\begin{document}

\newcommand{\titletext}{Corrected generalized cross-validation\\ for finite ensembles of penalized estimators}

\title{\titletext}

\author{
    \setcounter{footnote}{-1}
    \blfootnote{Author names sorted alphabetically. $^\ast$Corresponding authors.}
    \setcounter{footnote}{1}
    Pierre {C.} Bellec\footremember{rutgersstats}{Department of Statistics, Rutgers University, New Brunswick, NJ 08854, USA.}
    \\ {\small \href{mailto:pierre.bellec@rutgers.edu}{{pierre.bellec@rutgers.edu}}}
    \and
    Jin-Hong Du\footremember{cmustats}{Department of Statistics and Data Science, Carnegie Mellon University, Pittsburgh, PA 15213, USA.}\footremember{cmumld}{Machine Learning Department, Carnegie Mellon University, Pittsburgh, PA 15213, USA.}
    \\ {\small \href{mailto:jinhongd@andrew.cmu.edu}{{jinhongd@andrew.cmu.edu}}}
    \and
    Takuya Koriyama$^\ast$\footrecall{rutgersstats}
    \\ {\small \href{mailto:tk691@stat.rutgers.edu}{{takuya.koriyama@rutgers.edu}}}
    \and
    Pratik Patil$^\ast$\footremember{berkeleystats}{Department of Statistics, University of California, Berkeley, CA 94720, USA.}
    \\ {\small \href{mailto:pratikpatil@berkeley.edu}{{pratikpatil@berkeley.edu}}}
    \and
    Kai Tan\footrecall{rutgersstats}
    \\ {\small \href{mailto:kt536@stat.rutgers.edu}{{kai.tan@rutgers.edu}}}
}

\date{\today}

\maketitle

\begin{abstract}
Generalized cross-validation (GCV) is a widely-used method for estimating the squared out-of-sample prediction risk that employs a scalar degrees of freedom adjustment (in a multiplicative sense) to the squared training error. In this paper, we examine the consistency of GCV for estimating the prediction risk of arbitrary ensembles of penalized least-squares estimators. We show that GCV is inconsistent for any finite ensemble of size greater than one. Towards repairing this shortcoming, we identify a correction that involves an additional scalar correction (in an additive sense) based on degrees of freedom adjusted training errors from each ensemble component. The proposed estimator (termed CGCV) maintains the computational advantages of GCV and requires neither sample splitting, model refitting, or out-of-bag risk estimation. The estimator stems from a finer inspection of the ensemble risk decomposition and two intermediate risk estimators for the components in this decomposition. We provide a non-asymptotic analysis of the CGCV and the two intermediate risk estimators for ensembles of convex penalized estimators under Gaussian features and a linear response model. Furthermore, in the special case of ridge regression, we extend the analysis to general feature and response distributions using random matrix theory, which establishes model-free uniform consistency of CGCV.
\end{abstract}

\section{Introduction}

Ensemble methods are an important part of the toolkit in current machine learning practice.
These methods combine multiple models and improve predictive accuracy and stability compared to any single component model \citep{hastie2009elements,dietterich1998experimental,dietterich2000ensemble}.
Bagging and its variants form an important class of ensemble methods that average predictors fitted on different subsamples of the full dataset.
The classical literature on bagging includes work by \citet{breiman_1996,buhlmann2002analyzing,friedman_hall_2007}, among others.
There has been a flurry of recent work on characterizing the risk behavior of such subsampling-based ensemble methods in high dimensions; see, e.g., \citet{loureiro2022fluctuations,adlam2020understanding,mucke_reiss_rungenhagen_klein_2022,patil2022bagging,du2023subsample}.
These works demonstrate that the risk can be significantly reduced by properly choosing the size of the ensemble. 
Complementary to characterizing and understanding the risk behavior, it is important to have a reliable method for estimating these risks when implementing ensemble methods.
Accurate risk estimation provides a benchmark for comparing the performance of different ensemble configurations and provides insights into the expected performance of the combined model on unseen data. 
In particular, \citet{du2023subsample,patil2023generalized} demonstrate that the implicit regularization provided by ensembling amounts to ridge regularization, and optimizing over both the ensemble and subsample size helps to mitigate double descent behaviors.
In this paper, our primary focus is on risk estimation techniques suitable for ensemble methods, when these ensembles are composed of penalized least-squares estimators.
Several tuning parameters define an ensemble even for a single penalty function, including the regularization parameter and the subsample size.
Developing consistent risk estimators informs the selection of the optimal ensemble (regularization parameters and subsample size) and leads to better generalization performance.

One widely recognized technique for risk estimation is Generalized Cross-Validation (GCV). 
GCV adjusts the training error multiplicatively based on a factor known as degrees of freedom correction. 
GCV is conceptually motivated as an approximation to leave-one-out cross-validation and provides consistent risk estimation without sample splitting and refitting; see, e.g., Chapter 7 of \citet{hastie2009elements} and Chapter 5 of \citet{wasserman2006all}.
The consistency of GCV has been extensively studied, initially for fixed-X design settings and linear smoothers \citep{golub_heath_wabha_1979, craven_wahba_1979}.
Subsequent work has extended this to the random-X settings, which are arguably more relevant in modern machine learning applications where observations are drawn from some underlying distribution rather than being predetermined.
In particular, the consistency of GCV for ridge regression has been shown in \citet{patil2021uniform, patil2022estimating, wei_hu_steinhardt} in various settings. 
For ridge regression, the consistency holds for arbitrary response models with only the assumption of bounded moments.
Beyond ridge regression, the consistency of GCV is shown for Lasso in \citep{miolane2021distribution,celentano2020lasso} and for convex regularized estimates in \cite[Section 3]{bellec2020out}.
For linear base estimators (e.g., least squares estimator, ridge, etc.), the degrees of freedom of the ensemble estimator is simply the average of degrees of freedom for all base predictors.
The first question that this paper asks is the following:
\begin{center}
    Is GCV still consistent for ensembles of general penalized estimators?
\end{center}

Perhaps the simplest base predictor to study this question is ridge regression, which has emerged as a simple but powerful tool, particularly when dealing with high-dimensional data without any special sparsity structure.
Recent work by \cite{du2023subsample} considers the problem of risk estimation for ensembles of ridge predictors.
The authors analyze the use of GCV for the full ensemble ridge estimator when the number of ensemble components goes to infinity, demonstrating that GCV is consistent for the prediction risk.
However, an intriguing surprise from their work is the inconsistency of GCV for any finite ensemble:
The intricate correlation caused by overlapping samples in different subsampled datasets complicates risk estimation and necessitates a correction.
This finding suggests a limitation of GCV and prompts the question of whether this inconsistency is special to ridge regression or is a broader issue with GCV's applicability to other models as well.
And, in particular, this prompts the second key question of our paper: 
\begin{center}
    Is there a GCV correction that is consistent for all possible ensemble sizes?
\end{center}

Understanding these two questions forms the basis of our work.
We propose a novel risk estimator that addresses the limitations of naive GCV
for ensembles of penalized estimators.
Focusing on convex penalized predictors, we show that for a finite ensemble size larger than one, the GCV is always inconsistent and deviates from the true risk by an additive error that we characterize explicitly.
Understanding this additive error leads to a novel correction to GCV, and we provide both non-asymptotic and asymptotic analyses of this corrected GCV under two different sets of data-generating assumptions.
Our non-asymptotic analysis is restricted to Gaussian features and well-specified linear models, allowing general strongly convex penalty functions.
Our asymptotic analysis is more general in accommodating random matrix theory features beyond Gaussianity and arbitrary data models while being restricted to ridge predictors.
Our asymptotic analysis also enables the analysis of the ``ridgeless'' predictors (minimum-norm interpolators) in the overparameterized regime.
For the case of ridge regression, this also allows us to show the uniform consistency in the regularization parameter.
Before diving into the precise details of our contributions, we present our results in \Cref{fig:Fig1_ridge_lasso}, which demonstrates the clear gap between GCV and true risk. Our proposed corrected GCV (CGCV), on the other hand, accurately estimates the risk across varying settings for ensembles of both ridge and lasso predictors. 

\begin{figure}[!t]
    \centering
    \includegraphics[width=0.95\textwidth]{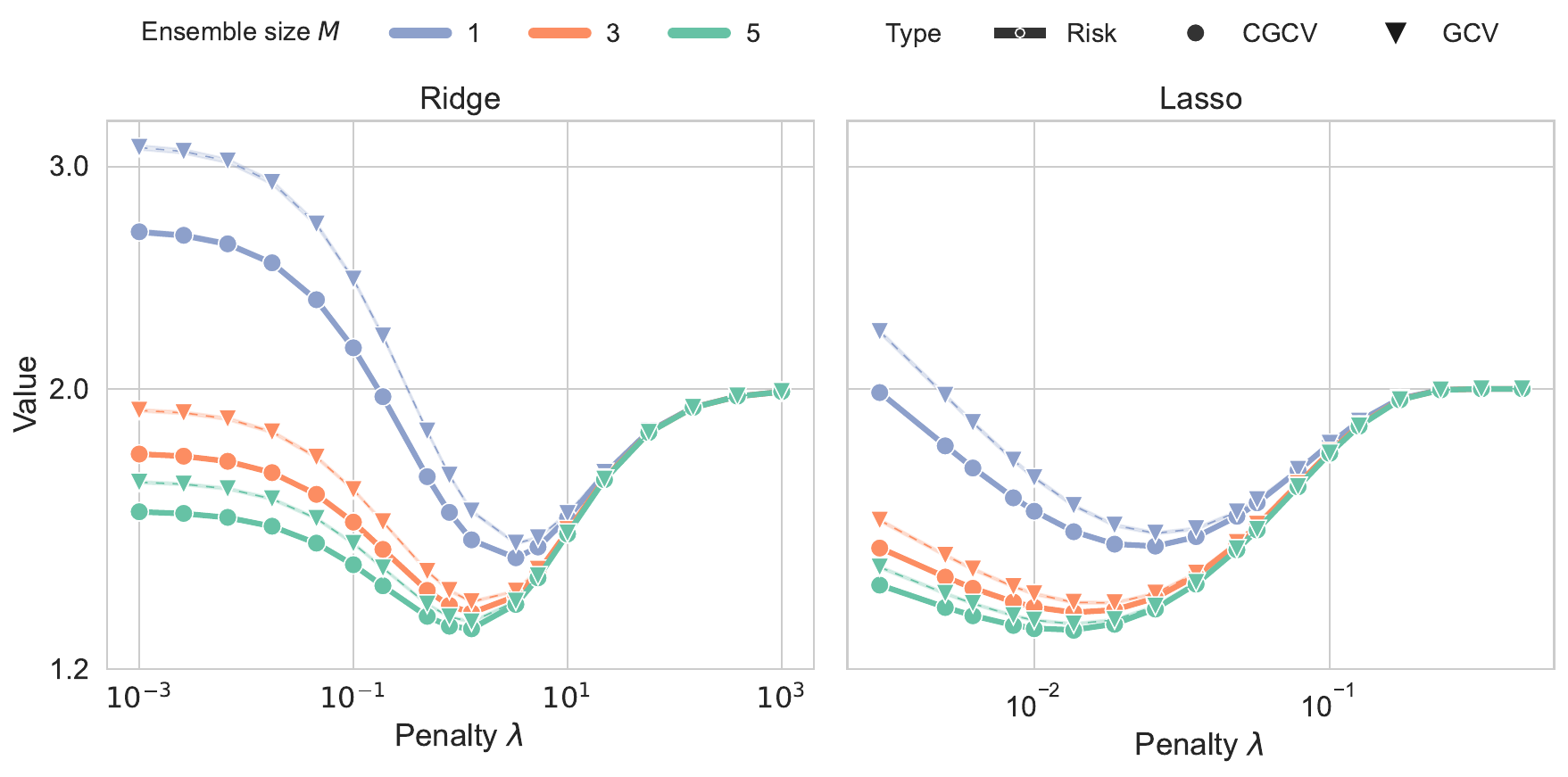}
    \caption{
    \textbf{CGCV is consistent for finite ensembles of penalized estimators while GCV is not.}
    We show a numerical comparison between the true squared risk (Risk), GCV error (GCV), and corrected GCV (CGCV) for ensembles across different tuning parameters $\lambda$ and ensemble sizes $M$, over 1000 repetitions of the datasets.
    The left panel shows ridge ensembles with different $\lambda$ and $M$.
    Data is generated from a linear model with Gaussian design, $(n,p) = (2000, 500)$, and a signal-to-noise ratio of $1$. 
    The subsample size is fixed at $k=800$. 
    The right panel shows the plot of lasso ensembles under the same setting as the left panel except for a different range of $\lambda$. 
    Further details of the experimental setup are given in \Cref{sec:numerical-illustrations-gaussian}. 
    }
    \label{fig:Fig1_ridge_lasso}
\end{figure}

\subsection{Summary of results}
In the classical low-dimensional regime, where the feature dimension \( p \) is fixed, the excess prediction risk goes to zero as the sample size \(n\) diverges. 
Thus, the average training error itself already serves as a consistent estimator of the risk. 
In the proportional asymptotic regime, where the feature size \( p \) scales proportionally with the sample size \( n \), the prediction risk typically converges to a non-vanishing constant \citep[among others]{bayati2011lasso,karoui_2013,thrampoulidis2018precise}. 
Therefore, the regime of primary interest for our results is the proportional asymptotic regime. 
More formally, we implicitly consider a sequence of regression problems, say, indexed by $n$. 
The dimension $p{(n)}$, the distribution of the observations, the estimators and penalty under consideration, and the subsample size $k{(n)}$ all implicitly depend on $n$ with $p{(n)},k{(n)}$ growing with $n$. 
We omit the dependence on $n$ to lighten the notation and write simply, e.g., $p$ for the dimension and $k$ for the subsample size. 
Throughout the paper, the ratio $p/n$ is bounded from above by a constant independent of $n$, so that by extracting a subsequence if necessary, we may assume that $p/n$ has a finite limit.
While several of our results are non-asymptotic with explicit dependence on certain problem parameters, we also state consistency results asymptotically using $\pto$/$\asto$ (convergence in probability/almost sure convergence), $\op(\cdot)$ and $\Op(\cdot)$ notation, which are defined with respect to the aforementioned sequence of regression problems.
Our contributions are four-fold, as summarized below. 

\begin{enumerate}[leftmargin=7mm]
    \item
    \textbf{Inconsistency of GCV for finite ensembles.}
    We establish that ordinary GCV is inconsistent in estimating the squared out-of-sample prediction risk of finite ensembles with more than one penalized estimator (\Cref{prop:gcv_inconsistency}).
    Here, we assume strongly convex penalties that include ridge regression and elastic net, among others; and the inconsistency result of \Cref{prop:gcv_inconsistency} applies to base estimators that have normalized degrees of freedom bounded away from 0 (see \Cref{rem:df-nonnegligibility} for details).
    
    \item
    \textbf{Corrected GCV for finite ensembles.}
    We introduce a novel risk estimator, termed corrected GCV (CGCV), designed to estimate the prediction risk of the arbitrary ensemble of penalized estimators (\Cref{def:cgcv-proposal}).
    The proposed estimator employs a scalar data-dependent correction term (in an additive sense) that incorporates degrees of freedom adjusted training errors of individual component estimators (\Cref{fig:cgcv-illustration}).
    Importantly, CGCV preserves all the computational advantages inherent to GCV, since it requires neither sample splitting nor model refitting nor out-of-bag risk estimation.
    
    \item
    \textbf{Non-asymptotic analysis under Gaussian designs.}
    The structure of CGCV stems from inspecting the ensemble risk decomposition and two intermediate risk estimators that we construct for the components in this decomposition (\Cref{eq:risk-decomp-est-1,eq:risk-decomp-est-2}).
    For Gaussian features and a well-specified linear model, we provide non-asymptotic bounds for these component estimators (\Cref{th:sub_full}).
    Building on these, we provide a non-asymptotic bound for the proposed corrected GCV estimator (\Cref{th:correction_gcv}).
    Our error bounds decrease at the $n^{-1/2}$ rate and provide explicit dependence on problem parameters such as the ensemble size $M$, the subsample size $k$, and the strong convexity parameter of the penalty (\Cref{eq:sub_full_relative_error,eq:cgcv_relative_error}).
    The derived rates are pertinent within the scope of the proportional asymptotic regime.
    
    \item
    \textbf{Asymptotic analysis under general designs.}
    For general feature structure (multiplicatively) composed of an arbitrary covariance matrix and independent components with bounded $4+\eta$ moments and response vector with bounded $4+\eta$ moments for some $\eta > 0$, we show that the intermediate risk estimators are asymptotically consistent for ridge ensemble (\Cref{thm:risk-est}).
    We further show uniform consistency in the regularization level $\lambda$ that includes the case of ridgeless regression for both intermediate estimators and, consequently, CGCV (\Cref{thm:unifrom-consistency-lambda} and \Cref{prop:correct-gcv}).
    The asymptotic analysis allows us to relax the data and model assumptions while obtaining model-free uniform consistency of CGCV in the regularization parameter.
\end{enumerate}

\subsection{Related work}
\label{sec:related_work}

In this section, we describe related work on ensemble risk analysis and risk estimation, as well as a detailed comparison to other baseline risk estimation approaches based on sample splitting and out-of-bag risk estimation.

\paragraph{GCV for linear smoothers.\hspace{-0.5em}}
Risk estimation is a crucial aspect of statistical learning that plays an important role in model evaluation and selection.
Over the years, a myriad of methods have been proposed, each with its own strengths and limitations.
Among these, GCV has emerged as a widely adopted technique for risk estimation. 
GCV is conceptually motivated as an approximation to leave-one-out cross-validation, a method that provides unbiased estimates of the true prediction error but can be computationally expensive \citep{hastie2009elements}.
The origins of GCV can be traced back to the work of \citet{golub_heath_wabha_1979} and \citet{craven_wahba_1979}, where it was studied in the context of fixed-X design settings for linear smoothers. 
These settings, where the predictors are considered fixed and non-random, are common in experimental designs. 
The consistency of GCV was subsequently investigated in a series of papers by \citet{li_1985, li_1986, li_1987}.
More recently, the focus has shifted toward the random-X and high-dimensional setting.
This change in focus is motivated by modern machine learning applications, which often deal with large datasets and where predictors are typically considered to be random variables drawn from some distribution rather than fixed values. 
In this context, GCV has been the subject of considerable research attention.
In particular, \cite{leeb2008evaluation} establishes the consistency of GCV for the ordinary least-squares, and the consistency of GCV for ridge regression has been established in a number of recent studies in various data settings by \citet{adlam_pennington_2020neural,patil2021uniform,patil2022estimating,wei_hu_steinhardt,han2023distribution,misiakiewicz2024non}, which provide different flavors of results (both asymptotic and non-asymptotic).

\paragraph{GCV beyond linear smoothers.\hspace{-0.5em}}
Generalized cross-validation was initially defined for linear smoothers, where the sum of training errors is adjusted multiplicatively by the trace of the smoothing matrix. 
There is a general way of understanding this estimator, and the definition of GCV for estimators that are not linear smoothers can be extended by using degrees of freedom adjustments. 
The notion of degrees of freedom of linear estimate $\hbbeta$ dates back to the pioneering paper of \cite{stein1981estimation}. 
This literature established the multivariate version of Stein's formula and proposed an unbiased estimator for estimating the mean square error of a Gaussian multivariate mean, which is often called Stein's Unbiased Risk Estimate ($\SURE$). 
For example, under the linear model $\by = \bX \bbeta_0 + \bvarepsilon$ with Gaussian covariates $\bX\in\RR^{n\times p}$ and Gaussian noise $\bvarepsilon\in\RR^{n}$, the Stein's Unbiased Risk Estimate has expression $\hat{\SURE} = \|\by - \bX\hbbeta\|_2^2 + 2\sigma^2 \df - n \sigma^2$, where $\sigma^2$ is the noise variance, and 
\begin{equation}
 \df = \trace[(\partial/\partial \by)\bX \hbbeta]
 \label{df}
\end{equation}
is the degrees of freedom of the estimate $\hbbeta$. 
The $\SURE$ is unbiased in the sense that 
$
    \EE [\hat{\SURE}]
    = \EE [\|\bX \hbbeta - \bX \bbeta_0\|_2^2].
$
This definition of degrees of freedom as the trace of the Jacobian yields a natural generalization of the GCV formula, where the trace of the smoothing matrix (for linear smoothers) is replaced by $\df$, i.e., the trace of the Jacobian $(\partial/\partial\by)\bX \hbbeta$.
Moving beyond the estimation of the in-sample error $\|\bX\hbbeta - \bX\bbeta_0\|_2^2$, this generalization of GCV beyond linear smoothers is known to be consistent in the sense:
\[
    \frac{\|\by - \bX\hbbeta\|_2^2/n}{(1 - \df/n)^2}
    \Big/
    \Bigl(
    \|\bSigma^{1/2}(\hbbeta - \bbeta_0)\|_2^2 + \sigma^2
    \Bigr)
    \pto 1,
\]
under linear models with jointly normal observations $(\bx_i, y_i)$ and $\EE[\bx_i\bx_i^\top]=\bSigma$.
This consistency is proven for the lasso \citep{bayati2011lasso, bayati2013estimating,miolane2021distribution,celentano2020lasso}, and for regularized least-squares with general convex penalty \citep[Section 3]{bellec2020out}.
\citet{bellec2022derivatives} generalized the above GCV estimator for the risk $\|\bSigma^{1/2}(\hbbeta - \bbeta_0)\|_2^2 + \|\bvarepsilon\|_2^2/n$ in the context of regularized M-estimation where the noise has possibly infinite variance. 
\citet{tan2022noise} studied a generalization of GCV to estimate the out-of-sample error in a multi-task regression model where each response $y_i$ is vector-valued.

Closely related to GCV, \cite{rad2018scalable} introduced the Approximate Leave-One Out (ALO) estimator for the out-of-sample error of regularized M-estimators and showed the consistency of ALO under smoothness assumptions on the data-fitting loss and the regularizer. 
\cite{xu2019consistent} used Approximate Message Passing to develop
risk estimates for estimators constructed with separable loss and separable regularizer, and showed that these estimates, including ALO, enjoy finite error bounds.
ALO has been extended to non-differentiable regularizers \cite[Section 2.2]{rad2018scalable} as well as non-differentiable losses and non-differentiable regularizers \citep{wang2018approximate}. 
Its theoretical accuracy has recently been demonstrated for LASSO and elastic net \citep{auddy2023approximate}.

\paragraph{Ensemble risk characterization.\hspace{-0.5em}}
Ensemble methods combine weak predictors to produce strong predictors and are a common part of the statistical machine learning toolkit \citep{hastie2009elements}.
Several variants exist such as bagging \citep{breiman_1996,buhlmann2002analyzing}, random forests \citep{breiman2001random}, stacking \citep{wolpert1992stacked}, among others.
These methods are widely used because of their ability to improve prediction accuracy and robustness.
The risk behavior of ensemble methods has been a topic of recent interest.
\cite{patil2022bagging} provide a strategy for analyzing the risk of an ensemble of general predictors and present exact risk asymptotics for an ensemble of ridge predictors.
Other related work on ensemble risk analysis include \citet{lejeune2020implicit,ando2023high,loureiro2022fluctuations,adlam2020understanding}, among others, which provide insights into the fluctuations in risk of ensemble methods, demonstrating that the risk can be significantly reduced by properly choosing the ensemble size. 
They also highlight the importance of the correlation structure among the predictors in determining the risk of the ensemble.
Recently, \citet{du2023subsample} show that the naive extension of the GCV estimator for the ridge ensemble is generally inconsistent for a finite ensemble size $M>1$, but it becomes consistent again when $M$ approaches infinity.
The follow-up work of \cite{patil2023generalized} provides certain structural and risk equivalences between subsampling and ridge regularization.
This work identifies an inflation factor of $\Delta / M$, for some finite bias term $\Delta$ (independent of $M$), between the risk of the ensemble at a given regularization level and full ridge at a larger ``induced'' regularization level. 
This inflation vanishes as $M \to \infty$. 
Even though GCV is consistent for ridge regression $(M=1)$ or the infinite-ensemble ridge $(M = \infty)$, these two works suggest that the naive extension of GCV for finite-ensemble ridge may be inconsistent for $1< M < \infty$.

\paragraph{Comparison with sample splitting and out-of-bag risk estimation.\hspace{-0.5em}}
In the context of ensemble learning, other types of cross-validation methods include the sample-split CV and out-of-bag CV, which utilize test samples to estimate the out-of-sample risk.
The sample-split CV is the most common strategy for risk estimation \citep{patil2022bagging}.
However, this approach presents several drawbacks.
The estimator can suffer from finite-sample efficiency due to the inherent sample splitting: $V$-fold CV estimates the risk of an estimator trained on $(V-1)n$ observations only instead of the risk of the estimator trained on the full dataset of size $n$.
Figure 1 of \citet{rad2018scalable} provides a clear illustration of this drawback.
Furthermore, the accuracy of the estimator can suffer when dealing with large subsample sizes.
On the other hand, \citet{lopes2019estimating,lopes2020measuring,du2023extrapolated} use out-of-bag risk estimates to extrapolate the risk estimation.
Their estimators provide a more efficient risk estimator without sample splitting but do not perform well with large subsample sizes, as the out-of-bag sample size becomes small.
In light of the aforementioned limitations of existing methods, our aim is to develop a risk estimator that fulfills certain desiderata.
First, we would like an estimator that does not require sample splitting. 
This would address the issue of finite-sample efficiency faced by split cross-validation.
Second, we aim to extend the applicability of GCV to accommodate any ensemble size. 
This would overcome the limitation of GCV being inconsistent for finite ensembles, as identified in \cite{du2023subsample}.
In this paper, we present two intermediate risk estimators that fulfill both these desiderata, which ultimately lead to our final corrected GCV.

\subsection{Organization}

The rest of the paper is organized as follows.
In \Cref{sec:preliminaries}, we set our notation and define the ensemble estimators and their prediction risks.
In \Cref{sec:risk-estimators}, we define our risk estimators and provide a qualitative comparison to other related risk estimators.
In \Cref{sec:finite-sample-analysis}, we provide a finite-sample analysis under stronger assumptions on the feature and response distributions.
In \Cref{sec:asm-consistency}, we show that the risk estimators are consistent under mild assumptions on the feature and response distributions for ridge ensembles.
\Cref{sec:discussion} concludes the paper and offers several follow-up directions.
Proofs of all the theoretical results are included in the supplement, which also contains a summary of the notational conventions used throughout the paper.
The source code for generating all the experimental figures in this paper can be accessed at: \href{https://github.com/kaitan365/CorrectedGCV}{[code repository]}.

\section{Background}\label{sec:preliminaries}

Consider the standard supervised regression setting where we observe $n$ i.i.d.\ samples $\{(\bx_i, y_i) : i \in [n] \}$ in $\RR^{p} \times \RR$, where $[n]$ stands for the index set $\{1, 2, \ldots, n\}$.  
Let $\bX\in\RR^{n\times p}$ denote the feature matrix whose $i$-th row contains $\bx_{i}^\top$ and $\by\in\RR^n$ denote the response vector whose $i$-th entry is $y_{i}$.

\subsection{Ensemble estimator and prediction risk}

For each $m\in [M]$, let $I_m$ be a non-empty subset of $[n]$.
Let $(\bX_{I_m}, \by_{I_m})$ denote the corresponding random design matrix and response vector associated with the subsampled dataset $\{(\bx_i, y_i) : i \in I_m\}$.
For each of the subsampled datasets $(\bX_{I_m}, \by_{I_m})$ for $m \in [M]$, we consider a penalized least-squares estimator:
\begin{equation}\label{eq:def-hbeta}
    \hbbeta_m
    \in \argmin_{\bb \in \RR^{p}}
    \frac{1}{2|I_m|}
    \sum_{i\in I_m}
    (y_i - \bx_i^\top\bb)^2
     + g_m(\bb)
     =
     \argmin_{\bb \in \RR^{p}}
    \frac{1}{2|I_m|}
    \|\bL_{I_m}(\by - \bX\bb)\|_2^2
     + g_m(\bb)
\end{equation}
where
$g_m:\R^p\to\R$ is a prespecified convex penalty and $\bL_{I_m}$ is the diagonal projection matrix with entries $(\bL_{I_m})_{ii}=I\{i\in I_m\}$.
Following \cite{stein1981estimation}, we define the degrees of freedom of the estimator $\hbeta_m$ as: 
\begin{equation}\label{eq:def-df}
    \df_m = \trace[(\partial/\partial\by) \bX\hbeta_m]. 
\end{equation}
For the estimator defined in \eqref{eq:def-hbeta} using the full data ${(\bx_i, y_i)}_{i\in[n]}$ with particular choices of the penalty function $g$, the quantity $\df$ has explicit expressions; see, for example, \citet{zou2007degrees,tibshirani2012degrees,dossal2013degrees,vaiter2012degrees,vaiter2017degrees,bellec2022derivatives}) among others. 
\Cref{tab:examples} presents the explicit known formulae for the lasso, ridge, and elastic net. 

\begin{table}[t]
    \centering
    \caption{
    Explicit formulae for the degrees of freedom in \eqref{eq:def-df} of the estimator \eqref{eq:def-hbeta} for commonly used penalty functions.
    Here $\hat S=\{j\in [p]:e_j^\top \hbeta_m \ne 0\}$ is the set of active variables, $\bX_{\hat S}$ is the submatrix of $\bX_{I_m}$ made of columns indexed in $\hat S$.
    See \cite{zou2007degrees,tibshirani2012degrees,dossal2013degrees,bellec2021second} among 
    others.
    }\label{tab:examples}
    \begin{tabular}{c c c}
         \toprule
         \textbf{Estimator} $\hbbeta_m$  & \textbf{Regularizer} $g_m(\bb)$  & \textbf{Degrees of freedom} $\df_m$ \\ 
         \midrule
         Lasso &
         $\lambda\|\bb\|_1$ & $|\hat S|$  \\[1pt]
         Ridge &
         $\frac{\lambda}{2} \|\bb\|_2^2$ & $\trace\big[\bX_{I_m}\big(\bX_{I_m}^\top\bX_{I_m} + {|I_m|}\lambda \bI\big)^{-1}\bX_{I_m}^\top\big]$  \\
         Elastic net &
         $\lambda_1\|\bb\|_1 + \frac{\lambda_2}{2} \|\bb\|_2^2$ &
         $\trace\big[\bX_{\hat S}\big(\bX_{\hat S}^\top\bX_{\hat S} + {|I_m|}\lambda_2\bI\big)^{-1}\bX^\top_{\hat S}\big] $ \\
        \bottomrule
    \end{tabular}    
\end{table}

The ensemble estimator constructed using \eqref{eq:def-hbeta} from the subsample datasets $\{(\bX_{I_m}, \by_{I_m})\}_{m\in[M]}$ is defined as:
\begin{align}\label{eq:def-M-ensemble}
    \tbeta_{M}\bigl(\{I_m\}_{m=1}^M\bigr)
    &:= \frac{1}{M} \sum_{m=1}^M
    \hbeta_m.
\end{align}
Though it is possible to extend the analysis by using unequal weights for the ensemble estimator \eqref{eq:def-M-ensemble}, we focus on the equal-weighted ensemble estimator for simplicity; see \Cref{rmk:unequal-weight} for more discussion.
For brevity, we will omit the dependency on $\{ I_{m} \}_{m\in[M]}$ for the ensemble estimator and simply write $\tbeta_M$ when it is clear from the context.
By the linearity property of the trace operator, the degrees of freedom of the ensemble estimator can be easily shown to be the average of the individual degrees of freedom:
\begin{align}
    \tdf_M = \frac{1}{M} \sum_{m=1}^M \df_{m}.%
    \label{eq:def-tdf}
\end{align}
We assess the performance of the $M$-ensemble predictor $\tbeta_{M}$ via conditional squared prediction risk:
\begin{align}
    R_{M}&:=\EE_{(\bx_0,y_0)}\big[(y_0-\bx_0^{\top} \tbeta_{M})^2\mid (\bX, \by), \{I_{m}\}_{m = 1}^M\big], \label{eq:R_M}
\end{align}
where $(\bx_0, y_0)$ is an independent test point sampled from the same distribution as the training dataset $(\bX, \by)$.
Note that the conditional risk $R_{M}$ is a scalar random variable that depends on both the dataset $(\bX, \by)$ and the random samples $\{I_{m}: m\in [M]\}$.
Our goal is to construct estimators for $R_M$ that work for all choices of ensemble sizes $M$.

\subsection{GCV for ensemble estimator}
\label{subsec:naive_gcv}

Before we delve into our proposed estimators, let us first consider the ordinary Generalized Cross-Validation (GCV) estimator.
Suppose that we have a linear predictor $\hf(\bx) = \bx^{\top}\hat{\bbeta}$ where $\hat{\bbeta}$ is obtained through \eqref{eq:def-hbeta} based on the dataset $(\bX, \by)$.
With the notation $\df$ in \eqref{df}, the ordinary GCV estimator for the prediction risk of this estimate $\hbeta$ is defined as:
\begin{equation}
    \label{eq:ogcv}
    \hR^\gcv = \frac{\|\by - \bX \hat{\bbeta} \|_2^2 / n}{(1 - \df/n)^2}.
\end{equation}
The numerator of the GCV estimator, representing the average training error, usually underestimates the true prediction risk. The denominator of the GCV estimator, smaller than one, is designed to correct this underestimation.
On the other hand, the denominator of the GCV estimator accounts for this training ``optimism''.
Note that this definition of the degree-of-freedom adjusted estimator coincides with the classic definition of GCV for linear smoother \citep{golub_heath_wabha_1979,craven_wahba_1979}, where the trace of the smoothing matrix represents the degrees of freedom of the linear predictor.
Existing literature provides a rich theoretical background for the consistency and efficiency of the ordinary GCV estimator.
For instance, its behaviors have been analyzed by
\citet{bayati2011lasso, bayati2013estimating,miolane2021distribution,celentano2020lasso} for the lasso and in Section 3 of \citet{bellec2020out} for penalized M-estimator. 
For ridge regression, \cite{patil2021uniform} show the uniform consistency of the ordinary GCV even in the overparameterized regimes when the feature size $p$ is larger than the sample size $n$.
Beyond ridge regression, the consistency of the estimator \eqref{eq:gcv-naive} is shown to be consistent for other convex penalized estimators under suitable design conditions, see for example, \cite{bayati2011lasso,bayati2013estimating,bellec2020out}.

Two natural extensions of GCV for bagging can be considered.
For the first extension (\eqref{eq:gcv-naive} below), the subsamples $I_1,\dots,I_M$ are sampled without looking at the data, and subsequently only the training data $(\bx_i,y_i)_{i\in I_{1:M}}$ is used for training, where $I_{1:M}=\cup_{m=1}^MI_{m}$. 
Then, since the training data is $(\bx_i,y_i)_{i\in I_{1:M}}$, a natural extension of the ordinary GCV for the ensemble estimator \eqref{eq:def-M-ensemble} is the following estimator:
\begin{align}
 \frac{\|\bL_{I_{1:M}} (\by-\bX\tbeta_{M})\|_2^2 / |I_{1:M}| }{(1 - \tdf_M / |I_{1:M}| )^2}, \label{eq:gcv-naive}
\end{align}
where $\bL_I$ is a diagonal matrix whose $i$-th diagonal entry is one if $i\in I$ and zero otherwise.
In the special case when the ensemble size is $M = 1$ or the subsample size is $k = n$, the definition reduces to the ordinary GCV \eqref{eq:ogcv}, for which consistency has been established under various settings.
For ensemble size $M>1$, there is less understanding of the behaviors of this estimator for general predictors.
However, it is shown to be inconsistent when the ensemble size is $M=2$ for ridge predictors \citep{du2023subsample}.
The inconsistency in large part happens because, for a finite ensemble size $M$, the residuals computed using the bagged predictor contain non-negligible fractions of out-of-sample and in-sample, and all of them are treated equally.

For the second extension, we consider that all data
$(\bx_i,y_i)_{i\in[n]}$ is used even if some $i\in[n]$ does not belong
to any of the subsamples $I_1,\ldots,I_M$.
This leads to the ensemble GCV:
\begin{align}
    \tR_M^{\gcv} = \frac{\| (\by-\bX\tbeta_{M})\|_2^2 / n }{(1 - \tdf_M / n )^2}. \label{eq:fgcv-naive}
\end{align}
As $M\to \infty$ with uniformly sampled $I_1,\ldots,I_M$, the union $I_{1:M}$ closely approaches the full set $[n]$ and the difference between \eqref{eq:gcv-naive} and \eqref{eq:fgcv-naive} vanishes.
Both \eqref{eq:gcv-naive} and \eqref{eq:fgcv-naive} are consistent as $M$ tends to infinity for ridge predictors \citep{du2023subsample}; namely, $\tR_{\infty}^{\gcv} \pto R_{\infty}$.
For finite ensemble sizes, however, \eqref{eq:fgcv-naive} is generally preferred over \eqref{eq:gcv-naive} because one may gain efficiency from more observations (except when considering ensemble sizes that are neither extremely large nor when considering no ensemble).
As we will show in \Cref{prop:gcv_inconsistency}, the ensemble GCV estimator, as defined in \eqref{eq:fgcv-naive}, is generally inconsistent for finite ensemble sizes.
The primary goal of this paper is to fully understand this phenomenon, correct the inconsistency of GCV estimate \eqref{eq:fgcv-naive}, and develop a risk estimator that is consistent for all ensemble sizes $M$.

\section{Proposed GCV correction}
\label{sec:risk-estimators}

In this section, we aim to address the limitations of the naive GCV estimators \eqref{eq:gcv-naive} and \eqref{eq:fgcv-naive}, which, as we demonstrate, fail to produce consistent estimates for the true prediction risk \eqref{eq:R_M} under finite ensemble sizes $M > 1$. 
By introducing term-by-term adjustments for each term of the decomposition \eqref{eq:risk-decomposition} below, we derive corrected and consistent risk estimators that hold for any ensemble size $M$.

\subsection{Intermediate risk estimators}
\label{sec:intermediate_estimators}

We begin by considering the decomposition of \eqref{eq:R_M} into its constituent components.
The motivation behind our proposed risk estimators stems from this decomposition.
It is easy to see that the risk of the $M$-ensemble estimator can be decomposed as follows:
\begin{align}
    R_{M} 
    &= 
    \frac{1}{M^2}
    \sum_{m, \ell \in [M]}
    \EE_{\bx_0, y_0}\big[(y_0 - \bx_0^\top \hbbeta_m ) (y_0 - \bx_0^\top \hbbeta_\ell ) \mid (\bX, \by), \{ I_m, I_\ell \}\big] \nonumber \\
    &=
    \frac{1}{M^2}\sum_{m, \ell \in [M]}
    \underbrace{(\hbeta_{m} - \bbeta_0)^\top \bSigma (\hbeta_{\ell} - \bbeta_0) + \sigma^2}_{R_{m, \ell}}.
    \label{eq:risk-decomposition}
\end{align}
Here, $\bbeta_0 := \EE[\bx_0 \bx_0^{\top}]^{-1} \EE[\bx_0 y_0]$ denotes the coefficient vector of linear projection of $y_0$ onto $\bx_0$, and $\sigma^2 := \EE[(y_0 -  \bx_0^\top \bbeta_0)^2]$ denotes the energy in the residual component in the response. 
As indicated in \eqref{eq:risk-decomposition}, we will denote the component in the risk decomposition corresponding to $\hat{\bbeta}_m$ and $\hat{\bbeta}_{\ell}$ by $R_{m,\ell}$.
The fundamental idea behind our risk estimators is to estimate each component $R_{m,\ell}$ individually.
The final risk estimator is then the sum of these estimated risk components.

We propose an adjustment for each term $R_{m,\ell}$ in the risk decomposition, which leads to a $\sqrt n$-consistent estimator.
This adjustment is a key component of our proposed risk estimators and contributes to their improved performance over the ordinary GCV estimator.
Before we present the corrected GCV estimators, we first introduce two intermediate estimators that give rise to our final correction.

\paragraph{Estimator using overlapping observations.\hspace{-0.5em}}
Our first estimator is defined as analogous to $R_M$, with each of $R_{m, \ell}$ replaced by its estimate $\hR^\sub_{m, \ell}$ defined below:
\begin{align}
    \tR_{M}^{\sub} 
    =  \ddfrac{1}{M^2}\sum_{m,\ell\in[M]}
    \hR^\sub_{m, \ell},
    \quad
    \hR^\sub_{m, \ell}
    =
    \frac{
    (\by_{I_m\cap I_\ell} - \bX_{I_m\cap I_\ell}\hbeta_{m})^{\top}(\by_{I_m\cap I_\ell} - \bX_{I_m\cap I_\ell}\hbeta_{\ell})
    / | I_m \cap I_\ell |
    }
    {
     {(1  - {\df_m} / {|I_m|}) 
     (1  - {\df_\ell}/ {|I_\ell|})}
     }.
    \label{eq:risk-decomp-est-1}
\end{align}
Here, the superscript ``\ovlp'' is used because the numerator of $\hR^\sub_{m, \ell}$ uses only the overlapping observations of subsets $I_m$ and $I_{\ell}$ when computing the residuals. 
In contrast, our next estimator will use all the available observations when computing similar residuals. 

The intuition behind the \ovlp-estimator \eqref{eq:risk-decomp-est-1} is that the summand in \eqref{eq:risk-decomp-est-1} is consistent for the corresponding summand in \eqref{eq:risk-decomposition}. 
This result is already known for $m=\ell$ under suitable conditions; see, for example, \cite{bellec2020out} and references therein. 
In the present paper, we extend this consistency result from $m = \ell$ to any pair of $(m,\ell)_{m,\ell\in[M]}$, see \Cref{th:sub_full} for a formal statement.

\begin{remark}[Special cases of the \ovlp-estimator]\label{rem:sub}
We consider two special cases under equal subsample sizes with $|I_m| = k$ for all $m\in[M]$:
(a) When $k = n$, the estimator in \eqref{eq:risk-decomp-est-1} matches with ordinary GCV estimator \eqref{eq:ogcv}, which is consistent.
(b) When $M = 1$, notice that $\tR_{M}^{\sub}$ is equal to the GCV in \eqref{eq:gcv-naive} restricted to $k$ subsampled observations. 
Thus, \eqref{eq:risk-decomp-est-1} is consistent for $M = 1$.
\end{remark}

It is worth noting that the definition of the \ovlp-estimator implicitly requires $| I_m \cap I_\ell |>0$, which necessitates a large enough subsample size $k$ to guarantee the overlapping sets $I_m\cap I_\ell$ are not empty. 
Furthermore, when the regularization is near zero (e.g., in ridgeless estimators), the term $1 - \df_m/|I_m|$ in the denominator can approach zero, further degrading the estimator's performance. 
To address these shortcomings, it is beneficial to use data from both $I_m$ and $I_{\ell}$, as well as other observations, to estimate the individual terms in the decomposition \eqref{eq:risk-decomposition}. This motivates us to construct the next estimator. 

\paragraph{Estimator using all observations.\hspace{-0.5em}}
We propose the following estimator, which estimates each of $R_{m,l}$ using all the data $(\bX, \by)$. The expression is given below:
\begin{align}
    \tR_M^{\full} 
    &= \ddfrac{1}{M^2} \sum_{m, \ell\in [M]}\hR_{m,\ell}^{\full}, 
    \quad
    \qquad
    \hR_{m,\ell}^{\full}  =  \frac{
    (\by - \bX \hbeta_{m})^\top (\by - \bX \hbeta_{\ell}) / n}{1 - {\df_m} / {n} - {\df_{\ell}} / {n} + ( \df_{m} \df_{\ell} / |I_m| |I_{\ell}| ) \cdot  | I_m \cap I_\ell | / {n} }.
    \label{eq:risk-decomp-est-2}
\end{align}
In contrast to the ``\ovlp'' estimator in \eqref{eq:risk-decomp-est-1}, we use superscript ``full'' in \eqref{eq:risk-decomp-est-2}, which refers to the risk estimator that uses all the available observations ($\by$ and $\bX$ are used in the numerator to compute residuals). 
Because the two intermediate estimators use different proportions of the samples to quantify the optimism, they require different degrees of freedom adjustments presented in the denominators.

Compared to $\tR^{\sub}_M$, the full-estimator offers several advantages.
Firstly, it utilizes all available data, which is beneficial, especially when the subsample sizes $|I_m|$ and $|I_{\ell}|$ are small, or the union of subsets is limited.
Secondly, its denominator is lower bounded by a positive constant, ensuring numerical stability, especially for base estimators with regularization close to zero.
It is worth noting that hybrid estimators can be constructed that utilize data points in a range between the extreme cases of the intersection and the full set of observations. We will not focus on such hybrid strategies in this paper.

\begin{remark}[Special cases of the full-estimator]
\label{rem:full}
As with \Cref{rem:sub}, we consider two special cases:
(a) When $k = n$, observe from \eqref{eq:est-full-M=1_1} that \eqref{eq:risk-decomp-est-2} matches with \eqref{eq:fgcv-naive}, which is consistent, as argued previously.
(b) When $M = 1$, the full-estimator in \eqref{eq:risk-decomp-est-2}, reduces to:
\begin{align}
    \tR_{1}^{\full}  
    &=  
    \frac{
    (\by - \bX \tbeta_{1})^\top (\by - \bX \tbeta_{1})/n}{{1 -  2{\tdf_1}/n  + (\tdf_{1})^2/(kn)}} \label{eq:est-full-M=1_1} \\
    &=  
    \frac{
    (\by - \bX \tbeta_{1})^\top \bL_{1} (\by - \bX \tbeta_{1}) / n
    + (\by - \bX \tbeta_{1})^\top \bL_{1^c} (\by - \bX \tbeta_{1}) / n}{
    1-2{\tdf}_1/n + ({\tdf}_1)^2/(kn)
    }, \label{eq:est-full-M=1_2}
\end{align}
where $\bL_{1^c}=\bI_n-\bL_{1}$ and $\bL_1 = \bL_{I_1}$ for brevity. 
To see that \eqref{eq:est-full-M=1_1} is consistent for any subsample size $k$, from the consistency of ordinary GCV for a single base predictor and no bagging, with observations $(\bx_i, y_i)_{i\in I_1}$ and sample size $k=|I_1|$, from Section 3 of \citet{bellec2020out}, we have
\begin{equation}
    \label{eq:est-full-M=1_numer1}
    \frac{
    (\by - \bX \tbeta_{1})^\top \bL_{1} (\by - \bX \tbeta_{1}) / k
    }
    {
    (1 -  {\tdf_1} / {k})^2 R_1
    }
    \pto
    1
    ,
\end{equation}
where $R_1$ is the risk of $\tbeta_1$.
Moreover, by the law of large numbers as $(n-k)\to\infty$,
\begin{equation}
    \label{eq:est-full-M=1_numer2}
    \frac{
    (\by - \bX \tbeta_{1})^\top \bL_{1^c} (\by - \bX \tbeta_{1})}{
    R_1
    (n - k)
}
\pto 1.
\end{equation}
Writing \eqref{eq:est-full-M=1_numer1} and \eqref{eq:est-full-M=1_numer2}
as $1+\op(1)$ and substituting 
into \eqref{eq:est-full-M=1_2}, we get that
\begin{align*}
    \frac{\tR_1^\full}{R_1}
    &=
    \frac{[1+\op(1)]\cdot (1 -  {\tdf_1} / {k})^2 \cdot ({k} / {n}) + [1+\op(1)]\cdot ({n-k})/{n}}{
        {1-2{\tdf}_1/n + ({\tdf}_1)^2/(kn)}
    }\pto 1.
\end{align*}
\end{remark}

\subsection{Corrected GCV for ensembles}

Though the two intermediate estimators are consistent, the nature of the term-by-term correction requires enumeration over all pairs of $m,\ell\in[M]$. 
This quadratic complexity in $M$ prevents them from being computationally tractable and practical for large $M$.
To mitigate the computational drawbacks of the intermediate estimators, we further propose a corrected GCV estimator as below.

\begin{figure}[!t]
    \centering
    \includegraphics[width=0.9\textwidth]{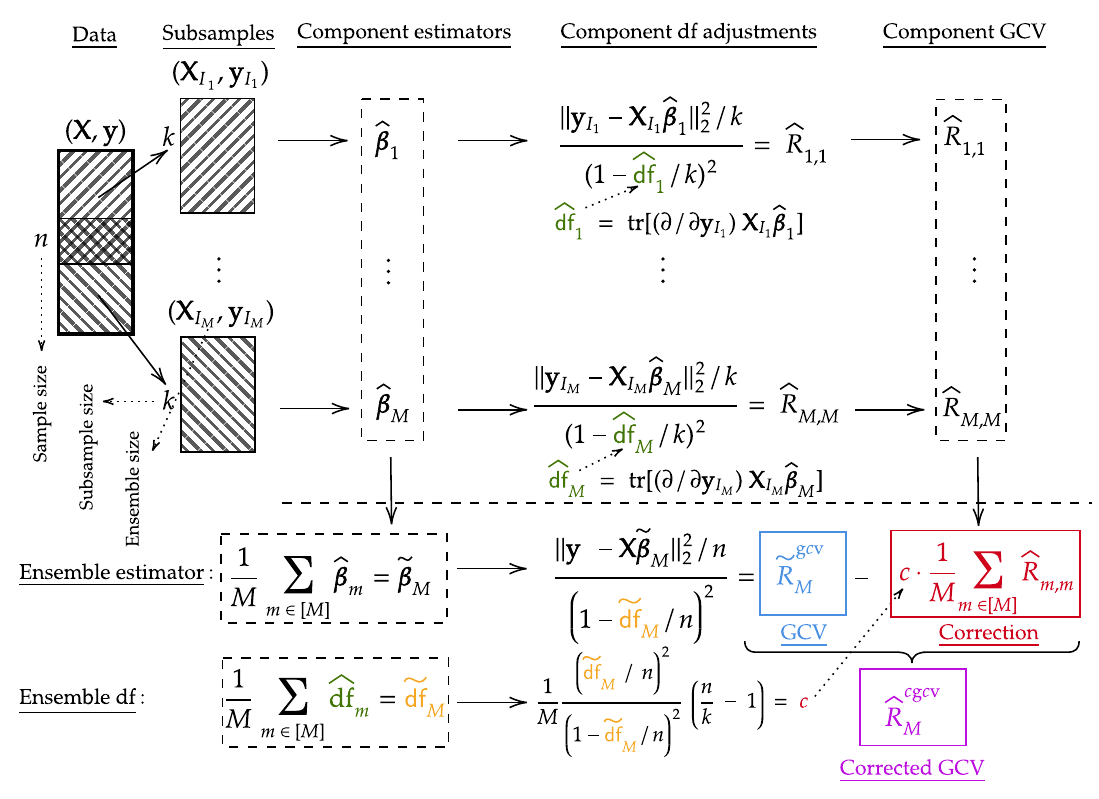}
    \caption{
    Illustration of the corrected generalized cross-validation for subsampled ensembles with equal subsample sizes $k$.
    }
    \label{fig:cgcv-illustration}
\end{figure}

\begin{definition}
[Corrected GCV for ensemble estimator]
\label{def:cgcv-proposal}
    The corrected GCV estimator for ensemble size $M$ is defined as:
    \begin{align}
        \label{eq:cgcv-proposal-sub}
        \tR^{\cgcv,\sub}_{M}
        = \underbrace{\frac{ \| \by - \bX \tbeta_{M} \|_2^2/n}{(1-\tdf_M/n)^2}}_{\tR_M^{\gcv}}
        -  \underbrace{\frac{1}{M} \Biggl\{ \frac{(\tdf_M/n)^2}{(1-\tdf_M/n)^2} \frac{1}{M}\sum_{m \in [M]} \bigg(\frac{n}{|I_m|}-1\bigg) \hat{R}_{m, m}^\sub \Biggr\}}_{\mathrm{correction}}.
    \end{align}
    We will also consider the alternative expression:
    \begin{align}
        \label{eq:cgcv-proposal}
        \tR^{\cgcv,\full}_{M}
        = \underbrace{\frac{ \| \by - \bX \tbeta_{M} \|_2^2/n}{(1-\tdf_M/n)^2}}_{\tR_M^{\gcv}}
        -  \underbrace{\frac{1}{M} \Biggl\{ \frac{(\tdf_M/n)^2}{(1-\tdf_M/n)^2} \frac{1}{M}\sum_{m \in [M]} \bigg(\frac{n}{|I_m|}-1\bigg) \hat{R}_{m, m}^\full \Biggr\}}_{\mathrm{correction}},
    \end{align}
    where the only difference is the use of 
    $\hat{R}_{m,m}^\full$ instead of $\hR^{\sub}_{m,m}$ in the rightmost sum in \eqref{eq:cgcv-proposal}. 
\end{definition}

The evaluation of \eqref{eq:cgcv-proposal} (and \eqref{eq:cgcv-proposal-sub}) only requires computing three components: (1) the full training of the ensemble estimator; (2) the average $\tdf_M$ defined in \eqref{eq:def-tdf}; and (3) the correction term that is built on the intermediate estimators. %
More importantly, this proposed estimator can be evaluated only in linear time with respect to $M$.
See \Cref{fig:cgcv-illustration} for an illustration of computing \eqref{eq:cgcv-proposal}.

The correction term in \eqref{eq:cgcv-proposal} is derived from the insights gained through the intermediate estimators $\tR_{M}^{\sub}$ and $\tR_M^{\full}$. 
It allows us to alleviate the computational challenges posed by the pairwise enumeration in the intermediate estimators, leading to an estimator that retains the advantages of both while being more computationally tractable.
While the correction is defined with respect to GCV in \eqref{eq:fgcv-naive}, one can also derive another correction with respect to \eqref{eq:gcv-naive}.
We prefer to present the correction with respect to \eqref{eq:fgcv-naive} because it leads to a slightly simpler expression and because, in this case, the correction is always positive.
Our theoretical results in the next section show the corrected GCV estimator \eqref{eq:cgcv-proposal} is consistent (\Cref{th:correction_gcv}), and
the naive extension of GCV in \eqref{eq:fgcv-naive} \emph{over-estimates the risk} with positive probability (\Cref{prop:gcv_inconsistency}).

\begin{remark}[Special cases of corrected GCV]
\label{rem:cgcv}
As with \Cref{rem:sub,rem:full}, we consider two special cases:
(a) When $k = n$, observe that the correction term is zero and the first term $\tR_1^\gcv$ is consistent as argued previously; thus, $\tR^{\cgcv,\full}_M$ is consistent.
(b) When $M = 1$, it is also not hard to see that $\tR^{\cgcv,\full}_1$ is consistent.
As argued in \Cref{sec:intermediate_estimators}, both $\hR^\sub_{1,1}$ and $\hR^\full_{1,1}$ are consistent.
Now writing \eqref{eq:est-full-M=1_numer1} and \eqref{eq:est-full-M=1_numer2} as $1+\op(1)$, 
\[
    \frac{\tR^{\cgcv,\full}_1}{R_1}
    =
    \frac{
        [1+\op(1)]
        (1 - \tdf_1/k)^2 {k}/{n} 
        +
        [1+\op(1)]
        ({n-k})/{n} 
    }{(1- \tdf_1/n)^2} 
    -
    \frac{
    [1+\op(1)] 
    (\tdf_1/n)^2 ({n-k})/{k}}{(1- \tdf_1/n)^2} \pto 1.
\]
Here, the last equality follows because
\begin{align*}
    \frac{k}{n}(1 - \tdf_1/k)^2 + \frac{n-k}{n} - \frac{n-k}{k}(\tdf_1/n)^2  &= 1 - 2 \tdf_1/n + \tdf_1^2/(kn) - \frac{n-k}{k}(\tdf_1/n)^2 \\
    &= 1 - 2 \tdf_1/n + \tdf_1^2/n^2
    = (1- \tdf_1/n)^2.
\end{align*}
\end{remark}
\begin{remark}[Unequal weighted ensemble]\label{rmk:unequal-weight}
    When defining the ensemble estimator in \eqref{eq:def-M-ensemble} with unequal weights, one can derive a similar corrected GCV estimator.
    This requires modifying the risk expansion in \eqref{eq:risk-decomposition} and using the intermediate estimators presented in \Cref{sec:intermediate_estimators}.
    For simplicity, we present and analyze the corrected GCV with equal weights.
    Because the individual estimators are i.i.d.\ conditioned on data, among all choices of deterministic weights that sum to $1$, the optimal combination of weights is simply $1/M$ for the lowest expected risk.

    One way to see this is to note that, since the penalties $g_1,\dots,g_M$ are the same, the component estimators $\hbeta_1,\dots,\hbeta_M$ are exchangeable.
    Because of exchangeability, for weights $(w_1,\dots,w_M)$, the expected risk is a convex polynomial of degree $2$ that is invariant by permutation of the $M$ inputs.
    By symmetry, a minimum is achieved at equal weights $w_1=\cdots=w_M$.
    Indeed, if a minimum is achieved at $w_1^*,\ldots,w_M^*$, it is also achieved at the average over all permutations $\frac{1}{M!}\sum_{\pi}(w_{\pi(1)}^*,\ldots,w_{\pi(M)}^*)$ by Jensen's inequality, and this average is proportional to $(1,1,\ldots,1)$.
    More broadly, this is a consequence of the Purkiss Principle \citep{waterhouse1983symmetric}.
    Thus, if we further restrict the weights to sum to $1$, the optimal choice for each weight is $1/M$.
\end{remark}

Our goal in the rest of the paper is to establish consistency properties of the corrected GCV for any $M$ and to provide convergence rates under various assumptions on the data and component estimators.
We will provide a non-asymptotic analysis in \Cref{sec:finite-sample-analysis} and an asymptotic analysis in \Cref{sec:asm-consistency}.
In both sections, we assume the following sampling scheme. 

\begin{assumption}[Sampling strategy]\label{assu:sampling}
The subsample sets $\{I_m\}_{m=1}^M$ are independent of $(\bX, \by)$ and i.i.d., uniformly distributed over subsets of $[n]$ with cardinality $k$. 
\end{assumption}

Before we dive into our analysis, it is worth reiterating that the intermediate estimators \eqref{eq:risk-decomp-est-1} and \eqref{eq:risk-decomp-est-2} are defined for ensembles fitted on subsamples $\cD_{I_m}$ for $m \in [M]$ of arbitrary sizes and component estimators fitted on different penalty functions $g_m$ for $m \in [M]$.
While our proofs are simplified by assuming equal subsample sizes, this condition can be relaxed for certain theoretical claims to follow in \Cref{sec:intermediate-guarantees-gaussian}. 
The formulation of the modified GCV, on the other hand, assumes that the same penalty function is used for estimators across subsamples and that subsample sizes do not vary much to ensure the validity of certain degrees of freedom concentrations, as we will explain in \Cref{sec:cgcv-guarantees-gaussian}.

\section{Non-asymptotic analysis for general ensembles}
\label{sec:finite-sample-analysis}

In this section, we provide a non-asymptotic analysis of the estimators introduced in the previous section.
Our analysis is applicable to general convex penalized estimators such as ridge regression and elastic net, among others.

To facilitate our analysis, we will need to impose certain assumptions on the data generation process.
Specifically, we will assume Gaussian features and linear models as encapsulated in the assumptions below.
These assumptions serve as a starting point but are not indispensable for more specialized estimators, as we discuss in \Cref{sec:asm-consistency}.

\begin{assumption}[Feature structure]
\label{assu:Gaussian-feature}
     The design matrix $\bX\in \R^{n\times p}$ has i.i.d. rows $\bx_i$ for $i \in [n]$ each drawn from a Gaussian distribution: $\bx_i \sim \cN(\bm{0},\bSigma)$.
\end{assumption}

\begin{assumption}[Response structure]
    \label{assu:Gaussian-response}
    The response vector $\by$ has i.i.d.\ entries $y_i$ for $i \in [n]$ each drawn from a linear model with Gaussian noise: $y_i = \bx_i^\top\bbeta_0 + \varepsilon_i$, where $\varepsilon_i\sim \cN(0,\sigma^2)$ is independent of~$\bx_i$.
\end{assumption}

Although these assumptions may appear restrictive, they serve to simplify our analysis and enable us to derive the estimate of risk as well as prove explicit non-asymptotic bounds.
These assumptions can be substantially relaxed for specialized base estimators.
In \Cref{sec:asm-consistency}, we will demonstrate that we can significantly relax these assumptions by focusing on a specific base estimator of ridge(less) regression.

\begin{assumption}
[Penalty structure]
\label{assu:penalty}
The penalty function $g_{m}: \R^p \to \R$ in \eqref{eq:def-hbeta} is $\mu$-strongly convex with respect to $\bSigma$ for $\mu > 0$, i.e., the function $\bb \mapsto g_{m}(\bb) - (\mu/2) \bb^\top \bSigma \bb$ is convex in $\bb \in \R^p$.
\end{assumption}

If $g_m$ is twice continuously differentiable, \Cref{assu:penalty} is equivalent to $\inf_{\bb\in \R^p} \nabla^2 g_m(\bb) \succeq \mu \bSigma$, where $\nabla^2 g_m(\bb) \in \R^{p\times p}$ is the Hessian matrix. 
For instance, if $g_m(\bb) = \bb^\top \bSigma_{w}\bb/2$ (generalized ridge penalty) with a positive definite matrix $\bSigma_{w}\in \R^{p\times p}$, \Cref{assu:penalty} holds with 
$
\mu = \sigma_{\min}(\bSigma_{w})/\sigma_{\max}(\bSigma).
$
In particular, $\mu=\lambda/\sigma_{\max}(\bSigma)$ when $g_m(\bb) = \lambda\|\bb\|_2^2/2$. 
Note that the strong convexity penalty guarantees a certain differentiable structure of the penalized least-squares estimator \eqref{eq:def-hbeta}  
with respect to $(\bX,\by)$ (see, for example, Theorem 1 of \citet{bellec2022derivatives} and \Cref{sec:derivatives_formulae}), which helps us in the proof to bound certain quantities in terms of $\mu$.
This strong convexity assumption can be relaxed in certain cases, including the lasso $g_m(\bb) = \lambda \|\bb\|_1$, as studied in \cite{celentano2020lasso,bellec2020out}; however, the proof argument will be largely different from the proof used in the current paper. 
Finally, note that in the case of $p/n < 1$, if the smallest eigenvalue of $\bSigma$ is uniformly lower bounded by a constant away from $0$, then the strongly convex assumption is not needed; we only need the penalty to be convex.
For an explicit statement and proof of this fact in the context of \Cref{th:sub_full}, see \Cref{sec:relaxing_strong_convexity}.

\subsection{Guarantees for intermediate risk estimators}
\label{sec:intermediate-guarantees-gaussian}

Our first result provides a non-asymptotic guarantee on the intermediate estimators.
Recall the notation in \eqref{eq:risk-decomposition}: the estimation target is $R_M = M^{-2}\sum_{m, \ell}R_{m,\ell}$, where $R_{m, \ell} = (\hat{\bbeta}_m - \bbeta_0)^\top\bSigma (\hat{\bbeta}_\ell - \bbeta_0) + \sigma^2$. 
For notational convenience, we introduce the following notation for each $m, \ell\in[M]$,
\begin{align}
 d_{m,\ell}^\sub:= 
        |I_m\cap I_\ell| \bigg(1-\frac{\df_m}{|I_m|}\bigg) \bigg(1-\frac{\df_\ell}{|I_\ell|}\bigg),
        \quad
        \qquad
 d_{m,\ell}^\full:= 
        n-\df_m - \df_\ell + 
    \frac{\df_m \df_\ell}{|I_m| |I_{\ell}|}
    | I_m \cap I_\ell |, 
    \label{eq:d_sub_and_full}
\end{align}
which are the denominators in the definitions of $\hat{R}_{m, \ell}^\sub$ and $\hat{R}_{m, \ell}^\full$ in \eqref{eq:risk-decomp-est-1} and \eqref{eq:risk-decomp-est-2}. 
Now, we are ready to provide the theoretical results of $\hat{R}_{m, \ell}^\sub$ and $\hat{R}_{m, \ell}^\full$ in \Cref{th:sub_full}.

\begin{restatable}[Finite-sample bounds for intermediate estimators]{theorem}{ThmSubFull}
\label{th:sub_full}
Suppose \Cref{assu:sampling,assu:Gaussian-feature,assu:Gaussian-response,assu:penalty} hold.
Let $c=k/n$, $\gamma = \max(1, p/n)$, and $\tau = \min(1, \mu)$. 
Then there exists an absolute constant $C>0$ such that the following holds:
\begin{align}
    \EE \bigg[\Big|\frac{d_{m, \ell}^{\sub}}{n} \cdot \frac{\hat{R}_{m, \ell}^{\sub} -R_{m,\ell}}{\sqrt{R_{m,m} R_{\ell,\ell}}}\Big| \bigg]
    \le C
    \frac{\gamma^{7/2}}{\tau^2 c^3 \sqrt n},
    \quad
    \qquad
    \EE \bigg[\Big|\frac{d_{m, \ell}^{\full}}{n} \cdot \frac{\hat{R}_{m, \ell}^{\full} -R_{m,\ell}}{\sqrt{R_{m,m} R_{\ell,\ell}}}\Big| \bigg]\le
    C
    \frac{\gamma^{5/2}}{\tau^2 c^2 \sqrt n}.
\label{eq:sub_full_moment_ineq}
\end{align}
Furthermore, if the same penalty $g_m$ is used for subsample estimate \eqref{eq:def-hbeta} across all $m\in [M]$, 
    for any $\epsilon\in(0,1)$, we have
    \begin{align}
    \PP\bigg(
    \Big|
    1-
    \frac{\tR_{M}^\sub}{R_M}
    \Big|
    >
    \epsilon
    \bigg)
    \le C \frac{M^3 \gamma^{11/2}}{\epsilon \tau^4 c^7 \sqrt{n}},
    \quad
    \qquad
    \PP\bigg(
    \Big|
    1-
    \frac{\tR_{M}^\full}{R_M}
    \Big|
    >
    \epsilon
    \bigg)
    \le C \frac{M^3 \gamma^{9/2}}{\epsilon \tau^4 c^2 \sqrt{n}}.
\label{eq:sub_full_relative_error}
\end{align}
Thus, if $(M, \mu^{-1}, p/n, n/k)$ are bounded from above by a constant independent of $n$,
we have 
$$\tR_M^\sub/R_M = 1+\Op(n^{-1/2}),
\quad
\qquad
\tR_M^\full/R_M = 1+\Op(n^{-1/2}).
$$
\end{restatable}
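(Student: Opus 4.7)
The plan is to first establish the pair-wise bound in \eqref{eq:sub_full_moment_ineq} and then aggregate over $(m,\ell)\in[M]^2$ with Markov's inequality to obtain \eqref{eq:sub_full_relative_error}. Writing $N_{m,\ell}^\sub := (\by-\bX\hbeta_m)^\top\bL_{I_m\cap I_\ell}(\by-\bX\hbeta_\ell)$ and $N_{m,\ell}^\full := (\by-\bX\hbeta_m)^\top(\by-\bX\hbeta_\ell)$, so that $\hR_{m,\ell}^\star=N_{m,\ell}^\star/d_{m,\ell}^\star$, the pair moment bound is equivalent to showing
$\EE[|N_{m,\ell}^\star-d_{m,\ell}^\star R_{m,\ell}|]/(n\sqrt{R_{m,m}R_{\ell,\ell}})\lesssim n^{-1/2}$
with the claimed $(\gamma,\tau,c)$ dependence.

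Using $\by=\bX\bbeta_0+\bvarepsilon$, expand
\begin{equation*}
    N_{m,\ell}^\star
    = \bvarepsilon^\top\bL\bvarepsilon
    - \bvarepsilon^\top\bL\bX(\hbeta_m-\bbeta_0)
    - (\hbeta_\ell-\bbeta_0)^\top\bX^\top\bL\bvarepsilon
    + (\hbeta_m-\bbeta_0)^\top\bX^\top\bL\bX(\hbeta_\ell-\bbeta_0),
\end{equation*}
with $\bL\in\{\bL_{I_m\cap I_\ell},\bI_n\}$ selecting between $\star=\sub$ and $\star=\full$. The first term concentrates at $\sigma^2\trace(\bL)$ by chi-squared concentration. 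Each of the two cross terms is handled by Gaussian integration by parts conditional on $\bX$, which under \Cref{assu:Gaussian-response} gives $\EE[\bvarepsilon^\top\bL\bX\hbeta_m\mid\bX]=\sigma^2\EE[\trace(\bL\cdot(\partial/\partial\by)\bX\hbeta_m)\mid\bX]$; the Jacobian $(\partial/\partial\by)\bX\hbeta_m$ is well-defined with controlled operator norm by \Cref{assu:penalty} and the differentiability framework of \citet{bellec2022derivatives}. For $\bL=\bL_{I_m\cap I_\ell}$, the partial trace concentrates around $(|I_m\cap I_\ell|/|I_m|)\df_m$: under \Cref{assu:sampling}, conditional on $I_m$ the intersection $I_m\cap I_\ell$ is a uniform random subset of $I_m$ (through the randomness of $I_\ell$), and sampling-without-replacement concentration produces fluctuations of order $\sqrt{|I_m|}$. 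The remaining quadratic form is handled by Gaussian concentration of the sample covariance $\bX^\top\bL\bX$ around $\trace(\bL)\bSigma$, yielding $(\hbeta_m-\bbeta_0)^\top\bX^\top\bL\bX(\hbeta_\ell-\bbeta_0)\approx\trace(\bL)(R_{m,\ell}-\sigma^2)$. Assembling all pieces produces the identity $N_{m,\ell}^\star\approx d_{m,\ell}^\star R_{m,\ell}$ at the $\sqrt n$ scale.

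The main obstacle will be the rigorous analysis of the quadratic form $(\hbeta_m-\bbeta_0)^\top\bX^\top\bL\bX(\hbeta_\ell-\bbeta_0)$: because $\hbeta_m,\hbeta_\ell$ both depend on $\bX$ and share observations whenever $I_m\cap I_\ell\neq\emptyset$, the naive substitution $\bX^\top\bL\bX/\trace(\bL)\to\bSigma$ misses a cross-correction of order $\df_m\df_\ell/(|I_m||I_\ell|)$, which is precisely the product term appearing in $d_{m,\ell}^\sub$ and $d_{m,\ell}^\full$. Making this rigorous requires either a second application of Stein's identity treating the quadratic form itself as a function of $\bvarepsilon$, or, equivalently, using the KKT conditions for \eqref{eq:def-hbeta} to extract the correction analytically; strong convexity (encoded through $\tau=\min(1,\mu)$) provides the Lipschitz bounds needed to dissipate all remainder terms at the $n^{-1/2}$ rate.

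For the global relative error, apply the triangle inequality $\EE[|\tR_M^\star-R_M|]\le M^{-2}\sum_{m,\ell}\EE[|\hR_{m,\ell}^\star-R_{m,\ell}|]$, bound each summand by the pair bound in \eqref{eq:sub_full_moment_ineq}, and control $\sqrt{R_{m,m}R_{\ell,\ell}}/R_M$ by combining the deterministic lower bound $R_M\ge\sigma^2$ with the strong-convexity/KKT bound $\|\hbeta_m\|_\bSigma\lesssim\|\by_{I_m}\|/\sqrt{\mu k}$ applied to \eqref{eq:def-hbeta}. Cauchy--Schwarz on $\sum_{m,\ell}\sqrt{R_{m,m}R_{\ell,\ell}}$ and Markov's inequality finally yield \eqref{eq:sub_full_relative_error}, the polynomial dependence on $M,\gamma,\tau,c$ arising from the combined effects of aggregating pair bounds, controlling ratios, and the $1/\epsilon$ factor from Markov. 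Taking $(M,\mu^{-1},\gamma,1/c)$ uniformly bounded then gives the asymptotic rates $\tR_M^\star/R_M=1+\Op(n^{-1/2})$.
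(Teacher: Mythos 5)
Your plan of attack shares the correct high-level idea with the paper --- use Gaussian integration by parts (Stein's formula) to extract the degrees-of-freedom corrections, then aggregate over pairs --- but both steps depart from the paper in ways that leave genuine gaps.

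\textbf{Gap 1: conditioning on $\bX$ does not close the quadratic-form term.}
You expand $N_{m,\ell}^\star$ into a noise quadratic, two cross terms, and the quadratic form $(\hbeta_m-\bbeta_0)^\top\bX^\top\bL\bX(\hbeta_\ell-\bbeta_0)$, then propose Gaussian integration by parts on $\bvarepsilon$ conditional on $\bX$. This works for the cross terms, but you correctly flag the quadratic form as the obstacle and then wave at ``a second application of Stein's identity'' or ``KKT extraction'' without developing it. Conditional on $\bX$ the object $\bX^\top\bL\bX/\trace(\bL)$ is a fixed matrix, not $\bSigma$, so relating the empirical quadratic form to $R_{m,\ell}-\sigma^2=(\hbeta_m-\bbeta_0)^\top\bSigma(\hbeta_\ell-\bbeta_0)$ requires random-matrix concentration of $\bX^\top\bL\bX$ \emph{jointly} with the fact that $\hbeta_m,\hbeta_\ell$ depend on the very same $\bX$. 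The paper avoids this entanglement by never conditioning on $\bX$: it sets $\bZ=[\bX\bSigma^{-1/2}\mid\sigma^{-1}\bvarepsilon]\in\R^{n\times(p+1)}$ and $\bh_m=(\bSigma^{1/2}(\hbeta_m-\bbeta_0)^\top,-\sigma)^\top$, so that $\br_m=-\bZ\bh_m$ and $R_{m,\ell}=\bh_m^\top\bh_\ell$; then a \emph{single} second-order Stein identity for the Gaussian matrix $\bZ$ (\Cref{lem:psi_bound}, built on \Cref{lem:derivative}) handles the noise, cross, and quadratic pieces in one shot, with the product term $\df_m\df_\ell|I_m\cap I_\ell|/(|I_m||I_\ell|)$ emerging automatically from the derivative of $\bh$. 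Your decomposition would need a separate, non-trivial argument of comparable difficulty to recover this product correction.

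\textbf{Gap 2: the aggregation step would introduce an SNR dependence that the theorem does not have.}
To convert the pair moment bound into a relative-error bound you propose the triangle inequality followed by bounding $\sqrt{R_{m,m}R_{\ell,\ell}}/R_M$ via $R_M\ge\sigma^2$ and $\|\hbeta_m\|_\bSigma\lesssim\|\by_{I_m}\|/\sqrt{\mu k}$. Tracing this through gives
$\sqrt{R_{m,m}R_{\ell,\ell}}/R_M\lesssim 1+(1+\mu^{-1})\|\bSigma^{1/2}\bbeta_0\|_2^2/\sigma^2$,
i.e., a bound that grows with the signal-to-noise ratio. But the stated inequalities \eqref{eq:sub_full_relative_error} depend only on $(M,\gamma,\tau,c,n,\epsilon)$ and are explicitly claimed (and used) without any assumption on $\|\bSigma^{1/2}\bbeta_0\|_2$. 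Your route cannot deliver that. The paper instead writes
$|\tR_M^\est/R_M-1|\le M\cdot\bigl(\sum_m\|\bh_m\|^2/\|\sum_m\bh_m\|^2\bigr)\cdot\max_{m,\ell}\bigl(n/|d_{m,\ell}^\est|\bigr)|E_{m,\ell}^\est|$,
and the crucial step is \Cref{lem:contraction_cor}, which shows the conditional expectation of $\bh_m^\top\bh_\ell/(\|\bh_m\|\|\bh_\ell\|)$ given the common data is non-negative (a symmetry argument) and concentrates (Gaussian Poincar\'e). This prevents cancellation among the $\bh_m$'s and makes the Ratio $\sum_m\|\bh_m\|^2/\|\sum_m\bh_m\|^2$ bounded with high probability \emph{without} any bound on $\bbeta_0$. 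That lemma is the missing ingredient in your plan; replacing it by $R_M\ge\sigma^2$ is not enough.
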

\Cref{th:sub_full} shows the rate of convergence for both the \ovlp- and full-estimator are $n^{-1/2}$.
Fixing other problem parameters $(\tau,c,\gamma,M)$, this bound in $n^{-1/2}$ is tight up to constants; e.g., see argument around equation (2.6) of \cite{bellec2020out} in the case of $M = 1$ and the Ordinary Least-Squares.
It is worth noting that the upper bounds regarding $\tR_M^\sub$ and $\tR_M^\full$ in \eqref{eq:sub_full_moment_ineq} and \eqref{eq:sub_full_relative_error} are slightly different, especially in the dependence of the ratio $c=k/n$ and $\gamma$; 
the upper bounds of the full-estimator are tighter than the \ovlp-estimator in terms of its dependencies on $c$ and $\gamma$. 
While these upper bounds may not be optimal in the dependence on $c$ and $\gamma$, we empirically observe that the relative error of the full-estimator is smaller than the \ovlp-estimator, especially when $c$ is small---see \Cref{fig:k}.
We do not claim that the dependence on the parameter \(M\) is tight; rather, it appears to be an artifact of our proof technique. Specifically, while the current bound suggests that \(M \ll n^{1/6}\) is needed for the bound to approach zero, this condition is not necessary in practice. Empirical results in Section \ref{sec:simulations-large-M} indicate that the proposed estimators perform well for all values of \(M\).
A major improvement for the current polynomial dependence in $M$ would come from replacing the bounds \eqref{eq:sub_full_moment_ineq} by novel exponential probability bounds for a single pair $(m,\ell)\in[M]^2$. 
Union bounds over $[M]^2$ pairs would then only induce a logarithmic dependence in $M$.
Exponential bounds for the risk $\|\bSigma^{1/2}(\hbeta_m-\bbeta_0)\|^2$ and the residual $\|\by-\bX\hbeta_m\|^2$ are available, thanks to the CGMT \citep{celentano2020lasso,loureiro2021learning}.
To derive exponential bounds for \eqref{eq:sub_full_relative_error}, one would also need similar exponential concentration bounds for the inner products $(\hbeta_m-\bbeta_0)^\top\bSigma(\hbeta_\ell-\bbeta_0)$ and for the degrees of freedom $\df_m$.
One avenue to obtain exponential bounds for the inner products is the conditional Gordon inequality developed in \cite[Lemma F.2 and appendix L]{celentano2021cad}.
On the other hand, we are not aware at this point of any tool that can provide exponential inequalities for the degrees of freedom $\df_m$, except in the special case of the Lasso \cite[Theorem 8]{celentano2020lasso}.
The reason that the Lasso case is special and that this argument does not generalize to penalty functions different from the $\ell_1$ norm is discussed in \cite[around equation (3.8)]{bellec2020out}.

Note that the guarantees in \Cref{th:sub_full} are in a multiplicative form.
This is preferable to the more common additive bounds because, in the regimes we study, the risk $R_M$ may have varied scales with different subsample sizes; for example, when the subsample size is near the feature size and the regularization parameter is small, in which cases the risk can be high \citep{patil2022bagging}.
It is also worth noting that we do not assume either the pure signal energy (i.e., $\| \bbeta_0 \|_2^2$) or the effective signal energy (i.e., $\| \bSigma^{1/2} \bbeta_0 \|_2^2$)  is bounded. 
This is possible because of the multiplicative bounds.

\begin{remark}
    [Diminishing strong convexity parameter]\label{rmk:strong-convexity}
    The upper bounds in \eqref{eq:sub_full_moment_ineq} and \eqref{eq:sub_full_relative_error} give explicit dependence on the strong convexity parameter $\mu$ of \Cref{assu:penalty} through $\tau=\min(1,\mu)$.
    Our results thus allow to choose $\mu$ decreasing in $n$, for instance $\mu=n^{-1/8-\epsilon}$ is sufficient to ensure that the upper bounds of \eqref{eq:sub_full_moment_ineq} and \eqref{eq:sub_full_relative_error} go to 0 as $n\to\infty$ while $M,c^{-1},\gamma$ are bounded by constants.
\end{remark}

\begin{remark}
    [Extensions beyond strongly convex regularizers]
    Without any strong convexity assumption on the penalty (i.e., $\mu=0$), results such as \Cref{thm:risk-est} for $M=1$ cannot be established using known techniques even for the well-studied lasso, unless additional assumptions are made on the subsample size.
    In particular, Proposition 4 of \citet{celentano2020lasso} shows that in the current proportional asymptotic regime with $n\asymp p$, the risk of a single lasso may be unbounded if the number of samples used to train the lasso falls below the Donoho-Tanner phase transition described in Section 3 of \citep{celentano2020lasso}.
    While the consistency of GCV for a single lasso is a consequence of this theory above this phase transition \cite[Theorem 9]{celentano2020lasso}, below the phase transition both $\|\bSigma^{1/2}(\hbbeta-\bbeta_0)\|_2$ and $1/(1-\df/n)$ are unbounded for certain $\bbeta_0$.
    Furthermore, the constants appearing in the upper bounds in \cite{celentano2020lasso} explode as the sample size approaches the phase transition. 
    We are not aware of any currently known techniques suitable for studying GCV below this phase transition, and we believe new ideas are needed.
    Thus, in the present context, where we subsample $I_m$ of size $k=|I_m|$, small values of $k$ would eventually fall below the phase transition, in which case the theoretical analysis of bagging GCV is currently out of reach.
\end{remark}

We numerically compare the performance of the \ovlp- and full-estimators for the ridge ensemble in \Cref{fig:k}.
It is clear that the relative error of the full-estimator is smaller than the \ovlp-estimator across different subsample size $k$ and tuning parameter $\lambda$, especially when $k$ is small and the tuning parameter $\lambda$ is small.
We also observe that, as $k$ increases towards the full sample size $n$, the performance of \ovlp-estimator and full-estimator are getting closer; this makes sense because the expressions of \ovlp-estimator and full-estimator will be the same when $k=n$. 
For comparison between the \ovlp- and full-estimators in the context of the elastic net ensemble and the lasso ensemble, we observe a similar comparative trend, and the results are shown in \Cref{app:experiments-gaussian-sub-vs-full}. 

\subsection{Guarantees for corrected GCV}
\label{sec:cgcv-guarantees-gaussian}

The next result provides non-asymptotic guarantees for corrected GCV.
Recall the form of the two variants of CGCV, $\tR^{\cgcv,\full}_{M}$ and $\tR^{\cgcv,\sub}_{M}$ from \eqref{eq:cgcv-proposal} and \eqref{eq:cgcv-proposal-sub}, respectively:
\begin{align*}
    \tR^{\cgcv,\est}_{M} :=\frac{\|\by - \bX \tbeta_M\|_2^2/n}{(1-\tdf_M/n)^2} -  
    \frac{(\tdf_M/n)^2}{(1-\tdf_M/n)^2} \frac{1}{M^2} \left(\frac{n}{k}-1\right)\sum_{m=1}^M \hat{R}_{m,m}^\est,
\end{align*}
where $\est\in\{\full,\sub\}$, depending on which estimate is used
in the rightmost sum to estimate $R_{m,m}$.

\begin{figure}[!t]
    \centering
    \includegraphics[width=0.75\textwidth]{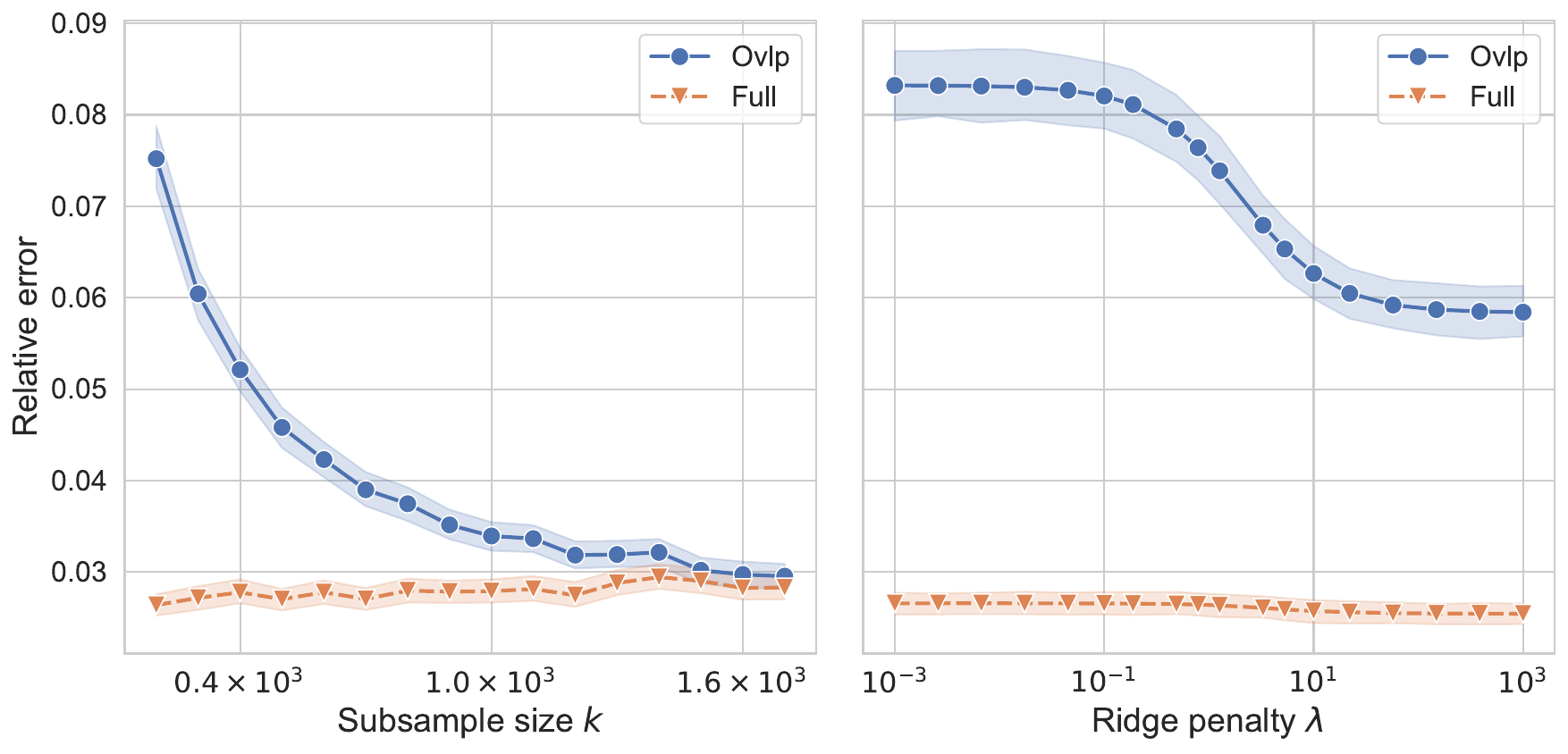}
    \caption{
    \textbf{Full-estimator $\tR_{M}^{\full}$  performs better than \ovlp-estimator $\tR_{M}^{\sub}$ across different regularization and for small subsample size $k$.}
    Plots of relative errors of the ``\ovlp'' and ``full'' estimators for ridge ensemble with ensemble size $M=10$.
    The left panel shows the results with ridge penalty $\lambda=1$ and varying subsample size $k$.
    The right panel shows the results with subsample size $k=300$ and varying ridge penalty $\lambda$.
    The data generating process is the same as in \Cref{fig:Fig1_ridge_lasso}.
    More details on the experimental setup can be found in \Cref{sec:numerical-illustrations-gaussian}.
    }\label{fig:k}
\end{figure}

\begin{restatable}[Finite-sample bounds for corrected GCV]{theorem}{ThmCorrectionGCV}\label{th:correction_gcv}
Suppose \Cref{assu:sampling,assu:Gaussian-feature,assu:Gaussian-response,assu:penalty} hold.
Let $c=k/n$, $\gamma = \max(1, p/n)$, and $\tau = \min(1, \mu)$.
Assume that the same penalty $g_m$ is used for each $m\in[M]$ in \eqref{eq:def-hbeta}.
Then, for any $\epsilon\in (0,1)$, there exists an absolute constant $C>0$ such that the following holds:
\begin{equation}
\label{eq:cgcv_relative_error}
\PP\Bigl(\Big|1-\frac{\tR_M^{\cgcv,\sub}}{R_M}\Big| > \epsilon\Bigr) \le C \frac{M^4 \gamma^{15/2}}{\epsilon\tau^6 c^6 \sqrt{n}},
\quad
\qquad
\PP\Bigl(\Big|1-\frac{\tR_M^{\cgcv,\full}}{R_M}\Big| > \epsilon\Bigr) \le C \frac{M^4 \gamma^{13/2}}{\epsilon\tau^6 c^4 \sqrt{n}}.
\end{equation}
Thus,
if $(M, \mu^{-1}, p/n, n/k)$ are bounded by constants independent of $n$, we have
$$
\tR^{\cgcv,\sub}_M/R_M = 1+ \Op(n^{-1/2}),
\quad
\qquad
\tR^{\cgcv,\full}_M/R_M = 1+ \Op(n^{-1/2}).
$$
\end{restatable}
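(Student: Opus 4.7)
The plan is to show that CGCV approximates the intermediate full-estimator $\tR_M^{\full}$ of \eqref{eq:risk-decomp-est-2}, so that \Cref{th:sub_full} can then be invoked to conclude. The starting point is the algebraic identity obtained by expanding $\|\by-\bX\tbeta_M\|_2^2 = M^{-2}\sum_{m,\ell}(\by-\bX\hbeta_m)^\top(\by-\bX\hbeta_\ell)$, which gives
\[
\tR_M^{\gcv} - \tR_M^{\full}
= \frac{1}{M^2(1-\tdf_M/n)^2}\sum_{m,\ell}\Delta_{m,\ell}\,\hR_{m,\ell}^{\full},
\qquad
\Delta_{m,\ell} := \frac{d_{m,\ell}^{\full}}{n} - \Bigl(1-\frac{\tdf_M}{n}\Bigr)^2.
\]
A direct computation yields $\Delta_{m,m} = (\tdf_M/n)^2(n/k-1) + \rho_m$ with residual $\rho_m = (\tdf_M-\df_m)(2k-\df_m-\tdf_M)/(nk)$ that vanishes when $\df_m = \tdf_M$, and for $m \neq \ell$, $\Delta_{m,\ell}$ vanishes whenever $\df_m = \df_\ell = \tdf_M$ and $|I_m \cap I_\ell| = k^2/n$. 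The crucial observation is that the leading diagonal part is exactly the CGCV-full correction in \eqref{eq:cgcv-proposal}, so
\[
\tR_M^{\cgcv,\full} - \tR_M^{\full}
= \frac{1}{M^2(1-\tdf_M/n)^2}\biggl[\sum_{m\neq\ell}\Delta_{m,\ell}\,\hR_{m,\ell}^{\full} + \sum_m \rho_m\,\hR_{m,m}^{\full}\biggr].
\]

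Controlling the right-hand side reduces to three concentration ingredients: (i) hypergeometric concentration of the intersection size, giving $\bigl||I_m \cap I_\ell| - k^2/n\bigr| = \Op(\sqrt{k})$ for $m \neq \ell$; (ii) concentration $|\df_m - \tdf_M|/n = \Op(n^{-1/2})$ of the per-component degrees of freedom, which I would obtain via Gaussian-Lipschitz concentration applied to $\df_m$ viewed as a smooth functional of $\bX$ under \Cref{assu:penalty}, using Theorem 1 of \citet{bellec2022derivatives} (see also \Cref{sec:derivatives_formulae}); and (iii) the high-probability bound $|\hR_{m,\ell}^{\full}| \le C\sqrt{R_{m,m}R_{\ell,\ell}}$, which follows from \Cref{th:sub_full} combined with $|R_{m,\ell}| \le \sigma^2 + \sqrt{R_{m,m}R_{\ell,\ell}}$ (Cauchy-Schwarz). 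Under (i)-(ii), both $|\Delta_{m,\ell}|$ for $m \neq \ell$ and $|\rho_m|$ are of order $n^{-1/2}$ times polynomial factors in $\tdf_M/n$ and $n/k$, and applying (iii) gives a multiplicative residual of size $\Op(n^{-1/2})\cdot R_M$ after careful summation.

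The main obstacle is to convert sums of the form $M^{-2}\sum_{m,\ell}\sqrt{R_{m,m}R_{\ell,\ell}}$ into bounds that are multiplicative in $R_M$ rather than in $M^{-1}\sum_m R_{m,m}$, the latter being an upper bound on $R_M$ by Jensen's inequality but potentially much larger. Handling this requires applying the individual normalized bounds from \Cref{th:sub_full} before summing, then tracking the polynomial dependence on $(M,\gamma,\tau,c)$. This accounting is responsible for the extra factor $\gamma^2/(\tau^2 c^2)$ over the rate of \Cref{th:sub_full}, originating from the $(1-\tdf_M/n)^{-2}$ normalization and the additional $(\tdf_M/n)^2(n/k-1)$ factor in the correction. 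Finally, the sub-variant follows from the identity
\[
\tR_M^{\cgcv,\sub} - \tR_M^{\cgcv,\full}
= \frac{1}{M^2}\,\frac{(\tdf_M/n)^2(n/k-1)}{(1-\tdf_M/n)^2}\sum_m\bigl(\hR_{m,m}^{\full}-\hR_{m,m}^{\sub}\bigr),
\]
with each summand difference controlled by two applications of \Cref{th:sub_full}; this yields the slightly worse $c^{-6}$ dependence in \eqref{eq:cgcv_relative_error} inherited from the $c^{-7}$-type rate for $\tR_M^{\sub}$.
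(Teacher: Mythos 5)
Your decomposition is essentially the same route as the paper, up to a slightly different choice of intermediate target. The paper's proof compares $\tR_M^{\cgcv,\full}$ first to a quantity $\tR_M^\cgcv$ (the CGCV formula with the exact $\|\bh_m\|^2 = R_{m,m}$ substituted for $\hR_{m,m}^\est$ in the correction) and then compares $\tR_M^\cgcv$ to $R_M$; you instead compare $\tR_M^{\cgcv,\full}$ to the intermediate estimator $\tR_M^\full$ and then invoke \Cref{th:sub_full}. These are algebraically equivalent in the sense that the quantities you must control are exactly the paper's $|d_{m,\ell}^\full - d_{m,\ell}^\cgcv|/n$: your $\Delta_{m,\ell}$ for $m\neq\ell$ and your $\rho_m$ for $m=\ell$ coincide with the off-diagonal and diagonal pieces of that difference. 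The one genuine advantage of your framing is that the second step is a direct citation of \eqref{eq:sub_full_relative_error} rather than a fresh argument. Your identification of the hypergeometric concentration of $|I_m\cap I_\ell|$, the $\bh_m^\top\bh_\ell$-inner-product interpretation of $R_{m,\ell}$, the role of $(\tdf_M/n)^2(n/k-1)\le c$ in saving a factor of $c$, and the $M\cdot(\sum_m\|\bh_m\|)^2/\|\sum_m\bh_m\|^2$ conversion to a multiplicative bound in $R_M$ are all correct and match the paper's bookkeeping.

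There is one concrete gap. You claim ingredient (ii), $|\df_m-\tdf_M|/n=\Op(n^{-1/2})$, "via Gaussian-Lipschitz concentration applied to $\df_m$ viewed as a smooth functional of $\bX$ under \Cref{assu:penalty}." This is not as easy as it sounds: $\df_m$ is not obviously a Lipschitz function of the design matrix with a constant of the right order — its Jacobian with respect to $\bX$ blows up as the effective regularization shrinks, and Theorem 1 of \citet{bellec2022derivatives} gives differentiability with respect to $\by$, not $\bX$. The paper's \Cref{lem:concentration_df} does not argue this way; it routes through the proxy $W_m = c\|\bL_m\br_m\|^2/(n\|\bh_m\|^2)$, shows $W_m\approx \tr[\bL_m\bV_m]^2/n^2$ using the moment bound of \Cref{th:sub_full} at $m=\ell$, controls $\Var(W_m)$ via the Gaussian Poincar\'e inequality applied to a carefully normalized ratio (\Cref{lem:contraction_ratio}), and then converts back with a high-probability lower bound on $\tr[\bL_m\bV_m]/n$ (\Cref{lem:df_upperbound}). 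Without an argument of this flavor, your step (ii) does not follow. A secondary, smaller omission: your ingredient (iii), the high-probability bound $|\hR_{m,\ell}^\full|\lesssim\sqrt{R_{m,m}R_{\ell,\ell}}$, does not follow from the moment bound of \Cref{th:sub_full} alone, since that bound is on $d_{m,\ell}^\full/n\cdot(\hR_{m,\ell}^\full-R_{m,\ell})$; you also need a high-probability lower bound on $d_{m,\ell}^\full/n$ (i.e., the paper's \Cref{lem:denom_lower_bound}, and the $(1-\tdf_M/n)^{-2}\lesssim\gamma^2/\tau^2$ event from \Cref{lem:df_upperbound}) to invert it.
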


The upper bounds presented in \Cref{th:correction_gcv} provide dependence of the accuracy of corrected GCV on several problem parameters, such as $M$, $\tau$, $c$, and $\gamma$. 
The key message of \Cref{th:correction_gcv} is 
that the corrected GCV estimators (both $\tR^{\cgcv,\sub}_M$ and $\tR^{\cgcv,\full}_M$) enjoy the rate of convergence $n^{1/2}$. 
As expected, numerical comparisons show that full-CGCV outperforms \ovlp-CGCV, see \Cref{fig:Fig2_sub_full_ridge-cgcv.pdf,fig:Fig2_sub_full_ElasticNet-cgcv.pdf,fig:Fig2_sub_full_lasso.pdf}. 

\begin{remark}[GCV is consistent for infinite ensembles]
Note that for $M \to \infty$, it is easy to show that the correction term becomes 0 if $1-\tdf_M/n \ge \delta$ for some positive constant $\delta$ and $M^{-1}\sum_{m=1}^M \hR_{m,m}^{\est}$ is bounded from above.  
A special case of this result for ridge ensemble is proved in \cite{du2023subsample}. 
While this literature showed that GCV is not consistent for ridge ensemble with $M=2$, our results imply more than inconsistency by providing the correction term to make GCV consistent, and our results are applicable not only to ensemble ridge estimates but also to ensembles of any strongly-convex regularized least-squares, including elastic net estimates.
In the next theorem, we show that GCV is inconsistent by providing a lower bound of the relative error of the GCV estimator. 
\end{remark}

\begin{restatable}[Inconsistency of GCV for finite ensembles]{theorem}{PropGCVInconsistency}\label{prop:gcv_inconsistency}
    Suppose \Cref{assu:sampling,assu:Gaussian-feature,assu:Gaussian-response,assu:penalty} hold, and assume that the same penalty $g_m$ is used for each $m\in[M]$ in \eqref{eq:def-hbeta}.
    Let $c=k/n$, $\gamma = \max(1, p/n)$, $\tau = \min(1, \mu)$. 
    Then, there exists an absolute constant $C>0$ such that the following holds:
    The probability that $\tR^\gcv_M$ in \eqref{eq:fgcv-naive} over-estimates the risk $R_M$ is lower bounded as:
    $$
    \text{for all } \delta\in(0,1), \quad \PP\bigg(
    \frac{\tR_M^{\gcv}}{R_M}
    \ge 1
    +\delta^2 \frac{c(1-c)}{2M}
    \bigg) \ge \PP\bigg(\frac{\tdf_M}k \ge \delta\bigg) - C \frac{M^5\gamma^{15/2}}{\sqrt{n}\tau^5c^5 (1-c)\delta^2}. 
    $$
    Therefore, if there exists a positive constant $\delta_0$ independent of $n$ such that $\PP(\tdf_M/k \ge \delta_0)\ge \delta_0$, and $\{M, \mu^{-1}, p/n, c^{-1}, (1-c)^{-1}\}$ are bounded by a constant independent of $n$, 
    $\tR_M^{\gcv}$
    is inconsistent and over-estimates
    the risk $R_M$ with positive probability, in the sense that
    $$
    \liminf_{n\to\infty} \ \PP\bigg(\frac{\tR^\gcv_M}{R_M} \ge 1 + \frac{\delta_0^2c(1-c)}{2M}\bigg) \ge \delta_0.
    $$
\end{restatable}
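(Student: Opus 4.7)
The plan is to quantify how far $\tR_M^{\gcv}/R_M$ exceeds $1$ by expanding the training residuals into pairs, following the heuristic derivation that motivates CGCV just before the statement. Specifically, I would start from
\[
\frac{\|\by-\bX\tbeta_M\|_2^2}{n} = \frac{1}{M^2}\sum_{m,\ell\in[M]} \frac{d_{m,\ell}^{\full}}{n}\, \hat R_{m,\ell}^{\full},
\]
apply the moment bound \eqref{eq:sub_full_moment_ineq} of \Cref{th:sub_full} to replace each $\hat R_{m,\ell}^{\full}$ by $R_{m,\ell}$, and substitute the approximations $|I_m\cap I_\ell|\approx k^2/n$ for $m\ne\ell$ and $\df_m\approx \tdf_M$ from the concentration lemmas cited before \Cref{def:cgcv-proposal}. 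These substitutions collapse $d_{m,\ell}^{\full}/n$ to $(1-\tdf_M/n)^2$ when $m\ne\ell$ and to $(1-\tdf_M/n)^2+\tfrac{1-c}{c}(\tdf_M/n)^2$ when $m=\ell$, yielding the approximate identity
\[
\frac{\|\by-\bX\tbeta_M\|_2^2/n}{(1-\tdf_M/n)^2} \;\approx\; R_M + \frac{(\tdf_M/n)^2}{(1-\tdf_M/n)^2}\cdot\frac{1-c}{c}\cdot \frac{1}{M^2}\sum_{m=1}^M R_{m,m}.
\]

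Next, I would lower bound the excess explicitly. By convexity of the squared norm applied to $\tbeta_M=M^{-1}\sum_m\hbeta_m$, one has $R_M\le M^{-1}\sum_m R_{m,m}$, hence $M^{-2}\sum_m R_{m,m}\ge R_M/M$. On the event $\{\tdf_M/k\ge \delta\}$, $\tdf_M/n=c\cdot\tdf_M/k\ge c\delta$ and $(1-\tdf_M/n)^2\le 1$, so $(\tdf_M/n)^2/(1-\tdf_M/n)^2\ge c^2\delta^2$. Combining, the deterministic portion of the excess is at least $c(1-c)\delta^2 R_M/M$ on this event, giving a margin of $c(1-c)\delta^2/M$ in $\tR_M^\gcv/R_M-1$, which is exactly twice the threshold advertised in the theorem.

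The third step is to upgrade this to a probabilistic lower bound. Writing $\|\by-\bX\tbeta_M\|_2^2/n$ minus its idealization above as an error, that error splits into $O(M^2)$ pairwise contributions, each controlled using \eqref{eq:sub_full_moment_ineq} for the factors $(d_{m,\ell}^\full/n)(\hat R_{m,\ell}^\full-R_{m,\ell})$ and using \Cref{lem:concentration-intersection,lem:concentration_df} for the discrepancy between $d_{m,\ell}^\full/n$ and its idealization. Markov's inequality with threshold $c(1-c)\delta^2 R_M/(2M)$ then converts the in-expectation bounds into the probabilistic subtraction $C M^5\gamma^{15/2}/(\sqrt{n}\tau^5c^5(1-c)\delta^2)$, the extra powers of $M$, $c^{-1}$, $(1-c)^{-1}$ and $\delta^{-2}$ arising from the $1/(c(1-c)\delta^2/M)$ rescaling and from summing over the $M^2$ pairs. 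A union bound with $\{\tdf_M/k\ge \delta\}$ yields the first displayed inequality of the theorem, and the asymptotic conclusion follows by letting $n\to\infty$ under the stated boundedness hypotheses so that the polynomial error vanishes while $\PP(\tdf_M/k\ge \delta_0)\ge \delta_0$ by assumption.

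The main obstacle will be bookkeeping the multiplicative errors when summing the pairwise bounds from \Cref{th:sub_full}. Each pairwise bound is normalized by $\sqrt{R_{m,m}R_{\ell,\ell}}$, whereas the target ratio is normalized by $R_M$, and $R_M$ can be strictly smaller than $M^{-1}\sum_m R_{m,m}$ when the base estimators are strongly positively correlated. The reconciliation is to combine $\sqrt{R_{m,m}R_{\ell,\ell}}\le (R_{m,m}+R_{\ell,\ell})/2$ with the invariance of $\tR_M^\gcv/R_M-1$ under rescaling of $R_M$, so that the aggregate error, after division by $R_M$, is controlled by a polynomial in $M^{-1}\sum_m R_{m,m}/R_M$ that the strong convexity factor $\tau^{-5}$ absorbs; propagating these constants cleanly through the denominators $d_{m,\ell}^\full/n$, which may themselves degenerate when $\tdf_M$ approaches $n$, is the delicate accounting step.
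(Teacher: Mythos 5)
Your proposal follows essentially the same route as the paper: decompose the training error into pairwise terms $\frac{1}{M^2}\sum_{m,\ell}\frac{d_{m,\ell}^{\full}}{n}\hat R_{m,\ell}^{\full}$, replace estimates by true risk components via the moment bound of \Cref{th:sub_full}, use the concentration of $|I_m\cap I_\ell|$ and $\df_m$ to rewrite $d_{m,\ell}^{\full}/n$ in the GCV/correction form, lower-bound the correction on the event $\{\tdf_M/k\ge\delta\}$ by $\delta^2 c(1-c)/M$ (using exactly the same $\|\sum_m\bh_m\|^2\le M\sum_m\|\bh_m\|^2$ inequality, stated in its equivalent form $M^{-2}\sum_m R_{m,m}\ge R_M/M$), and finally convert to a probabilistic statement via Markov. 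The paper achieves exactly this, but more economically: it defines $\mathrm{Correction}=(c^{-1}-1)\frac{(\tdf_M/n)^2}{(1-\tdf_M/n)^2}\frac{\sum_m\|\bh_m\|^2}{\|\sum_m\bh_m\|^2}$ and observes that $\tR_M^\gcv/R_M - \mathrm{Correction} = \tR_M^\cgcv/R_M$, so \Cref{lem:cgcv_risk_close} (already established for \Cref{th:correction_gcv}) immediately gives the needed tail bound for the error without re-deriving anything.

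The one place where your proposed resolution of the ``main obstacle'' would not close: you correctly identify that the error bounds are normalized by $\sqrt{R_{m,m}R_{\ell,\ell}}$ while the target is $R_M$, so the quantity $\sum_m R_{m,m}/(M R_M)$ (the paper's ``Ratio'') must be controlled from above, but your suggestion that the strong-convexity factor $\tau^{-5}$ ``absorbs'' this ratio is not the mechanism. The Ratio has no a priori deterministic upper bound — it could blow up if the $\bh_m$ were anti-correlated — and strong convexity alone does not rule that out. The paper's fix is \Cref{lem:contraction_cor}: conditionally on $(\bz_i)_{i\in I_m\cap I_\ell}$, $\bh_m$ and $\bh_\ell$ are i.i.d., so the conditional expectation of the normalized correlation $\bh_m^\top\bh_\ell/(\|\bh_m\|\|\bh_\ell\|)$ equals $\|\EE[\bh_m/\|\bh_m\|\,|\,\cdot\,]\|^2\ge 0$; combined with a Gaussian Poincar\'e concentration this forces the Ratio to be $\le 2$ with high probability (inequality \eqref{eq:correlation_bound}). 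Without that symmetry argument the Markov step cannot be completed with the $n^{-1/2}$ rate, so this is a genuine missing ingredient rather than pure bookkeeping. Also a small point: the worry about $d_{m,\ell}^\full/n$ ``degenerating when $\tdf_M$ approaches $n$'' is moot, since $\tdf_M<k<n$ always by \eqref{eq:property_tr}; the relevant lower bound on $d_{m,\ell}^\full/n$ is handled by \Cref{lem:denom_lower_bound}.
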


In the context of ridge ensembles, \citet{du2023subsample} show that the GCV variant \eqref{eq:gcv-naive} is not consistent when ensemble size is $M=2$.
It is possible to extend the inconsistency results for the variant \eqref{eq:gcv-naive} for finite ensemble sizes greater than $1$.
We provide numerical experiments showing the inconsistency of this variant in \Cref{sec:naive-gcv-inconsistency}.
As discussed in \Cref{subsec:naive_gcv}, for a moderate ensemble size $M$ and subsample size $k$, we expect the GCV variant \eqref{eq:gcv-naive} to behave similarly to \eqref{eq:fgcv-naive}.
Furthermore, the inconsistency of GCV implied by \Cref{prop:gcv_inconsistency} applies even for other penalized least-squares estimator \eqref{eq:def-hbeta}, extending the previous result to more generality.

\begin{remark}[On the assumption of non-negligibility of degrees of freedom]
    \label{rem:df-nonnegligibility}
    We argue here that assumption $\PP(\tdf_M/k>\delta_0) > \delta_0$ in \Cref{prop:gcv_inconsistency} is unavoidable.
    If no such constant $\delta_0$ exists as $n,p,k\to\infty$, by extracting a subsequence if necessary, we may assume $\tdf_M/k\pto 0$. Herein, let $m\in[M]$ be fixed (e.g., take $m=1$ throughout this remark). Then the positiveness of $(\df_{m})_{m=1}^M$ (see \eqref{eq:property_tr} in \Cref{lem:derivative}) implies $\df_m/k \le M\cdot \tdf_M/k = o_p(1)$.  
    The fact that GCV is consistent for $\hbbeta_m$,  combined with $\df_m/k\pto 0$, gives $k^{-1} \sum_{i\in I_m}(y_i-\bx_i^\top\hbbeta_m)^2 / R_{m,m} \pto 1$.
    Now consider the deterministic $\bar\bbeta$ defined as $\bar\bbeta = \argmin_{\bb \in \R^p} (\sigma^2 + \|\bSigma^{1/2}(\bbeta_0 - \bb)\|_2^2)/2 + g_m(\bb),$ where we replaced the objective function in \eqref{eq:def-hbeta} inside the minimum by its expectation.
    The strong convexity of the objective function of $\bar\bbeta$ and the optimality condition of $\hbeta_m$ give
    \begin{align*}
        \mu
        \|\bSigma^{1/2}(\hbbeta_m - \bar\bbeta)\|_2^2
        &\le
        \bigl[
        \|\bSigma^{1/2}(\hbbeta_m - \bbeta_0)\|_2^2
        +
        \sigma^2
        \bigr]
        -
        \bigl[
        \|\bSigma^{1/2}(\bar\bbeta-\bbeta_0)\|_2^2
        +\sigma^2
        \bigr]
        + 2(g_m(\hbbeta_m) - g_m(\bar\bbeta)),
        \\
        0 &\le 
        - \|\bL_{I_m} (\by - \bX\hbbeta_m)\|_2^2/k
        +
        \|\bL_{I_m} (\by - \bX\bar\bbeta)\|_2^2/k
        + 2(g_m(\bar\bbeta)-g_m(\hbbeta_m)).
    \end{align*}
    Summing these two inequalities, the penalty terms cancel out,
    and using the law of large numbers for 
    $\|\bL_m (\by - \bX\bar\bbeta)\|_2^2/k$, we get
    $
        \|\bSigma^{1/2}(\hbbeta_m - \bar\bbeta)\|_2^2
        \le \op(1)
        [
        \sigma^2 \|\bSigma^{1/2}(\hbbeta_m - \bbeta_0)\|_2^2
        +
        \sigma^2
        \|\bSigma^{1/2}(\bar\bbeta - \bbeta_0)\|_2^2
        ]
    $ so that
    $\|\bSigma^{1/2}(\hbbeta_m - \bar\bbeta)\|_2^2/
    [\sigma^2 + 
        \|\bSigma^{1/2}(\bar\bbeta - \bbeta_0)\|_2^2]
        \pto 0.
    $
    This means according to the norm $\|\bSigma^{1/2}(\cdot)\|_2$,
    $\hbbeta_m$ behaves like the point mass $\bar\bbeta$ up to
    a negligible multiplicative factor. 
    This contradicts the typical asymptotics seen in the proportional regime with $p \asymp n$ \citep[among others]{bayati2011lasso,thrampoulidis2018precise,miolane2021distribution,loureiro2022fluctuations,celentano2020lasso,wang2020bridge},
    so that the assumption $\PP(\tdf_M/k>\delta_0)>\delta_0$ is intrinsic to the proportional regime assumed throughout the paper.
\end{remark}

\subsection{Proof outlines}
\label{sec:proof_outlines}

In this section, we provide the proof outlines and the ideas that lead to the definition of the \sub and \full estimators. 
Then we provide a heuristic derivation of the corrected GCV estimator from the \full estimator.

By the risk decomposition \eqref{eq:risk-decomposition}, it is sufficient to study a single term in the double sum over $[M]^2$ and to focus on two estimators $\hbbeta$ and $\tbbeta$, trained respectively on subsampled datasets $(\bx_i,y_i)_{i\in I}$ and $(\bx_i,y_i)_{i\in\tilde I}$ for two subsets $I\subset [n]$ and $\tilde I\subset [n]$ independent of $(\bX,\by)$, that is,
\begin{equation*}
    \hbeta := \argmin_{\bbeta\in \R^p} \frac{1}{k} \sum_{i\in I} (y_i - \bx_i^\top \bbeta)^2 + g(\bbeta), 
    \quad
    \text{and}
    \quad
    \tbeta := \argmin_{\bbeta\in \R^p} \frac{1}{k} \sum_{i\in \tilde I} (y_i - \bx_i^\top \bbeta)^2 + \tilde{g}(\bbeta).
\end{equation*}
To study the success or failure of GCV in \eqref{eq:ogcv} or \eqref{eq:gcv-naive},
it is natural to study the inner products between the residuals of $\hbbeta$ and $\tbbeta$, of the form
\begin{equation}
\sum_{i\in J} \Bigl(y_i-\bx_i^\top\hbbeta\Bigr)\Bigl(y_i-\bx_i^\top\tbbeta\Bigr)
\label{eq:inner_product_res_J}
\end{equation}
for some subset $J\subset [n]$, as such inner products naturally appear by expanding the square in the numerators of the GCV estimates in \eqref{eq:ogcv} or \eqref{eq:gcv-naive}.
There are several natural candidates for the set $J$ of observations to use in the inner product above, including $J=[n]$ or $J=\cup_{m\in [M]}I_m$, or, as we study specific estimators trained over $I$ and $\tilde I$, the intersection $I\cap \tilde I$ as well as $I$ or $\tilde I$ themselves.
The following proof sketch explains why the intermediate estimators \eqref{eq:risk-decomp-est-1}-\eqref{eq:risk-decomp-est-2} were defined in this manner, including the choice of set $J$ in residual inner products of the form \eqref{eq:inner_product_res_J} and the various degrees of freedom adjustments visible in the denominators of \eqref{eq:risk-decomp-est-1}-\eqref{eq:risk-decomp-est-2}.

\paragraph{The \sub estimator.\hspace{-0.5em}}
Define $V_{ii} = 1 - \frac{\partial \bx_i^\top \hbbeta}{\partial y_i}$ as well as $\tilde V_{ii} = 1- \frac{\partial \bx_i^\top \tbbeta}{\partial y_i}$ for all $i\in[n]$.
The degrees of freedom for the individual estimators $\hbeta$ and $\tbeta$ are given by $\df=|I| - \sum_{i\in I} V_{ii}$ and $\tdf=|\tilde I| - \sum_{i\in\tilde I}\tilde V_{ii}$, respectively.
By variants of Stein's formula with respect to $(\bX,\bepsilon)$ formally stated in \Cref{lem:psi_bound,lem:trB_df}, the inner product \eqref{eq:inner_product_res_J} is well approximated as follows,
\begin{equation}
\begin{aligned}
& \sum_{i\in J} \Bigl(y_i-\bx_i^\top\hbbeta\Bigr)\Bigl(y_i-\bx_i^\top\tbbeta\Bigr)
\\ &\approx
-\Bigl(\frac{\tdf}{|\tilde I| - \tdf} \Bigr)
\sum_{i\in J\cap\tilde I} \Bigl(y_i-\bx_i^\top\hbbeta\Bigr)\Bigl(y_i-\bx_i^\top\tbbeta\Bigr)
+
\Bigl(\sum_{i\in J}V_{ii}\Bigr)
\Bigl(\sigma^2+(\hbbeta-\bbeta_0)^\top\bSigma(\tbbeta-\bbeta_0)\Bigr).
\end{aligned}\label{approx_J}
\end{equation}
The right-most term is the unknown quantity of interest, $(\sigma^2+(\hbbeta-\bbeta_0)^\top\bSigma(\tbbeta-\bbeta_0))$, that appears in the risk decomposition \eqref{eq:risk-decomposition}.
This motivates rearranging \eqref{approx_J} as 
\begin{equation}
\begin{aligned}
&\Bigl(\sum_{i\in J}V_{ii}\Bigr)
\Bigl(\sigma^2+(\hbbeta-\bbeta_0)^\top\bSigma(\tbbeta-\bbeta_0)\Bigr)\\
\approx &
\sum_{i\in J} \Bigl(y_i-\bx_i^\top\hbbeta\Bigr)\Bigl(y_i-\bx_i^\top\tbbeta\Bigr)
+ 
\Bigl(\frac{\tdf}{|\tilde I| - \tdf} \Bigr)
\sum_{i\in J\cap\tilde I} \Bigl(y_i-\bx_i^\top\hbbeta\Bigr)\Bigl(y_i-\bx_i^\top\tbbeta\Bigr).
\end{aligned}
\label{approx_J-rearrange}
\end{equation}
Now, to simplify the second line in \eqref{approx_J-rearrange}, we consider  $J\subset[n]$ such that $J = J\cap \tilde I$ (e.g., $J=I\cap \tilde I$ or $J=\tilde I$), so that the second line simplifies to $\bigl(\frac{|\tilde I|}{|\tilde I| - \tdf} \bigr)\sum_{i\in J} \bigl(y_i-\bx_i^\top\hbbeta\bigr)\bigl(y_i-\bx_i^\top\tbbeta\bigr)$. 
Dividing both sides by $(\sum_{i\in J} V_{ii})$ then gives an estimator of the prediction risk. 
Specifically, we first consider $J=I\cap \tilde I$. If given $(|\tilde I\cap I|,I)$ the intersection $\tilde I \cap I$ is uniformly distributed over subsets of $I$ with cardinality $|\tilde I\cap I|$, then  \Cref{lemma:sample_without_rep} (on the concentration of sampling without replacement) gives
$$
\frac{1}{|I\cap \tilde I|}\sum_{i\in I\cap \tilde I} V_{ii} \approx \frac{1}{|I|} \sum_{i\in I} V_{ii}
= 1-\frac{\df}{|I|}
$$
to approximate the factor in front of $(\sigma^2+(\hbbeta-\bbeta_0)^\top\bSigma(\tbbeta-\bbeta_0))$.
Combining these approximations with $J=I\cap \tilde I$ yields
$$
|I\cap \tilde I| \Bigl(1-\df/|I|\Bigr)
\Bigl(\sigma^2+(\hbbeta-\bbeta_0)^\top\bSigma(\tbbeta-\bbeta_0)\Bigr)
\approx 
\frac{1}{1 - \tdf/|\tilde I|}
\sum_{I\cap \tilde I}
\Bigl(y_i-\bx_i^\top\hbbeta\Bigr)\Bigl(y_i-\bx_i^\top\tbbeta\Bigr),
$$
which gives rise to the definition of overlapping estimator \eqref{eq:risk-decomp-est-1}.
The superscript ``\sub'' in \eqref{eq:risk-decomp-est-1} indicates that only overlapping observations are used in the residual inner product.
These successive approximations motivate the definition of $\Rem_1,\Rem_2,\Rem_3$ in \eqref{eq:ovlp_Rem1_2_3} of the formal proof of \Cref{th:sub_full} for the ``\sub'' estimator.

\paragraph{The \full estimator.\hspace{-0.5em}}
When the subsample size is small, the choice of $J=I\cap \tilde I$ may not be desirable because it only uses residuals of the overlapping data, and $J$ can be an empty set when $I$ and $\tilde I$ are disjoint. 
Observations that are neither in $I$ nor in $\tilde I$, i.e., $i\in [n]\setminus (I\cap \tilde I)$ would be particularity useful since they produce exactly unbiased estimates, with $\EE[(y_i-\bx_i^\top\hbbeta)(y_i-\bx_i^\top\tbbeta)\mid (\hbbeta,\tbbeta)]=\sigma^2+(\hbbeta-\bbeta_0)^\top\bSigma(\tbbeta-\bbeta_0)$ for $i\notin I\cap \tilde I$ by independence.
Ideally, in order to reduce variance, we would leverage a residual inner product over all $[n]$ samples. This motivates us to consider the approximation \eqref{approx_J} with $J=[n]$ and $J=\tilde I$.
By choosing a specific weighted combination of these two approximations that make all residual inner product over $\tilde I$ cancel out, we are left with an approximation between the residual inner product over $[n]$ and the target of interest, namely,
$$\sum_{i\in [n]} \Bigl(y_i-\bx_i^\top\hbbeta\Bigr)\Bigl(y_i-\bx_i^\top\tbbeta\Bigr)
\approx
\Bigl[
    -
\frac{\tdf}{|\tilde I|}
\Bigl(\sum_{i\in \tilde I}V_{ii}\Bigr)
+
\Bigl(\sum_{i\in [n]}V_{ii}\Bigr)
\Bigr]
\Bigl(\sigma^2+(\hbbeta-\bbeta_0)^\top\bSigma(\tbbeta-\bbeta_0)\Bigr).
$$
If $I$ and $\tilde I$ are two different independent subsets $I_m,I_\ell$ (corresponding to an off-diagonal term $m\le \ell$ in \eqref{eq:risk-decomposition}), then by the independence of $\tilde I$ and $(V_{ii})_{i\in[n]}$ the concentration of sampling without replacement in \Cref{lemma:sample_without_rep} further gives $\frac{1}{|\tilde I|}\sum_{i\in\tilde I}V_{ii}\approx \frac{1}{n}\sum_{i=1}^n V_{ii} = 1-\df/n$ so that the square bracket factor on the right-hand side simplifies to $n(-\tdf/n(1-\df/n)+ 1-\df/n)=n(1-\tdf/n)(1-\df/n)$.
This is only valid if $I$ and $\tilde I$ are independent.
On the other hand, if $I=\tilde I$,
(corresponding to a diagonal term $m =\ell$ in \eqref{eq:risk-decomposition}),
then $\frac{1}{|\tilde I|}\sum_{i\in \tilde I} V_{ii} = 1 - \df/|I|$
and the square bracket above equals $n-2\df + \df^2/k$.
This means that a different correction needs to be used for the diagonal and off-diagonal terms. 
These approximations motivate the definition of the ``\full'' estimator in \eqref{eq:risk-decomp-est-2}.

\paragraph{The CGCV estimator.\hspace{-0.5em}}
Next, we provide some heuristics behind the derivation of the corrected GCV from the full estimator.
The key ingredients are the concentration of $|I_m\cap I_\ell|$ and $\df_m$ below (formally stated in \Cref{lem:concentration-intersection} and \Cref{lem:concentration_df}, respectively):
\[
    | I_m \cap I_\ell | \approx \frac{|I_m||I_\ell|}{n} \quad\text{ for all }
    m \neq \ell, 
    \qquad
    \df_m \approx \tdf_M = \frac{1}{M} \sum_{m' \in [M]} \df_{m'} \quad\text{ for all } m.
\]
By those concentration properties, we observe that $d_{m,\ell}^\full$, the denominator in the full-estimator defined in \eqref{eq:d_sub_and_full}, has the following approximation depending on whether $m=\ell$ or not:
\begin{align*}
    \frac{d_{m,\ell}^\full}{n} 
    \approx 
    \begin{dcases}
        1 - 2 \, \frac{\tdf_M}{n} + \bigg(\frac{\tdf_M}{n}\bigg)^2 & \text{ if } m \neq \ell \\
         1 - 2 \, \frac{\tdf_M}{n} + \bigg(\frac{\tdf_M}{n}\bigg)^2 \frac{n}{|I_m|} &  \text{ if } m=\ell
    \end{dcases}
    = \bigg(1-\frac{\tdf_M}{n}\bigg)^2 + \ind_{\{m=\ell\}} \bigg(\frac{n}{|I_m|}-1\bigg) \bigg(\frac{\tdf_M}{n}\bigg)^2. 
\end{align*}
Combining the above display and $\hat{R}_{m, \ell}^\full\approx R_{m,\ell}$ by \eqref{eq:sub_full_moment_ineq}, we have 
\begin{align*}
    \frac{\|\by - \bX\tbeta_M\|_2^2}{n} &= \frac{1}{M^2} \sum_{m, \ell}\frac{(\by - \bX\hbeta_m)^\top(\by-\bX\hbeta_\ell)}{n} = \frac{1}{M^2}\sum_{m, \ell} \frac{d_{m,\ell}^\full}{n} \cdot \hat{R}_{m, \ell}^\full\\
    &\approx \bigg(1-\frac{\tdf_M}{n}\bigg)^2 R_M + \bigg(\frac{\tdf_M}{n}\bigg)^2 \frac{1}{M^2} \sum_{m=1}^M  \bigg(\frac{n}{|I_m|}-1\bigg) R_{m,m}.
\end{align*}
Dividing by $(1-\tdf_M/n)^2$ and plugging in $\hat{R}_{m, m}^\sub$ or $\hat {R}_{m,m}^\full$ to estimate $R_{m,m}$ inside the rightmost sum, we obtain the corrected GCV estimators given in \Cref{def:cgcv-proposal}.

\subsection{Numerical illustrations}
\label{sec:numerical-illustrations-gaussian}

In this section, we corroborate our theoretical results with numerical experiments on synthetic data, showing the validity of our proposed corrected GCV ($\tR_M^{\cgcv,\full}$ in \eqref{eq:cgcv-proposal}).
The purpose is to empirically show our proposal, the corrected GCV estimator (shown as CGCV), accurately estimates the true risk $R_M$ (shown as Risk), while the ordinary GCV in \eqref{eq:fgcv-naive} (shown as GCV) is a poor estimator of the true risk $R_M$. 

\begin{figure}[!t]
    \centering
    \includegraphics[width=0.95\textwidth]%
    {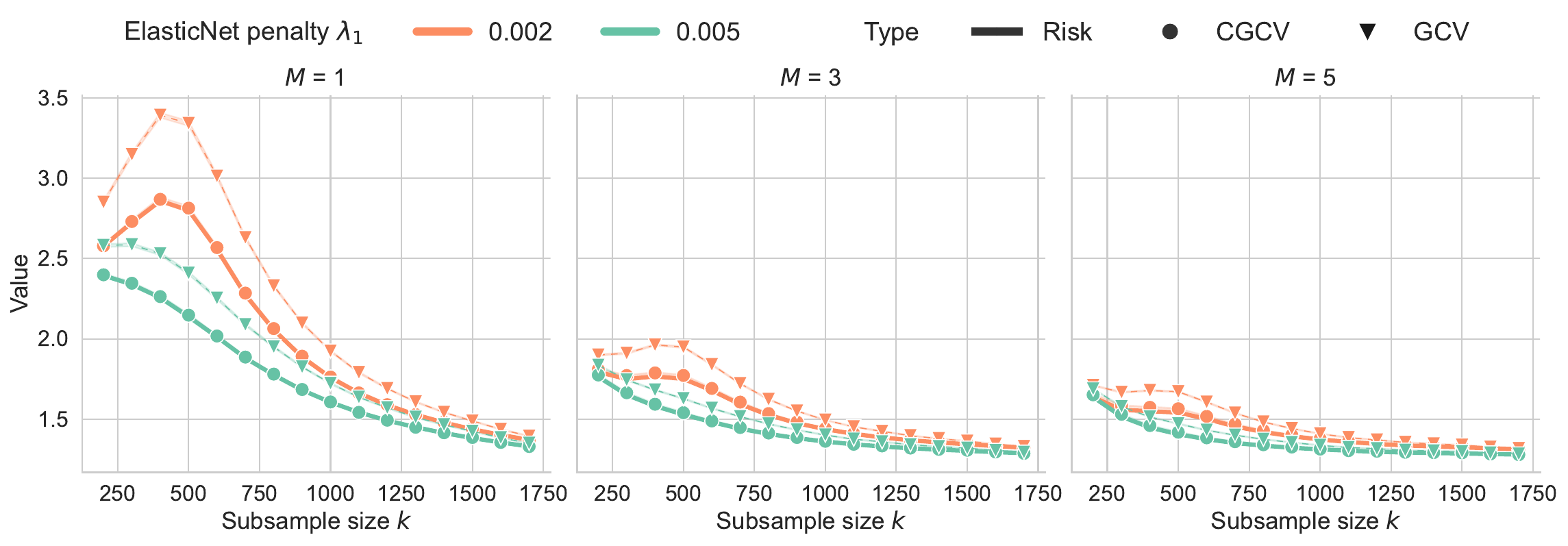}
    \caption{
    \textbf{CGCV is consistent for finite ensembles across different subsample sizes.}
    Plot of risks versus the subsample size $k$ for Elastic Net ensemble with different $\lambda$ and $M$ and fixed $\lambda_2 = 0.01$. 
    }\label{fig:ridge-k}
\end{figure}

\paragraph{Linear models with Gaussian designs.\hspace{-0.5em}}
We first describe the simulation setting. 
We set $(n, p) =(2000, 500)$ and consider different combinations of $(k, M)$
and tuning parameters $(\lambda,\lambda_1,\lambda_2)$ for the three penalty functions in \Cref{tab:examples}.
We consider a linear model satisfying \Cref{assu:Gaussian-feature,assu:Gaussian-response} with noise variance $\sigma^2=1$. To be specific, we consider a non-isotropic covariance matrix $\bSigma$, whose diagonal entries are evenly spaced in $[0.1, 10]$ and other entries are 0. We generate the true regression coefficient $\bbeta_0$ as follows: generate $\bb_0\in \RR^p$ with its first $p-100$ rows sampled from \iid $\cN(0,1)$, and set the remaining rows to $0$. 
We then scale $\bb_0$ to obtain the coefficient $\bbeta_0$ by $\bbeta_0= \bb_0(\sigma^2 / \bb_0^\top\bSigma \bb_0)^{1/2}$ so that the signal-to-noise ratio $\bbeta_0^\top\bSigma \bbeta_0 /\sigma^2$ is $1$. 
For the ridge and lasso ensembles, the tuning parameter $\lambda$ of the corresponding penalty in \Cref{tab:examples} is fixed across all the subsample estimates $\hbeta_m$. 
For the elastic net ensemble, the tuning parameter $\lambda_2$ is fixed as $0.01$ and the same sequence of $\lambda_1$ is used across all subsample estimates $\hbeta_m$. 
For each simulation setting, we perform $1000$ dataset repetitions and report the average of relevant risks.

We present the simulation results of the elastic net ensemble in \Cref{fig:ridge-k,fig:M}, and leave the results of the ridge and lasso ensembles to the \Cref{app:experiments-gaussian} of the supplement. 
\Cref{fig:ridge-k} shows the risk curve and its estimators as a function of the ensemble size $M$. 
We observe a clear gap between the curves of the GCV estimator and the true risk for different settings of $(M, k, \lambda_1)$, especially for small subsample size $k$ or small ensemble size $M$. 
In stark contrast, our proposed corrected GCV aligns closely with the true risk curve. 
\Cref{fig:M} again confirms that our proposal outperforms the GCV in estimating the true risk across different values of $M$ and different combinations of $k$ and $\lambda_1$. 
We provide more simulation results for the ridge and lasso ensembles in \Cref{app:experiments-gaussian}.
 
\begin{figure}[!ht]
    \centering
    \includegraphics[width=0.95\textwidth]%
    {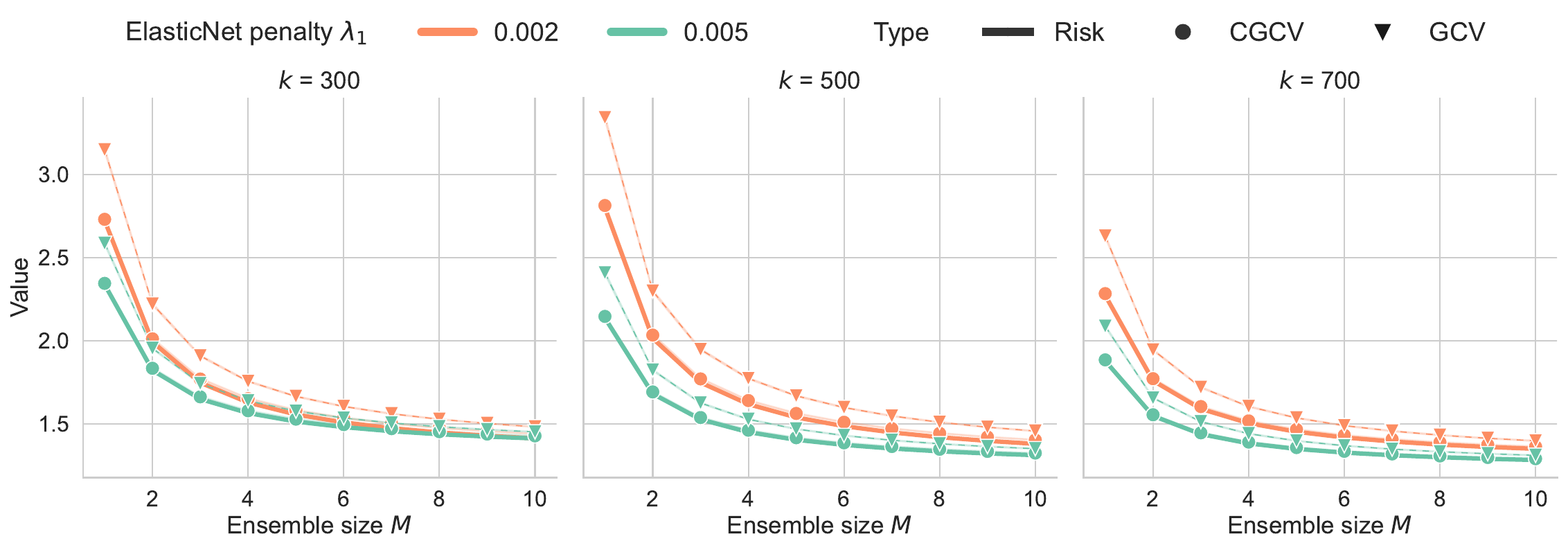}
    \caption{
    \textbf{GCV gets closer to the risk as ensemble size increases across different subsample sizes.}
    Plot of risks versus the Elastic Net ensemble size $M$ for ridge ensemble with different $\lambda$ and $k$ and fixed $\lambda_2 = 0.01$.
    }
    \label{fig:M}
\end{figure}

\paragraph{Linear models with non-Gaussian designs.\hspace{-0.5em}}
To demonstrate the robustness of our results beyond Gaussian distribution, 
we also consider scenarios where $\bX$ is not Gaussian distributed but is sub-Gaussian, for example, the Rademacher and uniform distributions. 
The simulation results for these non-Gaussian features closely resemble those for Gaussian features presented in the current section, confirming that the proposed CGCV accurately estimates the actual risk, whereas GCV exhibits a non-ignorable bias. 
We provide the simulation results for non-Gaussian features in \Cref{sec:numerical-non-gaussian}.

\section{Asymptotic analysis for ridge ensembles}
\label{sec:asm-consistency}

For ridge ensembles, the GCV estimator \eqref{eq:gcv-naive} for ridge ensembles can be inconsistent when the ensemble size is greater than $1$, as mentioned in \Cref{subsec:naive_gcv}. 
This inconsistency still exists when features are sampled from non-Gaussian distributions \citep{du2023subsample}.
In this section, we will show that the result of the previous section can be generalized to a wide class of data distributions when focusing on ridge predictors and using standard results from random matrix theory.

Recall the base estimator $\hbeta_m$ defined in \eqref{eq:def-hbeta} for $m=1,\ldots,M$.
If we use the ridge penalty with regularization parameter $\lambda>0$, then the base estimator reduces to the \emph{ridge} estimator fitted on $\cD_{I_m}$:
\begin{equation}
    \hbeta_{m,\lambda}
    = \argmin\limits_{\bbeta\in\RR^p}
    \sum_{i \in I_m} (y_i  - \bx_i^\top \bbeta)^2 / k
    + \lambda \| \bbeta \|_2^2
    = (\bX^{\top} \bL_{I_m} \bX / k  + \lambda\bI_p)^{-1}{\bX^{\top} \bL_{I_m} \by}/{k} .\label{eq:ingredient-estimator}
\end{equation}
Taking $\lambda\rightarrow0^+$, $\hbeta_{m,0}:=(\bX^{\top} \bL_{I_m} \bX/|I_m|)^{+}\bX^{\top} \bL_{I_m} \by/|I_m|$ becomes the so-called \emph{ridgeless} estimator, or minimum-norm interpolator, where $\bA^+$ denotes the Moore-Penrose inverse of matrix $\bA$.

To prepare for our upcoming results, we impose the following assumptions on the responses, features, and subsample aspect ratios.

\begin{assumption}[Response structure]\label{asm:model}
    The response $y$ satisfies $\EE[y]=0$ and $\EE[y^{4+\delta}] \leq C_0$ for some $\delta>0$ and $C_0>0$.
\end{assumption}
    
\begin{assumption}
[Feature structure]
\label{asm:feat}
    The feature vector decomposes as $\bx = \bSigma^{1/2}\bz$, where $\bz\in\RR^{p}$ contains i.i.d.\ entries with mean $0$, variance $1$, bounded moments of order $4+\delta$ for some $\delta > 0$, and $\bSigma \in \RR^{p \times p}$ is deterministic and symmetric with eigenvalues uniformly bounded between $r_{\min}>0$ and $r_{\max}<\infty$.
\end{assumption}

We make several remarks about the data assumptions in our analysis. 
First, we do not impose any specific model assumptions between the response variable $y$ and the feature vector $\bx$. 
Thus, our results are model-free, relying solely on the bounded moment assumption stated in \Cref{asm:model}, which ensures the generality of our subsequent findings.
Although we assume zero mean for $y$, this is purely for simplicity, as centering can always be achieved by subtracting the sample mean in practice. 
Furthermore, the condition of bounded moments can also be satisfied by imposing a stronger distributional assumption, such as sub-Gaussianity.

\begin{assumption}
[Proportional asymptotics]
\label{asm:prop-asym}
    The sample size $n$, subsample size $k$, and feature dimension $p$ tend to infinity such that $p/n\rightarrow\phi\in(0,\infty)$ and $p/k\rightarrow\psi\in[\phi,\infty]$.
\end{assumption}

Even though we assume that both the data aspect ratio $p/n$ and the subsample aspect ratio $p/k$ converge to fixed constants, this is for simplicity of exposition and proofs, and is not essential.
It suffices only to require that they scale proportionally such that $0 < \liminf p/n \le \liminf p/n \leq \liminf p/k < \infty$ as, by compactness, we can always extract a subsequence of regression problems in which $p/n$ and $p/k$ have a finite limit.

Both \Cref{asm:feat}, concerning the feature vector, and \Cref{asm:prop-asym}, clarifying the proportional asymptotics regime, are assumptions commonly employed in the study of random matrix theory \citep{bai2010spectral,el2010spectrum} and the analysis of ridge and ridgeless regression \citep{karoui2011geometric,karoui_2013,dobriban_wager_2018,hastie2022surprises}.

\begin{remark}[Extreme points in proportional asymptotics]
    In \Cref{asm:prop-asym}, we allow the limiting data aspect ratio $p/n$ and subsample aspect ratio $p/k$ to be in $(0,\infty)$ and $[\phi,\infty]$, respectively.
    In the extreme case where $p / k \to \infty$, the prediction risks of the ridge ensembles admit simple asymptotic formulas.
    More specifically, both the prediction risk and the estimates converge to the null risk $\EE[y^2]$, the risk of a full predictor that always outputs zero.
\end{remark}

\subsection{Guarantees for intermediate risk estimators}

Since the ridge predictor is a linear smoother, its degrees of freedom $\df_{m}$ defined in \eqref{eq:def-df} is equivalent to the trace of its smoothing matrix $\bS_m = \bX_{I_m}(\bX_{I_m}^{\top}\bX_{I_m}/k + \lambda\bI_p)^{-1}\bX_{I_m}^{\top}/k$; namely, $\df_m = \trace[\bS_m]$.
For ridge regression, the \ovlp-estimator $\hR_{m,\ell,\lambda}^{\sub}$ and the full-estimator $\hR_{m,\ell,\lambda}^{\full}$ as defined in \eqref{eq:risk-decomp-est-1} and \eqref{eq:risk-decomp-est-2} are respectively given explicitly as follows:
\begin{align*}
    \hR_{m,\ell,\lambda}^{\sub}
    &=
    \ddfrac{
    (\by - \bX\hbeta_{m,\lambda})^{\top}\bL_{m}\bL_{\ell}(\by - \bX\hbeta_{\ell,\lambda}) / |I_m\cap I_{\ell}|}{    
     1  - \frac{\tr[\bS_m]}{|I_m|} - \frac{\tr[\bS_\ell]}{|I_{\ell}|} + \frac{\tr[\bS_m]}{|I_m|} \frac{\tr[\bS_\ell]}{|I_{\ell}|}
    }, \\
    \hR_{m,\ell,\lambda}^{\full}
    &=
    \ddfrac{
    (\by - \bX \hbeta_{m,\lambda})^\top (\by - \bX \hbeta_{\ell,\lambda}) / n}{1 - \frac{\tr[\bS_m]}{n} - \frac{\tr[\bS_\ell]}{n} 
    + \frac{\tr[\bS_m]}{|I_m|}
    \frac{\tr[\bS_\ell]}{|I_{\ell}|}
    \cdot \frac{| I_m \cap I_\ell |}{n}}.
\end{align*}
Here, we use the subscript $\lambda$ to indicate the dependence of the estimators for the risk component on the ridge penalty parameter $\lambda$.
Similarly, we write $\tR_{M,\lambda}^{\sub} $, $\tR^{\full}_{M,\lambda}$, and $R_{M,\lambda}$ for the two risk estimators and the risk, respectively.
The following theorem shows that both the two estimators are pointwise consistent for the risk component $R_{m,\ell,\lambda}$.

\begin{restatable}[Pointwise consistency of intermediate estimators in $\lambda$]{theorem}{ThmRiskEst}\label{thm:risk-est}
    Under \Cref{assu:sampling,asm:model,asm:feat,asm:prop-asym} with $\sqrt{n}/k=\cO(1)$ for the \ovlp-estimator, for $\lambda\geq 0$, for any $M \in \NN$ and $m, l \in [M]$, it holds that
    $|\hR_{m,\ell,\lambda}^{\sub} - R_{m, \ell,\lambda}| \asto 0$
    and $|\hR_{m,\ell,\lambda}^{\full} - R_{m, \ell,\lambda}| \asto 0$.
    Consequently, it holds that 
    $| \tR_{M,\lambda}^{\sub} - R_{M,\lambda} | \asto 0$ 
    and 
    $| \tR^{\full}_{M,\lambda} - R_{M,\lambda} | \asto 0$.
\end{restatable}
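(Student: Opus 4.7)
The strategy is to show, for each pair $(m,\ell)\in [M]^2$, that $\hR_{m,\ell,\lambda}^{\sub}$, $\hR_{m,\ell,\lambda}^{\full}$, and the target $R_{m,\ell,\lambda}$ all converge almost surely to the \emph{same} deterministic limit. The claims for $\tR_{M,\lambda}^{\sub}$ and $\tR_{M,\lambda}^{\full}$ then follow by averaging the $M^2$ terms in \eqref{eq:risk-decomposition} and invoking the triangle inequality. The diagonal case $m=\ell$ reduces to GCV consistency for a single ridge estimator trained on a subsample of size $k$, already established under essentially these assumptions in \citet{patil2021uniform,patil2022estimating}, so the crux of the argument is the off-diagonal case $m\ne\ell$.

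\textbf{Step 1: Resolvent reformulation.} Introduce the subsample resolvent $\mathbf{M}_m := (\bX^\top\bL_{I_m}\bX/k+\lambda\bI_p)^{-1}$, so that $\hbeta_{m,\lambda} = \mathbf{M}_m \bX^\top\bL_{I_m}\by/k$ and the residual vector equals $[\bI_n-\bX\mathbf{M}_m\bX^\top\bL_{I_m}/k]\by$. With this representation, every numerator and denominator appearing in $\hR_{m,\ell,\lambda}^{\sub}$, $\hR_{m,\ell,\lambda}^{\full}$, and $R_{m,\ell,\lambda}$ (after replacing $\bbeta_0$ by the best linear predictor of $y_0$ from $\bx_0$, which is well defined under \Cref{asm:model,asm:feat}) becomes a linear combination of (i) traces of products of $\mathbf{M}_m,\mathbf{M}_\ell,\bX,\bL_{I_m\cap I_\ell},\bSigma$, and (ii) quadratic forms $\by^\top\bA\by$ where $\bA$ is measurable with respect to $(\bX,I_m,I_\ell)$.

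\textbf{Step 2: Concentration and deterministic equivalents.} For the quadratic forms, \Cref{asm:model} combined with a Hanson--Wright-type moment bound yields $\by^\top\bA\by = \trace(\bA\,\EE[\by\by^\top]) + o(1)$ almost surely whenever $\|\bA\|_{\mathrm{op}}$ is bounded, which is the case for every matrix that arises here. For the remaining traces, conditionally on $I_m$ the subsampled Gram matrix $k^{-1}\sum_{i\in I_m}\bx_i\bx_i^\top$ is a sample covariance from $k$ i.i.d.\ vectors with population $\bSigma$ at aspect ratio $p/|I_m|\to\psi$, so \Cref{asm:feat,asm:prop-asym} supply standard deterministic equivalents for $\mathbf{M}_m$ and $\bSigma\mathbf{M}_m$ governed by the Silverstein fixed-point equation. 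Since \Cref{assu:sampling} makes $I_m$ and $I_\ell$ independent of $\bX$ and of each other, conditioning successively on $I_m$ and then on $I_\ell$ lets one compose these equivalents for products involving both $\mathbf{M}_m$ and $\mathbf{M}_\ell$, exactly as done for ridge ensembles in \cite{patil2022bagging,du2023subsample}. The overlap $|I_m\cap I_\ell|$ is hypergeometric with mean $k(k-1)/(n-1)$ and concentrates exponentially, so the denominators also admit explicit almost-sure limits.

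\textbf{Step 3: Matching the limits, and the main obstacle.} After substitution, verifying that the three limits coincide reduces to an algebraic identity between deterministic expressions. The GCV-style denominators $1-\tr[\bS_m]/|I_m|$ (and the full-sample analogue) are engineered precisely so that, after cancellation with the quadratic-form limit in the numerator, one recovers the same fixed-point expression for $R_{m,\ell,\lambda}$ that already appears in the ridge-ensemble risk characterizations of \cite{patil2022bagging,du2023subsample}. The main technical obstacle is the cross term of the \ovlp-estimator when $k$ is close to $\sqrt n$: its normalization is by $|I_m\cap I_\ell|\asymp k^2/n$, so a generic fluctuation of order $\sqrt{k^2/n}$ in the unnormalized quadratic form translates into a relative error of order $\sqrt n/k$ on the estimator, which is precisely why the statement restricts the \ovlp-estimator to $\sqrt n/k = O(1)$. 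The full-estimator avoids this obstacle because its denominator stays bounded away from zero and its numerator is averaged over all $n$ indices, so the standard $n^{-1/2}$ concentration is more than sufficient.
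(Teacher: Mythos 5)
Your roadmap matches the paper's actual proof almost exactly: both pass through resolvent representations $\bM_m=(\bX^\top\bL_{I_m}\bX/k+\lambda\bI_p)^{-1}$, both compute deterministic equivalents for products of $\bM_m,\bM_\ell,\bSigma$ by conditioning on the subsample index sets, both use hypergeometric concentration of $|I_m\cap I_\ell|$, and both reduce the final check to an algebraic identity between fixed-point quantities. Your intuition for why the \ovlp-estimator needs $\sqrt n/k=O(1)$ is also the right one (the overlap $|I_m\cap I_\ell|\asymp k^2/n$ must grow for the law of large numbers to kick in on the $\|\bL_{m\cap\ell}\bfNL\|_2^2/|I_m\cap I_\ell|$ term). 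The structure of Lemmas~\ref{lem:risk}, \ref{lem:num-est-1}--\ref{lem:den-est-2} in the paper is exactly your Step~2 and Step~3.

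The one step that would fail as written is in Step 2, where you assert a ``Hanson--Wright-type moment bound'' yielding $\by^\top\bA\by=\tr(\bA\,\EE[\by\by^\top])+o(1)$ for $\bA$ measurable with respect to $(\bX,I_m,I_\ell)$. That statement requires $\by$ to be independent of $\bA$, but here $\bA$ is built from $\bX$ and the response is allowed to depend on $\bx$ arbitrarily (\Cref{asm:model} is model-free): only the \emph{linear} part $\bX\bbeta_0$ of $\by$ is structurally tied to $\bX$, while the nonlinear residual $\bfNL=\by-\bX\bbeta_0$ is merely \emph{uncorrelated} with, not independent of, the rows of $\bX$. The paper therefore first splits $\by=\bX\bbeta_0+\bfNL$ and treats bias, variance, and cross terms separately (Lemmas~\ref{lem:risk-bias}--\ref{lem:risk-var}); the variance piece $\bfNL^\top\bX\bA\bX^\top\bfNL$ then requires the dedicated argument of Lemma~\ref{lem:quad-uncorr} (a diagonal-concentration-plus-anisotropic-local-law argument in the spirit of \citealp{bartlett2021deep,knowles2017anisotropic}), precisely because $\bfNL$ and $\bX$ are dependent. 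Without some version of that lemma your quadratic-form step has a gap.

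A second, smaller omission: the statement claims consistency for all $\lambda\geq 0$, including the ridgeless endpoint $\lambda=0$ (and, under \Cref{asm:prop-asym}, $\psi=\infty$). The deterministic-equivalent machinery you invoke is stated for $\lambda>0$; extending to the boundary requires either a separate argument (e.g.\ via $s_{\min}(\bX_{I_m})\to\infty$ when $\psi=\infty$) or a uniform-equicontinuity-plus-Moore--Osgood limit interchange, which is what the paper's Lemma~\ref{lem:boundary} does. You should at least flag that $\lambda=0$ is a limiting case handled by continuity rather than by the same fixed-point formulas directly.
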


For the first estimator to work in the extreme case when $\psi = \infty$, we also need $\sqrt{n}/k=\cO(1)$.
We note that this is not required for the full-estimator precisely because the full-estimator uses both subsample observations and out-of-subsample observations. So, even if the number of overlapping subsample observations is small, the out-of-subsample observations are large, and the estimator is able to track the risk.  
The regime when $k$ is small is the regime where the full-estimator is substantially better than the \ovlp-estimator, as we have seen in \Cref{fig:k}.

One natural question that remains for ridge predictors is how to select the ridge penalty $\lambda$.
Building on the previous results, we are able to show a stronger notion of the consistency of the two proposed estimators over $\lambda\in\Lambda:=[0,\infty]$.
When $\lambda=\infty$, the ensemble ridge predictor reduces to a null predictor that always outputs zero.
Under the above assumptions, uniform consistency is established through the following theorem.
\begin{restatable}[Uniform consistency of intermediate estimators in $\lambda$]{theorem}{ThmUniformConsistencyLambda}\label{thm:unifrom-consistency-lambda}
    Under same conditions in \Cref{thm:risk-est}, when $\psi\neq 1$, it holds that $\sup_{\lambda\in\Lambda}|\tR_{M,\lambda}^{\sub} -R_{M,\lambda}| \asto  0$ and $\sup_{\lambda\in\Lambda}|\tR_{M,\lambda}^{\full} -R_{M,\lambda}| \asto  0$.
\end{restatable}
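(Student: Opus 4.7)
The plan is to lift the pointwise consistency of \Cref{thm:risk-est} to uniform consistency over the compactified interval $\Lambda=[0,\infty]$, where throughout $\bullet\in\{\sub,\full\}$ denotes either variant. The argument has three main ingredients: continuity of the deterministic limit $\lambda\mapsto R_{M,\lambda}$, stochastic equicontinuity of $\lambda\mapsto \tR^{\bullet}_{M,\lambda}$, and pointwise convergence at a countable dense subset of $\Lambda$, all assembled by an $\varepsilon$-net / sandwich argument.

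First, I would verify that $R_{M,\lambda}$ is continuous on $\Lambda$. Continuity on $(0,\infty)$ is immediate from the explicit resolvent expression $\hbeta_{m,\lambda}=(\bX^\top\bL_{I_m}\bX/k+\lambda\bI_p)^{-1}\bX^\top\bL_{I_m}\by/k$ via dominated convergence. At $\lambda=\infty$, each base predictor tends to the zero vector, so $R_{M,\lambda}\to\EE[y^2]$. At $\lambda=0$, the hypothesis $\psi\neq 1$ together with standard random matrix theory (Bai--Yin type estimates) guarantees that the smallest nonzero singular value of $\bX^\top\bL_{I_m}\bX/k$ is bounded away from zero almost surely in the limit, so the ridgeless estimator and the associated risk are well defined and $\lim_{\lambda\downarrow 0}R_{M,\lambda}=R_{M,0}$.

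Second, I would establish a stochastic equicontinuity estimate for $\tR^{\bullet}_{M,\lambda}$ on $\Lambda$. The numerator and denominator of $\hR^{\bullet}_{m,\ell,\lambda}$ are $C^{1}$ functions of $\lambda\in(0,\infty)$ with derivatives expressible in terms of the resolvent $(\bX^\top\bL_{I_m}\bX/k+\lambda\bI_p)^{-1}$; controlling these derivatives by operator-norm estimates on the resolvent, together with a smooth reparametrization $\lambda\mapsto\lambda/(1+\lambda)$ of $\Lambda$ onto $[0,1]$ that tames endpoint blow-ups, provides a uniform-in-$\lambda$ modulus of continuity that holds almost surely for all sufficiently large $n$. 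Crucially, the denominator of $\hR^{\sub}_{m,\ell,\lambda}$ involves the factor $(1-\tr[\bS_m]/k)$, which can approach zero as $\lambda\downarrow 0$; here $\psi\neq 1$ enters once more, furnishing a deterministic lower bound via random matrix estimates on the smallest eigenvalue of $\bX^\top\bL_{I_m}\bX/k$. The denominator of $\hR^{\full}_{m,\ell,\lambda}$ is bounded below by $(1-k/n)^{2}$ plus a nonnegative correction, so this part of the analysis is comparatively benign.

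Combining pointwise convergence on a countable dense set $\{\lambda_j\}_{j\ge 1}\subset\Lambda$ from \Cref{thm:risk-est} (with the endpoints $\lambda\in\{0,\infty\}$ handled by direct limits and step 1) with the stochastic equicontinuity of step 2 and the uniform continuity of $R_{M,\cdot}$ from step 1 yields, for any $\varepsilon>0$, a finite $\varepsilon$-net $\{\lambda_{j_1},\ldots,\lambda_{j_N}\}$ on which the estimator and the limit differ by at most $\varepsilon$ simultaneously on an almost-sure eventually event; sandwiching $\tR^{\bullet}_{M,\lambda}$ and $R_{M,\lambda}$ between their values at neighboring grid points then gives $\sup_{\lambda\in\Lambda}|\tR^{\bullet}_{M,\lambda}-R_{M,\lambda}|\asto 0$. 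The principal technical obstacle is the uniform control at the ridgeless endpoint $\lambda=0$: without $\psi\neq 1$, both $(1-\tr[\bS_m]/k)$ in $\hR^{\sub}_{m,\ell,\lambda}$ and the limiting risk itself may exhibit singular behavior, and the condition $\psi\neq 1$ is precisely what enables random matrix theory to provide the quantitative spectral gap at the origin needed throughout.
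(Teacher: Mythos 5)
Your proposal is correct and takes essentially the same route as the paper: pointwise consistency (the paper's \Cref{thm:risk-est}, proved via deterministic equivalents), almost-sure equicontinuity of the estimator and the limit on the compactified interval $\Lambda=[0,\infty]$ (the paper's \Cref{lem:boundary}, which controls $\partial/\partial\lambda$ of numerator and denominator and uses $\psi\neq 1$ to keep the ridgeless denominator away from zero), and finally a uniform-convergence argument for equicontinuous pointwise-converging sequences — the paper invokes Theorem 21.8 of \citet{davidson1994stochastic}, which is exactly the $\varepsilon$-net sandwich you describe.
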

The restriction on $\psi$ is because both the risk and the risk estimate diverge when $\psi = 1$ for the ridgeless predictor when $\lambda=0$. 
This divergence phenomenon has been studied in several recent works; see, e.g., \cite{hastie2022surprises}. 

\subsection{Guarantees for corrected GCV}

The average degrees of freedom \eqref{eq:def-tdf} reduces to $\tdf_M = M^{-1}\sum_{m=1}^M \tr(\bS_m)$.
The corrected GCV from \eqref{eq:cgcv-proposal} and \eqref{eq:cgcv-proposal-sub} for ridge regression are given for $\est\in\{\sub,\full\}$ by:
\begin{equation}
    \hR_{M,\lambda}^{\cgcv,\est}:=
    \tR^{\gcv}_{M,\lambda}
        - \frac{1}{M} \left\{ \frac{(M^{-1}\sum_{m=1}^M \tr(\bS_m)/n)^2}{(1-M^{-1}\sum_{m=1}^M \tr(\bS_m)/n)^2} 
        \bigg(\frac{n}{k} - 1\bigg)
        \frac{1}{M}\sum_{m=1}^M \hR_{m,m}^\est \right\},\label{eq:asym-corr-term}
\end{equation}
where $\tR^{\gcv}_{M,\lambda}=  (1-M^{-1}\sum_{m=1}^M \tr(\bS_m)/n)^{-2} \|\by-\bX \tbeta_{M}\|_2^2/n$ is the GCV estimator and the correction term $\hR_{m,m, \lambda}^\est$ is given, for $\est\in\{\sub,\full\}$, by:
\begin{align*}
    \hat{R}_{m, m, \lambda}^\sub
    &=
        \frac{\|\by_{I_m} - \bX_{I_m} \hat{\bbeta}_{m,\lambda}\|_2^2/k}{(1-M^{-1}\sum_{m=1}^M \tr(\bS_m)/k)^2},\\
    \hat{R}_{m, m, \lambda}^\full
    &=
        \frac{\|\by-\bX\hbeta_{m,\lambda}\|_2^2/n}{(1-M^{-1}\sum_{m=1}^M \tr(\bS_m)/n)^2 + (M^{-1}\sum_{m=1}^M \tr(\bS_m)/n)^2 \cdot ({n}/{k} - 1)}.
\end{align*}

\begin{restatable}[Uniform consistency of corrected GCV in $\lambda$]{theorem}{PropCorrectGCV}\label{prop:correct-gcv}
    Under the same conditions in \Cref{thm:risk-est},
    it holds that
    $| \hR_{M,\lambda}^{\cgcv,\sub} - R_{M,\lambda} | \asto 0$ 
    and 
    $| \hR_{M,\lambda}^{\cgcv,\full} - R_{M,\lambda} | \asto 0$.
    Moreover, when $\psi\neq 1$, it holds that 
    $\sup_{\lambda\in\Lambda}|\hR_{M,\lambda}^{\cgcv,\sub} - R_{M,\lambda}| \asto  0$
    and
    $\sup_{\lambda\in\Lambda}|\hR_{M,\lambda}^{\cgcv,\full} - R_{M,\lambda}| \asto  0$.
\end{restatable}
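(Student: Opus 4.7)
\textbf{Proof plan for \Cref{prop:correct-gcv}.}
The strategy is to reduce the consistency of $\hR_{M,\lambda}^{\cgcv,\est}$ to the already-established consistency of the intermediate full-estimator $\tR_{M,\lambda}^{\full}$ from \Cref{thm:risk-est,thm:unifrom-consistency-lambda}, by making the heuristic derivation following \Cref{th:correction_gcv} rigorous. Starting from the identity
\begin{equation*}
\frac{\|\by-\bX\tbeta_M\|_2^2}{n}
= \frac{1}{M^2}\sum_{m,\ell\in[M]} \frac{d_{m,\ell}^{\full}}{n}\cdot \hR_{m,\ell,\lambda}^{\full},
\end{equation*}
the plan is to show that the coefficient $d_{m,\ell}^{\full}/n$ concentrates, uniformly in $\lambda\in\Lambda$, around $(1-\tdf_M/n)^2$ for $m\ne\ell$ and around $(1-\tdf_M/n)^2 + (n/k-1)(\tdf_M/n)^2$ for $m=\ell$. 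Plugging these into the identity and dividing by $(1-\tdf_M/n)^2$ will yield
\begin{equation*}
\tR^{\gcv}_{M,\lambda}
= \frac{1}{M^2}\sum_{m,\ell}\hR_{m,\ell,\lambda}^{\full}
+ \frac{1}{M}\frac{(\tdf_M/n)^2}{(1-\tdf_M/n)^2}\Bigl(\frac nk-1\Bigr)\frac{1}{M}\sum_{m}\hR_{m,m,\lambda}^{\full} + o(1),
\end{equation*}
whose first sum is exactly $\tR_{M,\lambda}^{\full}$. Subtracting the correction term in \eqref{eq:asym-corr-term} then gives $\hR_{M,\lambda}^{\cgcv,\full}=\tR_{M,\lambda}^{\full}+o(1)$, after which \Cref{thm:risk-est,thm:unifrom-consistency-lambda} deliver (uniform) consistency.

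The two concentration ingredients to establish are: (i) $|I_m\cap I_\ell|/n \to c^2/\phi$ almost surely (pointwise and uniformly in $m,\ell$) under \Cref{assu:sampling}, and for $m=\ell$ simply $|I_m|/n\to c/\phi$ (where $c=\lim k/n$); these are standard hypergeometric concentration bounds, independent of $\lambda$. (ii) $|\tr(\bS_m)/n - v(\lambda)|\asto 0$ uniformly in $\lambda\in\Lambda$ for a deterministic function $v(\lambda)$; this uses standard random-matrix-theory results for the resolvent of $\bX^\top\bL_{I_m}\bX/k$ (e.g., those leveraged in the proofs of \Cref{thm:risk-est,thm:unifrom-consistency-lambda}) combined with the monotonicity of $\lambda\mapsto\tr(\bS_m(\lambda))$ in $\lambda$ to upgrade pointwise convergence to uniform convergence via a Glivenko–Cantelli–type argument. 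Together, these guarantee the uniform expansion of $d_{m,\ell}^{\full}/n$ above, with remainder $o(1)$ uniformly in $\lambda$.

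To handle $\hR_{M,\lambda}^{\cgcv,\sub}$, the plan is to write
\begin{equation*}
\hR_{M,\lambda}^{\cgcv,\sub}-\hR_{M,\lambda}^{\cgcv,\full}
= \frac{1}{M}\frac{(\tdf_M/n)^2}{(1-\tdf_M/n)^2}\Bigl(\frac nk-1\Bigr)\frac{1}{M}\sum_m\bigl(\hR_{m,m,\lambda}^{\full}-\hR_{m,m,\lambda}^{\sub}\bigr),
\end{equation*}
and show the right-hand side tends to zero using \Cref{thm:risk-est} applied with $M=1$ and $\ell=m$ (both $\hR_{m,m,\lambda}^{\sub}$ and $\hR_{m,m,\lambda}^{\full}$ are consistent for $R_{m,m,\lambda}$). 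The uniform-in-$\lambda$ version then follows from \Cref{thm:unifrom-consistency-lambda} together with the uniform boundedness (away from zero and infinity) of the prefactor, which in turn follows from the uniform convergence of $\tdf_M/n$ to $v(\lambda)\in[0,c/\phi)$ whenever $\psi\ne 1$.

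The main obstacle will be obtaining genuinely uniform (in $\lambda\in[0,\infty]$) versions of the concentration statements, particularly near $\lambda=0$ in the overparametrized regime, where the ridgeless limit requires care with $\bS_m=\bX_{I_m}(\bX_{I_m}^\top\bX_{I_m}/k)^+\bX_{I_m}^\top/k$; exclusion of $\psi=1$ addresses the divergence of the resolvent there, mirroring the restriction already imposed in \Cref{thm:unifrom-consistency-lambda}. A second technical point is ensuring the prefactor $(\tdf_M/n)^2/(1-\tdf_M/n)^2$ stays bounded uniformly, which is where the condition $\psi\ne 1$ together with $\lambda\ge 0$ is essential since it bounds $v(\lambda)$ away from $1$ uniformly. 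Beyond these, the argument is essentially algebraic and leverages the already-proved intermediate-estimator consistency as a black box.
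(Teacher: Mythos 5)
Your plan is correct and, in substance, tracks the paper's own proof of the uniform-consistency statement: the paper also expresses $\tR^{\gcv}_{M,\lambda}$ as $M^{-2}\sum_{m,\ell} N_{m,\ell}^{\full}/(1-\tdf_M/n)^2$, identifies the mismatch between $(1-\tdf_M/n)^2$ and the diagonal denominator $D_{m,m}^{\full}$ as exactly the correction term, and then appeals to the intermediate-estimator results and the equicontinuity machinery of \Cref{lem:boundary}. The main stylistic difference is that you keep everything at the sample level --- using the exact algebraic identity, concentrating $d_{m,\ell}^{\full}/n$ around its ``symmetrized'' value, and reusing \Cref{thm:risk-est,thm:unifrom-consistency-lambda} as a black box --- whereas the paper works in terms of the deterministic asymptotic equivalents $\sN_{m,\ell}^{\full},\sD_{m,\ell}^{\full},\sR_{m,\ell}$ and verifies the cancellation at the level of limits. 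Your route is arguably cleaner in that it isolates which new concentration facts are really needed (concentration of $|I_m\cap I_\ell|$ and of $\tr(\bS_m)/n$, both of which the paper also invokes via \citet{du2023subsample}); the paper's route has the advantage of producing closed-form asymptotic expressions for $\tR^{\gcv}_{M,\lambda}$ and the correction term as a by-product, which is used to give the separate pointwise argument.

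Two small points to tighten. First, the claim that $\psi\neq 1$ bounds the prefactor $(\tdf_M/n)^2/(1-\tdf_M/n)^2$ uniformly is not quite the right invariant: what you actually use is $\tdf_M/n\le k/n\to\phi/\psi$, so the prefactor is uniformly bounded whenever $\phi<\psi$ strictly; when $\phi=\psi$ (i.e.\ $k=n$) the prefactor can blow up near $\lambda=0$ in the overparametrized case, but then $n/k-1=0$ and the correction vanishes identically, so that case must be split off rather than folded into the $\psi\neq 1$ restriction. Second, for the $\sub$-version you need the extra condition $\sqrt{n}/k=\cO(1)$ carried over from \Cref{thm:risk-est}; this is already in the hypotheses, but worth flagging that it enters only through the $\hR_{m,m,\lambda}^{\sub}$ factors in the correction term. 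With these adjustments the argument goes through.
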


\Cref{prop:correct-gcv} specializes the results of \Cref{th:correction_gcv} for a general convex penalty to the ridge predictor.
In particular, the correction term decreases as $1/M$.
On the other hand, compared to \Cref{th:correction_gcv}, it also extends the corrected GCV to ridgeless predictors.
Some remarks are as follows.

\begin{remark}[Consistency of GCV under general models]
    The GCV estimator $\tR^{\gcv}_{M,\lambda}$ is shown to be consistent for estimating the prediction risk $R_M$ of the ridge predictors \citep{patil2021uniform} (when $M=1$ or $k=n$) and in the infinite ensemble \citep{du2023subsample}.
    Therein, the infinite ensemble is defined by letting the ensemble size $M$ tend to infinity for any given sample size $n$ and the feature dimension $p$.
    Since the correction term \eqref{eq:asym-corr-term} vanishes as $M$ tends to infinity, \Cref{prop:correct-gcv} above also indirectly shows that the corrected GCV estimator is consistent in the infinite ensemble under more general data models.
    On the other hand, the correction term in \eqref{eq:asym-corr-term} implies that the GCV estimator $\tR^{\gcv}_{M,\lambda}$ is generally inconsistent for finite ensemble sizes when $n>k$ and $\tdf_M > 0$.
    In the case of ridge regression, the latter condition is generally satisfied because for $\lambda \in [0, \infty)$ because
    \[\tdf_M=M^{-1}\sum_{m=1}^M\tr(\bS_m) = M^{-1} \sum_{m=1}^{M} \tr[\bX_{I_m}^{\top} \bX_{I_m}/k (\bX_{I_m}^{\top} \bX_{I_m}/k+\lambda \bI_p )^{-1}]
    > 0,\]
    unless all the singular values of $\bX_{I_m}$ are zero for all $m\in[M]$.
    Compared to \citet[Proposition 3]{du2023subsample}, which shows the inconsistency of $\tR^{\gcv}_{M,\lambda}$ for $M=2$ and $\lambda=0$ under a well-specified linear model, our result is more general because it allows for any ridge parameter $\lambda \ge 0$, any ensemble size $M \ge 2$, and an arbitrary response model.
\end{remark}

\begin{remark}[Model selection]
    The ensemble ridge predictors involve three key parameters: the ridge penalty parameter $\lambda$, the subsample size $k$, and the ensemble size $M$.
    Questions related to the selection of the model of ridge ensembles over the subsample size $k$ and the ensemble size $M$ have already been discussed by \citet{patil2022bagging,du2023extrapolated}.
    Tuning over $M$ is relatively easy because the risk is decreasing in $M$ \citep{patil2022bagging}.
    For tuning over $\lambda$, \Cref{prop:correct-gcv} implies that the tuned risk using the corrected GCV estimator converges to the optimal risk in a grid of ridge penalties used to fit the ensemble.
    In other words, the optimality of the data-dependent model selection can be guaranteed by the corrected GCV estimators.
\end{remark}

\begin{remark}[Non-asymptotic versus asymptotic analyses]
    When restricted to ridge estimators, the asymptotic analysis presented in the current section reveals some benefits compared to the finite-sample analysis in \Cref{sec:finite-sample-analysis}.
    First, the asymptotic analysis in the current section relaxes the assumptions on the data-generating process.
    More specifically, no explicit response-feature model is needed, except for the bounded moment assumptions and the feature structure assumed in \Cref{asm:model,asm:feat}.
    Second, the consistency provided in the current section does not require strong convexity as in \Cref{assu:penalty}.
    It also applies to the ridgeless ensembles when $\lambda = 0 $, in which case the base predictor reduces to the minimum $\ell_2$-norm least squares.
    Last but not least, the asymptotic analysis enables the uniformity over $\lambda \in [0,\infty]$.
\end{remark}

\subsection{Proof outlines}

There are three key steps involved in proving the uniform consistency (in $\lambda$) of the intermediate and corrected GCV estimator in \Cref{thm:unifrom-consistency-lambda,prop:correct-gcv}, respectively, for ridge ensembles.

\begin{enumerate}[leftmargin=7mm]
    \item
    Deriving the asymptotic limit of the prediction risk:
    The first step involves deriving asymptotic equivalents for the risk of the ridge ensemble predictor.
    We build upon prior results on the risk analysis of ridge ensembles in \cite{du2023subsample,patil2023generalized} and derive these asymptotic risk equivalences, as done in \Cref{lem:risk}.

    \item
    Deriving the asymptotic limit of the intermediate and CGCV estimates:
    The second step involves deriving asymptotic equivalents for each of the proposed intermediate and corrected GCV estimators.
    Since each of these estimators is a ratio of terms that involve a version of training error and a denominator correction, we obtain each of these asymptotics separately.
    This is done in \Cref{lem:num-est-1,lem:den-est-1,lem:num-est-2,lem:den-est-2}.
    
    \item
    Establishing the pointwise consistency in $\lambda$ by matching the two limits and then lifting to uniform convergence in $\lambda$:
    In the last step, we show that the asymptotic limits obtained in the first two steps match with each other, which shows the pointwise consistency in $\lambda$ (\Cref{thm:risk-est}).  
    The matching uses the analytic properties of parameters related to certain fixed-point equations.
    To establish uniform convergence, we lift the pointwise consistency by showing that the family of functions under consideration forms an equicontinuous family and appeals to a stochastic version of the Arzela-Ascoli theorem.
    This is done in \Cref{lem:boundary}. 
\end{enumerate}

Throughout the proofs, the notion of asymptotic equivalence plays a crucial role (see \Cref{sec:preparation-asm-consistency} for more details).
We refer the reader to \cite{patil2022bagging,patil2023generalized} for background and further calculus of asymptotic equivalents.
In our proofs, we build on these equivalents to obtain equivalents of various bias and variance components of the intermediate and corrected GCV estimators (see \Cref{app:subsec:asym-technical-lemmas} for more details).
Finally, all our results are model-free.
A component of this analysis involves an intermediate step showing quadratic concentration with uncorrelated components in \Cref{lem:quad-uncorr} that extends the previous results of \citet[Lemma A.16]{bartlett2021deep} and \citet[Lemma D.3]{patil2023generalized}, which may be of independent interest.

\subsection{Numerical illustrations: Non-linear models with non-Gaussian designs}
\label{subsec:asym-numerical}

To numerically evaluate our proposed corrected GCV estimator, we generate data from a nonlinear model $y=\bx^{\top}\bbeta_0 + (\|\bx\|_2^2/p - 1 ) + \epsilon$, where $\bx = \bSigma^{-1/2}\bz$ with $z_1,\ldots,z_p\overset{\text{i.i.d.}}{\sim}t_5$ follows a $t$-distribution with degrees of freedom 5 and $\bSigma=\bSigma_{\textsc{ar1},\rho=0.25}=(\rho^{|i-j|})_{i,j\in[p]}$ is the covariance matrix of the AR(1) process, $\bbeta_0$ is the average of the top-5 eigenvectors of $\bSigma$, and $\epsilon\sim\cN(0,1)$.
This setup has a linearized signal-to-noise ratio of 1.67.
We set the sample size $n=6000$ and the feature dimension $p=1200$.

Our first experiment is to compare the naive GCV estimator $\tR_M^{\gcv}$ and the corrected GCV estimator $\tR_M^{\cgcv,\full}$ as a function of the subsample size $k$, or equivalently, the subsample aspect ratio $p/k$.
In \Cref{fig:ridge-lasso-psi}, we visualize the results for both the ridge and lasso predictors with different ensemble sizes $M$.
For ridge predictors, we see that the naive GCV estimator is consistent when $M=1$.
However, it is extremely unstable when the subsample aspect ratio $p/k$ is close to 1, which is also the interpolation threshold for the ridgeless predictor.
For the lasso predictors, the naive GCV estimates are not even close to the prediction risks when the subsample size is small, even for ensemble size $M=1$.
Nevertheless, the corrected GCV estimates are in line with the true prediction risks for different ensemble sizes $M$.
Although our theoretical results do not cover the lasso, the numerical results in the current subsection indicate the generality and robustness of our proposed corrected GCV estimator beyond ridge ensembles.
On the other hand, as shown in \Cref{fig:ridge-M}, the inconsistency of the naive GCV vanishes as the ensemble size $M$ tends to infinity.

\begin{figure}[!t]
    \centering
    \includegraphics[width=0.9\textwidth]{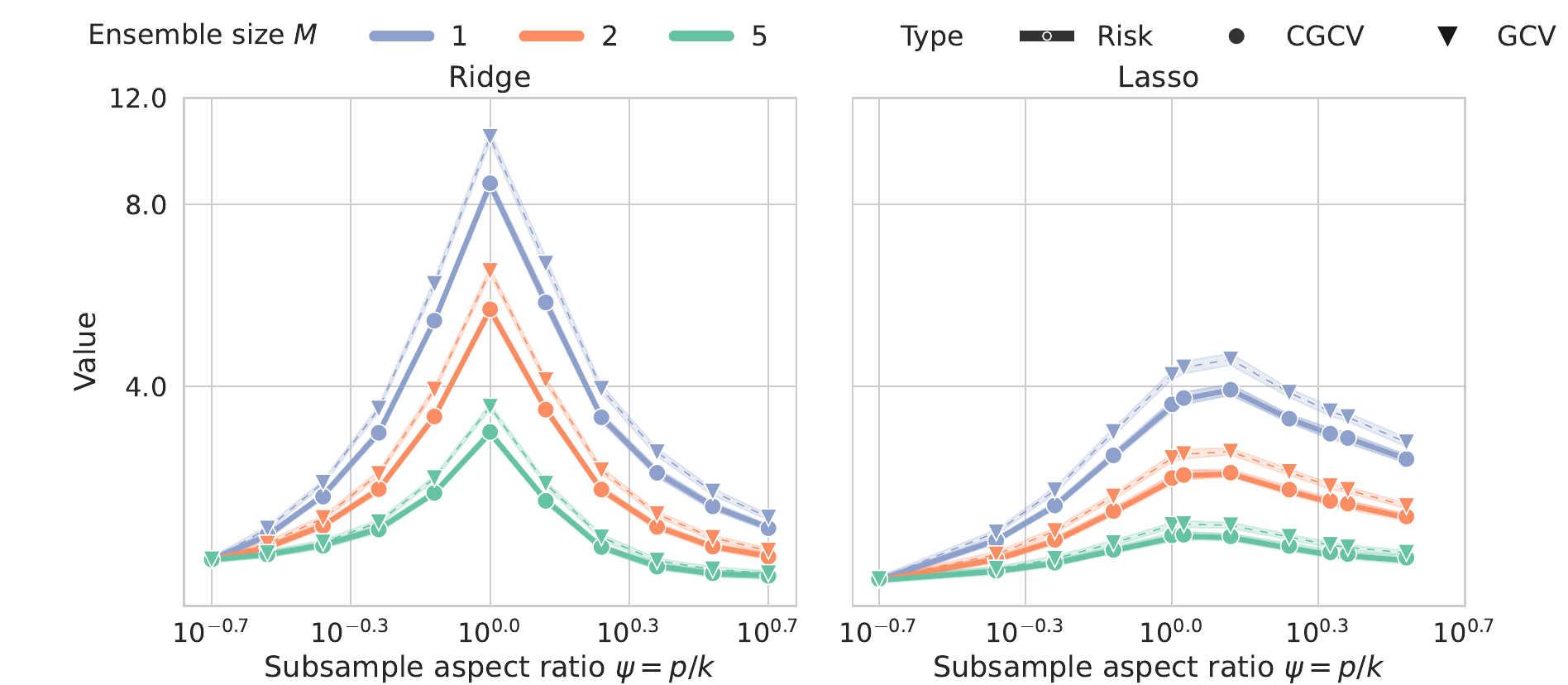}
    \caption{
    \textbf{CGCV is consistent for finite ensembles across different subsample aspect ratios.}
    The GCV estimates for the ridge and lasso ensembles with penalty $\lambda=10^{-2}$ and varying subsample size in a problem setup of \Cref{subsec:asym-numerical}, over 50 repetitions of the datasets.
    The left panel shows the results for ridge predictors.
    The right panel shows the results for the lasso predictors.
    }\label{fig:ridge-lasso-psi}
\end{figure}

Finally, we compare the two estimators as a function of the ridge penalty $\lambda$ in \Cref{fig:ridge-lambda}.
As we can see, for ensemble size $M=1$, both estimators can track the true prediction risk well in either underparameterized or overparameterized regimes.
However, when $M>1$, the naive GCV estimator does not provide a valid estimate for the true risk.
The inconsistency becomes more significant when either the subsample size $k$ is small relative to the feature dimension $p$ or the ridge penalty is close to zero.
Similar observations are also present for the lasso estimators; see \Cref{fig:lasso-lambda}.
On the other hand, our corrected GCV estimator is uniformly consistent in various values of $\lambda$ for all ensemble sizes $M$, as proved in \Cref{prop:correct-gcv}. 

\begin{figure}[!t]
    \centering
    \includegraphics[width=0.9\textwidth]{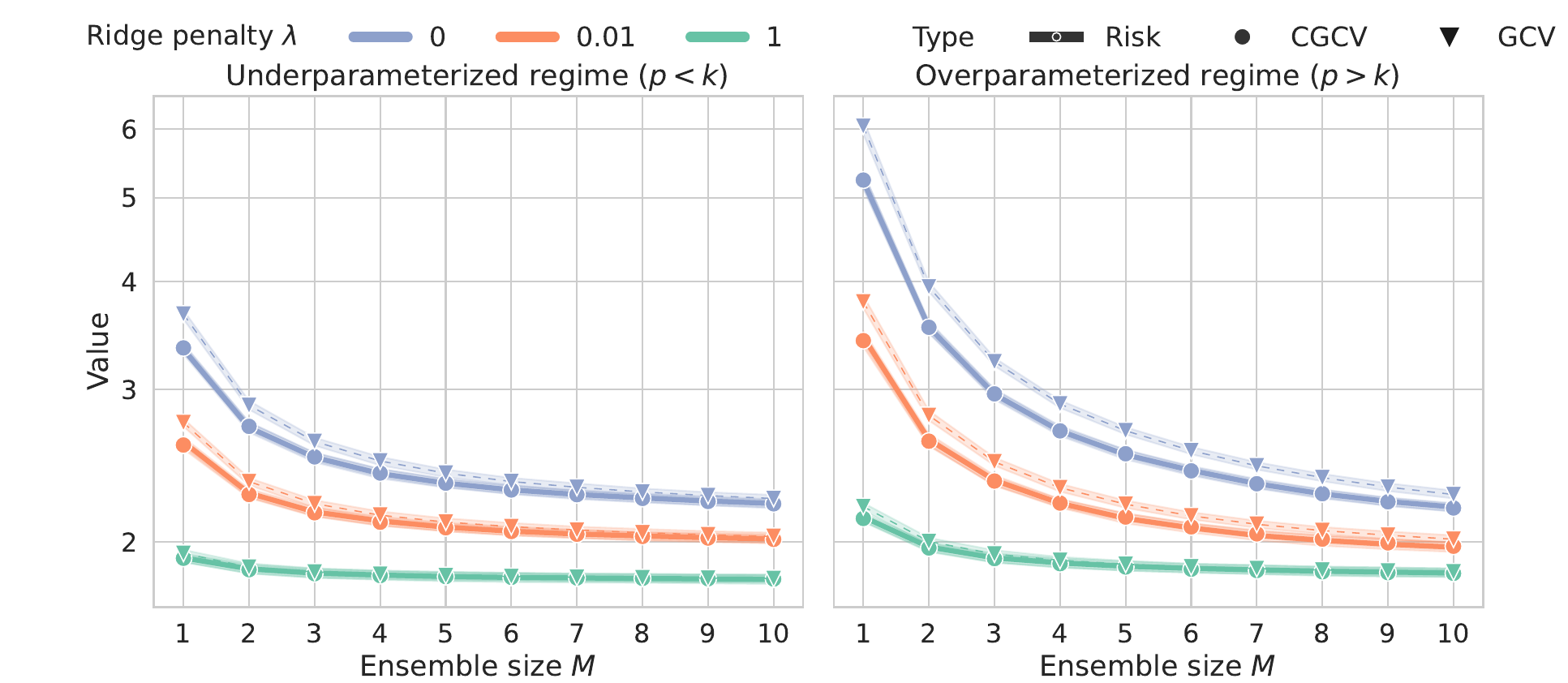}
    \caption{
    \textbf{GCV gets closer to the risk as ensemble size increases.}
    The GCV estimates for ridge ensemble with varying ensemble size in a problem setup of \Cref{subsec:asym-numerical} over 50 repetitions of the datasets.
    Here, the feature dimension is $p=1200$.
    The left panel shows the case when the subsample sizes are $k=2400$ (an underparameterized regime).
    The right panel shows the case when the subsample sizes are $k=800$ (an overparameterized regime).
    }\label{fig:ridge-M}
\end{figure}

\begin{figure}[!t]
    \centering
    \includegraphics[width=0.9\textwidth]{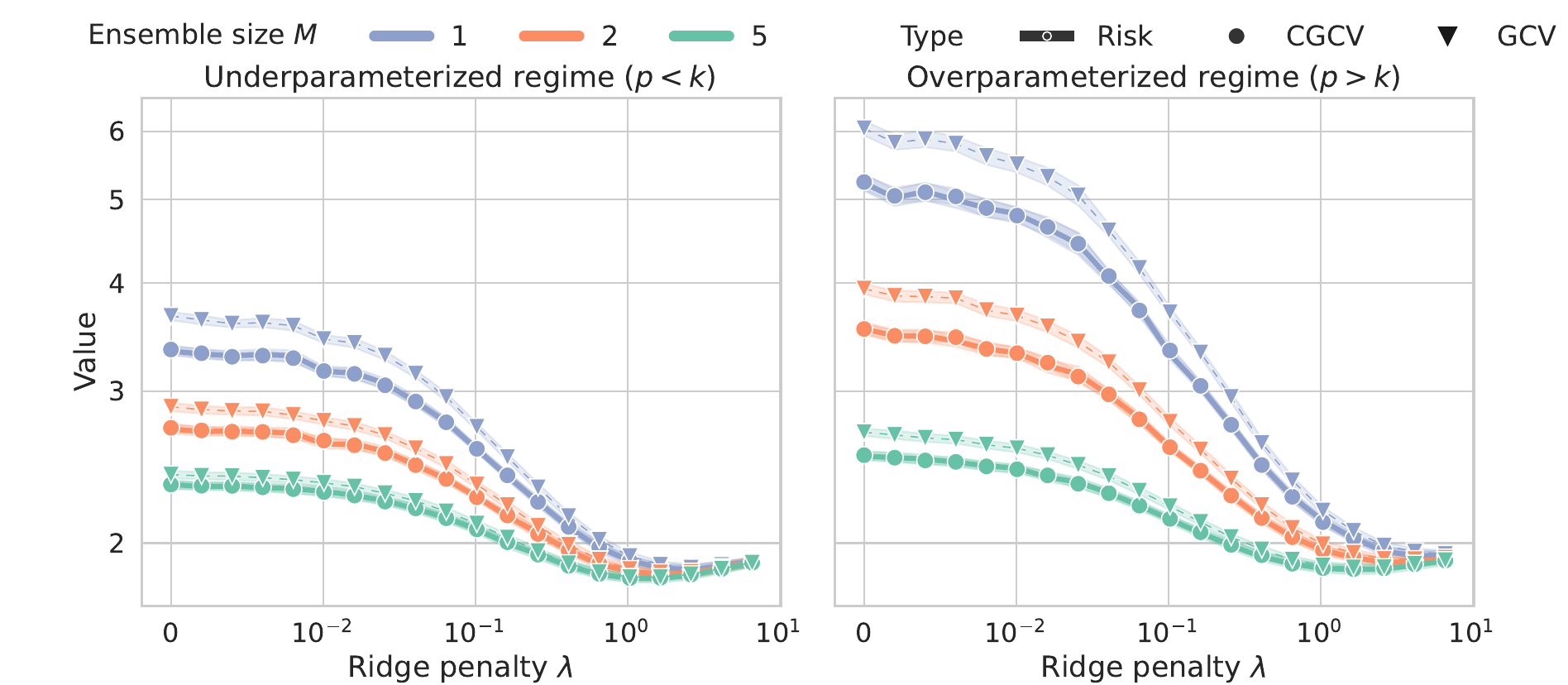}
    \caption{
    \textbf{CGCV is consistent for finite ensembles across different regularization levels.}
    The GCV estimates for ridge ensemble with varying ridge penalty $\lambda$ in a problem setup of \Cref{subsec:asym-numerical} over 50 repetitions of the datasets.
    Here, the feature dimension is $p=1200$.
    The left panel shows the case when the subsample sizes are $k=2400$ (an underparameterized regime).
    The right panel shows the case when the subsample sizes are $k=800$ (an overparameterized regime).
    }
    \label{fig:ridge-lambda}
\end{figure}

\section{Discussion}
\label{sec:discussion}

The primary goal of this paper has been to examine GCV for the risk estimation of ensembles of penalized estimators. 
Our main contribution is to show that ordinary GCV fails to be consistent for finite ensembles and to formulate a corrected GCV that is consistent for any finite ensemble.
The main high-level idea in constructing the correction is deriving term-by-term corrections for the ordinary GCV estimator, which subsequently leads to the form of corrected GCV. 
Assuming Gaussianity and linear model, we derived non-asymptotic bounds for our corrected GCV estimator that enjoys $\sqrt{n}$-consistency and provides explicit dependence on the subsample size $k$, ensemble size $M$, and the form and level of regularization penalty $\lambda$. 
These assumptions, although seemingly stringent, are amenable to a wide range of general convex penalized estimators. 
Moreover, for ensembles of ridge regression, we prove the asymptotic consistency of our estimator under very mild assumptions and, in particular, without assuming Gaussianity and linear response model.

As alluded to earlier, our intermediate risk estimators can incorporate several extensions and modifications, thereby further broadening their applicability.
First, we can use different subsample sizes for different ensemble estimators.
Second, we can use different types of penalties or different regularization parameters for different base estimators.
In such cases, the components of the \ovlp-estimator \eqref{eq:risk-decomp-est-1} and the full-estimator \eqref{eq:risk-decomp-est-2} are still consistent.
Note that additive bounds, as indicated in \Cref{th:sub_full,th:correction_gcv}, already accommodate this flexibility.
It is also possible to establish the multiplicative bounds assuming certain restrictions on the component risks.
Finally, provided that the average cardinality concentrates and the cardinality of the intersection is lower-bounded with high probability, a slightly general form of CGCV is expected to work.

Although the techniques of the present paper can be applied to
estimate the risk of hybrid ensembles (with the $I_m$ having different cardinalities and the $\hbeta_m$ having different penalty functions), it poses challenges when it comes to tuning the hyperparameters.
Tuning an ensemble model, especially when equipped with a plethora of hyperparameters, is computationally challenging. 
In such cases, a greedy strategy often employed is to minimize the risk for each subsample estimator separately. 
However, it is important to note that the optimal regularization parameter $\lambda$ for an individual ensemble component (i.e., for a fixed $m\in [M]$) may not be the best choice for the entire ensemble (i.e., after averaging over $m\in[M]$).
This computational complexity of tuning ensembles becomes especially challenging, as ensemble components employ different penalties or regularization amounts.
For this reason, we recommend training each individual estimate $\hbbeta_m$ with the same penalty and tuning parameter (say, $\lambda$) and selecting the tuning parameter $\lambda$ for the ensemble average $\tbeta_{M} = {M}^{-1}\sum_{m=1}^M \hbbeta_m$ in \eqref{eq:def-M-ensemble} that yields the smallest corrected GCV criterion. 
Furthermore, one can also combine CGCV with the extrapolated CV method of \citet{du2023extrapolated} to tune over $k$ and $M$.

Our work opens up several avenues for future investigation.
We discuss three of them below.

\paragraph{Beyond squared loss for training.\hspace{-0.5em}}
We have considered penalized estimators trained using squared loss in \eqref{eq:def-hbeta}. 
One may consider penalized estimators trained on different loss functions, such as the Huber loss or other robust loss functions, especially if the noise random variables $\epsilon_i$ are heavy-tailed and/or have infinite variance.
Estimators of the squared risk in this context are obtained in \cite{rad2018scalable,wang2018approximate,bellec2020out,bellec2022derivatives} for the non-ensemble setting, with the method in \citep{rad2018scalable,wang2018approximate} being applicable beyond the squared risk.
It would be of interest to extend (with necessary modifications) our corrected GCV criterion to enable ensembles of base estimators $\hbbeta_m$ trained on robust loss functions. 
We leave this research direction for future work.

\paragraph{Beyond squared loss for testing.\hspace{-0.5em}}
Our primary focus in this paper has been centered on evaluating the squared prediction risk.
The special form of the squared risk allows the useful decomposition in \eqref{eq:risk-decomposition}, which subsequently leads to the estimators \eqref{eq:risk-decomp-est-1}, \eqref{eq:risk-decomp-est-2} and corrected GCV.
In practice, one may be interested in error metrics other than squared error.
More broadly, one may be interested in functionals of the out-of-sample error distribution, such as quantiles of the error distribution.
For regular penalized estimators, one can construct estimators for such functionals by constructing estimators for the out-of-sample distribution first and then subsequently using the plug-in functionals of these distributions.
See \cite{patil2022estimating} for an example that uses GCV-based correction of the in-sample residuals.
Another avenue for loss estimation beyond the square risk is \citep{rad2018scalable,wang2018approximate}, wherein the proposed Approximate Leave-One-Out handles the non-ensemble setting.
Whether one can further additively correct these residuals and construct consistent estimators of the out-of-sample distribution for the ensemble of penalized estimators is an interesting direction of future work.

\paragraph{Risk estimation for generic ensembles.\hspace{-0.5em}}
The subsamples $I_1,\ldots,I_M$ are assumed to be sampled without replacement throughout this paper.
It is of interest to investigate risk estimators for other ensemble techniques like bagging (where we sample with replacement) or other random weighting schemes, i.e., where the data-fitting loss in \eqref{eq:def-hbeta} is replaced by the loss $\sum_{i=1}^n w_{m,i}(y_i-\bx_i^\top\bbeta)^2$ for weights $(w_{m,i})_{i\in[n]}$ sampled independently of the data $(\bX,\by)$. 
This includes weights $w_{m,i} \overset{\text{i.i.d.}}{\sim}\mathrm{Poisson}(1)$ or $(w_{m,1},\ldots,w_{m,n}) \sim $ Multinomial$(n,n,n^{-1})$ typically used in the pair bootstrap; see \cite{karoui2016can} for a study of such weights in unregularized estimation in the proportional regime.

\section*{Acknowledgements}

We warmly thank Arun {K.} Kuchibhotla for orchestrating this ensemble as well as many insightful discussions and feedback while developing the ideas in the paper.
We also warmly thank Ryan {J.} Tibshirani, Alessandro Rinaldo, Yuting Wei, Matey Neykov, Daniel LeJeune for many insightful discussions surrounding generalized cross-validation and ensembles in general (and especially of the ridge and lasso estimators).
Finally, we thank the anonymous reviewers for their constructive feedback and suggestions, leading to several technical and presentational improvements throughout the paper.

P.C.\ Bellec's research was partially supported by the NSF Grant
DMS-1945428.
P.\ Patil thanks the computing support provided by grant MTH230020, which enabled experiments at the Pittsburgh Supercomputing Center Bridge-2 RM-partition.

\bibliographystyle{apalike}

\clearpage

\newcommand{\removelinebreaks}[1]{%
      \def\\{\relax}#1}
\def\titleRLB{\removelinebreaks{\titletext}}

\appendix
\clearpage

\def\titleRLB{\removelinebreaks{\titletext}}

\newgeometry{left=0.75in,top=0.6in,right=0.75in,bottom=0.25in,head=.1in,foot=.1in}

\begin{center}
\Large
{\bf
\framebox{Supplement}
}
\end{center}

\bigskip

This document serves as a supplement to the paper ``Correcting generalized cross-validation for arbitrary ensembles of penalized estimators.''
The initial (unnumbered) section outlines the supplement's structure and summarizes the general notation used in the main paper and supplement in \Cref{tab:notation}. 

\subsection*{Organization}

The content of this appendix is organized as follows.

\begin{itemize}
[leftmargin=1mm]
    \item In \Cref{app:finite-sample-analysis}, we provide proofs of results in \Cref{sec:finite-sample-analysis}.

    \begin{table}[!ht]
    \centering
    \begin{tabular}{l  l  l}
        \toprule
        \textbf{Section} & \textbf{Content} & \textbf{Purpose} \\
        \midrule
        \Cref{sec:derivatives_formulae} & \Cref{lem:derivative} &  Preparatory derivative formulae \\
        \addlinespace[0.2ex] \arrayrulecolor{black!25}
        \midrule
        \Cref{proof:th:sub_full} & \Crefrange{lem:psi_bound}{lem:denom_lower_bound} & Proof of \Cref{th:sub_full}  \\
        \addlinespace[0.2ex] \arrayrulecolor{black!25}
        \midrule
        \Cref{proof:th:correction_gcv} & \Crefrange{lem:cgcv_risk_close}{lem:concentration_df} & 
        Proof of \Cref{th:correction_gcv}
        \\
        \midrule
        \Cref{app:proof:prop:gcv_inconsistency}
        & \cellcolor{lightgray!25}
        & Proof of \Cref{prop:gcv_inconsistency} \\
        \addlinespace[0.2ex] \arrayrulecolor{black!25}
        \midrule
        \Cref{sec:technical-lemmas-nonasymp} & \Crefrange{prop:sure_bound}{lem:contraction_ratio} & Helper lemmas (and their proofs) used in the proofs of \Cref{th:sub_full,th:correction_gcv,prop:gcv_inconsistency} \\
        \midrule
        \Cref{app:elementary-inequality} & \Crefrange{lem:gaussian_oper_bound}{lem:concentration-intersection} & Miscellaneous useful facts used in the proofs of \Cref{th:sub_full,th:correction_gcv,prop:gcv_inconsistency} \\
        \midrule
        \Cref{sec:relaxing_strong_convexity} & \Cref{thm:relaxing-strong-convexity} & Relaxing \Cref{assu:penalty} in the underparameterized regime \\
        \addlinespace[0.2ex] \arrayrulecolor{black}
        \bottomrule
    \end{tabular}
    \end{table}
     
    \item In \Cref{app:asm-consistency}, we provide proofs of results in \Cref{sec:asm-consistency}.

    \begin{table}[!ht]
    \centering
    \begin{tabular}{l  l  l}
        \toprule
        \textbf{Section} & \textbf{Content} & \textbf{Purpose} \\
        \midrule
        \Cref{sec:preparation-asm-consistency}
        & \cellcolor{lightgray!25}
        & Preparatory definitions \\
        \addlinespace[0.2ex] \arrayrulecolor{black!25}
        \midrule
        \Cref{app:subsec:asym-thm:risk-est} & \Crefrange{lem:risk}{lem:boundary} &  Proofs of \Cref{thm:risk-est,thm:unifrom-consistency-lambda} (and proofs of \Crefrange{lem:risk}{lem:boundary}) \\
        \addlinespace[0.2ex] \arrayrulecolor{black!25}
        \midrule
        \Cref{app:subsec:asym-prop:correct-gcv} & \cellcolor{lightgray!25} & Proof of \Cref{prop:correct-gcv}  \\
        \addlinespace[0.2ex] \arrayrulecolor{black!25}
        \addlinespace[0.2ex] \arrayrulecolor{black!25}
        \midrule
        \Cref{app:subsec:asym-technical-lemmas} & \Crefrange{lem:risk-bias}{lem:quad-uncorr} & 
        Helper lemmas (and their proofs)
        used in the proofs 
        of \Cref{thm:risk-est,thm:unifrom-consistency-lambda,prop:correct-gcv} 
        \\
        \addlinespace[0.2ex] \arrayrulecolor{black}
        \bottomrule
    \end{tabular}
    \end{table}
    
    \item In \Cref{app:experiments-gaussian}, we provide additional illustrations for \Cref{sec:finite-sample-analysis} with Gaussian data.

    \begin{table}[!ht]
    \centering
    \begin{tabular}{l  l  l}
        \toprule
        \textbf{Section} & \textbf{Content} & \textbf{Purpose} \\
        \midrule
        \Cref{app:experiments-gaussian-sub-vs-full} & \Crefrange{fig:Fig2_sub_full_ElasticNet.pdf}{fig:Fig2_sub_full_k.pdf} &  Intermediate \ovlp- vs.\ full-estimators in $k$ and $\lambda$ for elastic net and lasso \\
        \addlinespace[0.2ex] \arrayrulecolor{black!25}
        \midrule
        \Cref{app:experiments-gaussian-sub-vs-full-CGCV} & \Crefrange{fig:Fig2_sub_full_ridge-cgcv.pdf}{fig:Fig2_sub_full_lasso.pdf} &  CGCV \ovlp- vs.\ full-estimators in $k$ and $\lambda$ for ridge, elastic net, and lasso \\
        \addlinespace[0.2ex] \arrayrulecolor{black!25}
        \midrule
        \Cref{app:experiments-gaussian-cgcv-vs-gcv-in-k} & \Cref{fig:surface-plot,fig:ElasticNet_k.pdf,fig:lasso_k.pdf} & CGCV vs.\ GCV in $k$ for ridge, elastic net, and lasso \\
        \addlinespace[0.2ex] \arrayrulecolor{black!25}
        \midrule
        \Cref{app:experiments-gaussian-cgcv-vs-gcv-in-M} & \Cref{fig:lasso-surface-plot,fig:ElasticNet_M.pdf,fig:lasso_M.pdf} & CGCV vs.\ GCV in $M$ for ridge, elastic net, and lasso \\
        \addlinespace[0.2ex] \arrayrulecolor{black!25}
        \midrule
        \Cref{app:experiments-gaussian-cgcv-vs-gcv-in-lambda} & \Cref{fig:lasso-lam,fig:lam-ridge,fig:lam-lasso} & CGCV vs.\ GCV in $\lambda$ for ridge, elastic net, and lasso \\
        \addlinespace[0.2ex] \arrayrulecolor{black!25}
        \midrule
        \Cref{sec:simulations-large-M} & \Cref{fig:fun_M_large_M} & CGCV vs. GCV with a large ensemble size $M$ for ridge, elastic net, and lasso \\
        \addlinespace[0.2ex] \arrayrulecolor{black!25}
        \midrule
        \Cref{sec:naive-gcv-inconsistency} & \Cref{fig:gcv_naive_lam} & Inconsistency for the GCV variant \eqref{eq:gcv-naive}  \\
        \addlinespace[0.2ex] \arrayrulecolor{black}
        \bottomrule
    \end{tabular}
    \end{table}

    \clearpage

    \item
    In \Cref{sec:numerical-non-gaussian}, we provide additional illustrations for \Cref{sec:finite-sample-analysis} with non-Gaussian data.

    \begin{table}[!ht]
    \centering
    \begin{tabular}{l  l  l}
        \toprule
        \textbf{Section} & \textbf{Content} & \textbf{Purpose} \\
        \midrule
        \Cref{app:experiments-rademacher} & \Cref{fig:Fig1_ridge_lasso_rademacher,fig:k_rademacher,fig:ridge-k_rademacher,fig:M_rademacher} &  Experiments with Rademacher distribution \\
        \addlinespace[0.2ex] \arrayrulecolor{black!25}
        \midrule
        \Cref{app:experiments-uniform} & \Cref{fig:Fig1_ridge_lasso_uniform,fig:k_uniform,fig:ridge-k_uniform,fig:M_uniform} & Experiments with uniform distribution  \\
        \addlinespace[0.2ex] \arrayrulecolor{black}
        \bottomrule
    \end{tabular}
    \end{table}

    \item
    In \Cref{app:experiments-nongaussian}, we provide additional numerical illustrations for \Cref{sec:asm-consistency} with non-Gaussian data.

    \begin{table}[!ht]
    \centering
    \begin{tabular}{l  l  l}
        \toprule
        \textbf{Section} & \textbf{Content} & \textbf{Purpose} \\
        \midrule
        \Cref{app:experiments-nongaussian-cgcv-vs-gcv-in-lambda} & \Cref{fig:elasticnet-lambda,fig:lasso-lambda} & CGCV vs. GCV in $\lambda$ for ridge, elastic net, and lasso \\
        \addlinespace[0.2ex] \arrayrulecolor{black}
        \bottomrule
    \end{tabular}
    \end{table}

\end{itemize}

\clearpage

\subsection*{Notation}

The notational conventions used in this paper are as follows.
Any section-specific notation is introduced in-line as needed.

\begin{table}[ht]
\centering
\caption{Summary of general notation used in the paper and the supplement.}
\begin{tabularx}{\textwidth}{L{3.5cm}L{16cm}}
\toprule
\textbf{Notation} & \textbf{Description} \\
\midrule
Non-bold lower or upper case & Denotes scalars (e.g., $k$, $\lambda$, $M$) \\
Bold lower case & Denotes vectors (e.g., $\bx$, $\by$, $\bbeta_0$) \\
Bold upper case & Denotes matrices (e.g., $\bX$, $\bSigma$, $\bL$) \\
Script font & Denotes certain limiting functions (e.g., $\sR$). \\
\arrayrulecolor{black!25}\midrule
$\mathbb{R}$, $\mathbb{R}_{\ge 0}$ & Set of real and non-negative real numbers \\
$[n]$ & Set $\{1, \dots, n\}$ for a natural number $n$ \\
\midrule
$|I|$ & Cardinality of a set $I$ \\
$(x)_{+}$ & Positive part of a real number $x$ \\
$\nabla f$, $\nabla^2 f$ & Gradient and Hessian of a function $f$ \\
$\ind_A$, $\PP(A)$ & Indicator random variable associated with an event $A$ and probability of $A$ \\
$\EE[X], \Var(X)$ & Expectation and variance of a random variable $X$ \\
\midrule
$\mathcal{V}^\perp$ & Orthogonal complement of a vector space $\cV$ \\
$\tr[\bA]$, $\bA^{-1}$ & Trace and inverse (if invertible) of a square matrix $\bA \in \RR^{p \times p}$ \\
$\mathrm{rank}(\bB)$, $\bB^\top$, $\bB^{+}$ & Rank, transpose and Moore-Penrose inverse of matrix $\bB \in \RR^{n \times p}$ \\
$\bC^{1/2}$ & Principal square root of a positive semidefinite matrix $\bC$ \\
$\bI_p$ or $\bI$ & The $p \times p$ identity matrix \\
\midrule
$\langle \bu, \bv \rangle$ & Inner product of vectors $\bu$ and $\bv$ \\
$\| \bu \|_{p}$ & The $\ell_p$ norm of a vector $\bu$ for $p \ge 1$ \\
$\|f\|_{L_p}$ & The $L_p$ norm of a function $f$ for $p \ge 1$ \\
\midrule
$\| \bA \|_{\mathrm{op}}$ & Operator (or spectral) norm of a real matrix $\bA$ \\
$\| \bA \|_{\mathrm{tr}}$ & Trace (or nuclear) norm of a real matrix $\bA$ \\
$\| \bA \|_F$ & Frobenius norm of a real matrix $\bA$ \\
\midrule
$X \lesssim Y$ & $X \le C Y$ for some absolute constant $C$ \\
$X \lesssim_\alpha Y$ & $ X \le C_\alpha Y$ for some constant $C_\alpha$ that may depend on ambient parameter $\alpha$ \\
$X = \cO_\alpha(Y)$ & $| X | \le C_\alpha Y$ for some constant $C_\alpha$ that may depend on ambient parameter $\alpha$  \\
$\bu \le \bv$ & Lexicographic ordering for vectors $\bu$ and $\bv$  \\
$\bA \preceq \bB$ & Loewner ordering for symmetric matrices $\bA$ and $\bB$ \\
\midrule
$\Op$ & Probabilistic big-O notation \\
$\op$ & Probabilistic little-o notation \\
$\bC \asympequi \bD$ & Asymptotic equivalence of matrices $\bC$ and $\bD$ (see \Cref{app:asm-consistency} for more details) \\
$\stackrel{\mathrm{d}}{=}$ & Equality in distribution \\
\midrule
$\dto$ & Denotes convergence in distribution \\
$\pto$ & Denotes convergence in probability \\
$\asto$ & Denotes almost sure convergence \\
\arrayrulecolor{black}\bottomrule
\end{tabularx}
\label{tab:notation}
\end{table}

Note: Throughout, $C$, $C'$ denote positive absolute constants.
If no subscript is present for norm $\| \bu \|$ of a vector $\bu$, then it is assumed to be the $\ell_2$ norm of $\bu$.
If a proof of a statement is separated from the statement, the statement is restated (while keeping the original numbering) along with the proof for convenience.

\restoregeometry

\newpage

\section[Proofs of \Cref{sec:finite-sample-analysis}]{Proofs in \Cref{sec:finite-sample-analysis}} \label{app:finite-sample-analysis}

\subsection{Preparatory derivative formulae}
\label{sec:derivatives_formulae}
For a pair of possibly overlapping subsamples $(I_m, I_\ell)$ (written as $(I, \Tilde{I})$ in this proof for easy of writing) of $\{1,2, \ldots, n\}$ with $|I_m|= |I_\ell|=k \le n$, 
we define 
$\hbeta$ and $\tbeta$ as the estimators using subsamples $(\bx_i, y_i)_{i\in I}$ and $(\bx_i, y_i)_{i\in \tilde I}$ respectively, i.e., 
\begin{align*}
    \hbeta := \argmin_{\bbeta\in \R^p} \frac{1}{k} \sum_{i\in I} (y_i - \bx_i^\top \bbeta)^2 + g(\bbeta), 
    \quad
    \text{and}
    \quad
    \tbeta := \argmin_{\bbeta\in \R^p} \frac{1}{k} \sum_{i\in \tilde I} (y_i - \bx_i^\top \bbeta)^2 + \tilde{g}(\bbeta),
\end{align*}
and we denote the corresponding residual and the degree of freedom by
$$
\br := \by - \bX \hbeta, \quad \tilde\br := \by-\bX\tbeta, \quad  
\df := \tr[\frac{\partial}{\partial \by} \bX\hbeta],\quad  \tdf := \tr[\frac{\partial}{\partial\by}\bX \tbeta]. 
$$
For clarity, we use the notation below throughout \Cref{app:finite-sample-analysis}:
$$
\bZ := \biggl[\bX \bSigma^{-1/2} \mathrel{\Big|} \sigma^{-1} \bepsilon\biggr] \in \R^{n\times(p+1)},\quad
\bh := \begin{pmatrix}
    \bSigma^{1/2}(\hbeta - \bbeta_0)
    \\
    -\sigma
\end{pmatrix} \in \R^{p+1}
, \quad \tilde\bh := \begin{pmatrix}
    \bSigma^{1/2}(\tbeta - \bbeta_0)
    \\
    -\sigma
\end{pmatrix}\in \R^{p+1}
$$
Under \Cref{assu:Gaussian-feature,assu:Gaussian-response}, 
the matrix $\bZ$ has i.i.d.\ $\cN(0,1)$ entries. The individual risk component $R_{m,\ell}$ in \eqref{eq:R_M} and the residuals $(\br, \tilde{\br})$ can be written as  functions of $(\bZ, \bh, \tilde\bh)$ as follows:
$$
(\hbeta-\bbeta_0)^\top\bSigma(\tbeta-\bbeta_0) + \sigma^2 = \bh^{\top}\tilde \bh, \quad \br = -\bZ\bh, \quad \tilde\br = -\bZ\tilde\bh. 
$$
Our first lemma below provides the derivative formula of $\bh$ with respect to $\bZ$, which is an important component of our proof. Similar derivative formulas have been studied in M-estimation \citep{bellec2022derivatives}, multi-task linear model \citep{bellec2021chi,tan2022noise}, and multinomial logistic model \citep{tan2023multinomial}.
\begin{restatable}{lemma}{LemDerivative}[Variant of \citet[Theorem 1]{bellec2022derivatives}]\label{lem:derivative}
Suppose the penalty $g$ satisfies \Cref{assu:penalty} with a constant $\mu>0$. Then there exists a matrix $\bB\in \R^{(p+1)\times (p+1)}$ depending on $(\bz_i)_{i\in I}$ such that 
\begin{equation}\label{eq:property_B}
    \|\bB\|_{\oper} \le (k\mu)^{-1}, \quad \rank(\bB) \le  p, \quad \tr[\bB] \ge 0
\end{equation}
and the derivative of $\bh$ with respect to $\bZ=(z_{ij})_{i\in [n], j\in[p+1]}$ is given by
\begin{equation}\label{eq:derivartive_formula}
    \text{for all } i\in [n] \text{ and } j\in [p+1], \quad \frac{\partial \bh}{\partial z_{ij}} =
\bB\be_j \be_i^\top \bL\br-\bB \bZ^{\top} \bL \be_i  \be_j^\top \bh  \quad \text{with} \quad \bL = \bL_I = \sum_{i\in I} \be_i\be_i^\top. 
\end{equation}
Furthermore, $\bV := \bI_n - \bZ \bB \bZ^\top \bL$ satisfies the following:
\begin{align}\label{eq:property_tr}
\df = k - \tr[\bL\bV], \quad 
0 < k \bigg(1+ \frac{\|\bL\bG\|_{op}^2}{k\mu}\bigg)^{-1} \le \tr[\bL\bV] \le k \quad \text{and} \quad \|\bL\bV\|_{\oper} \le 1,
\end{align}
where $\bG=\bX\bSigma^{-1/2}\in\R^{n\times p}$ is a standard Gaussian matrix. 
\end{restatable}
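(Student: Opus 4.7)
I will parametrize $\hbeta$ via $\bu := \bSigma^{1/2}(\hbeta - \bbeta_0) \in \RR^p$, so that the first $p$ entries of $\bh$ are $\bu$ and its last entry is the constant $-\sigma$, and write $\bW := \bX\bSigma^{-1/2}$ for the first $p$ columns of $\bZ$. The KKT condition of \eqref{eq:def-hbeta} then reads
\begin{equation*}
\tfrac{1}{k}\bW^\top \bL (\bW\bu - \bepsilon) + \bSigma^{-1/2}\bv = \bm{0}, \qquad \bv \in \partial g(\hbeta).
\end{equation*}
I then take
\begin{equation*}
\bB = \begin{pmatrix} \tilde\bH^{-1} & \bm{0} \\ \bm{0}^\top & 0 \end{pmatrix},
\qquad \tilde\bH := \bW^\top \bL \bW + k\,\bSigma^{-1/2}\nabla^2 g(\hbeta)\bSigma^{-1/2},
\end{equation*}
where $\nabla^2 g(\hbeta)$ is understood in the generalized-Jacobian sense of Bellec and Shen when $g$ is not $C^2$. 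Under \Cref{assu:penalty} the second summand is $\succeq k\mu\bI_p$, so $\tilde\bH \succeq k\mu\bI_p$; this immediately yields all three properties in \eqref{eq:property_B}: $\|\bB\|_{\oper} = \|\tilde\bH^{-1}\|_{\oper} \le (k\mu)^{-1}$, $\rank(\bB) \le p$ (the last row/column vanishes by construction), and $\tr[\bB] = \tr[\tilde\bH^{-1}] \ge 0$.

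For \eqref{eq:derivartive_formula} I will implicit-differentiate the KKT with respect to $z_{ij}$, handling the two cases $j \le p$ (where $\partial\bW/\partial z_{ij} = \be_i \be_j^\top$ and $\bepsilon$ is fixed) and $j = p+1$ (where $\bepsilon = \sigma\, \bZ_{\cdot, p+1}$ so $\partial\bepsilon/\partial z_{i,p+1} = \sigma\be_i$ and $\bW$ is fixed). In both cases the derivative $\partial \bu/\partial z_{ij}$ solves a linear system with matrix $\tilde\bH/k$; solving and using the identity $\bW\bu - \bepsilon = -\br$ gives
\begin{equation*}
\partial_{ij}\bu = \tilde\bH^{-1}\be_j\be_i^\top\bL\br - \tilde\bH^{-1}\bW^\top\bL\be_i\,(\be_j^\top\bu) \quad (j\le p),
\qquad \partial_{i,p+1}\bu = \sigma\,\tilde\bH^{-1}\bW^\top\bL\be_i.
\end{equation*}
Since $\bh = (\bu; -\sigma)$ has a constant last coordinate, and since $\bB\be_{p+1} = \bm{0}$, $\be_{p+1}^\top\bh = -\sigma$, $\be_j^\top \bh = u_j$ for $j \le p$, together with $\bZ^\top = (\bW^\top;\, \sigma^{-1}\bepsilon^\top)$, both cases repackage into the single formula $\partial \bh/\partial z_{ij} = \bB \be_j \be_i^\top \bL \br - \bB \bZ^\top \bL \be_i \be_j^\top \bh$ claimed in \eqref{eq:derivartive_formula}; the zero last row/column of $\bB$ automatically enforces the vanishing last coordinate on the left-hand side.

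For \eqref{eq:property_tr} the key identity is
\begin{equation*}
\bZ\bB\bZ^\top = [\bW \mid \sigma^{-1}\bepsilon] \begin{pmatrix}\tilde\bH^{-1}&\bm{0}\\\bm{0}^\top&0\end{pmatrix} \begin{pmatrix}\bW^\top\\\sigma^{-1}\bepsilon^\top\end{pmatrix} = \bW\tilde\bH^{-1}\bW^\top.
\end{equation*}
The chain rule applied to the $j = p+1$ case above (using $\partial y_j = \partial \epsilon_j$ as $\bX$ is fixed) gives $\partial (\bX\hbeta)_i/\partial y_j = (\bW\tilde\bH^{-1}\bW^\top\bL)_{ij}$, so
$\df = \tr[\bW\tilde\bH^{-1}\bW^\top\bL] = \tr[\bL\bZ\bB\bZ^\top\bL] = \tr[\bL] - \tr[\bL\bV] = k - \tr[\bL\bV]$ using $\bL^2 = \bL$. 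Setting $\bA := \bL\bW = \bL\bG$, we have $\bL\bV = \bL - \bA\tilde\bH^{-1}\bA^\top$; since $\mathrm{Im}(\bA) \subseteq \mathrm{Im}(\bL)$ and the eigenvalues of $\bA\tilde\bH^{-1}\bA^\top$ agree (up to zeros) with those of $\bI_p - k\tilde\bH^{-1/2}\bS\tilde\bH^{-1/2} \in [0,1)$, we get $\bm{0} \preceq \bL\bV \preceq \bL$, hence $\|\bL\bV\|_{\oper} \le 1$ and $\tr[\bL\bV] \le k$. For the lower bound, $\tilde\bH \succeq \bA^\top\bA + k\mu\bI_p$ gives $\tilde\bH^{-1} \preceq (\bA^\top\bA + k\mu\bI_p)^{-1}$, and using the SVD of $\bA$ together with $\rank(\bA) \le k$,
\begin{equation*}
k - \tr[\bL\bV] = \tr[\bA\tilde\bH^{-1}\bA^\top] \le \sum_j \frac{\sigma_j(\bA)^2}{\sigma_j(\bA)^2 + k\mu} \le k \cdot \frac{\|\bA\|_{\oper}^2}{\|\bA\|_{\oper}^2 + k\mu},
\end{equation*}
and rearranging yields $\tr[\bL\bV] \ge k/(1 + \|\bL\bG\|_{\oper}^2/(k\mu))$.

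The main obstacle is interpretive rather than algebraic: the implicit differentiation step above must be justified when $g$ is not smooth (e.g., lasso, elastic net). Under \Cref{assu:penalty} the map $(\bX, \by) \mapsto \hbeta$ is globally Lipschitz and differentiable almost everywhere, and Theorem 1 of Bellec and Shen carries out exactly the required differentiation via an active-set/generalized-Hessian device. The present lemma is thus essentially a restatement of that theorem in the $(\bZ, \bh, \bB, \bL)$ bookkeeping of this paper, supplemented by the spectral manipulations of $\bL\bV$ above.
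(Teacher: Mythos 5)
Your proof follows essentially the same architecture as the paper's: change of variables to $\bu=\bSigma^{1/2}(\hbeta-\bbeta_0)$ and $\bW=\bX\bSigma^{-1/2}$, embed the $p\times p$ derivative matrix into the top-left block of $\bB\in\R^{(p+1)\times(p+1)}$, verify that both cases $j\le p$ and $j=p+1$ collapse into the single formula \eqref{eq:derivartive_formula}, and then establish the spectral bounds on $\bL\bV$ via the SVD of $\bA=\bL\bG$. Where you genuinely differ is in refusing to treat the inner matrix as a black box: the paper simply cites Theorem~1 of \citet{bellec2022derivatives} for the existence of $\bA$ with $\|\bA\|_{\oper}\le(k\mu)^{-1}$ and the derivative formula, cites their equation~(D.8) for the $\bL\bV$ bounds, and cites equation~(7.4) of \citet{bellec2022observable} for $\tr[\bA]\ge0$, whereas you derive all of this from the explicit formula $\bA=\tilde\bH^{-1}$ with $\tilde\bH=\bW^\top\bL\bW+k\bSigma^{-1/2}\nabla^2g(\hbeta)\bSigma^{-1/2}$. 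Your version is more self-contained and instructive: the nonnegativity of $\tr[\bB]$ becomes immediate from $\tilde\bH^{-1}\succ0$, and the $\bL\bV$ bounds fall out of the clean projection comparison $\bA\tilde\bH^{-1}\bA^\top\preceq\bL$ and the scalar function $s\mapsto s^2/(s^2+k\mu)$.

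The one substantive caveat, which you flag but perhaps underplay, is that the explicit $\tilde\bH^{-1}$ formula is only correct when $g$ is $C^2$. \Cref{assu:penalty} permits non-smooth strongly convex penalties such as the elastic net, and there the actual derivative matrix is an active-set-restricted generalized inverse, not $\tilde\bH^{-1}$ with a pointwise-a.e.\ Hessian inserted; plugging $\nabla^2 g(\hbeta)=\lambda_2\bI_p$ into $\tilde\bH$ for elastic net would simply be the wrong matrix. Your spectral manipulations and the proof of $\tr[\bB]\ge0$ all lean on this explicit form, so as written they only cover the $C^2$ case; the closing paragraph correctly identifies that the general case is carried by the generalized-Jacobian construction of Bellec--Shen, which is precisely why the paper invokes that theorem as a black box rather than reproving it. A small cosmetic issue: the quantity $\bS$ appearing in your remark about the eigenvalues of $\bI_p-k\tilde\bH^{-1/2}\bS\tilde\bH^{-1/2}$ is never defined (you presumably mean the transformed penalty Hessian $\bSigma^{-1/2}\nabla^2g(\hbeta)\bSigma^{-1/2}$), though the intended similarity argument is clear. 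Otherwise the algebra checks out.
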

\Cref{lem:derivative} is proved in
\Cref{proof:lem:derivative}.
Note that \Cref{lem:derivative} holds for the derivatives of $\tilde \bh$, using $\tilde\bB$, $\tilde \bL = \bL_{\tilde{I}}$, and $\tilde\bV$ instead.

\subsection{Proof of \Cref{th:sub_full} (restated here for convenience)}\label{proof:th:sub_full}

\bigskip

\ThmSubFull*

\subsubsection{Part 1: Proof of \Cref{eq:sub_full_moment_ineq}}
\begin{proof}[\underline{Proof for the $\ovlp$-estimator}]
Using the notation in \Cref{sec:derivatives_formulae} with $I$ and $\tilde I$ meaning $I_m$ and $I_\ell$, 
the inequality we want to show can be written as 
\begin{equation}\label{eq:moment_ineq_sub}
\EE\Bigl[\Bigl|
\frac{\br^\top \bL \tilde\bL \tilde\br - |I\cap \tilde I|(1-\df/k) (1-\tdf/k) \bh^\top\tilde\bh}{\|\bh\| \|\tilde\bh\|}
\Bigr|\Bigr] \lesssim \sqrt{n}\tau^{-2} c^{-3} \gamma^{7/2},    
\end{equation}
where $c=k/n, \tau=\min(1, \mu), \gamma=\max(1, p/n)$. 
Using $\tr[\bL \bV] = k - \df$ and $\tr[\tilde\bL \tilde\bV] = k - \tdf$ by \eqref{eq:property_tr}, we can rewrite the error inside the expectation in the left-hand side as the sum of three terms
\begin{equation}\frac{\br^\top \bL \tilde\bL \tilde\br - |I\cap \tilde I|(1-\df/k) (1-\tdf/k) \bh^\top\tilde\bh}{\|\bh\| \|\tilde\bh\|} = \Rem_1+\Rem_2+\Rem_3,
    \label{eq:ovlp_Rem1_2_3}
\end{equation}
where $\Rem_1, \Rem_2, \Rem_3$ are defined as
\begin{align*}
    \Rem_1 &= \frac{\bh^\top \tilde\bh}{\|\bh\| \|\tilde\bh\|} \frac{\tr[\tilde \bL\tilde\bV]}{k} (\tr[\tilde\bL\bL\bV] -  \frac{|I\cap\tilde I|\tr[\bL \bV]}{k} ),
    \Rem_2 = \frac{\tr[\tilde\bL\tilde\bV]}{k} \frac{\br^\top \tilde\bL\bL \br (1 + \tr[\tilde\bB]) - \tr[\tilde\bL\bL\bV]\bh^\top\tilde\bh}{\|\bh\|\|\tilde\bh\|},\\
    \Rem_3 &= \frac{\br^\top \tilde\bL\bL \tilde\br}{\|\bh\|\|\tilde\bh\| } \frac{k - \tr[\tilde \bL \tilde\bV] (1+\tr[\tilde{\bB}])}{k}.
\end{align*}
Next, we bound the moment of $|\Rem_1|, |\Rem_2|, |\Rem_3|$ one by one.

{\noindent\textbf{Control of $\Rem_1$.}}
Since $\tr[\tilde\bL\tilde\bV] \in (0,k]$ by \eqref{eq:property_tr} and $|\tilde\bh^\top \bh| \le \|\bh\|\|\tilde\bh\|$ by the Cauchy--Schwarz inequality, we have 
$$
|\Rem_1| \le |\tr[\tilde\bL\bL\bV] - k^{-1} |I\cap\tilde I|\tr[\bL \bV]| = \text{RHS}.
$$
Below we bound the second moment of RHS. Here, the key fact is that conditionally on $(|I\cap \tilde I|, I)$,  $I\cap \tilde I$ is uniformly distributed over all subsets of $I$ of size $|I\cap \tilde I|$. Then, the variance formula of the sampling without replacement (see \Cref{lemma:sample_without_rep} with $x_i = V_{ii}$, $M=|I|=k$, and $m= |I\cap\tilde I|$) implies that the conditional expectation of $(\text{RHS})^2$ given $(|I\cap \tilde I|, I, \bV)$ is bounded from above as
\begin{align*}
&\EE\Bigl[\Bigl(\tr[\tilde\bL\bL\bV] - \frac{|I\cap\tilde I|}{k} \tr[\bL\bV]\Bigr)^2~\Big|~ |I\cap \tilde I|, I, \bV\Bigr] = |I\cap \tilde I|^2 \EE\Bigl[\bigg(\frac{\sum_{i\in I\cap \tilde I} V_{ii}}{|I\cap\tilde I|} - \frac{\sum_{i\in I} V_{ii}}{|I|}\bigg)^2~\Big|~ |I\cap \tilde I|, I, \bV\Bigr]\\
&= |I\cap \tilde I|^2 \Var\Bigl[\frac{\sum_{i\in I\cap \tilde I} V_{ii}}{|I\cap \tilde I|}~\Big|~ |I\cap \tilde I|, I, \bV \Bigr] \le |I\cap \tilde I|^2 \frac{\sum_{i\in I} V_{ii}^2}{|I||I\cap \tilde I|} = |I\cap \tilde I|\frac{\sum_{i\in I} V_{ii}^2}{|I|}. 
\end{align*}
Note that the above inequality holds even when $|I\cap \tilde I|=0$, since the both sides are then $0$. 
Note in passing that $\sum_{i\in I} V_{ii}^2/|I| \le \|\bL\bV\|_{F}^2/|I| \le \|\bL\bV\|_{\oper}^2 \le 1$ by \eqref{eq:property_tr}. Then, we obtain
\begin{equation}\label{eq:var_trace}
    \EE\Bigl[\Bigl|\tr[\tilde\bL\bL\bV] - \frac{|I\cap\tilde I|\tr[\bL \bV]}{k}\Bigr|^2\Bigr] \le \EE\Bigl[|I\cap \tilde I|\frac{\sum_{i\in I} V_{ii}^2}{|I|}\Bigr] \le \EE[|I\cap \tilde I|] \le k. 
\end{equation}
Thus, \eqref{eq:var_trace} and the Cauchy--Schwarz inequality $\EE[\cdot]^2 \le \EE[(\cdot)^2]$ yield
\begin{equation}\label{eq:rem_1_sub}
        \EE|\Rem_1| \le \sqrt{\EE[|\Rem_1|^2]}
 \le  \sqrt{\EE[(\tr[\tilde\bL\bL\bV] -  k^{-1}{|I\cap\tilde I|\tr[\bL \bV]})^2]} \le \sqrt{k}.
\end{equation}

{\noindent\textbf{Control of $\Rem_2$.}} 
Since $\tr[\tilde\bL\tilde\bV]\in (0,k]$ by \eqref{eq:property_tr}, we have
$$
|\Rem_2| \le |\br^\top \tilde\bL\bL \br (1 + \tr[\tilde\bB]) - \tr[\tilde\bL\bL\bV]\bh^\top\tilde\bh|/(\|\bh\|\|\tilde\bh\|).
$$
The lemma below gives the bound of the second moment of the right-hand side. 
\begin{restatable}{lemma}{LemPsiBound}\label{lem:psi_bound}
    For any subset $J \subset [n]$ that is independent of $\bZ$, we have
    $$
    \EE \biggl[
        \Bigl(
        \frac{-\br^{\top} \bL_J \tilde \br
    - 
    \tilde \br^\top \tilde \bL \bL_J \br
    \trace[\tilde \bB]
    + \trace[\bL_J \bV] \bh^\top \tilde \bh
    }{\|\bh\| \|\tilde\bh\|}\Bigr)^2 \ \mathrel{\Big|} J \ \biggr]
    \lesssim \tau^{-2} (|J| + \frac{p(|J|+p)^2}{k^2})
    $$
    where $\tau=\min(1, \mu)$ and $\bL_J = \sum_{i\in J}\be_i\be_i^\top$.
    \end{restatable}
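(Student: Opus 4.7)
The plan is to recognize the expression $\Psi := -\br^\top\bL_J\tilde\br - \tr[\tilde\bB]\tilde\br^\top\tilde\bL\bL_J\br + \tr[\bL_J\bV]\bh^\top\tilde\bh$ as a Stein-type discrepancy that vanishes in expectation once Gaussian integration by parts is applied to the quadratic form $-\br^\top\bL_J\tilde\br = \bh^\top\bZ^\top\bL_J\bZ\tilde\bh$. Combining the derivative formulas for $\partial_{ij}\bh$ and $\partial_{ij}\tilde\bh$ from \eqref{eq:derivartive_formula} with the identity $\tr[\bL_J\bV] = |J| - \tr[\bL_J\bZ\bB\bZ^\top\bL]$ in \eqref{eq:property_tr}, the three summands of $\Psi$ are exactly the main-order Stein contributions, leaving only ``second-order'' remainders whose variance can be controlled via the operator-norm and rank bounds in \eqref{eq:property_B}--\eqref{eq:property_tr}. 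This is the ``divergence $+$ remainder'' template already used in \cite{bellec2022derivatives} for single-estimator GCV, the twist being that two different estimators (and their associated $\bB/\tilde\bB$, $\bL/\tilde\bL$) now appear in parallel.

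First, I would apply Stein's lemma to each summand of $\bh^\top\bZ^\top\bL_J\bZ\tilde\bh = \sum_{i\in J,\,j,k}(\bL_J)_{ii}z_{ij}z_{ik}h_j\tilde h_k$ by differentiating the factor $z_{ik}h_j\tilde h_k$ in $z_{ij}$. The Kronecker contribution $\delta_{jk}h_j\tilde h_k$ sums to $|J|\bh^\top\tilde\bh$; the contribution from $\sum_j\partial_{ij}h_j = \tr[\bB](\bL\br)_i - \bh^\top\bB\bZ^\top\bL\be_i$ gives, after contraction with $\bZ\tilde\bh = -\tilde\br$, the piece $-\tr[\bL_J\bZ\bB\bZ^\top\bL]\bh^\top\tilde\bh$ which combines with $|J|\bh^\top\tilde\bh$ into $\tr[\bL_J\bV]\bh^\top\tilde\bh$; the symmetric contribution from $\partial_{ij}\tilde h_k$ produces the $\tr[\tilde\bB]\tilde\br^\top\tilde\bL\bL_J\br$ correction. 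After these cancellations, $\Psi$ equals a finite sum of remainders each of the form $\bu^\top\bA\bv$, where $\bA$ is a product of at most three matrices drawn from $\{\bB, \tilde\bB, \bV, \tilde\bV, \bZ, \bL_J\}$ and $\bu,\bv \in \{\bh, \tilde\bh, \bL\br, \tilde\bL\tilde\br\}$.

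To convert this mean-zero Stein identity into the second-moment bound, I would apply the Gaussian Poincar\'e inequality (or, equivalently, iterate Stein once more) to $\Psi/(\|\bh\|\|\tilde\bh\|)$, using $\|\bh\|\|\tilde\bh\|\ge\sigma^2$ from the fact that the last coordinates of $\bh$ and $\tilde\bh$ both equal $-\sigma$. The gradient of $\Psi$ with respect to $\bZ$ decomposes into (a) a linear-in-$\bZ$ contribution whose squared Frobenius norm is $\lesssim \tau^{-2}|J|$ (coming from $\|\bL_J\|_F^2 = |J|$ together with one factor of $\bB$ or $\tilde\bB$ contributed by $\partial\bh$ or $\partial\tilde\bh$), and (b) bilinear contributions carrying one factor of $\bZ$ alongside a $\bB$ or $\tilde\bB$; using $\|\bB\|_{\oper}\vee\|\tilde\bB\|_{\oper}\le (k\tau)^{-1}$, $\rank(\bB)\vee\rank(\tilde\bB)\le p$, and the Gaussian operator-norm bound $\|\bL_J\bZ\|_{\oper}\lesssim\sqrt{|J|+p}$ from \Cref{lem:gaussian_oper_bound}, these bilinear pieces contribute $\lesssim \tau^{-2}p(|J|+p)^2/k^2$ after squaring. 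Adding the two groups yields the stated bound $\tau^{-2}(|J| + p(|J|+p)^2/k^2)$.

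The hard part will be the bookkeeping in the first step: the combined Stein expansion over $\partial_{ij}\bh$ and $\partial_{ij}\tilde\bh$ produces roughly half a dozen distinct cross-contractions involving all of $\bB,\tilde\bB,\bV,\tilde\bV,\bL,\tilde\bL,\bL_J,\bZ$, and one must carefully track which derivative acts on which residual in order to recover the asymmetric pairing of $\tr[\tilde\bB]$ with $\tr[\bL_J\bV]$ (rather than $\tr[\bB]$ with $\tr[\bL_J\tilde\bV]$) that appears in $\Psi$. Once this asymmetric matching is secured, the second-moment estimates reduce to routine applications of the operator/Frobenius inequalities already employed in the single-estimator ($I=\tilde I$, $\hbeta=\tbeta$) version of this lemma.
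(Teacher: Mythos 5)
Your overall plan -- Gaussian integration by parts to identify the leading terms, followed by a second-order Stein / Poincar\'e-type bound -- is the same template the paper uses (via \Cref{prop:sure_bound}, a variant of Proposition 6.1 of \citet{bellec2020out}). But the proposal as written has a genuine gap that would change the final bound.

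The main issue is the claim that ``after these cancellations, $\Psi$ equals a finite sum of remainders of the form $\bu^\top\bA\bv$.'' It does not. The exact algebraic identity (spelled out after \Cref{prop:sure_bound}) is
\[
-\Psi \;=\; \Bigl[\br^{\top}\bL_J\tilde\br + \sum_{i\in J}\sum_{j=1}^{p+1}\tfrac{\partial (r_i\tilde h_j)}{\partial z_{ij}}\Bigr] \;+\; \bigl(\tilde\bh^\top\tilde\bB\bZ^\top\tilde\bL\bL_J\br + \br^\top\bL\bL_J\bZ\bB\tilde\bh\bigr),
\]
where the bracketed ``Stein-corrected'' quantity has zero conditional mean but is \emph{not} deterministically zero. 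Its second moment is controlled by \Cref{prop:sure_bound} and supplies the \emph{dominant} piece $|J|$ of the stated bound; the remainder terms only produce the $p(|J|+p)^2/k^2$ piece. A proof that treats all of $\Psi$ as deterministic remainder -- as your proposal does -- would drop the $|J|$ contribution entirely. Relatedly, applying Gaussian Poincar\'e to $\Psi/(\|\bh\|\|\tilde\bh\|)$ gives only a \emph{variance} bound; since the conditional mean of this ratio is not directly controlled by $\EE[\Psi]\approx 0$ (the normalizing factor correlates with $\Psi$), this does not yield the second-moment bound stated in the lemma. The lower bound $\|\bh\|\|\tilde\bh\|\ge\sigma^2$ plays no role in the paper's proof and cannot yield a $\sigma$-free bound: \Cref{prop:sure_bound} instead normalizes the vectors \emph{before} invoking the second-order Stein formula, setting $\brho := \bL_J\br/\|\bh\|$, $\tilde{\bm\eta} := \tilde\bh/\|\tilde\bh\|$, and then reconciling $\sum\partial_{ij}(\rho_i\tilde\eta_j)$ with $\sum\partial_{ij}(r_i\tilde h_j)/(\|\bh\|\|\tilde\bh\|)$ through the projector identity \eqref{eq:identity_frac}.

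A secondary but concrete point: the Stein contraction you sketch -- ``differentiating the factor $z_{ik}h_j\tilde h_k$ in $z_{ij}$'' -- extracts the Gaussian factor contracted with $\bh$, which yields $\tr[\bB]\,\tilde\br^\top\bL\bL_J\br$ and $\tr[\bL_J\tilde\bV]\,\bh^\top\tilde\bh$ as the leading terms, i.e., the pairing opposite to the one appearing in $\Psi$. To recover $\tr[\tilde\bB]\,\tilde\br^\top\tilde\bL\bL_J\br$ and $\tr[\bL_J\bV]\,\bh^\top\tilde\bh$, one must write $\br^\top\bL_J\tilde\br = -\sum_{i\in J}\sum_j z_{ij}\,r_i\tilde h_j$ (extracting the $z_{ij}$ that pairs with $\tilde\bh$) and differentiate $r_i\tilde h_j$; with the wrong side the would-be ``remainder'' involves differences such as $\tr[\bB]\,\tilde\br^\top\bL\bL_J\br - \tr[\tilde\bB]\,\tilde\br^\top\tilde\bL\bL_J\br$ that are not small when $m\ne\ell$. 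You flag this bookkeeping issue at the end but your stated contributions (attributing $\tr[\bL_J\bV]$ to the $\partial_{ij}h_j$ term and $\tr[\tilde\bB]$ to the $\partial_{ij}\tilde h_k$ term) do not follow from the side you chose.
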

    \Cref{lem:psi_bound} is proved in \Cref{proof:psi_bound}.
    A sketch is as follows:
        \Cref{lem:psi_bound} follows from the derivative formula \eqref{eq:derivartive_formula} and the second order Stein's formula, where $-\br^\top \bL_J\tilde\br = \br^\top \bL_J \bZ \tilde\bh$ is regarded as a component-wise inner product of the Gaussian matrix $\bL_J \bZ\in \R^{n \times (p+1)}$ and $\bL_J \br \tilde\bh^\top \in \R^{n \times (p+1)}$. 
    
\Cref{lem:psi_bound} with $J=I\cap\tilde I$ and 
$\EE|\Rem_2| \le \sqrt{\EE[|\Rem_2|^2]}$ lead to 
\begin{equation}\label{eq:rem_2_sub}
    \EE|\Rem_2| \lesssim \tau^{-1} \sqrt{\EE[|I\cap\tilde I| + p(|I\cap\tilde I| + p)^2 k^{-2}]}
    \le \tau^{-1} \sqrt{k + p(k+p)^2 k^{-2}}
    \lesssim  \sqrt{k} \tau^{-1} (1 + p/k)^{3/2}
\end{equation}
thanks to $\tau = \min(1, \mu)$.

{\noindent\textbf{Control of $\Rem_3$.}} 
Note
$$
|\Rem_3| = \frac{|\br^\top \tilde\bL\bL \tilde\br|}{\|\bh\|\|\tilde\bh\| } \cdot \frac{|k - \tr[\tilde \bL \tilde\bV] (1+\tr[\tilde{\bB}])|}{k}\le  \|\tilde\bL\bZ\|_{\oper}^2 \frac{|k - \tr[\tilde \bL \tilde\bV] (1+\tr[\tilde{\bB}])|}{k},
$$
thanks to $|\br^\top \tilde\bL\bL \tilde\br| = |\bh^\top \bZ^\top \tilde\bL \bL \bZ \tilde\bh| \le \|\bh\|\|\tilde\bh\|\|\tilde\bL\bL\bZ\|_{\oper}^2 \le \|\bh\|\|\tilde\bh\|\|\tilde\bL\bZ\|_{\oper}^2$. 
Let us define the random variable $Y$ and the event $\Omega$ as 
$$
Y = k^{-1} (k - \tr[\tilde \bL \tilde\bV] (1+\tr[\tilde{\bB}])), \quad \Omega =  \{ \|\tilde\bL\bZ\|_{\oper} \le 2(\sqrt{k} + \sqrt{p+1})\}. 
$$
Then, $\EE[|\Rem_3|]$ is bounded from above as
\begin{align*}
    \EE|\Rem_3| &\le \EE [\|\tilde{\bL}\bZ\|_{\oper}^2 |Y|]
    \le  \|Y\|_{\infty} \EE[\ind_{\Omega^c} \|\tilde{\bL}\bZ\|_{\oper}^2] + [2(\sqrt{k} + \sqrt{p+1})]^2 \EE|Y|\\
    &\lesssim \|Y\|_{\infty} \cdot \PP(\Omega^c) \cdot \EE[\|\tilde{\bL}\bZ\|_{\oper}^4]^{1/2} + (k+p) \EE|Y| := \Rem_3^1 + \Rem_3^2
\end{align*}
thanks to H\"older's inequality. 
Below we bound $\Rem_3^1$ and $\Rem_3^2$. 
\begin{enumerate}
    \item Bound of $\Rem_3^1$: Since $\|\tilde{\bL}\bZ\|_{\oper} =^d \|\bG\|_{\oper}$ with $\bG\in\R^{k\times(p+1)}$ being the standard Gaussian matrix, the concentration of the operator norm (see \Cref{lem:gaussian_oper_bound}) implies 
$$
\PP(\Omega^c) \le e^{-(\sqrt{k} + \sqrt{p+1})^2/2} \le e^{-(k+p)/2} \text{ and } \EE\|\tilde \bL \bZ\|_{\oper}^4 \lesssim (k + p)^2.
$$
By contrast, $\|\tilde\bB\|_{\oper} \le (k\mu)^{-1}$ and $\rank(\tilde\bB) \le p$ by \eqref{eq:property_B} and $\tr[\tilde\bL\tilde\bV]\in(0,k]$ by \eqref{eq:property_tr} lead to $$
\|Y\|_{\infty} = |1 - k^{-1}\tr[\tilde{\bL}\tilde{\bV}](1+\tr[\tilde\bB])]| \le 1 + |k^{-1}\tr[\tilde{\bL}\tilde{\bV}]| (1+|\tr[\tilde\bB]|) \le 2 + p(k\mu)^{-1} \le \tau^{-1} (1+p/k)
$$ with $\tau=\min(1, \mu)$. Therefore, we have
\begin{align*}
\Rem_3^1 = \|Y\|_{\infty} \PP(\Omega^c) \EE[\|\tilde{\bL}\bZ\|_{\oper}^4]^{1/2} \lesssim\tau^{-1} (1+p/k)  e^{-(k+p)/2}  (k + p)
\lesssim
\sqrt{k}\tau^{-1} (1+p/k)^{3/2},
\end{align*}
thanks to $e^{x} \ge \sqrt{x/2}$ for all $x>0$. 
\item Bound of $\Rem_3^2$: The following lemma bounds $\Rem_3^2 = (k+p)\EE[|Y|]$. 
    \begin{restatable}{lemma}{LemTrBdf}\label{lem:trB_df}
    Let
    $\xi_1 = (1+\tr[\bB]) \|\bL \br \|^2/\|\bh\|^2 - \tr[\bL\bV]$
    and
    $\xi_2 = k -  (1+\tr[\bB])^2 \|\bL \br \|^2/\|\bh\|^2$.
    Then
$$
\EE  |k - \tr[\bL\bV] (1+\tr[\bB])| 
= \EE| (1+\tr[\bB]) \xi_1 + \xi_2 |
\lesssim \sqrt{k} \tau^{-2}(1+p/k)^{5/2}.
$$
\end{restatable}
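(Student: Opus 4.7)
\textbf{Proof plan for \Cref{lem:trB_df}.} The stated equality is a direct algebraic identity: expanding $(1+\tr[\bB])\xi_1+\xi_2$ causes the two $\pm(1+\tr[\bB])^2\|\bL\bpsi\|^2/\|\bh\|^2$ terms to cancel, leaving $k-(1+\tr[\bB])\tr[\bL\bV]$. I would verify this first, and then reduce the problem to bounding $\EE|\xi_1|$ and $\EE|\xi_2|$ separately. From \eqref{eq:property_B}, $|\tr[\bB]|\le\rank(\bB)\cdot\|\bB\|_{\oper}\le p/(k\mu)$, hence $1+|\tr[\bB]|\lesssim \tau^{-1}(1+p/k)$ with $\tau=\min(1,\mu)$. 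Combined with the triangle inequality, it therefore suffices to show $\EE|\xi_1|\vee\EE|\xi_2|\lesssim \sqrt{k}\,\tau^{-1}(1+p/k)^{3/2}$; multiplying by the factor $\tau^{-1}(1+p/k)$ in front of $\xi_1$ then produces the target bound $\sqrt{k}\,\tau^{-2}(1+p/k)^{5/2}$.

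The bound on $\EE|\xi_1|$ will be essentially a corollary of \Cref{lem:psi_bound}. Since $\bL$ is a projection we have $\|\bL\bpsi\|^2=\br^\top\bL\br$, so specializing \Cref{lem:psi_bound} to the case $\tilde I=I$ (whereupon $\tilde\br=\br$, $\tilde\bh=\bh$, $\tilde\bB=\bB$, $\tilde\bL=\bL$) and $J=I$ makes the expression inside the expectation in that lemma coincide exactly with $\xi_1$. The lemma then yields $\EE[\xi_1^2]\lesssim \tau^{-2}(k+p(k+p)^2/k^2)\lesssim \tau^{-2}k(1+p/k)^3$, and Cauchy--Schwarz delivers the required $L^1$ bound.

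The harder quantity is $\xi_2=k-(1+\tr[\bB])^2\|\bL\bpsi\|^2/\|\bh\|^2$, which is a \emph{quadratic} (rather than linear) functional of $\bh$, so a single application of a first-order Stein identity does not suffice. My plan is to apply Gaussian integration-by-parts to $\bh^\top\bZ^\top\bL\bZ\bh/\|\bh\|^2$, iterating the derivative formula \eqref{eq:derivartive_formula} twice. The leading term produces a mean contribution of size $k(1+\tr[\bB])^{-2}$, which cancels $k$ after multiplication by $(1+\tr[\bB])^2$. The residual terms couple derivatives of $\bh$, $\bB$, and $\bL\bV$; each can be absorbed using $\|\bB\|_{\oper}\le(k\mu)^{-1}$, $\rank(\bB)\le p$, $\|\bL\bV\|_{\oper}\le 1$ from \eqref{eq:property_B}--\eqref{eq:property_tr}, together with Gaussian operator-norm concentration of $\|\bL\bZ\|_{\oper}\lesssim\sqrt{k}+\sqrt{p+1}$, giving a net $L^1$ contribution of order $\sqrt{k}\,\tau^{-1}(1+p/k)^{3/2}$, in parallel with the bookkeeping performed in the proof of \Cref{lem:psi_bound}. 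The main obstacle is precisely this second-order Stein step: iterating \eqref{eq:derivartive_formula} introduces cross-derivatives of $\bB$ and $\bL\bV$ (they cannot be treated as constant), and one must carefully identify which cross-terms cancel versus which contribute to the $\sqrt{k}$-scale error, so that no $\tau^{-1}$ factor beyond the two already present is incurred.
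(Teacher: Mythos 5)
Your high-level structure exactly matches the paper's: verify the algebraic identity $(1+\tr[\bB])\xi_1+\xi_2=k-(1+\tr[\bB])\tr[\bL\bV]$, apply the triangle inequality, control $\EE|\xi_1|$ by specializing \Cref{lem:psi_bound} to $\tilde I=I$, $J=I$ (correctly noting $\|\bL\bpsi\|^2=\br^\top\bL\br$ when $\bL$ is a projection), and absorb the prefactor $1+\tr[\bB]\lesssim\tau^{-1}(1+p/k)$. The $\xi_1$ half is sound and identical to the paper's.

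Where you depart is the treatment of $\xi_2$. The paper does \emph{not} run an iterated Stein calculation from scratch; it instead reduces $\xi_2$ to an off-the-shelf chi-square-type moment inequality. Concretely, it introduces the auxiliary vector $\bb$ with $b_i=\|\bh\|^{-1}\{(\bL\br)_i+\sum_j\partial h_j/\partial z_{ij}\}$, observes via the derivative formula \eqref{eq:derivartive_formula} that $\ba-\bb=\|\bh\|^{-1}\bL\bZ\bB^\top\bh$ where $\ba=(1+\tr[\bB])\bL\br/\|\bh\|$, bounds the perturbation $\|\ba-\bb\|$ using $\|\bB\|_{\oper}\le(k\mu)^{-1}$ and the Gaussian operator-norm bound, and then applies a ready-made lemma (\Cref{prop:chi_square}, a variant of Theorem 7.1 in Bellec 2020) to control $\EE|k-\|\bb\|^2|$. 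Your plan of twice iterating \eqref{eq:derivartive_formula} and tracking all second-order cross-derivatives of $\bB$ and $\bL\bV$ is attempting to rederive exactly what \Cref{prop:chi_square} packages. It could be made to work, but it is substantially more delicate and, as you yourself flag, the bookkeeping of which cross-terms cancel is the genuine obstacle; the paper sidesteps it entirely by a light perturbation argument plus a black-box chi-square bound.

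A small but consequential miscalibration: you set yourself the target $\EE|\xi_2|\lesssim\sqrt{k}\,\tau^{-1}(1+p/k)^{3/2}$, but the paper only proves (and only needs) the weaker $\EE|\xi_2|\lesssim\sqrt{k}\,\tau^{-2}(1+p/k)^{2}$ via \Cref{lem:psi_bound_2}. Since $\xi_2$ is \emph{not} multiplied by $1+\tr[\bB]$ in the decomposition, the weaker bound is already dominated by the $\xi_1$ contribution $\sqrt{k}\,\tau^{-2}(1+p/k)^{5/2}$. Your tighter target is not obviously attainable (the $\EE\Xi_I\lesssim\tau^{-2}(1+p/k)^2$ term in \Cref{prop:chi_square} already saturates the weaker rate when $p/k$ is order one or larger), so aiming for it would make an already harder route harder for no gain.
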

\Cref{lem:trB_df} is proved in \Cref{proof:trB_df}.
A sketch is as follows:
The quantity $\xi_1$ is bounded using \Cref{lem:psi_bound} with $J=\tilde I = I$. For $\xi_2$, we use a chi-square type moment inequality \cite[Theorem 7.1]{bellec2020out}. 

Recall $\Rem_3^2 = (k+p) \EE|Y|$ with $Y:= k^{-1} (k - \tr[\tilde \bL \tilde\bV] (1+\tr[\tilde{\bB}]))$. Then, \Cref{lem:trB_df} implies
$$
\Rem_3^2 = (1+p/k) \EE[|k - \tr[\tilde \bL \tilde\bV] (1+\tr[\tilde{\bB}])|] \lesssim \sqrt{k} \tau^{-2} (1+p/k)^{7/2}.
$$
\end{enumerate}
Combining the bounds of $\Rem_3^1$ and $\Rem_3^2$, we have
\begin{equation}\label{eq:rem_3_sub}
    \EE|\Rem_3| \lesssim \Rem_3^1 + \Rem_3^2 \lesssim \sqrt{k} \tau^{-2} (1 + p/{k})^{7/2} + \sqrt{k}\tau^{-1} (1+p/k)^{3/2} \lesssim \sqrt{k} \tau^{-2} (1 + p/{k})^{7/2},
\end{equation}
thanks to $\tau = \min(1, \mu) \le 1$. 

We have now controlled each of $\EE|\Rem_1|,\EE|\Rem_2|,\EE|\Rem_3|$. \eqref{eq:rem_1_sub}, \eqref{eq:rem_2_sub} and \eqref{eq:rem_3_sub} result in
\begin{align*}
    \EE[|\Rem_1|] + \EE[|\Rem_2|] + \EE[|\Rem_3|] &\lesssim \sqrt{k}\tau^{-1} (1+p/k)^{3/2} + \sqrt{k} \tau^{-2} (1 + p/k)^{7/2} + \sqrt{k} \\
    &\lesssim  \sqrt{k} \tau^{-2} (1 + p/k)^{7/2} \lesssim \sqrt{n}\tau^{-2} c^{-3} \gamma^{7/2}
\end{align*}
thanks to $\gamma=\max(1, p/n) \ge 1$, $c=k/n \le 1$ and $\tau=\min(1, \mu) \le 1$. This finishes the proof of \eqref{eq:sub_full_moment_ineq} for the $\sub$-estimator.
\end{proof}

\begin{proof}[\underline{Proof for the $\full$-estimator}]
Using the notation in \Cref{sec:derivatives_formulae}, our goal here is to show
$$
\EE\Bigl[\Bigl|
\frac{(n - \df -\tdf +  \frac{|I\cap \tilde I|}{k^2} \df\tdf) {\bh^\top  \tilde\bh} -{\br^\top\tilde\br}}{\|\bh\|\|\tilde{\bh}\|}
\Bigr|\Bigr] \lesssim  \sqrt{n} \tau^{-2} c^{-2} \gamma^{5/2}.
$$
Here, $\tr[\bV] = n- \df$,  $\tr[\bL\bV] = k-\df$ and $\tr[\tilde\bL \bV] - \tr[\tilde\bL \bL\bV] = k - |I\cap \tilde I|$ implies
\begin{align*}
&\Big(n - \df -\tdf + \frac{|I\cap \tilde I|}{k^2} \df\tdf\Big) \bh^\top  \tilde\bh =
\Bigl(\tr[\bV] - \frac{\tdf}{k}\big\{\tr[\tilde\bL \bV] - \tr[\tilde\bL \bL\bV] +  \frac{|I\cap\tilde I|}{k}\tr[\bL\bV]\big\}\Bigr) \bh^\top \tilde\bh,
\end{align*}
so that, by simple algebra, the error can be decomposed as 
\begin{align*}
    \Big(n - \df -\tdf +  \frac{|I\cap \tilde I|}{k^2} \df\tdf\Big) {\bh^\top  \tilde\bh} -{\br^\top\tilde\br} = \|\bh\| \|\tilde \bh\| (\Rem_1 + \Rem_2 + \Rem_3 + \Rem_4),
\end{align*}
where the four following remainders will be bounded separately:
\begin{align*}
    \Rem_1 &= \frac{- \br^\top \tilde\br - \tilde\br^\top \tilde\bL \br \tr[\tilde\bB] + \tr[\bV] \bh^\top\tilde\bh}{\|\bh\|\|\tilde\bh\|}, 
           &&\Rem_2 = \frac{\tr[\tilde\bB]}{1+\tr[\tilde\bB]} \frac{
    (1+\tr[\tilde\bB]) \tilde\br^\top \tilde\bL \br - \tr[\tilde\bL \bV] \bh^\top \tilde\bh}{\|\bh\|\|\tilde\bh\|},\\
    \Rem_3 &= \frac{(1+\tr[\tilde\bB]) \tr[\tilde\bL \tilde\bV] - k}{(1+\tr[\tilde\bB])} \frac{\tr[\tilde \bL \bV]}{k} \frac{\bh^\top\tilde\bh}{\|\bh \| \| \tilde\bh \|}, 
           &&\Rem_4 = \frac{\tdf}{k} (\tr[\tilde\bL\bL\bV] - \frac{|I\cap \tilde I|}{k} \tr(\bL\bV)) \frac{\bh^\top \tilde\bh}{\|\bh\|\|\tilde\bh\|}.
\end{align*}

{\noindent\textbf{Control of $\Rem_1$ and $\Rem_2$.}} 
\Cref{lem:psi_bound} with $K=[n]$ implies
$$
\EE[|\Rem_1|] \le \sqrt{\EE[|\Rem_1|^2]} \lesssim \sqrt{\tau^{-2} (n + p(n+p)^2/k^2)} \le \tau^{-1} c^{-1} \sqrt{n + p(1+p/n)^2}
\lesssim \sqrt{n} \tau^{-1} c^{-1} (1+p/n)^{3/2},
$$
thanks to $k/n = c\in (0,1]$, while 
$\tr[\tilde\bB] \ge 0$ by \eqref{eq:property_B} and \Cref{lem:psi_bound} with $J=\tilde I$ lead to 
$$
\EE[|\Rem_2| \le \sqrt{\EE[|\Rem_2|^2]} \lesssim \sqrt{\tau^{-2} (k + p(k+p)^2/k^2)} \lesssim \tau^{-1} \sqrt{k} (1 + p/k)^{3/2} \lesssim
\sqrt{n} \tau^{-1} c^{-1} (1+p/n)^{3/2}.
$$
{\noindent\textbf{Control of $\Rem_3$.}} 
According to \eqref{eq:property_tr}, it holds that
$\tr[\tilde\bL \bV] =  \tr[\tilde\bL \bL\bV] + k - |I\cap \tilde I|$ and $\|\bL\bV\|_{\oper} \le 1$. Thus, 
$$
k^{-1} |\tr[\tilde\bL\bV]
| = k^{-1} |\tr[\tilde\bL\bL\bV]| + k^{-1} |k- |I\cap \tilde I|| \le \|\bL\bV\|_{\oper} + 1 \le 2.
$$
Combining the above display and $\tr[\tilde\bB] \ge 0$ by \eqref{eq:property_B}, as well as \Cref{lem:trB_df}, we obtain
$$
\EE[|\Rem_3|] \le 2 \EE[|(1+\tr[\tilde\bB]) \tr[\tilde\bL \tilde\bV] - k|] \lesssim  \tau^{-2} k^{1/2} (1 + p/k)^{5/2} \lesssim \sqrt{n} \tau^{-2} c^{-2} (1+p/n)^{5/2}.
$$

{\noindent\textbf{Control of $\Rem_4$.}} 
\Cref{eq:var_trace} implies
$$\EE[|\Rem_4|] \le \EE|\tr[\tilde\bL\bL\bV] - k^{-1}|I\cap \tilde I| \tr[\bL\bV]| \le \sqrt{k} \le \sqrt{n}.$$
From the above displays, we observe that the dominating upper bound is that of $\EE[|\Rem_3|]$. Thus, we obtain 
\[
    \EE[|\Rem_1|] + \EE[|\Rem_2|] + \EE[|\Rem_3|] + \EE[|\Rem_4|] \lesssim  \sqrt{n} \tau^{-2} c^{-2} (1+p/n)^{5/2} \lesssim \sqrt{n} \tau^{-2} c^{-2} \gamma^{5/2}
\]
thanks to $\gamma=\max(1, p/n)$. This finishes the proof of \eqref{eq:sub_full_moment_ineq} for the $\full$-estimator.
\end{proof}

\subsubsection{Part 2: Proof of \Cref{eq:sub_full_relative_error}}
For $\est=\sub$ and $\full$, by algebra, the relative error is bounded from above as
\begin{align}
    \Big|\frac{\tR_M^\est}{R_M} - 1\Big| &= \frac{|\sum_{m,\ell} (\hat{R}_{m,\ell}^\est - \bh_m^\top \bh_\ell)|}{\|\sum_{m'} \bh_{m'}\|^2} \le \frac{(\sum_{m'} \|\bh_{m'}\|)^2}{\|\sum_{m'} \bh_{m'}\|^2} \cdot \sum_{m,\ell} \frac{\|\bh_m\|\|\bh_\ell\|}{(\sum_{m'} \|\bh_{m'}\|)^2}
   \cdot \frac{|\hat{R}_{m, \ell}^\est - \bh_m^\top \bh_\ell|}{\|\bh_m\|\|\bh_\ell\|} \notag \\
   &\le \frac{(\sum_{m'} \|\bh_{m'}\|)^2}{\|\sum_{m'} \bh_{m'}\|^2} \cdot
   \max_{m, \ell} \frac{|\hat{R}_{m, \ell}^\est - \bh_m^\top \bh_\ell|}{\|\bh_m\|\|\bh_\ell\|} 
   \le M \cdot \frac{\sum_{m'} \|\bh_{m'}\|^2}{\|\sum_{m'} \bh_{m'}\|^2} \cdot
   \max_{m, \ell} \frac{|\hat{R}_{m, \ell}^\est - \bh_m^\top \bh_\ell|}{\|\bh_m\|\|\bh_\ell\|}\notag\\
   &= M \cdot \text{Ratio} \cdot \max_{m, \ell} \frac{n} {|d_{m,\ell}^\est|} |E_{m,\ell}^\est|
   \label{eq:relative_error_ineq}.
\end{align}
Here, we have defined $\text{Ratio}$ and $E_{m,\ell}^\est$ by
\begin{equation}\label{eq:df_ratio_E_ml_est}
    \text{Ratio} := \frac{\sum_{m'} \|\bh_{m'}\|^2}{\|\sum_{m'} \bh_{m'}\|^2}, \quad E_{m,\ell}^\est:= \frac{d_{m,\ell}^\est(\hat{R}_{m,\ell}^\est-\bh_m^\top\bh_\ell)}{n\|\bh_m\|\|\bh_\ell\|}, 
\end{equation}
where $d_{m,\ell}^\est$ is the denominator of the corresponding estimator for $\est\in\{\sub, \full\}$ defined in \eqref{eq:d_sub_and_full}.  We restate their expressions for convenience: 
\begin{equation}\label{eq:df_denominator_sub_full}
    d_{m,\ell}^\sub = |I_m\cap I_\ell| (1-\df_m/k)(1-\df_\ell/k), \quad d_{m,\ell}^\full=
n-\df_m - \df_\ell + k^{-2}{|I_m\cap I_\ell|} \df_m\df_\ell
\end{equation}
Below we bound $|E_{m,\ell}^\est|$,  $|d_{m,\ell}^\est|^{-1}$, and Ratio. 

\noindent\textbf{Control of $E_{m, \ell}^\est$.} 
Markov's inequality applied with the moment bound \eqref{eq:sub_full_moment_ineq} yields
\begin{equation}\label{eq:bound_E_ml_est}
\text{ for all $\epsilon > 0$}, \quad \PP(|E_{m,\ell}^\est| > \epsilon) \le \epsilon^{-1} \EE [|E_{m,\ell}^\est|] \lesssim \frac{1}{\epsilon \sqrt{n} \tau^2} \times \begin{dcases}
    c^{-3} \gamma^{7/2} & \est=\sub\\
    c^{-2} \gamma^{5/2} & \est=\full
\end{dcases}    
\end{equation}

\noindent\textbf{Control of Ratio.}
Here the key lemma to bound Ratio := ${\sum_{m'} \|\bh_{m'}\|^2}/{\|\sum_{m'} \bh_{m'}\|^2}$ is the concentration of the correlation $\bh_m^\top \bh_\ell/\|\bh_m\|\|\bh_\ell\|$:
\begin{restatable}{lemma}{LemContractionCor}\label{lem:contraction_cor}
    If the same penalty is used for all $m\in[M]$, i.e., $g_m=g$ in \eqref{eq:def-hbeta}, then
$$
\eta_{m,\ell} := \EE\bigg[\frac{\bh_m^\top \bh_\ell}{\|\bh_m\|\|\bh_\ell \| } \mid (\bz_i)_{i\in I_m\cap I_\ell} \bigg] \ge 0, \quad 
\text{and}
\quad
\EE\bigg[\Bigl(\frac{\bh_m^\top \bh_\ell}{\|\bh_m\|\|\bh_\ell\|} -\eta_{m,\ell}\Bigr)^2\bigg]\lesssim 
 \frac{\gamma }{nc^2\tau^2}.
$$
\end{restatable}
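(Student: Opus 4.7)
The plan is to prove both claims via a conditional i.i.d.\ argument. Define $\widehat{\bh}_m := \bh_m/\|\bh_m\|$, $\widehat{\bh}_\ell := \bh_\ell/\|\bh_\ell\|$, and $F := \widehat{\bh}_m^\top \widehat{\bh}_\ell$; since $\|\bh_m\|\ge\sigma>0$ the function $F$ is smooth in the data. Let $\cG := \sigma(I_m, I_\ell, (\bz_i)_{i\in I_m\cap I_\ell})$. The key observation, which uses the same-penalty hypothesis together with $|I_m|=|I_\ell|=k$, is that conditional on $\cG$ the estimators $\hbeta_m$ and $\hbeta_\ell$ are i.i.d.: each is defined by the same convex program on the shared data $(\bz_i)_{i\in I_m\cap I_\ell}$ augmented by independent i.i.d.\ Gaussian samples of equal cardinality $k - |I_m\cap I_\ell|$ at the disjoint index sets $I_m\setminus I_\ell$ and $I_\ell\setminus I_m$.

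Non-negativity of $\eta_{m,\ell}$ is then immediate: conditional independence and identical distribution of $\widehat{\bh}_m, \widehat{\bh}_\ell$ give $\EE[F\mid\cG] = \|\EE[\widehat{\bh}_m\mid\cG]\|^2 \ge 0$, and the tower property yields $\eta_{m,\ell} = \EE[\EE[F\mid\cG]\mid (\bz_i)_{i\in I_m\cap I_\ell}]\ge 0$. For the variance bound, the first step is to upgrade this to the pathwise identity $\EE[F\mid\cG] = \eta_{m,\ell}$. This follows from the exchangeability of the sampling scheme (\Cref{assu:sampling}) combined with the symmetric way the Gaussian rows indexed by $I_m\triangle I_\ell$ enter the two optimization problems, so that $\EE[F\mid\cG]$ depends on $(I_m,I_\ell)$ only through $|I_m\cap I_\ell|$. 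Consequently
\begin{equation*}
    \EE\bigl[(F - \eta_{m,\ell})^2\bigr] = \EE\bigl[\Var(F\mid\cG)\bigr],
\end{equation*}
and the residual randomness under $\cG$ is simply the i.i.d.\ Gaussian data $(\bz_i)_{i\in I_m\triangle I_\ell}$, which puts me in position to apply the Gaussian Poincar\'e inequality.

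The remaining work is the gradient calculation. Fix $i\in I_m\setminus I_\ell$ and $j\in[p+1]$; since $\bh_\ell$ does not depend on $z_{ij}$, with $\bm{P}_m := \bI_{p+1} - \widehat{\bh}_m\widehat{\bh}_m^\top$ (so $\|\bm{P}_m\|_\oper=1$) one has $\partial F/\partial z_{ij} = (\|\bh_m\|\|\bh_\ell\|)^{-1}\,\bh_\ell^\top \bm{P}_m (\partial\bh_m/\partial z_{ij})$. The derivative formula \eqref{eq:derivartive_formula} of \Cref{lem:derivative} reduces, for $i\in I_m$, to $\partial\bh_m/\partial z_{ij} = (\br_m)_i \bB_m\be_j - (\bh_m)_j \bB_m\bz_i$; setting $\bm{v} := \bB_m\bm{P}_m\bh_\ell$ and using symmetry of $\bB_m$ then gives $\partial F/\partial z_{ij} = [(\br_m)_i\,\be_j^\top\bm{v} - (\bh_m)_j\,\bm{v}^\top\bz_i]/(\|\bh_m\|\|\bh_\ell\|)$. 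Squaring, summing over $j\in[p+1]$ and $i\in I_m\setminus I_\ell$, and using $\|\bm{v}\|\le(k\mu)^{-1}\|\bh_\ell\|$ (from $\|\bB_m\|_\oper\le(k\mu)^{-1}$ supplied by \Cref{lem:derivative}), $\|\br_m\| = \|\bZ\bh_m\|\le\|\bZ\|_\oper\|\bh_m\|$, and $\sum_i(\bm{v}^\top\bz_i)^2\le\|\bZ\bm{v}\|^2\le\|\bZ\|_\oper^2\|\bm{v}\|^2$ produces the clean bound $4\|\bZ\|_\oper^2/(k\mu)^2$; an identical estimate holds for $i\in I_\ell\setminus I_m$ by symmetry. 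Gaussian Poincar\'e then gives $\Var(F\mid\cG) \le 8\|\bZ\|_\oper^2/(k\mu)^2$; taking expectations and using the standard bound $\EE\|\bZ\|_\oper^2\lesssim n+p$ yields $\EE[(F-\eta_{m,\ell})^2] \lesssim (n+p)/(k\mu)^2 \lesssim \gamma/(nc^2\tau^2)$, with $c=k/n$, $\gamma=\max(1,p/n)$, and $\tau=\min(1,\mu)\le\mu$.

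The main obstacle is the identification $\EE[F\mid\cG] = \eta_{m,\ell}$: without this step one only has $\EE[\Var(F\mid\cG)] \le \EE[(F-\eta_{m,\ell})^2]$, the wrong direction of inequality, and Gaussian Poincar\'e cannot be applied directly in the coarser filtration $\sigma((\bz_i)_{i\in I_m\cap I_\ell})$ because the randomness there is not Gaussian (it includes the uniform sampling of $I_m, I_\ell$). This identification is precisely where the same-penalty hypothesis is essential; the gradient computation is routine once the abstract operator bounds on $\bB_m$ from \Cref{lem:derivative} are in hand.
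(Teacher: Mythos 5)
Your proof is correct and follows essentially the same route as the paper's: conditional i.i.d.\ structure for non-negativity, then Gaussian Poincar\'e combined with the derivative formula of \Cref{lem:derivative} and the operator-norm bound $\|\bB_m\|_\oper\le(k\mu)^{-1}$ for the variance. The one place you are more careful than the paper is in explicitly establishing the pathwise identity $\EE[F\mid\cG]=\eta_{m,\ell}$ between the conditional expectation given $\cG=\sigma(I_m,I_\ell,(\bz_i)_{i\in I_m\cap I_\ell})$ and the one defining $\eta_{m,\ell}$; the paper applies Poincar\'e while writing the conditioning as only on $(\bz_i)_{i\in I_m\cap I_\ell}$, which implicitly requires exactly the exchangeability observation you make, so your version is a legitimate and slightly more rigorous presentation of the same argument.
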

\Cref{lem:contraction_cor} is proved in \Cref{proof:contraction_cor}.
The proof is based on a symmetry argument for the non-negativity of $\eta_{m,\ell}$ and the Gaussian Poincar\'e inequality for the upper bound.

\noindent
Now we provide a high-probability bound for the Ratio. 
Let $U_{m,\ell} := -\bh_m^\top\bh_\ell/\|\bh_m\|\|\bh_\ell\| + \eta_{m,\ell}$. Then, using the positiveness of $\eta_{m,\ell}$ by \Cref{lem:contraction_cor}, we find
\begin{align*}
    \sum_{m}\|\bh_m\|^2 -\|\sum_{m} \bh_m\|^2 
    &=  \sum_{m\neq \ell} (-\bh_m^\top\bh_\ell + \eta_{m, \ell}\|\bh_m\|\|\bh_\ell\| - \eta_{m, \ell}\|\bh_m\|\|\bh_\ell\|) 
    \le \sum_{m\neq \ell} U_{m, \ell} \|\bh_m\|\|\bh_\ell\| \\
    &\le \Bigl(\sum_{m\neq\ell}U_{m\ell}^2\Bigr)^{1/2} \Bigl(\sum_{m\neq \ell}\|\bh_m\|^2\|\bh_\ell\|^2\Bigr)^{1/2} 
    \le \Bigl(\sum_{m,\ell} U_{m\ell}^2\Bigr)^{1/2} \sum_{m}\|\bh_m\|^2.
\end{align*}
Dividing both sides by $\sum_m\|\bh_m\|^2$, we have
$$
\frac{\|\sum_{m} \bh_m\|^2}{\sum_{m}\|\bh_m\|^2} \ge 1-\Bigl(\sum_{m, \ell} U_{m, \ell}^2\Bigr)^{1/2}.
$$
Then, Markov's inequality applied with $\EE[U_{m,\ell}^2] \lesssim \gamma/(nc^2\tau^2)$ by \Cref{lem:contraction_cor} yields
\begin{equation}\label{eq:correlation_bound}
    \PP\bigg(\frac{\sum_{m} \|\bh_{m}\|^2}{\|\sum_{m} \bh_{m}\|^2} \ge  2\bigg) \le
    \PP\bigg( 1- \Bigl(\sum_{m,\ell} U_{m,\ell}^2\Bigr)^{1/2} \le \frac{1}{2}\bigg) 
    \le  \PP\Big(\sum_{m, \ell} U_{m, \ell}^2 \ge 1/4\Big) \le 4\EE\Big[\sum_{m, \ell} U_{m, \ell}^2\Big] \lesssim \frac{M^2\gamma} {nc^2\tau^2},
\end{equation}
which gives the upper bound of Ratio $= \sum_{m} \|\bh_{m}\|^2/{\|\sum_{m} \bh_{m}\|^2}$.

\noindent\textbf{Control of $|d_{m,\ell}^\est|^{-1}$.}
The lemma below gives an lower bound of $d_{m,\ell}^\est$ for $\est\in\{\sub, \full\}$.
\begin{restatable}{lemma}{LemDenomLowerBound}\label{lem:denom_lower_bound}
Let $d_{m, \ell}^\sub$ and $d_{m,\ell}^\full$ be the random variables defined in \eqref{eq:df_denominator_sub_full}. Then, 
there exists a positive absolute constant $C >0$ such that for $\est\in\{\sub,\full\}$, 
\begin{align*}
    \PP\Bigl(\frac{d_{m,\ell}^\est}{n} < C d^\est(\tau, c, \gamma) \Bigr) \lesssim \frac{\gamma^4}{n c^2 \tau^4}
    \text{ where }
    d^\sub(\tau, c,\gamma) = \tau^2 c^4\gamma^{-2}, \text{ and } d^\full(\tau,c,\gamma)=\tau^2\gamma^{-2}.
\end{align*}
\end{restatable}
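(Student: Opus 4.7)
My plan is to bound $d_{m,\ell}^{\est}/n$ from below by combining two high-probability ingredients: a lower bound on each factor $1 - \df_{m'}/k$ coming from \eqref{eq:property_tr} in \Cref{lem:derivative}, and concentration of the overlap cardinality $|I_m \cap I_\ell|$ around its mean $k^2/n$ provided by \Cref{lem:concentration-intersection}. For the first ingredient, the standard Gaussian bound $\|\bL_{I_{m'}} \bG\|_{\oper}^2 \le C(k+p)$ holds with exponentially small failure probability by \Cref{lem:gaussian_oper_bound}; combined with \eqref{eq:property_tr} and $p/k \le \gamma/c$, this yields on a good event $\mathcal{E}$
\[
    1 - \df_{m'}/k \;\ge\; \bigl(1 + C(1 + p/k)/\mu\bigr)^{-1} \;\gtrsim\; c\tau/\gamma \qquad \text{for } m' \in \{m,\ell\},
\]
with $\PP(\mathcal{E}^c)$ exponentially small in $n$ and therefore negligible compared to the target bound.

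For the \ovlp-case, the identity $d_{m,\ell}^\sub/n = (|I_m \cap I_\ell|/n)(1 - \df_m/k)(1 - \df_\ell/k)$ gives, on $\mathcal{E} \cap \{|I_m\cap I_\ell| \ge c^2 n/2\}$ (the second event being automatic when $m = \ell$), the inequality $d_{m,\ell}^\sub/n \gtrsim c^2 \cdot (c\tau/\gamma)^2 = c^4 \tau^2/\gamma^2$. By Chebyshev (using $\Var(|I_m\cap I_\ell|) \lesssim c^2 n$), $\PP(|I_m\cap I_\ell| < c^2 n/2) \lesssim 1/(nc^2)$ for $m \ne \ell$, and this is dominated by $\gamma^4/(nc^2\tau^4)$ since $\gamma \ge 1$ and $\tau \le 1$.

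The \full-case is more delicate because $d_{m,\ell}^\full/n = 1 - a - b + \alpha ab/c^2$, with $a = \df_m/n$, $b = \df_\ell/n$, $\alpha = |I_m\cap I_\ell|/n$, can vanish or become negative when $\alpha$ dips slightly below $c^2$. The key manipulation is the algebraic identity
\[
    d_{m,\ell}^\full/n = (1-a)(1-b) + ab\bigl(\alpha/c^2 - 1\bigr).
\]
On $\mathcal{E}$, writing $1 - a = (1-c) + c(1 - \df_m/k) \ge (1-c) + C c^2 \tau/\gamma =: \delta$, and similarly $1 - b \ge \delta$, a case split on $c \le 1/2$ versus $c > 1/2$ shows $\delta \gtrsim \tau/\gamma$, hence $(1-a)(1-b) \ge \delta^2 \gtrsim \tau^2/\gamma^2$. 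For the cross term, I would invoke \Cref{lem:concentration-intersection} at the finer scale: by Chebyshev, $\PP(|\alpha/c^2 - 1| > \delta^2/2) \lesssim 1/(nc^2\delta^4) \lesssim \gamma^4/(nc^2\tau^4)$, and on this event $|ab(\alpha/c^2 - 1)| \le \delta^2/2$ (since $a,b \le 1$), so $d_{m,\ell}^\full/n \ge \delta^2/2 \gtrsim \tau^2/\gamma^2$. The case $m = \ell$ is immediate since $\alpha = c$ gives $\alpha/c^2 - 1 = (1-c)/c \ge 0$, making the cross term non-negative and yielding $d_{m,m}^\full/n \ge (1-a)^2 \ge \delta^2$.

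The main obstacle is the cross term in the \full-case: it can have the opposite sign from $(1-a)(1-b)$, so controlling it requires concentration of $|I_m \cap I_\ell|$ at the delicate scale $\delta^2 \asymp \tau^2/\gamma^2$ that shrinks with the strong-convexity parameter. This scale is precisely the origin of the $\gamma^4/\tau^4$ factor in the stated failure probability, which would otherwise be unnecessary if we only needed $|I_m \cap I_\ell| \ge c^2 n/2$ as in the \ovlp-case.
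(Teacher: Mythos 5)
Your proposal is correct and takes essentially the same route as the paper: the same exponential tail bound on $1 - \df_{m'}/k$ via \eqref{eq:property_tr} and \Cref{lem:gaussian_oper_bound} (which the paper packages as \Cref{lem:df_upperbound}), the same algebraic split $d_{m,\ell}^\full/n = (1-a)(1-b) + ab(\alpha/c^2 - 1)$, and a Chebyshev/Markov bound on the overlap cardinality at the scale $\tau^2/\gamma^2$. The only cosmetic differences are that the paper applies Markov to $B_{m,\ell} = ab(\alpha/c^2-1)$ directly after bounding $ab \le c^2$ (which wins an extra factor of $c^4$ over your Chebyshev on $\alpha/c^2$, though both hit the stated bound), and the paper treats $m=\ell$ via the closed form $d_{m,m}^\full/n = c(1-\df_m/k)^2 + 1 - c$ rather than your observation that the cross term is non-negative.
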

\Cref{lem:denom_lower_bound} is proved in \Cref{proof:df_lower_bound}.
The proof uses \eqref{eq:property_tr} and some property of the hypergeometric distribution (\Cref{lem:concentration-intersection}).

Combining
\eqref{eq:relative_error_ineq}, \eqref{eq:bound_E_ml_est},
\eqref{eq:correlation_bound}, and \Cref{lem:denom_lower_bound} yield the following for all $\epsilon>0$:
\begin{align*}
\PP\Big(\Big|\frac{\tR_M^\est}{R_M} - 1\Big|>\epsilon\Big) &\le \PP\Big(\text{Ratio} \cdot  \max_{m,\ell} \frac{n}{|d_{m,\ell}^\est|} |E_{m,\ell}^\est| > \frac{\epsilon}{M}\Big) \\
&\lesssim \frac{M^2\gamma}{n\tau^2 c^2} + \PP\Big(2 \max_{m,\ell} \frac{n}{|d_{m,\ell}^\est|} |E_{m,\ell}^\est| > \frac{\epsilon}{M}\Big)\\
&\le \frac{M^2\gamma}{n\tau^2 c^2} + \sum_{m, \ell} \PP \Big(\frac{n}{|d_{m,\ell}^\est|}|E_{m,\ell}^\est| > \frac{\epsilon}{2M}\Big) \\
&\le \frac{M^2\gamma}{n\tau^2 c^2} + \sum_{m,\ell} \bigg\{ \PP\Big(\frac{d_{m,\ell}^\est}{n} < Cd^\est(\tau, c, \gamma)\Big) + \PP\Big(|E_{m,\ell}^\est| > \frac{\epsilon C d^\est(\tau, c, \gamma)}{2M}\Big)
\bigg\}\\
&\lesssim \frac{M^2\gamma}{n\tau^2 c^2} + M^2 \Bigl[\frac{\gamma^4}{n c^2 \tau^4} + \frac{2M}{\epsilon C d^\est(\tau, c, \gamma)} \frac{1}{\sqrt{n}\tau^2} \cdot \begin{dcases}
    c^{-3} \gamma^{7/2} & \est=\sub\\
    c^{-2} \gamma^{5/2} & \est=\full,
\end{dcases} \Bigr]
\end{align*}
where $C>0$ is an absolute constant, and $d^\est(c, \tau, \gamma)$ is given by
$d^\sub(\tau, c, \gamma) = \tau^2 c^4 \gamma^{-2}$ and  $d^\full(\tau, c, \gamma) = \tau^2 \gamma^{-2}$. Substituting this expression to the last bound, we obtain
\begin{align}
    \text{for all } \epsilon>0, \quad \PP\bigg(\Big|\frac{\tR_M^\est}{R_M} - 1\Big|>\epsilon\bigg) &\lesssim \frac{M^2\gamma}{n\tau^2 c^2} + \frac{M^2\gamma^4}{nc^2\tau^4} + \frac{M^3}{\epsilon\sqrt{n} \tau^{2+2}} \cdot  \begin{dcases}
    c^{-3-4} \gamma^{7/2+2} &\est=\sub\\
    c^{-2} \gamma^{5/2+2} & \est=\full
    \end{dcases} \notag\\
    &\lesssim \frac{M^2\gamma^4}{nc^2\tau^4} + \frac{M^3}{\epsilon\sqrt{n} \tau^{4}} \times \begin{dcases}
    c^{-7} \gamma^{11/2} &\est=\sub\\
    c^{-2} \gamma^{9/2} & \est=\full
    \end{dcases} \label{eq:relative_error_epsilon_any},
\end{align}
thereby when $\epsilon\in(0,1)$, the second term dominates the first term. This concludes the proof of \eqref{eq:sub_full_relative_error}.

\subsection{Proof of \Cref{th:correction_gcv} (restated here for convenience)}
\label{proof:th:correction_gcv}

\bigskip

\ThmCorrectionGCV*

The goal in this section is to show $\tR_M^{\cgcv, \est}/R_M \approx 1$ for $\est\in\{\sub, \full\}$. The definition of $\tR_M^{\cgcv, \est}$ is recalled for convenience:
\begin{equation}\label{eq:df_cgcv_est}
\tR_M^{\cgcv, \est} = \frac{\|M^{-1}\sum_m\br_m\|^2}{n(1-\tdf_M/n)^2} - (c^{-1}-1) \frac{(\tdf_M/n)^2}{(1-\tdf_M/n)^2} \frac{1}{M^2} \sum_m \hat{R}_{m,m}^\est.
\end{equation}
Now, we define $\tR^\cgcv_M$ by
\begin{equation}\label{eq:df_cgcv_without_est}
\tR^\cgcv_M := \frac{\|M^{-1}\sum_m\br_m\|^2}{n(1-\tdf_M/n)^2} - (c^{-1}-1) \frac{(\tdf_M/n)^2}{(1-\tdf_M/n)^2} \frac{1}{M^2} \sum_m \|\bh_m\|^2.
\end{equation}
The difference between  $\tR_M^\cgcv$ and $\tR_M^{\cgcv,\est}$  is whether it uses $\|\bh_m\|^2$ or its estimate $\hat{R}^\est_{m,m}$ in the rightmost sum. Considering $\hat{R}_{m,m}^\est \approx \|\bh_m\|^2$ from  \eqref{eq:sub_full_moment_ineq}, $\tR_M^\cgcv$ is naturally expected to be close to $\tR_M^{\cgcv, \est}$. We state this approximation more precisely as follows:
\begin{equation}\label{eq:cgcv_cgcv_est_close}
 \text{for all } \epsilon\in(0,1),\quad \PP\bigg(\Big|\frac{\tR_M^{\cgcv} - \tR_M^{\cgcv, \est}}{R_M}\Big| > \epsilon\bigg) \lesssim \frac{M^2}{\sqrt{n} \epsilon \tau^6} \times \begin{dcases}
        c^{-6} \gamma^{15/2} & \est=\sub\\
        c^{-2} \gamma^{13/2} & \est = \full 
    \end{dcases}.
\end{equation}
\Cref{eq:cgcv_cgcv_est_close} is proved in \Cref{proof:lem:cgcv_cgcv_est_close}.
The second task is to show $\tR^\cgcv_M \approx R_M$. 
\begin{restatable}{lemma}{LemCgcvRiskClose}\label{lem:cgcv_risk_close}
For $\tR_M^{\cgcv}$ defined in \eqref{eq:df_cgcv_without_est}, we have
$$
\text{for all } \epsilon\in(0,1), \quad \PP\bigg(\Big|\frac{\tR_{M}^\cgcv -R_M}{R_M}\Big| > \epsilon\bigg) \lesssim \frac{M^4}{\sqrt{n}\epsilon \tau^5} \cdot c^{-4} \gamma^{13/2}.
$$
\end{restatable}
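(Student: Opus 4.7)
The plan is to execute the heuristic derivation at the end of Section \ref{sec:cgcv-guarantees-gaussian} rigorously. Expand the numerator of the GCV part as a pairwise sum, substitute the definition of $\hat R_{m,\ell}^\full$, invoke Theorem \ref{th:sub_full} to replace $\hat R_{m,\ell}^\full$ by $R_{m,\ell}=\bh_m^\top\bh_\ell$, simplify each $d_{m,\ell}^\full$ using the concentrations of $|I_m\cap I_\ell|$ and $\df_m$, and finally rearrange algebraically to match $R_M$.

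Concretely, the starting identity is
\begin{equation*}
    \frac{\|M^{-1}\sum_m \br_m\|_2^2}{n} = \frac{1}{M^2}\sum_{m,\ell}\frac{\br_m^\top \br_\ell}{n} = \frac{1}{M^2}\sum_{m,\ell}\frac{d_{m,\ell}^\full}{n}\, \hat R_{m,\ell}^\full.
\end{equation*}
Using Lemma \ref{lem:concentration-intersection} for $|I_m\cap I_\ell|/n\approx c^2$ when $m\neq\ell$, and Lemma \ref{lem:concentration_df} for $\df_m\approx \tdf_M$ (which requires the common-penalty hypothesis), the formula \eqref{eq:d_sub_and_full} for the denominator simplifies to
\begin{equation*}
    \frac{d_{m,\ell}^\full}{n} \approx (1-\tdf_M/n)^2 + \ind_{\{m=\ell\}}\,(c^{-1}-1)(\tdf_M/n)^2,
\end{equation*}
with a residual whose expected absolute value is controlled at rate $n^{-1/2}$. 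Replacing each $\hat R_{m,\ell}^\full$ by $\bh_m^\top\bh_\ell$ at cost $O(n^{-1/2})$ via \eqref{eq:sub_full_moment_ineq} and separating diagonal from off-diagonal terms, one obtains
\begin{equation*}
    \frac{\|M^{-1}\sum_m \br_m\|_2^2}{n} \approx (1-\tdf_M/n)^2 R_M + (c^{-1}-1)(\tdf_M/n)^2\,\frac{1}{M^2}\sum_m\|\bh_m\|^2.
\end{equation*}
Dividing by $(1-\tdf_M/n)^2$ and subtracting the second term on the right recovers $\tR_M^\cgcv$ on the left and $R_M$ on the right, modulo the accumulated errors.

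To pass from these additive bounds to the multiplicative form of the statement, I will normalize every error term by $\|\bh_m\|\|\bh_\ell\|$ (as already afforded by \eqref{eq:sub_full_moment_ineq}), sum over $(m,\ell)$, and then divide by $R_M = M^{-2}\|\sum_m \bh_m\|^2$. Controlling $R_M^{-1}$ by a constant multiple of $(\sum_m\|\bh_m\|^2)^{-1}$ is handled by the Ratio bound \eqref{eq:correlation_bound}, which is precisely where the positive-correlation Lemma \ref{lem:contraction_cor}---and hence the same-penalty assumption---enters a second time. The denominator $(1-\tdf_M/n)^{-2}$ is kept bounded using the deterministic inequality $\df_m\le k$ from \eqref{eq:property_tr}, and all constants are tracked as in the proof of Theorem \ref{th:sub_full}.

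The main obstacle is bookkeeping: three independent approximations (on $|I_m\cap I_\ell|$, on $\df_m$, and on $\hat R_{m,\ell}^\full$) interact multiplicatively through $d_{m,\ell}^\full$, and the various cross-terms must be handled via H\"older or by passing to an intermediate high-probability event, all while preserving the $n^{-1/2}$ rate and the correct polynomial dependencies on $(M,c^{-1},\gamma,\tau^{-1})$. I expect the dominant contribution to come from the $\df_m\approx\tdf_M$ concentration in Lemma \ref{lem:concentration_df} coupled with the leading-order replacement error from Theorem \ref{th:sub_full}, which together produce the advertised rate $M^4\gamma^{13/2}c^{-4}\tau^{-5}/\sqrt n$.
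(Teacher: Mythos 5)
Your decomposition, sequence of approximations, and invocation of \Cref{lem:concentration_df}, \Cref{lem:concentration-intersection}, and the moment bound from \Cref{th:sub_full} mirror the paper's proof of this lemma almost step for step, and your identification of where the same-penalty hypothesis enters (both in \Cref{lem:concentration_df} and via the Ratio bound through \Cref{lem:contraction_cor}) is correct.

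There is, however, one concrete gap: you propose to keep $(1-\tdf_M/n)^{-2}$ bounded ``using the deterministic inequality $\df_m \le k$ from \eqref{eq:property_tr}.'' That inequality only yields $1-\tdf_M/n \ge 1-c$, hence $(1-\tdf_M/n)^{-2} \le (1-c)^{-2}$, which blows up as $c\to 1$. But the claimed bound $M^4\gamma^{13/2}c^{-4}\tau^{-5}/\sqrt n$ carries no factor of $(1-c)^{-1}$, and indeed must remain finite at $c=1$ (where the correction term vanishes and the lemma reduces to the consistency of ordinary GCV). The paper avoids this by using \Cref{lem:df_upperbound}, which combines $\df_m<k$ with the Gaussian operator-norm concentration from \eqref{eq:property_tr} to give the high-probability lower bound $1-\tdf_M/n \ge C\tau\gamma^{-1}$ --- valid uniformly in $c\in(0,1]$ --- and this is precisely where the factors $\gamma^2/\tau^2$ enter the event $\{U > 2C^{-2}\gamma^2\tau^{-2}\}$ in \eqref{eq:U_upper_bound}. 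With the deterministic bound alone your final exponents would read $\gamma^{9/2}\tau^{-3}c^{-4}(1-c)^{-2}$ rather than the advertised $\gamma^{13/2}\tau^{-5}c^{-4}$, so you would neither match the stated form nor obtain a bound that degrades gracefully as $k\to n$. Replacing that step with \Cref{lem:df_upperbound} closes the gap.
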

\Cref{lem:cgcv_risk_close} is proved in \Cref{proof:lem:cgcv_risk_close}, where we use some concentration of $\df_m$ around its average $\tdf_M = M^{-1}\sum_{m=1}^M\df_m$. Now we assume \Cref{lem:cgcv_risk_close}. Then, 
\Cref{eq:cgcv_cgcv_est_close} and \Cref{lem:cgcv_risk_close} together yield
\begin{align*}
    &\PP\Bigl(|{\tR_M^{\cgcv,\est}}/{R_M} -1| > \epsilon\Bigr) \le \PP \Big(|({\tR^{\cgcv,\est}_M-\tR_M^\cgcv})/{R_M}| > \epsilon/2 \Big) + \PP\Big(|{\tR_M^\cgcv}/{R_M} - 1| > \epsilon/2\Bigr)
    \\
    &\lesssim
\frac{M^4}{\sqrt{n}\epsilon \tau^5} \cdot c^{-4} \gamma^{13/2} + \frac{M}{\sqrt{n} \epsilon \tau^6} \cdot \begin{dcases}
        c^{-6} \gamma^{15/2} & \est=\sub\\
        c^{-2} \gamma^{13/2} & \est = \full 
    \end{dcases}
    \lesssim \frac{M^4}{\sqrt{n}\epsilon\tau^6} \cdot \begin{dcases}
        c^{-6} \gamma^{15/2} & \est=\sub\\
        c^{-4} \gamma^{13/2} & \est = \full 
    \end{dcases},
\end{align*}
for all $\epsilon\in(0,1)$. 
This completes the proof of \Cref{th:correction_gcv}. 
In the following sections, we prove \Cref{eq:cgcv_cgcv_est_close} and \Cref{lem:cgcv_risk_close}.
\subsubsection{Proof of \Cref{eq:cgcv_cgcv_est_close}}
\label{proof:lem:cgcv_cgcv_est_close}
By the definition of $\tR^{\cgcv, \est}$ in \eqref{eq:df_cgcv_est} and $\tR^\cgcv$ in \eqref{eq:df_cgcv_without_est}, 
$$
\frac{\tR_M^{\cgcv} - \tR_M^{\cgcv, \est}}{R_M} = (c^{-1}-1) \bigg(\frac{\tdf_M/n}{1-\tdf_M/n}\bigg)^2 \frac{\sum_m(\hat{R}_{mm}^\est - \|\bh_m\|^2)}{\|\sum_m\bh_m\|^2}.
$$
Here, thanks to $(c^{-1}-1) (\tdf_M/n)^2 \le (c^{-1}-1) (k/n)^2 = (c^{-1}-1) c^2 \le c$ by \eqref{eq:property_tr}, we have
\begin{align*}
    \bigg|\frac{\tR_M^{\cgcv} - \tR_M^{\cgcv, \est}}{R_M}\bigg| 
    &\le \frac{c}{(1-\tdf_M/n)^2} \frac{\sum_m\|\bh_m\|^2}{\|\sum_m\bh_m\|^2}\max_{m} |\frac{\hat{R}_{mm}^\est}{\|\bh_m\|^2}-1| = c \cdot U \cdot \max_{m} V_m^\est,
\end{align*}
where $U$ and $V_m^\est$ are defined as 
\begin{equation}\label{eq:df_U_V_m}
U:= \frac{1}{(1-\tdf_M/n)^2} \frac{\sum_m\|\bh_m\|^2}{\|\sum_m\bh_m\|^2}, 
\quad\qquad
V_{m}^\est := \bigg|\frac{\hat{R}_{mm}^\est}{\|\bh_m\|^2}-1\bigg|.    
\end{equation}

\noindent\textbf{Control of $U$.}
By \Cref{lem:df_upperbound} (introduced later),
there exists an absolute constant $C\in(0,1)$ such that 
$$
\PP(1-\df_m/n \le C\tau\gamma^{-1}) \le e^{-nc/2},
$$
Applying the above display to $1-\tdf_M/n = M^{-1} \sum_{m=1}^M (1-\df_m/n)$ with the union bound, we have
$$
\PP\Bigl(\frac{1}{(1-\tdf_M/n)^2}\ge \frac{\gamma^2}{C^2 \tau^2}\Bigr) = \PP(1-\tdf_M/n \le C \tau \gamma^{-1}) \le 
\sum_{m=1}^M \PP(1-\df_m/n \le C\tau\gamma^{-1}) \lesssim M e^{-nc/2}.
$$ 
Combining the above display and the upper bound of ${\sum_{m} \|\bh_{m}\|^2}/{\|\sum_{m} \bh_{m}\|^2}$ given by \eqref{eq:correlation_bound}, we have
\begin{equation}\label{eq:U_upper_bound}
\PP\Bigl(U > \frac{2\gamma^2}{C^2 \tau^2}\Bigr) \le \PP\Bigl(\frac{\sum_{m} \|\bh_{m}\|^2}{\|\sum_{m} \bh_{m}\|^2}>2\Bigr) + \PP\Bigl(\frac{1}{(1-\tdf_M/n)^2}\ge \frac{\gamma^2}{C^2 \tau^2}\Bigr)
\lesssim \frac{M^2\gamma^2}{n \tau^2 c^2} +  \frac{M}{e^{nc/2}} \lesssim\frac{M^2\gamma^2}{n \tau^2 c^2}
\end{equation}

\noindent\textbf{Control of $V_m^\est$.}
\Cref{eq:relative_error_epsilon_any} with $M=1$ implies
\begin{equation}\label{eq:V_m_upper_bound}
\text{for all } \epsilon>0, \quad \PP(V_m^\est>\epsilon) \lesssim \frac{\gamma^4}{nc^2\tau^4} +  \frac{1}{\epsilon \sqrt{n}\tau^4} \cdot 
\begin{dcases}
    c^{-7} \gamma^{11/2} & \est=\sub\\
    c^{-2} \gamma^{9/2} & \est = \full 
\end{dcases}    
\end{equation}

Now we have controlled $U$ and $V_m^\est$. \eqref{eq:U_upper_bound} and \eqref{eq:V_m_upper_bound} together yield, for all $\epsilon\in(0,1)$, 
\begin{align*}
\PP\bigg( \Big|\frac{\tR_M^{\cgcv} - \tR_M^{\cgcv, \est}}{R_M}\Big|>\epsilon\bigg) &\le \PP(c U \max_m V_{m}^\est> \epsilon) 
\le \PP\Bigl(U > \frac{2\gamma^2}{C^2 \tau^2}\Bigr) + \PP\Bigl(c\frac{2\gamma^2}{C^2\tau^2} \max_{m} V_M^\est > \epsilon\Bigr)\\
&\lesssim \frac{M^2\gamma^2}{n \tau^{2} c^2}   + \sum_m \PP\Big(V_m^\est > \frac{\epsilon C^2\tau^2}{2c \gamma^2}\Big) \text{ (by \eqref{eq:U_upper_bound})}\\
&\lesssim \frac{M^2\gamma^2}{n \tau^{2} c^2} + \frac{M\gamma^4}{n\tau^4c^2} + \frac{M}{\sqrt{n} \tau^4} \bigg(\frac{\epsilon C^2\tau^2}{2c \gamma^2}\bigg)^{-1} \cdot
    \begin{dcases}
        c^{-7} \gamma^{11/2} & \est=\sub\\
        c^{-2} \gamma^{9/2} & \est = \full 
    \end{dcases} \text{ (by \eqref{eq:V_m_upper_bound})} \\
&\lesssim \frac{M^2\gamma^4}{n \tau^{4} c^2} + \frac{M}{\sqrt{n} \epsilon \tau^6} \times \begin{dcases}
        c^{-6} \gamma^{15/2} & \est=\sub\\
        c^{-1} \gamma^{13/2} & \est = \full 
    \end{dcases} \text{ (thanks to $\tau\in(0,1]$ and $\gamma\ge 1$)}\\
    &\lesssim \frac{M^2}{\sqrt{n} \epsilon \tau^6} \times \begin{dcases}
        c^{-6} \gamma^{15/2} & \est=\sub\\
        c^{-2} \gamma^{13/2} & \est = \full 
    \end{dcases} \text{ (thanks to $\epsilon\in(0,1)$)}
\end{align*}
This concludes the proof.

\subsubsection{Proof of \Cref{lem:cgcv_risk_close} (restated here for convenience)}\label{proof:lem:cgcv_risk_close}
\LemCgcvRiskClose*
\begin{proof}
If we define $d_{m,\ell}^\cgcv$ by
$$
d_{m,\ell}^\cgcv := n\Bigl\{\Big(1-\frac{\tdf_M}{n}\Big)^2 + (c^{-1}-1) \ind_{\{m=\ell\}}\Big(\frac{\tdf_M}{n}\Big)^2\Bigr\}
$$
the relative error can be written as 
$$
\frac{\tR_M^\cgcv}{R_M} - 1 =\frac{\|\sum_m \br_m\|^2 - n(c^{-1}-1) (\tdf_M/n)^2 \sum_m \|\bh_m\|^2}{n(1-\tdf_M/n)^2\|\sum_m \bh_m\|^2} - 1= \frac{\sum_{m, \ell} (\br_m^\top \br_\ell - d_{m,\ell}^\cgcv\bh_m^\top\bh_\ell)}{n(1-\tdf_M/n)^2 \|\sum_m\bh_m\|^2}.
$$
By the same argument in \eqref{eq:relative_error_ineq}, the relative error is bounded from above as
\begin{align*}
    \Bigl|\frac{\tR_M^\cgcv}{R_M} - 1 \Bigr| 
    &\le \frac{1}{(1-\tdf_M/n)^2}  \frac{(\sum_{m=1}^M\|\bh_m\|)^2}{\|\sum_{m=1}^M\bh_m\|^2} \cdot \max_{m, \ell} \frac{|\br_m^\top\br_\ell - d_{m,\ell}^\cgcv\bh_m^\top\bh_\ell|}{n\|\bh_m\|\|\bh_\ell|}\\
    &\le M \cdot \frac{1}{(1-\tdf_M/n)^2}  \frac{\sum_m\|\bh_m\|^2}{\|\sum_m\bh_m\|^2} \cdot \max_{m,\ell} (\frac{|\br_m^\top\br_\ell - d_{m,\ell}^\full \bh_m^\top \bh_\ell |}{n \|\bh_m\|\|\bh_\ell\|} + \frac{|d_{m,\ell}^\cgcv - d_{m,\ell}^\full|}{n})\\
    &= M \cdot U \cdot \max_{m,\ell}\Big(|E_{m,\ell}^\full| + |d_{m,\ell}^\cgcv-d_{m,\ell}^\full|/n\Big),
\end{align*}
where $E_{m,\ell}^\full$ and $U$ were defined before by \eqref{eq:df_ratio_E_ml_est} and \eqref{eq:df_U_V_m}. Their expressions are recalled here for convenience:
$$
    U := \frac{1}{(1-\tdf_M/n)^2} \frac{\sum_m\|\bh_m\|^2}{\|\sum_m\bh_m\|^2}, \quad E_{m,\ell}^\full := \frac{\br_m^\top \br_\ell - d_{m,\ell}^\full \bh_m^\top\bh_\ell}{n\|\bh_m\|\bh_\ell\|} = \frac{d_{m,\ell}^\full(\hat{R}_{m,\ell}^\full-\bh_m^\top\bh_\ell)}{n\|\bh_m\|\bh_\ell|},
$$
where $d_{m,\ell}^\full = n-\df_m-\df_\ell + k^{-2}{|I_m\cap I_\ell|}\df_m\df_\ell$ is the denominator in the $\full$-estimator.
Note in passing that $U$ and $E_{m,\ell}^\full$ have been already bounded by 
$\eqref{eq:bound_E_ml_est}$ with $\est=\full$ and \eqref{eq:U_upper_bound}: there exists a absolute constant $C$ such that 
\begin{equation}\label{eq:bound_E_ml_full_U}
 \PP(U > C \tau^{-2}\gamma^2) \lesssim  \frac{M^2\gamma^2}{n\tau^2 c^2}, \quad\quad 
\PP(|E_{m,\ell}^\full| > \epsilon) \lesssim\frac{\tau^{5/2}}{\epsilon\sqrt{n}\tau^2 c^2} \text{ for all $\epsilon>0$}.
\end{equation}
It remains to bound  $|d_{m,\ell}^\cgcv-d_{m,\ell}^\full|/n$. 
Here, we will argue that
\begin{equation}\label{eq:concentrate_denom_full}
     \text{for all } m, \ell\in[M], \quad \text{for all }\epsilon>0, \quad \PP\Big(\frac{|d_{m,\ell}^\full - d_{m,\ell}^\cgcv|}{n} > \epsilon\Big) \lesssim M \Bigl(e^{-nc/2} + \frac{\gamma^{9/2}}{\epsilon\sqrt{n} \tau^3 c^4}\Bigr).
\end{equation}
The proof of \eqref{eq:concentrate_denom_full} is given at the end of this section. Now we assume it. Then \eqref{eq:bound_E_ml_full_U} and \eqref{eq:concentrate_denom_full}  together yield, for all $\epsilon\in(0,1)$, 
\begin{align*}
    \PP\Big(|\frac{\tR_M^\cgcv}{R_M} - 1|>\epsilon\Big) &\le \PP \Big( U \cdot \max_{m,\ell} (|E_{m,\ell}^\full| + \frac{|d_{m,\ell}^\cgcv-d_{m,\ell}^\full|}{n}) > \frac{\epsilon}{M}\Big)\\
    &\le \PP\Big(U > \frac{C \gamma^2}{\tau^2}\Big) + \PP\Big(\max_{m,\ell} (|E_{m,\ell}^\full| + \frac{|d_{m,\ell}^\cgcv-d_{m,\ell}^\full|}{n}) > \frac{\epsilon}{M} \frac{\tau^2}{C\gamma^2}\Big)\\
    &\lesssim\frac{M^2\gamma^2}{n\tau^2 c^2} + \sum_{m,\ell} \PP\Big(|E_{m,\ell}^\full|> \frac{\epsilon\tau^2}{2CM\gamma^2}\Big) + \PP(\frac{|d_{m,\ell}^\cgcv-d_{m,\ell}^\full|}{n}> \frac{\epsilon\tau^2}{2CM\gamma^2})\\
    &\lesssim \frac{M^2\gamma^2}{n\tau^2 c^2} + M^2 \bigg\{
    \frac{2CM\gamma^2}{\epsilon\tau^2} \cdot 
    \frac{\gamma^{5/2}}{\sqrt{n}\tau^2 c^2}
    +  M e^{-nc/2} + \frac{2CM\gamma^2}{\epsilon\tau^2} \cdot 
    \frac{M\gamma^{9/2}}{\sqrt{n}\tau^3 c^4}
    \bigg\}\\
    &\lesssim \frac{M^4 \gamma^{13/2}}{\sqrt{n}\epsilon\tau^5 c^4} \quad \text{(thanks to  $\epsilon, c, \tau \in(0,1]$ and $\gamma\ge1$}),
\end{align*}
which concludes the proof of \Cref{lem:cgcv_risk_close}.
\end{proof}

\noindent \textbf{\underline{Proof of Equation~\eqref{eq:concentrate_denom_full}}.}
The expressions of $d_{m,\ell}^\full$ and $d_{m,\ell}^\cgcv$ are recalled here for convenience:
$$
    \frac{d_{m,\ell}^\full}{n} = 1 - \frac{\df_m}{n} - \frac{\df_\ell}{n} + \frac{|I_m\cap I_\ell|}{nk^2}\df_m\df_\ell, \quad \frac{d_{m,\ell}^\cgcv}{n} =  \Big(1-\frac{\tdf_M}{n}\Big)^2 + (c^{-1}-1) \ind_{\{m=\ell\}}\Big(\frac{\tdf_M}{n}\Big)^2. 
$$
Below we prove $d_{m,\ell}^\full/n \approx d_{m,\ell}^\cgcv/n$. 
The key lemma is the concentration of $\df_m$ around its average $\tdf_M = \sum_m \df_m/M$. 
\begin{restatable}{lemma}{LemConcentrationDf}\label{lem:concentration_df}
Suppose the same penalty is used for $(\bh_m)_{m=1}^M$. Then, we have
$$
\text{for all } m\in[M], \quad \text{for all }\epsilon>0, \quad \PP\Big(\frac{|\df_m - \tdf_M|}{n} > \epsilon\Big) \lesssim M\Big(e^{-nc/2} + 
\frac{\gamma^{9/2}}{\epsilon\sqrt{n}\tau^3 c^4}\Big).
$$
\end{restatable}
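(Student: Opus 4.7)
The plan is to reduce to a single-pair comparison and then control that pair via exchangeability plus a Gaussian concentration bound. First, the identity $\df_m - \tdf_M = M^{-1}\sum_{m'\ne m}(\df_m - \df_{m'})$ and the triangle inequality give $|\df_m - \tdf_M|/n \le \max_{m'\ne m}|\df_m - \df_{m'}|/n$, so a union bound over the $M-1$ indices $m'\ne m$ reduces the claim to showing that, for each fixed $m\ne m'$,
\[
\PP\bigl(|\df_m - \df_{m'}|/n > \epsilon\bigr) \lesssim e^{-nc/2} + \gamma^{9/2}/(\epsilon\sqrt n\,\tau^3 c^4);
\]
the prefactor $M$ in the statement then arises from summing these $M-1$ terms.

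To control a single pair I would introduce a common deterministic proxy. Under \Cref{assu:sampling} the subsets $I_m, I_{m'}$ are i.i.d.\ uniform $k$-subsets, and under the hypothesis $g_m = g_{m'}$ the joint law of $(\bZ, I_m, I_{m'})$ is invariant under swapping the labels $m \leftrightarrow m'$, so $(\df_m, \df_{m'})$ is exchangeable. In particular $\bar d := \EE[\df_m] = \EE[\df_{m'}]$, and a further triangle inequality followed by Markov's inequality reduces the per-pair bound to the one-variable first-moment estimate
\[
\EE|\df_m - \bar d|/n \lesssim \gamma^{9/2}/(\sqrt n\,\tau^3 c^4),
\]
up to an additive contribution $\lesssim e^{-nc/2}$ coming from the bad event on which the subsequent Gaussian concentration argument fails.

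For this last estimate I would reuse the machinery already deployed for $\Rem_3$ in the proof of \Cref{th:sub_full}. Starting from the identity $\df_m = k - \tr[\bL_m\bV_m]$ of \Cref{lem:derivative}, \Cref{lem:trB_df} already provides $\EE|k - \tr[\bL_m\bV_m](1+\tr[\bB_m])| \lesssim \sqrt k\,\tau^{-2}(1+p/k)^{5/2}$, which shows that $\df_m$ differs from $k\tr[\bB_m]/(1+\tr[\bB_m])$ by a $\sqrt k$-level $L^1$ error. The remaining task is to concentrate $\tr[\bB_m]$ around its mean: since $\bB_m$ is a smooth function of the Gaussian block $\bX_{I_m}$ on the operator-norm good event $\Omega := \{\|\bG_{I_m}\|_{\oper} \le 2(\sqrt k + \sqrt{p+1})\}$, whose complement has probability $\le e^{-nc/2}$ by \Cref{lem:gaussian_oper_bound}, a Gaussian Poincar\'e inequality in the spirit of the one used in the proof of \Cref{lem:contraction_cor} controls $\Var(\tr[\bB_m]/k)$ on $\Omega$, and Cauchy--Schwarz converts this into the required first-moment bound.

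The main obstacle is the polynomial-factor bookkeeping in this last step, since the Poincar\'e constant relevant for $\tr[\bB_m]$ is not almost surely finite: although $\|\bB_m\|_{\oper} \le (k\mu)^{-1}$ holds uniformly by \eqref{eq:property_B}, the derivatives of $\tr[\bB_m]$ in the Gaussian entries of $\bX_{I_m}$ involve the less well-behaved operator norms $\|\bZ\bB_m\bZ^\top\|_{\oper}$ and $\|\bL_m\bZ\|_{\oper}$, which are only controlled on $\Omega$. Carefully chasing this operator-norm dependence through Poincar\'e and Cauchy--Schwarz is what produces the specific $\gamma^{9/2}\tau^{-3}c^{-4}$ polynomial factor---noticeably worse than the $\gamma^{5/2}\tau^{-2}c^{-2}$ rate of \Cref{th:sub_full}, essentially because concentrating $\df_m$ itself is one differentiation level deeper than the bilinear-in-residual forms treated there---and simultaneously isolates the exponential event responsible for the $e^{-nc/2}$ term.
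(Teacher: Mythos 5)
Your reduction to a per-pair comparison via exchangeability is sound, and so is the reformulation of the $L^1$ estimate using $\df_m = k - \tr[\bL_m\bV_m]$ and \Cref{lem:trB_df}; up to that point your plan and the paper's are compatible, differing only in the choice of deterministic proxy ($\bar d = \EE[\df_m]$ versus the paper's $d_n = k/n - \sqrt{\EE[W_m]}$ with $W_m = k\|\bL_m\br_m\|_2^2/(n^2\|\bh_m\|^2)$), which is immaterial.

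The genuine gap is in the last step. You propose to close the argument by concentrating $\tr[\bB_m]$ around its mean via a Gaussian Poincar\'e inequality. But applying Poincar\'e to $\tr[\bB_m]$ requires differentiating $\bB_m$ with respect to the Gaussian entries $z_{ij}$, and $\bB_m$ is itself the Jacobian object coming from the \emph{first} derivative of $\hbbeta_m$: computing $\partial \tr[\bB_m]/\partial z_{ij}$ would require second-order derivatives of the penalized estimator. Those are not available under \Cref{assu:penalty} --- Theorem 1 of \cite{bellec2022derivatives}, restated as \Cref{lem:derivative}, only guarantees a once-differentiable (locally Lipschitz) structure and gives the first derivative formula \eqref{eq:derivartive_formula} together with operator-norm and rank bounds on $\bB_m$, not any control of its fluctuations in $\bZ$. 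So the Poincar\'e step, as you describe it, cannot be run. This is not merely a bookkeeping difficulty; it is why the operator-norm dependence you worry about ``chasing'' cannot even be started.

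The paper sidesteps this precisely by never concentrating $\tr[\bB_m]$ (or $\tr[\bL_m\bV_m]$, which has the same problem through $\bV_m = \bI_n - \bZ\bB_m\bZ^\top\bL_m$). Instead it concentrates $W_m = k\|\bL_m\br_m\|_2^2/(n^2\|\bh_m\|^2)$: its Poincar\'e derivative involves only $\partial \bh_m/\partial z_{ij}$, which \emph{is} given explicitly by \eqref{eq:derivartive_formula}, and this yields $\Var[W_m] \lesssim \gamma^3/(n\tau^2)$ via \Cref{lem:contraction_ratio}. The connection to $\df_m$ is then made indirectly: \eqref{eq:moment_ineq_sub} with $I = \tilde I = I_m$ gives $\EE|W_m - \tr[\bL_m\bV_m]^2/n^2| \lesssim \gamma^{7/2}/(\sqrt n\,\tau^2 c^2)$, combining to $\EE|\tr[\bL_m\bV_m]^2/n^2 - \EE[W_m]| \lesssim \gamma^{7/2}/(\sqrt n\,\tau^2 c^2)$, and then the lower bound $\tr[\bL_m\bV_m]/n \ge C\gamma^{-1}c\tau$ on the good event (from \Cref{lem:df_upperbound}) lets one factor $|a^2 - b^2| = |a - b||a + b|$ to extract $|\df_m/n - d_n|$. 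Markov's inequality then produces exactly the $\gamma^{9/2}\tau^{-3}c^{-4}$ rate as the product of the $L^1$ rate $\gamma^{7/2}/(\tau^2c^2)$ and the inverse lower bound $\gamma/(\tau c^2)$, with the $e^{-nc/2}$ coming from $\PP(\Omega^c)$. To repair your argument you would need to replace ``concentrate $\tr[\bB_m]$'' with ``concentrate a residual-based proxy whose derivative is first-order in $\bh_m$,'' which is what $W_m$ accomplishes.
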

The proof of \Cref{lem:concentration_df} is given in  \Cref{proof:concentrate_denom_full}.
From \Cref{lem:concentration_df}, it suffices to bound $|d_{m,\ell}^\full-d_{m,\ell}^\cgcv|/n$ from above by $|\df_m-\tdf_M|/n$ and $|\df_\ell-\tdf_M|/n$ up to an absolute constant. Below, we prove \eqref{eq:concentrate_denom_full} for $m=\ell$ and $m\neq \ell$ separately. 

\noindent\textbf{When $m=\ell$. }
Letting $f$ be the function
$
f(x) = 1 -2x + c^{-1} x^2 = (1-x)^2 + (c^{-1}-1)x^2
$, we have
$$
\frac{d_{m,m}^\full}{n} = 1- 2 \frac{\df_m}{n} + \frac{\df_m^2}{nk}= f(\frac{\df_m}{n}), \quad \frac{d_{m,m}^\cgcv}{n} = (1-\frac{\tdf_M}{n})^2 + (c^{-1}-1) (\frac{\tdf_M}{n})^2 = f(\frac{\tdf_M}{n}).
$$
Here, $\df_m/n, \tdf_M/n \in [0, c)$ by \eqref{eq:property_tr}, while 
$f$ is $2$-Lipschitz on $[0,c]$ since $\sup_{x \in [0,c]} |f'(x)| = \sup_{x\in [0,c]} 2(1-x/c) = 2$. Thus, we obtain that, for all $\epsilon>0$, 
$$
\PP\Big(\frac{|d_{m,m}^\full - d_{m,m}^\cgcv|}{n} >\epsilon\Big) = \PP\Big(\Big|f\big(\frac{\df_m}{n}\big)-f\big(\frac{\tdf_M}{n}\big)\Big|> \epsilon\Big) \le \PP\Big( 2\Big|\frac{\df_m}{n}-\frac{\tdf_M}{n}\Big| > \epsilon\Big)\lesssim M\Big(e^{-nc/2} + 
\frac{\gamma^{9/2}}{\epsilon\sqrt{n}\tau^3 c^4}\Big),
$$
thanks to the Lipschitz property of $f$ and \Cref{lem:concentration_df}. 
This completes the proof of \eqref{eq:concentrate_denom_full} for $m=\ell$. 

\noindent\textbf{When $m\neq \ell$.}  Letting $g$ be the function $g(x,y)=(1-x)(1-y)$, we have 
\begin{align*}
\frac{d_{m, \ell}^\full}{n} = g\Big(\frac{\df_m}{n}, \frac{\df_\ell}{n}\Big) +  \frac{\df_m}{n}\frac{\df_\ell}{n}  \Big(\frac{|I_m\cap I_\ell| \cdot n}{k^2}-1\Big), \qquad  \frac{d_{m,\ell}^\cgcv}{n} = g \Big(\frac{\tdf_M}{n}, \frac{\tdf_M}{n}\Big).
\end{align*}
Here, $g(x,y)=(1-x)(1-y)$ satisfies the following inequality:
for all $x,x', y,y'\in [0,1]$,
\begin{align*}
|g(x, y) - g(x',y')| 
\le |g(x, y)-g(x',y)| + |g(x', y)-g(x', y')| \le |x-x'| + |y-y'|. 
\end{align*}
From this property of $g$ and $\df_m/n, \df_\ell/n, \tdf_M/n \in [0,c) \subset[0,1]$ by \eqref{eq:property_tr}, we find
\begin{align*}
\frac{|d_{m, \ell}^\full - d_{m,\ell}^\cgcv|}{n} &\le \frac{\df_m}{n}\frac{\df_\ell}{n}  \Bigl|\frac{|I_m\cap I_\ell| \cdot n}{k^2}-1\Bigr| + \Big|g\Big(\frac{\df_m}{n}, \frac{\df_\ell}{n}\Big)-g\Big(\frac{\tdf_M}{n}, \frac{\tdf_M}{n}\Big)\Big| \\
&\le c^2 \Bigl|\frac{|I_m\cap I_\ell| \cdot n}{k^2}-1\Bigr| + 
\frac{|\df_m-\tdf_M|}{n} + \frac{|\df_\ell-\tdf_M|}{n}.
\end{align*}
Here, thanks to the bound of of $\Var[|I_m \cap I_\ell|]$ (see \Cref{lem:concentration-intersection}), 
the moment of the first term on the right-hand side is bounded from above as
\[
\EE\Bigl[c^2 \Bigl|\frac{|I_m\cap I_\ell| \cdot n}{k^2}-1\Bigr|\Bigr] = c^2 \frac{n}{k^2} \EE\Bigl[\Bigl||I_m\cap I_\ell|-\frac{k^2}{n}\Bigr|\Bigr] 
\le \frac{1}{n} \sqrt{ \EE\Bigl[\Bigl||I_m\cap I_\ell|-\frac{k^2}{n}\Bigr|^2\Bigr] } \le \frac{1}{n} \sqrt{\frac{k^2}{n}} = \frac{c}{\sqrt{n}}.
\]
Therefore, Markov's inequality applied with the above moment bound \Cref{lem:concentration_df} results in
\begin{align*}
\PP\Big(\frac{|d_{m, \ell}^\full - d_{m,\ell}^\cgcv|}{n}>\epsilon\Big) &\le \PP\Big(c^2 \Bigl|\frac{|I_m\cap I_\ell| \cdot n}{k^2}-1\Bigr|>\frac{\epsilon}{3}\Big) + \PP\Big(\frac{|\df_m-\tdf_M|}{n} > \frac{\epsilon}{3}\Big) + \PP\Big(\frac{|\df_\ell-\tdf_M|}{n} > \frac{\epsilon}{3}\Big)\\
&\lesssim \frac{c}{\epsilon\sqrt{n}} + M\Big(e^{-nc/2} + 
\frac{\gamma^{9/2}}{\epsilon\sqrt{n}\tau^3 c^4}\Big) 
\lesssim M\Big(e^{-nc/2} + 
\frac{\gamma^{9/2}}{\epsilon\sqrt{n}\tau^3 c^4}\Big),
\end{align*}
for all $\epsilon>0$. This completes the proof of Equation~\eqref{eq:concentrate_denom_full} for $m\neq\ell$. 
\qedsymbol{}

\subsection{Proof of \Cref{prop:gcv_inconsistency}}
\label{app:proof:prop:gcv_inconsistency}
Define $\text{Correction}$ as 
$$
 \text{Correction} :=  (c^{-1}-1) \frac{(\tdf_M/n)^2}{(1-\tdf_M/n)^2} \frac{\sum_m \|\bh_m\|^2}{\|\sum_m\bh_m\|^2},
$$
so that \Cref{lem:cgcv_risk_close} can be written as 
\begin{equation}\label{eq:gcv_C_n_1}
\PP\bigg(\Big|\frac{\tR_M^\gcv}{R_M} - \text{Correction} -  1\Big| > \epsilon\bigg) \lesssim \frac{M^4  \gamma^{13/2}}{\epsilon \sqrt{n}\tau^5 c^4} \text{ for all } \epsilon\in(0,1). 
\end{equation}
Since $\|\sum_m\bh_m\|^2 \le (\sum_m \|\bh_m\|)^2 \le M\sum_m \|\bh_m\|^2$ by the triangle inequality, it holds that
$$
\forall \delta\in(0,1), \quad \Bigl\{{\tdf_M}/{k} \ge \delta\Bigr\} = \Bigl\{{\tdf_M}/{n} \ge c\delta\Bigr\} \subset 
\Bigl\{\text{Correction} \ge (c^{-1}-1) \bigg(\frac{c\delta}{1-c\delta}\bigg)^2 \frac{1}{M} \ge \frac{\delta^2 c(1-c)}{M}\Bigr\}.
$$
Therefore, we obtain that, for all $\delta\in(0,1)$, 
\begin{align*}
    \PP\bigg(\frac{\tdf_M}{k} \ge \delta\bigg) &\le \PP\bigg(\text{Correction} \ge
    \frac{\delta^2 c(1-c)}{M}\bigg)\\
    &\le \PP\bigg(\frac{\tR_M^\gcv}{R_M} - \text{Correction} -  1  \le -\frac{\delta^2 c(1-c)}{2M}\bigg) + \PP\bigg(\frac{\tR^\gcv_M}{R_M}-1 > \frac{\delta^2 c(1-c)}{2M}\bigg)\\
    &\le C \bigg(\frac{\delta^2 c(1-c)}{2M}\bigg)^{-1} \frac{M^4  \gamma^{13/2}}{\sqrt{n}\tau^5 c^4} + \PP\bigg(\frac{\tR^\gcv_M}{R_M} > 1 + \frac{c(1-c)\delta^2}{2M}\bigg) \text{ (by \eqref{eq:gcv_C_n_1} with $\epsilon=\frac{\delta^2c(1-c)}{2M}$)} \\
    &\le C' \frac{M^5}{\sqrt{n}\tau^5 \delta^2} \frac{\gamma^{13/2}}{c^5(1-c)} + \PP\bigg(\frac{\tR^\gcv_M}{R_M} > 1 + \frac{c(1-c)\delta^2}{2M}\bigg),
\end{align*}
where $C,C'>0$ are absolute constants. This finishes the proof.

\subsection{Technical lemmas and their proofs}
\label{sec:technical-lemmas-nonasymp}

\subsubsection{Proof of \Cref{lem:derivative} (restated here for convenience)}
\label{proof:lem:derivative}

\bigskip

\LemDerivative*

\begin{proof}
By the change of variable $\bbeta \mapsto \bu = \bSigma^{1/2}(\bbeta-\bbeta_0)$ and $\bG = \bX\bSigma^{-1/2}$, we have
$$
\bSigma^{1/2}(\hat{\bbeta} -\bbeta_0) = \hat{\bu}, \quad \br = \by - \bX\hbeta =\bepsilon - \bG\hat{\bu}, \quad  \df = \tr[\bX (\partial/\partial \by) \hat{\bbeta}] = \tr[\bG (\partial/\partial\bepsilon)\hat{\bu}], 
$$
where $\hat{\bu}$ is a penalized estimator with an isotropic design $\bG = \bX\bSigma^{-1/2}$:
$$
\hat{\bu} = \hat{\bu}(\bepsilon, \bG) := \argmin_{\bu\in\R^{p}} \frac{1}{k}\sum_{i\in I} (\epsilon_i - \bg_i^\top \bu)^2 + f(\bu) \text{ with } f(\bu) := g(\bSigma^{-1/2} \bu + \bbeta_0).
$$
Note in passing that the map $\bu \in\R^{p} \mapsto f(\bu) - \mu \|\bu\|^2/2$ is convex thanks to \Cref{assu:penalty}. Then, \citet[Theorem 1]{bellec2022derivatives} with $\bSigma=\bI_p$ implies the followings:
there exists a matrix $\bA\in \R^{p\times p}$ depending on $(\epsilon_i, \bg_i)_{i\in I}$ such that 
$$
\|\bA\|_{\oper} \leq (|I| \mu)^{-1} = (k\mu)^{-1},
$$ and the derivative of $\hat{\bu}$ with respect to $\bG=(g_{ij})_{ij}\in \R^{n\times p}$ and $\bepsilon=(\epsilon_i)\in\R^n$ are  given by
\begin{align*}
  \text{for all } i\in [n], j\in [p], \quad  &\frac{\partial\hat{\bu}}{\partial g_{ij}}(\bepsilon,\bG)
= \begin{dcases}
    \bA [\be_j \be_{i}^\top\br - \bG^\top \be_i \be_j \hat{\bu}] & (i\in I)\\
    \bm{0}_p & (i\notin I)
\end{dcases} = \bA [{\be_j} (\bL\br)_i - \bG^\top\bL \be_i \hat{u}_j ], 
\\
\text{for all } i\in[n], \quad &\frac{\partial\hat{\bu}}{\partial \epsilon_i}(\bu,\bG)
= \begin{dcases}
    \bA \bG^\top \be_i & (i\in I)\\
    \bm{0}_p & (i\notin I)
\end{dcases} = \bA \bG^\top \bL \be_i,
\end{align*}
where $\bL=\sum_{i\in I} \be_i\be_i^\top$. 
Furthermore, equation (D.8) (and the following argument) in \cite{bellec2022derivatives} imply that 
$\bV := \bI_n - \bG \bA \bG^\top \bL$ satisfies
$$
0 < k(1+\|\bG\|_{\oper}^2/(k\mu))^{-1} \le \tr[\bL\bV] = k - \df \le k, \quad \|\bL\bV\|_{\oper} \le 1.
$$
Thus, if we define the matrix $\bB\in\R^{(p+1)\times(p+1)}$ as
$
\bB := \begin{pmatrix}
\bA & \bm{0}_p \\
\bm{0}_p^\top & 0
\end{pmatrix}, 
$
the derivative of $\bh = (\hat{\bu}^\top, -\sigma)^\top\in\R^{p+1}$
with respect to $\bZ = [\bG| \sigma^{-1}\bepsilon] = (z_{ij})_{ij} \in \R^{n\times (p+1)}$ can be written as 
\begin{align*}
i\in [n], \ 1\le j\le p, \quad \frac{\partial\bh}{\partial z_{ij}} &= \frac{\partial}{\partial g_{ij}} \begin{pmatrix}
    \hat{\bu}\\
    -\sigma
\end{pmatrix} = 
\begin{pmatrix}
  \bA [{\be_j} (\bL\br)_i - \bG^\top\bL \be_i \hat{h}_j ]\\
    0
\end{pmatrix}
=  \bB \Bigl[\begin{pmatrix}
    \be_j\\
   0
\end{pmatrix} (\bL\br)_i - \bZ^\top \bL\be_i \hat{h}_j\Bigr],\\
i\in[n], \quad \frac{\partial\bh}{\partial z_{i, p+1}} &=\sigma \frac{\partial}{\partial \epsilon_{i}} \begin{pmatrix}
    \hat{\bu}\\
    -\sigma
\end{pmatrix}  = \begin{pmatrix}
      \sigma \bA\bG^\top \bL \be_i\\
      0
  \end{pmatrix}
=  \bB \Bigl[\begin{pmatrix}
    \bm{0}_p \\
   1
\end{pmatrix} (\bL\br)_i - \bZ^\top \bL\be_i  \hat{h}_{p+1}\Bigr].
\end{align*}
This finishes the proof of the derivative formula \eqref{eq:derivartive_formula}. 
It remains to prove $\tr[\bB] \ge 0$ in \eqref{eq:property_B}. By the definition of $\bB$, it suffices to show $\tr[\bA] \ge 0$.
Equation (7.4) in \cite{bellec2022observable} implies 
$
\bv^\top \bA \bv \ge 0
$ for all $\bv\in \ker(\bA)^\perp$. Then, letting $\bV=(\bv_1, \bv_2, \dots, \bv_p)\in\R^{p\times p}$ be an orthogonal matrix with its columns including an orthogonal basis of $\ker(\bA)^\perp$, we have
$
0 \le \sum_{i=1}^p \bv_i^\top \bA \bv_i = \tr[\bV^\top \bA \bV] = \tr[\bA \bV\bV^\top] = \tr[\bA].
$ This completes the proof.
\end{proof}

\subsubsection{Proof of \Cref{lem:psi_bound} (restated here for convenience)}\label{proof:psi_bound}

\bigskip

\LemPsiBound*

\begin{proof}
The proof is based on the moment inequality in \Cref{prop:sure_bound} ahead. We will bound $\Xi_J$ in \Cref{prop:sure_bound} using the derivative formula \eqref{eq:derivartive_formula} and the bound of the operator norm $\|\bB\|_{\oper} \le (k\mu)^{-1}$ by \eqref{eq:property_B}. 
Note in passing that the derivative formula \eqref{eq:derivartive_formula} implies 
$$
\sum_{i\in J} \sum_{j=1}^{p+1} \Big\|\frac{\partial \bh}{\partial z_{ij}}\Big\|^2 = \sum_{i\in J}\sum_{j=1}^{p+1} \|\bB \be_j (\bL \br)_i - \bB \bZ^\top \bL \be_i  h_j\|^2 \lesssim \|\bB\|_F^2 \|\bL_J \br\|^2 + \|\bL_J\bZ \bB^\top\|_F^2\|\bh\|^2,
$$
so that $\br = -\bZ\bh$ and $\|\bB\|_{F}^2 \le \rank(\bB)\|\bB\|_{\oper}^2 \le p (k\mu)^{-2}$ lead to 
\begin{equation}\label{eq:Xi_bound}
       \Xi_J = \sum_{i\in J} \|\bh\|^{-2} \sum_{j=1}^{p+1} \Big\| \frac{\partial \bh}{\partial z_{ij}} \Big\|^2
        \lesssim \|\bB\|_F^2 \|\bL_J \bZ\|_{\oper}^2 \lesssim p (k\mu)^{-2}\|\bL_J \bZ\|_{\oper}^2.
\end{equation}
Thus, \Cref{prop:sure_bound} and $\EE[\|\bL_J\bZ\|_{\oper}^4] \lesssim (|J|+p)^2$ by \eqref{lem:gaussian_oper_bound} yield
$$
\EE\bigg[
    \frac{1}{\|\bh\|^2 \|\tilde\bh\|^2} \Bigl(\br \bL_J \tilde\br + \sum_{i\in J}\sum_{j=1}^{q} \frac{\partial r_i \tilde h_j }{\partial z_{ij}}
    \Bigr)^2 \mathrel{\Big |} J
    \bigg] \lesssim |J| +  p (|J|+p)^2 (k\mu)^{-2} \lesssim \tau^{-2} (|J| + p(|J|+p)^2/k^2), 
$$
thanks to $\tau=\min(1, \mu)$. It remains to bound the error inside the square in the LHS; the derivative formula \eqref{eq:derivartive_formula} leads to
$$
\frac{1}{\|\bh\|\|\tilde\bh\|} \sum_{i\in K}\sum_{j=1}^{p+1}\Bigl(
\frac{\partial (\tilde h_j r_i)}{\partial z_{ij}} - \tilde \br^{\top}\tilde \bL \bL_J \br  \trace [\tilde \bB] + \tr[\bL_J \bV] \tilde \bh^{\top}\bh\Bigr)
= \ddfrac{- \tilde \bh^{\top} \tilde \bB \bZ^{\top} \tilde \bL \bL_J \br
- \br^{\top} \bL \bL_J \bZ \bB \tilde \bh}{\|\bh\|\|\tilde\bh\|}.
$$
Here, the square of the RHS can be bounded from above by
$4\mu^{-2} k^{-2} \|\bL_J \bZ\|_{\oper}^4$.
From this and  $\EE[\|\bL_J\bZ\|_{\oper}^4] \lesssim (|J|+p)^2$ by \eqref{lem:gaussian_oper_bound}, we find that the expectation of the RHS is bounded from above by $(|J|+p)^2/(k\mu)^{-2}$ up to some absolute constant. This finishes the proof.
\end{proof}

\begin{lemma}[{Variant of \citet[Proposition 6.1]{bellec2020out}}]\label{prop:sure_bound}
Assume that $\bh$ and $\tilde\bh$ are locally Lipschitz function from $\R^{n\times q} \to \R^q$ such that $\|\bh\|^2, \|\tilde\bh\|^2 \neq 0$, and let $\br = -\bZ\bh $ and $\tilde\br = -\bZ \tilde\bh$, where 
$\bZ\in \R^{n\times q}$ has i.i.d.\ $\cN(0,1)$ entries. If 
$J\subset[n]$ is independent of $\bZ$, we have
\begin{align*}
\EE\bigg[
    \frac{1}{\|\bh\|^2 \|\tilde\bh\|^2} \Bigl(\br \bL_J \tilde\br + \sum_{i\in J}\sum_{j=1}^{q} \frac{\partial r_i \tilde h_j }{\partial z_{ij}}
    \Bigr)^2 \mathrel{\Big |} J
    \bigg] 
    \lesssim |J| +  \EE [\|\bL_J \bZ\|_{\oper}^2 (1 + \Xi_J + \tilde\Xi_J)],
\end{align*}
where $\bL_J = \sum_{i\in J} \be_i\be_i^\top$, $\Xi_J = \sum_{i\in J} \sum_{j=1}^{q} \|\bh\|^{-2} \Big\|\frac{\partial \bh}{\partial z_{ij}}\Big\|^2$, and $\tilde\Xi_J = \sum_{i\in J} \sum_{j=1}^{q} \|\tilde \bh\|^{-2} \Big\|\frac{\partial \tilde\bh}{\partial z_{ij}}\Big\|^2$.
\end{lemma}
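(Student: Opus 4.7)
The strategy is to view the bracketed quantity as a Stein-adjusted statistic and then bound its second moment by a second-order Gaussian moment inequality, closely following the template of Proposition~6.1 in \cite{bellec2020out}. Define $F_{ij}:=r_i\tilde h_j\,\ind_{\{i\in J\}}$. Since $r_i=-\sum_j z_{ij}h_j$, we have $\br^\top\bL_J\tilde\br=-\sum_{i\in J,j}z_{ij}(r_i\tilde h_j)$, so the expression inside the square equals $\sum_{i,j}(-z_{ij}F_{ij}+\partial F_{ij}/\partial z_{ij})$, which by Gaussian integration by parts is mean-zero conditionally on $J$.

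The core of the proof is to control its second moment. Expanding the square and applying Stein's integration by parts repeatedly (equivalently, applying the Gaussian Poincar\'e inequality after reducing via the first-order Stein identity) yields
\[
\EE\!\left[\Bigl(\sum_{i,j}\bigl(z_{ij}F_{ij}-\tfrac{\partial F_{ij}}{\partial z_{ij}}\bigr)\Bigr)^{\!2}\,\bigg|\,J\right] \;\le\; \EE\bigl[\|\bF\|_F^2 \,\big|\, J\bigr] + \sum_{(i,j),(k,l)} \EE\!\left[\tfrac{\partial F_{ij}}{\partial z_{kl}}\tfrac{\partial F_{kl}}{\partial z_{ij}}\,\bigg|\,J\right].
\]
The Frobenius piece factors cleanly as $\|\bF\|_F^2=\|\bL_J\br\|^2\|\tilde\bh\|^2\le \|\bL_J\bZ\|_{\oper}^2\|\bh\|^2\|\tilde\bh\|^2$, using $\br=-\bZ\bh$. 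For the Jacobian cross-term, the chain rule together with $\partial r_i/\partial z_{kl}=-\delta_{ik}h_l-\sum_m z_{im}\,\partial h_m/\partial z_{kl}$ decomposes $\partial F_{ij}/\partial z_{kl}$ (for $i\in J$) into a ``free'' piece $-\delta_{ik}h_l\tilde h_j$ and two ``chain'' pieces involving $\partial\bh$ and $\partial\tilde\bh$. Pairing the free piece with itself under the $(i,j)\leftrightarrow(k,l)$ swap gives $(\bh^\top\tilde\bh)^2|J|\le \|\bh\|^2\|\tilde\bh\|^2\,|J|$, producing the $|J|$ summand. The remaining eight cross products are handled by Cauchy--Schwarz together with the identities $\sum_{k\in J,l}\|\partial\bh/\partial z_{kl}\|^2=\|\bh\|^2\Xi_J$ and its analog for $\tilde\bh$, and absorbing the quadratic-in-$\bZ$ pieces into $\|\bL_J\bZ\|_{\oper}^2$; this produces a pointwise upper bound of $C\|\bL_J\bZ\|_{\oper}^2(\Xi_J+\tilde\Xi_J)\|\bh\|^2\|\tilde\bh\|^2$ for some absolute constant $C$.

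Dividing the pointwise bounds by $\|\bh\|^2\|\tilde\bh\|^2$ before taking expectation yields the stated inequality. The main subtlety -- and the reason the normalizations $\|\bh\|^{-2},\|\tilde\bh\|^{-2}$ are \emph{built into} the definitions of $\Xi_J$ and $\tilde\Xi_J$ -- is that the random denominators sit inside the expectation on the left, so one cannot naively pull $\|\bh\|^2\|\tilde\bh\|^2$ out of $\EE$. The mechanism adopted in \cite{bellec2020out} and reused here is that every pointwise upper bound produced by the Stein argument carries exactly a matching factor of $\|\bh\|^2\|\tilde\bh\|^2$, which cancels against the denominator before expectation is taken. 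The extension from the single-vector setting of \cite{bellec2020out} to the bilinear pair $(\bh,\tilde\bh)$ is routine via Cauchy--Schwarz on the mixed derivative products, introducing no essential new difficulty.
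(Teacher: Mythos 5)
The central step of your argument does not go through. The second-order Stein machinery (whether in the form of the exact variance identity, the Gaussian Poincar\'e inequality, or Proposition~6.1 of \cite{bellec2020out}) gives an inequality between \emph{expectations}: something of the shape $\EE[X^2\mid J]\le \EE[\text{RHS}\mid J]$. Even granting your claim that the RHS factors pointwise as $\|\bh\|^2\|\tilde\bh\|^2\cdot(\text{bounded stuff})$, this yields $\EE[X^2\mid J]\le \EE[\|\bh\|^2\|\tilde\bh\|^2\cdot(\text{bounded stuff})\mid J]$. What the lemma requires is $\EE[X^2/(\|\bh\|^2\|\tilde\bh\|^2)\mid J]\lesssim |J|+\EE[\|\bL_J\bZ\|_{\oper}^2(1+\Xi_J+\tilde\Xi_J)]$, and there is no general way to pass from the former to the latter when the denominator $\|\bh\|^2\|\tilde\bh\|^2$ is random and strongly correlated with $X$. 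In other words, Stein does not give you a pointwise bound on $X^2$ that you can divide through before taking expectation; it only gives you a bound on $\EE[X^2]$.

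You correctly flag this as ``the main subtlety'' but the resolution you propose is circular: the reason the normalizations $\|\bh\|^{-2},\|\tilde\bh\|^{-2}$ appear inside $\Xi_J,\tilde\Xi_J$ is not that the unnormalized Stein bound magically cancels them; it is that the Stein argument must be applied to \emph{already-normalized} quantities. That is exactly what the paper does: it sets $\brho=\bL_J\br/\|\bh\|$ and $\tilde{\bm\eta}=\tilde\bh/\|\tilde\bh\|$, applies Proposition~6.1 of \cite{bellec2020out} to the bilinear form $-\brho^\top\bZ\tilde{\bm\eta}$ (where the crucial point is that $\|\tilde{\bm\eta}\|=1$ deterministically and $\|\brho\|\le\|\bL_J\bZ\|_{\oper}$, so the RHS of the Stein bound contains no uncontrolled $\|\bh\|,\|\tilde\bh\|$ factors), and then handles separately the deterministic-looking discrepancy between $\sum_{i,j}\partial(\rho_i\tilde\eta_j)/\partial z_{ij}$ and $\|\bh\|^{-1}\|\tilde\bh\|^{-1}\sum_{i,j}\partial(r_i\tilde h_j)/\partial z_{ij}$, using the elementary identity $\partial(\bh/\|\bh\|)/\partial z_{ij}=\|\bh\|^{-1}\bP^\perp\,\partial\bh/\partial z_{ij}$. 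Your proposal, which applies Stein to the unnormalized $F_{ij}=r_i\tilde h_j\ind_{\{i\in J\}}$ and divides at the end, misses this and so does not establish the claim. To repair it, normalize before invoking Stein, and then control the normalization remainder by Cauchy--Schwarz as the paper does.
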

\begin{proof}
    Let $\brho = \bL_J \br/\|\bh\| = - \bL_J \bZ \bh /\|\bh\|\in \R^n$ and $\tilde{\bm \eta}:=  \tilde\bh/\|\tilde\bh\|\in \R^{q}$. Then,  Proposition 6.1. in \cite{bellec2020out} implies
    \begin{equation}\label{eq:sure_quadratic}
        \EE \bigg[\Big(-\brho^\top \bZ \tilde{\bm\eta} + \sum_{i,j} \frac{\partial (\rho_i \tilde\eta_j)}{\partial z_{ij}}\Big)^2\bigg] \le \EE[\|\brho\|^2 \|\tilde{\bm\eta}\|^2] + \sum_{ij} \EE\bigg[\|\brho\|^2 \Big\|\frac{\partial\tilde{\bm \eta}}{\partial z_{ij}}\Big\| + \|\tilde{\bm\eta}\|^2 \Big\|\frac{\partial \brho}{\partial z_{ij}}\Big\|^2\bigg],
    \end{equation}
    where 
    $\EE[\|\brho\|^2 \|\tilde{\bm\eta}\|^2] \le \EE[\|\bL_J\bZ\|_{\oper}^2] $ thanks to $\|\tilde{\bm\eta}\|^2 = 1$ and $\|\brho\|^2 \le \|\bL_J \bZ\|_{\oper}^2$. It remains to bound 
    the second term. Here, we use the following identity: if $\bh: \R^{n\times q}\to \R^{q}$ is locally Lipschitz with $\|\bh\|^2\neq 0$, we have
    \begin{equation}\label{eq:identity_frac}
        \frac{\partial}{\partial z_{ij}} \Big(\frac{\bh}{\|\bh\|}\Big) = \frac{\bP^\perp}{\|\bh\|} \frac{\partial \bh}{\partial z_{ij}}\text{ with }\bP^\perp = \bI_{p+1} - \frac{\bh \bh^\top}{\|\bh\|^2} \text{ so that } \Big\| \frac{\partial}{\partial z_{ij}} \Big(\frac{\bh}{\|\bh\|}\Big) \Big\|^2 \le \frac{1}{\|\bh\|^2} \Big\|\frac{\partial\bh}{\partial z_{ij}}\Big\|^2,
    \end{equation}
    where the inequality follows from $\|\bP^\perp\|_{\oper} \le 1$.  Then, \eqref{eq:identity_frac} and $\|\brho\|^2 \le \|\bL_J \bZ\|_{\oper}^2$ yield
    $$
    \sum_{i\in J}\sum_{j=1}^q \|\brho\|^2 \Big\|\frac{\partial\tilde{\bm \eta}}{\partial z_{ij}}\Big\| \le \|\bL_J\bZ\|_{\oper}^2 \sum_{i\in J}\sum_j \frac{1}{\|\bh\|^2} \Big\|\frac{\partial \tilde \bh}{\partial z_{ij}}\Big\|^2 = \|\bL_J \bZ\|_{\oper}^2 \Xi_J.
    $$
    In the same way, 
    $\brho=-\bL_J\bZ\bh/\|\bh\|$, $\|\tilde{\bm\eta}\|^2 = 1$, and \eqref{eq:identity_frac} lead to
    $$
    \sum_{i\in J}\sum_{j=1}^q\|\tilde{\bm\eta}\|^2 \Big\|\frac{\partial \brho}{\partial z_{ij}}\Big\|^2 =
    \sum_{i\in J}\sum_{j=1}^q \Big\|\bL_J \Big(\be_i \frac{h_j}{\|\bh\|} + \bZ \frac{\partial}{\partial z_{ij}}  \Big(\frac{\bh}{\|\bh\|}\Big)\Big)\Big\|^2
    \le 2|J| + 2\|\bL_J\bZ\|_{\oper}^2 \Xi_J.
    $$
    Therefore, the RHS of \eqref{eq:sure_quadratic} is bounded from above by $|J| +  \EE [\|\bL_J \bZ\|_{\oper}^2 (1 + \Xi_J + \tilde\Xi_J)]$ up to some absolute constant. It remains to control the error inside the square:
\begin{align*}
& \frac{1}{\|\bh\|\|\tilde\bh\|}\sum_{i\in J} \sum_{j\in[q]} \frac{\partial r_i\tilde h_j}{\partial z_{ij}} - \sum_{i\in J}\sum_{j\in[q]} \frac{\partial \rho_i \tilde\eta_j}{\partial z_{ij}}  =\sum_{i\in J}\sum_{j=1}^q \frac{r_i \tilde h_j}{\|\bh\|\|\tilde\bh\|} \bigg(
\frac{\bh^\top}{\|\bh\|^2}\frac{\partial\bh}{\partial z_{ij}} + \frac{\tilde\bh^\top }{\|\tilde\bh\|^2} \frac{\partial \tilde\bh}{\partial z_{ij}}
\bigg)
\end{align*}
By multiple applications of the Cauchy-Schwartz inequality, the square of the RHS is bounded from above by $2\|\bL_J \bZ\|_{\oper}^2 (\Xi_J + \tilde{\Xi}_J)$. This finishes the proof.
\end{proof}

\subsubsection{Proof of \Cref{lem:trB_df} (restated here for convenience)}\label{proof:trB_df}

\bigskip

\LemTrBdf*

\begin{proof}
\Cref{lem:psi_bound} with $J=I=\tilde I$ and $0 \le \tr[\bB] \le p \|\bB\|_{\oper} \le p/(k\mu)$ by \eqref{eq:property_B} yield
$$
\EE|(1+\tr[\bB]) \xi_1| \lesssim \mu^{-1} (p/k ) \EE|\xi_1| \lesssim \tau^{-1} (p/k) \sqrt{k} \tau^{-1} (1 + p/k)^{3/2} \lesssim \sqrt{k}\tau^{-2} (1+p/k)^{5/2},
$$
while $\EE|\xi_2| \lesssim \sqrt{k} \tau^{-2} (1+p/k)^2$ by \Cref{lem:psi_bound_2} below. Thus, we obtain 
\[
\EE[|k - \tr[\bL\bV](1+\tr[\bB])|] \lesssim \sqrt{k}\tau^{-2} (1+p/k)^{5/2} +  \sqrt{k} \tau^{-2} (1+p/k)^2 \lesssim \sqrt{k}\tau^{-2} (1+p/k)^{5/2}.
\]
This finishes the proof.
\end{proof}

\begin{lemma}\label{lem:psi_bound_2}
We have
$\EE[|k - (1+\tr[\bB])^2 \|\bL\br\|^2/\|\bh\|^2|] 
\lesssim k^{1/2} \tau^{-2} (1 + p/k)^2$. 
\end{lemma}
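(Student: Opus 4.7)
My plan is to bound the quantity $k - (1+\tr[\bB])^2 \|\bL\br\|^2/\|\bh\|^2$ via a chi-square type concentration in the spirit of \Cref{prop:sure_bound}. The guiding heuristic is that the ``inflated'' in-sample residuals $(1+\tr[\bB])r_i$ for $i\in I$ behave like nearly independent leave-one-out residuals each with variance $\|\bh\|^2$, so their squared norm should concentrate around $k\|\bh\|^2$ with fluctuations of order $\sqrt{k}\|\bh\|^2$.

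First, I would apply \Cref{prop:sure_bound} with $\tilde\br = \br$, $\tilde\bh = \bh$, $J = I$. Using $\br = -\bZ\bh$ together with the derivative formula \eqref{eq:derivartive_formula}, a direct divergence calculation (analogous to that in the proof of \Cref{lem:psi_bound}) gives
\begin{equation*}
    \br^\top\bL\br + \sum_{i\in I,\,j\in[p+1]} \frac{\partial(r_ih_j)}{\partial z_{ij}} \;=\; (1+\tr[\bB])\,\|\bL\br\|^2 \;-\; \tr[\bL\bV]\,\|\bh\|^2 \;+\; 2\,\bh^\top\bZ^\top\bL\bZ\bB\bh.
\end{equation*}
Combined with the bound $\Xi_I \lesssim p(k\mu)^{-2}\|\bL\bZ\|_{\oper}^2$ from \eqref{eq:Xi_bound} and the Gaussian operator-norm estimate $\EE\|\bL\bZ\|_{\oper}^4 \lesssim (k+p)^2$, \Cref{prop:sure_bound} followed by Jensen's inequality yields
\begin{equation*}
    \EE\Big|(1+\tr[\bB])\|\bL\br\|^2/\|\bh\|^2 \;-\; \tr[\bL\bV] \;+\; 2\,\bh^\top\bZ^\top\bL\bZ\bB\bh/\|\bh\|^2\Big| \;\lesssim\; \tau^{-1}\sqrt{k}\,(1+p/k)^{3/2}.
\end{equation*}
The quadratic cross term is then absorbed via the almost sure estimate $|\bh^\top\bZ^\top\bL\bZ\bB\bh|/\|\bh\|^2 \le \|\bL\bZ\|_{\oper}^2\,\|\bB\|_{\oper} \le (k+p)/(k\mu)$, which contributes at most $\tau^{-1}(1+p/k)$ in expectation.

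Next, to pass from $(1+\tr[\bB])\|\bL\br\|^2/\|\bh\|^2 \approx \tr[\bL\bV]$ to $(1+\tr[\bB])^2\|\bL\br\|^2/\|\bh\|^2 \approx k$, I would use the algebraic decomposition
\begin{equation*}
    k - (1+\tr[\bB])^2\|\bL\br\|^2/\|\bh\|^2 \;=\; \bigl[k - (1+\tr[\bB])\tr[\bL\bV]\bigr] + (1+\tr[\bB])\bigl[\tr[\bL\bV] - (1+\tr[\bB])\|\bL\br\|^2/\|\bh\|^2\bigr].
\end{equation*}
The second bracket is controlled by the preceding step, pre-multiplied by $(1+\tr[\bB]) \le 1 + p/(k\mu) \le \tau^{-1}(1+p/k)$. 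The first bracket is of the same form as $\xi_2$ itself appearing in \Cref{lem:trB_df}, so it must be bounded by a parallel chi-square argument (choosing a Stein vector field that extracts a factor of $(1+\tr[\bB])$ on the trace side), rather than by invoking \Cref{lem:trB_df}, which would be circular.

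The main obstacle is sharpness. The naive multiplication by $(1+\tr[\bB])$ in the last step yields a bound of order $\sqrt{k}\,\tau^{-2}(1+p/k)^{5/2}$, losing a factor $(1+p/k)^{1/2}$ compared to the stated target $\sqrt{k}\,\tau^{-2}(1+p/k)^{2}$. I expect the sharper power to come from applying the chi-square inequality \emph{directly} to the vector $\bv := (1+\tr[\bB])\,\bL\br/\|\bh\|$: computing $\partial v_i/\partial z_{ij}$ by accounting for the contributions from $\partial\tr[\bB]/\partial z_{ij}$ (via implicit differentiation of the Hessian defining $\bB$) and $\partial\|\bh\|/\partial z_{ij}$ as in \eqref{eq:identity_frac}, and then using moment bounds on $1+\tr[\bB]$ and $\Xi_I$ rather than their almost sure suprema. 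Carrying out this second-order bookkeeping while preserving the $\tau^{-2}$ and $(1+p/k)^2$ dependence is the main technical challenge.
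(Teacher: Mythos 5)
Your opening step is sound and matches what the paper uses: applying \Cref{prop:sure_bound} (equivalently \Cref{lem:psi_bound}) with $J=\tilde I = I$, $\tilde\bh=\bh$ gives $\EE\bigl|(1+\tr[\bB])\|\bL\br\|^2/\|\bh\|^2 - \tr[\bL\bV]\bigr| \lesssim \sqrt{k}\tau^{-1}(1+p/k)^{3/2}$, and you correctly diagnose that combining this \emph{scalar} estimate with a second scalar estimate for $k - (1+\tr[\bB])\tr[\bL\bV]$ costs a factor $(1+p/k)^{1/2}$ over the target. But the fix you propose does not work and misses the actual mechanism. Differentiating $\bv=(1+\tr[\bB])\bL\br/\|\bh\|$ requires $\partial\tr[\bB]/\partial z_{ij}$; since $\bB$ in \Cref{lem:derivative} comes from the Hessian of the penalty at $\hbbeta$, this needs third derivatives of $g$ and second derivatives of $\hbbeta$, none of which exist under \Cref{assu:penalty} (only strong convexity is assumed) and none of which are controlled by \Cref{lem:derivative}. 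So that route is blocked, not merely technically demanding.

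The paper avoids touching $\partial\tr[\bB]$ by working with an intermediary vector. Set $a_i = (1+\tr[\bB])(\bL\br)_i/\|\bh\|$ (your $\bv$) and $b_i = \bigl((\bL\br)_i + \sum_{j} \partial h_j/\partial z_{ij}\bigr)/\|\bh\|$. From \eqref{eq:derivartive_formula}, $\sum_j \partial h_j/\partial z_{ij} = \tr[\bB](\bL\br)_i - (\bL\bZ\bB^\top\bh)_i$, so $\ba - \bb = \bL\bZ\bB^\top\bh/\|\bh\|$ and $\EE\|\ba-\bb\|^2 \lesssim (k\mu)^{-2}(k+p)$ — a single rank-one correction, no differentiation of $\bB$ at all. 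The vector $\bb$ is, by construction, exactly the Stein-corrected residual to which \Cref{prop:chi_square} applies, giving $\EE|k-\|\bb\|^2| \lesssim \sqrt{k(1+\EE\Xi_I)}+\EE\Xi_I \lesssim \sqrt{k}\tau^{-2}(1+p/k)^2$, and \Cref{lem:basic_triangle_inequality} transfers this to $\|\ba\|^2$. The point you were groping toward is correct — the chi-square inequality must be applied once, at the vector level — but the factor $(1+\tr[\bB])$ is not something to differentiate; it is the \emph{multiplicative} form of what the chi-square lemma already handles \emph{additively} through the Stein divergence, and the clean path is to compare the two vectors rather than compute a new derivative.
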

\begin{proof}
Define $\ba\in \R^{n}$ and $\bb \in \R^{n}$ as
\[
\text{for all } i\in [n], \quad 
a_i = \frac{1}{\|\bh\|} (1+\tr[\bB])(\bL\br)_i
\quad
\text{and} 
\quad
b_i = \frac{1}{\|\bh\|} (\bL\br)_i + \frac{1}{\|\bh\|} \sum_{j=1}^{p+1} \frac{\partial h_j}{\partial z_{ij}}
\]
so that  $(1+\tr[\bB])^2 \|\bL\br\|^2/\|\bh\|^2 = \|\ba\|^2$. \Cref{lem:basic_triangle_inequality} ahead and the Cauchy--Schwarz inequality yield
\begin{align}
    \EE[|k-\|\ba\|^2|] 
    &\le \EE[2 (\sqrt{k}\|\ba-\bb\| + |k-\|\bb\|^2| + \|\ba-\bb\|^2] \notag \\
    &\lesssim \sqrt{k \EE[\|\ba-\bb\|^2]} + \EE[\|\ba-\bb\|^2] + \EE[|k-\|\bb\|^2|]. \label{eq:basic_triangle_cauchy}
\end{align}
Thus, it suffices to bound $\EE[\|\ba-\bb\|^2]$ and $\EE[|k-\|\bb\|^2]$. For
$\EE\|\ba-\bb\|^2$, \eqref{eq:property_B},  \eqref{eq:derivartive_formula}, and \Cref{lem:gaussian_oper_bound} yield 
\[
\EE \|\ba-\bb\|^2 = \EE[\|\bh\|^{-2} \|\bL\bZ \bB^\top \bh\|^2]  \le \EE[\|\bB\|_{\oper}^2 \|\bL\bZ\|_{\oper}^2] \lesssim  (k\mu)^{-2} (k+p) \lesssim 
k^{-1} \tau^{-2} (1+p/k),
\]
where $\tau = \min(1, \mu)$.
If we denote $\|\bh\|^{-2} \sum_{i\in I} \sum_{j=1}^{p+1} \|(\partial/\partial z_{ij})\bh\|^2$ by $\Xi_I$, 
\Cref{prop:chi_square} below leads to 
\begin{align*}
    \EE[|k-\|\bb\|^2|] &\lesssim \sqrt{k(1+\EE[\Xi_I])} + \EE[\Xi_I] \text{ (by \Cref{prop:chi_square})}\\
    &\lesssim \sqrt{k(1+ \mu^{-2} (1+p/k)^2)} + \mu^{-2} (1+p/k)^2 \text{ (thanks to \eqref{eq:Xi_bound} with $J=I$)}\\
    &\lesssim \sqrt{k}\tau^{-2} (1+p/k)^2 \text{ (thanks to $\tau=\min(1, \mu)$)}.
\end{align*}
This concludes the proof.
\end{proof}

\begin{lemma}[{Variant of \citet[Theorem 7.1]{bellec2020out}}]\label{prop:chi_square}
Let $\bh: \R^{k \times q}\to \R^{q}$ be a locally Lipschitz functions. If $\bZ \in \R^{k\times q}$ has i.i.d.\ $\cN(0,1)$ entries, we have 
$$
\EE\bigg[\bigg|k - \frac{1}{\|\bh\|^2}\sum_{i=1}^k \Big(\be_i^\top \bZ \bh - \sum_{j=1}^{q} \frac{\partial h_j}{\partial z_{ij}}\Big)^2\bigg|\bigg] \lesssim \sqrt{k(1 + \EE\Xi)} + \EE \Xi \text{ with } \Xi = \frac{1}{\|\bh\|^2} \sum_{i=1}^k \sum_{ j=1}^{q} \Big\|\frac{\partial \bh}{\partial z_{ij}}\Big\|^2.
$$
\end{lemma}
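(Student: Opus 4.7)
The statement is a chi-square-type moment inequality with a Stein compensator, essentially the content of \citet[Theorem 7.1]{bellec2020out} after a normalization step. My plan is therefore a reduction to that theorem rather than a proof from scratch. Set $\bv = \bh/\|\bh\|$ and define $\ba \in \R^k$ by $a_i = \be_i^\top \bZ \bv - \sum_{j=1}^{q} \partial v_j/\partial z_{ij}$, so that, using the identity \eqref{eq:identity_frac} to rewrite the normalized divergence, the quantity inside the absolute value in the lemma becomes $k - \|\ba\|^2$ up to an error controlled by $\Xi$. The initial task is to verify that this passage from $\bh$ to $\bv$ does not inflate the right-hand side beyond $\EE\Xi$, which follows because $\sum_{i,j}\|\partial_{ij}\bv\|^2 \le \Xi$ by \eqref{eq:identity_frac}.

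Next, I would use the decomposition
\begin{equation*}
\EE\bigl|k - \|\ba\|^2\bigr| \le \bigl|k - \EE\|\ba\|^2\bigr| + \sqrt{\Var\bigl(\|\ba\|^2\bigr)}
\end{equation*}
and treat the two terms separately. For the bias term, first-order Gaussian integration by parts applied to each $\EE[z_{ij} v_j(\bZ)]$ shows that the divergence correction in $a_i$ is precisely the Stein compensator, so that $\bigl|k - \EE\|\ba\|^2\bigr| \lesssim \EE\Xi$. For the variance term, I would apply the Gaussian Poincar\'e inequality to the functional $F(\bZ) = \|\ba\|^2$, establishing that its gradient has squared Frobenius norm at most of order $k(1 + \Xi) + \Xi^2$; taking expectations and a square root yields $\sqrt{\Var(\|\ba\|^2)} \lesssim \sqrt{k(1 + \EE\Xi)} + \EE\Xi$.

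The main obstacle is the Poincar\'e step. Differentiating $F = \|\ba\|^2$ requires differentiating both the Gaussian factor $\be_i^\top\bZ\bv$ and the divergence $\sum_j \partial v_j/\partial z_{ij}$, the latter producing second-order cross derivatives of $\bh$ that are not directly controlled by $\Xi$. The technical device, following \citet{bellec2020out}, is to rewrite these second-order derivatives by combining the projection $\bI-\bv\bv^\top$ from \eqref{eq:identity_frac} with Cauchy--Schwarz, so that the second-order contribution is dominated by the first-order quantities already present in $\Xi$ together with a deterministic $O(k)$ piece. Once this rewriting is in place, combining the bias and Poincar\'e bounds via the triangle inequality gives the stated inequality, with constants matching those in \citet[Theorem 7.1]{bellec2020out}.
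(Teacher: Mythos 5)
Your opening sentence says you will reduce to Theorem~7.1 of \citet{bellec2020out}, and your first step (normalize to $\bv = \bh/\|\bh\|$, control the change in the Stein compensator by $\Xi$ via~\eqref{eq:identity_frac}) is exactly the paper's first step. But you then abandon the reduction and instead outline a re-derivation of the unit-vector case from scratch, splitting $\EE|k-\|\ba\|^2|$ into a bias piece $|k-\EE\|\ba\|^2|$ and a Poincar\'e variance piece. The paper does \emph{not} do this: after establishing $\|\ba-\bb\|^2\le\Xi$ (where $\bb$ is the normalized-compensator version), it combines this with $|k-\|\bb\|^2|$ using the deterministic inequality in \Cref{lem:basic_triangle_inequality} and Cauchy--Schwarz, and then bounds $\EE|k-\|\bb\|^2|$ by invoking Theorem~7.1 of \citet{bellec2020out} directly on the unit vector $\bv$ with $\Xi'\le\Xi$. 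That is the whole proof.

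Your re-derivation route, as sketched, has gaps. The bias claim $|k-\EE\|\ba\|^2|\lesssim\EE\Xi$ is not justified: carrying out the Gaussian integration by parts on $\EE[(\be_i^\top\bZ\bv)^2]$ produces cross terms of the form $\EE\bigl[\sum_{j,l}v_j z_{il}\,\partial v_l/\partial z_{ij}\bigr]$ and $\EE\bigl[(\be_i^\top\bZ\bv)\sum_j\partial v_j/\partial z_{ij}\bigr]$, which involve the unbounded factor $z_{il}$ (or $\be_i^\top\bZ\bv$) multiplying first derivatives and are not dominated by $\EE\Xi$ without further argument; moreover $\sum_i(\sum_j\partial v_j/\partial z_{ij})^2$ is bounded by $q\,\Xi$, not $\Xi$, which in the proportional regime is a factor $q\asymp k$ too large. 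Likewise, the Poincar\'e step you describe requires differentiating the divergence $\sum_j\partial v_j/\partial z_{ij}$, and you acknowledge but do not resolve the resulting second-order derivatives. These are exactly the technical difficulties that the second-order Stein machinery in Theorem~7.1 of \citet{bellec2020out} is built to handle, which is why the paper cites that theorem rather than re-derive it. If you retain your stated plan of reducing to Theorem~7.1, simply apply it to the unit vector $\bv$ after your normalization step; the re-proof attempt can then be deleted.
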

\begin{proof}
Define vectors $\ba, \bb\in \R^{k}$ by
\[
\text{for all } i\in [k], \quad  a_i = \frac1{\|\bh\|} \Big(\be_i^\top \bZ\bh - \sum_j \frac{\partial h_j}{\partial z_{ij}}\Big)
\quad
\text{and}
\quad
b_i = \be_i^\top \bZ\frac{\bh}{\|\bh\|} - \sum_j \frac{\partial}{\partial z_{ij}} \Bigl(\frac{h_j}{\|\bh\|}\Bigr) ,
\] 
so that the LHS of the assertion is $\EE|k - \|\ba\|^2|$. The same argument in \eqref{eq:basic_triangle_cauchy} leads to
$$
\EE[|k-\|\ba\|^2|] \lesssim \sqrt{k \EE[\|\ba-\bb\|^2]} + \EE[\|\ba-\bb\|^2] + \EE[|k-\|\bb\|^2|]. 
$$
Below we bound $\EE[\|\ba-\bb\|^2]$ and $\EE[|k-\|\bb\|^2]$. 
For $\|\ba-\bb\|^2$, multiple applications of the Cauchy--Schwarz inequality lead to 
$$
\|\ba-\bb\|^2 = \sum_{i=1}^k \Bigr\{-\frac{1}{\|\bh\|} \sum_{j}\frac{\partial h_j}{\partial z_{ij}} + \sum_j \frac{\partial}{\partial z_{ij}} \Big(\frac{h_j}{\|\bh\|}\Big)\Bigr\}^2 = \sum_{i=1}^k \Bigl( - \sum_{j} \frac{\bh^\top}{\|\bh\|^3} \frac{\partial \bh}{\partial z_{ij}} h_j\Bigr)^2 \le \sum_{i,j} \frac{1}{\|\bh\|^2} \Big\|\frac{\partial \bh}{\partial z_{ij}}\Big\|^2 = \Xi.
$$
For $\EE[|k-\|\bb\|^2]$, Theorem 7.1 in \cite{bellec2020out} applied to the unit vector $\bh/\|\bh\|\in \R^q$ implies 
\[
\EE|k -\|\bb\|^2| \lesssim  \sqrt{k(1 + \EE \Xi')} +  \EE\Xi'
\quad
\text{with}
\quad
\Xi' := \EE\sum_{i=1}^k \sum_{j=1}^q \|\frac{\partial}{\partial z_{ij}} \Bigl(\frac{\bh}{\|\bh\|} \Bigr)\|^2.
\]
Since $\Xi' \le \Xi$ by \eqref{eq:identity_frac}, we have 
$
\EE[|k-\|\bb\|^2|] \lesssim \sqrt{k (1 + \EE[\Xi])}  + \EE[\Xi]. 
$
This finishes the proof.
\end{proof}

\begin{lemma}\label{lem:basic_triangle_inequality}
We have
$|k-\|\ba\|^2| \le 2 (\sqrt{k}\|\ba-\bb\| + |k-\|\bb\|^2| + \|\ba-\bb\|^2)$ for all vector $\ba, \bb$ in the same Euclidean space. 
\end{lemma}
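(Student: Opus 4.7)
The plan is to split $|k - \|\ba\|^2|$ via the triangle inequality into $|k - \|\bb\|^2|$ and $|\|\bb\|^2 - \|\ba\|^2|$, expand the second term using $\ba = \bb + (\ba-\bb)$, and then convert any residual $\|\bb\|$ factor into $\sqrt{k}$ plus a cheap remainder using the bound $\|\bb\| \le \sqrt{k} + \sqrt{|k-\|\bb\|^2|}$. A single application of AM-GM on the cross term $\sqrt{|k-\|\bb\|^2|}\|\ba-\bb\|$ absorbs the awkward square root, leaving exactly the three promised pieces with the factor $2$.

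\textbf{Key steps, in order.} First, write
\[
|k - \|\ba\|^2| \le |k - \|\bb\|^2| + \bigl| \|\ba\|^2 - \|\bb\|^2 \bigr|.
\]
Second, expand $\|\ba\|^2 = \|\bb + (\ba-\bb)\|^2 = \|\bb\|^2 + 2\bb^\top(\ba-\bb) + \|\ba-\bb\|^2$ and use Cauchy--Schwarz to get
\[
\bigl| \|\ba\|^2 - \|\bb\|^2 \bigr| \le 2\|\bb\|\,\|\ba-\bb\| + \|\ba-\bb\|^2.
\]
Third, use the elementary bound $\|\bb\|^2 \le k + |k-\|\bb\|^2|$, which together with $\sqrt{x+y}\le\sqrt{x}+\sqrt{y}$ for $x,y\ge0$ gives
\[
\|\bb\| \le \sqrt{k} + \sqrt{|k-\|\bb\|^2|}.
\]
Fourth, plug this in and apply $2\sqrt{|k-\|\bb\|^2|}\,\|\ba-\bb\| \le |k-\|\bb\|^2| + \|\ba-\bb\|^2$ (AM-GM) to conclude
\[
|k-\|\ba\|^2| \le 2\bigl( \sqrt{k}\,\|\ba-\bb\| + |k-\|\bb\|^2| + \|\ba-\bb\|^2 \bigr).
\]

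\textbf{Main obstacle.} There is no real obstacle: the statement is a purely algebraic perturbation inequality in a Euclidean space, and each step above is a one-line manipulation (triangle inequality, polar identity, Cauchy--Schwarz, AM-GM). The only mildly non-obvious choice is step three, where one must resist bounding $\|\bb\|$ by $\|\ba\| + \|\ba-\bb\|$ (which would reintroduce $\|\ba\|$ and create circularity) and instead bound it through $|k - \|\bb\|^2|$ so that the RHS depends only on $\bb$ and $\ba-\bb$. Once this choice is made, the constants line up cleanly to give the factor $2$ claimed.
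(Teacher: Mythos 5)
Your proof is correct and takes essentially the same approach as the paper: both bound $|\|\ba\|^2 - \|\bb\|^2|$ by $2\|\bb\|\|\ba-\bb\| + \|\ba-\bb\|^2$ via Cauchy--Schwarz, then replace $\|\bb\|$ by $\sqrt{k} + \sqrt{|k - \|\bb\|^2|}$, and finally absorb the square-root cross term with AM--GM. The only cosmetic difference is that the paper starts from the reverse triangle inequality $\bigl||k-\|\ba\|^2| - |k-\|\bb\|^2|\bigr| \le |\|\ba\|^2 - \|\bb\|^2|$ and adds $|k-\|\bb\|^2|$ at the end, whereas you apply the ordinary triangle inequality up front; the intermediate steps and constants are otherwise identical.
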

\begin{proof}
 By multiple applications of the triangle inequality and $2ab \le a^2 + b^2$, we have
\begin{align*}
    \bigl||k- \|\ba\|^2| - |k-\|\bb\|^2|\bigr| 
    &\le \|\ba - \bb\|\|\ba + \bb\| \le \|\ba-\bb\|^2 + 2\|\ba-\bb\|\|\bb\| \\
    &\le \|\ba-\bb\|^2 + 2\|\ba-\bb\| (\sqrt{|\|\bb\|^2 - k |} + \sqrt{k}) \\
    &\le 2\|\ba-\bb\|^2 + |\|\bb\|^2-k| + 2\|\ba-\bb\| \sqrt{k}.
\end{align*}
Adding $|\|\bb\|^2 - k|$ to both sides and using the triangle inequality, we conclude the proof.
\end{proof}

\subsubsection{Proof of \Cref{lem:contraction_cor} (restated here for convenience)}
\label{proof:contraction_cor}

\bigskip

\LemContractionCor*

\begin{proof}
Letting $\bv = \bh/\|\bh\|$ and $\tilde{\bv}=\tilde{\bh}/\|\tilde\bh\|$, the statement of \Cref{lem:contraction_cor} can be written as follows;
if the same penalty is used for $\bh$ and $\tilde\bh$, we have
$$
\EE \Bigl[\bv^\top \tilde\bv \mathrel{\big |} (\bz_i)_{i\in I\cap \tilde I}\Bigr] \ge 0, 
\quad
\text{and}
\quad
\EE\Bigl[\Var \Bigl[\bv^\top\tilde\bv \mathrel{\big |} (\bz_i)_{i\in I\cap \tilde I}\Bigr]\Bigr] \lesssim \frac{\gamma}{nc^2\tau^2},
$$
Below we prove this claim. Here, the key fact is that conditionally on $(\bz_i)_{i\in I \cap \tilde I}$, the random vectors $\bv$ and $\tilde{\bv}$ are independent and identically distributed. Then, it immediately follows from this fact that 
$$
\EE \Bigl[\bv^\top\tilde\bv \mathrel{\big |} (\bz_i)_{i \in I\cap \tilde I}\Bigr] = \Big\Vert\EE\Bigl[\bv \mathrel{\big |} (\bz_{i})_{i\in I\cap \tilde I} \Bigr]\Big\Vert^2 \ge 0.
$$
Next, we derive the bound of the variance:
first using the Gaussian Poincar\'e inequality for the inequality below,
second using that $\bv$ does not depend on $(\bz_i)_{i\in \tilde{I}\setminus I}$ and $\tilde{\bv}$ does not depend on $(\bz_{i})_{i\in I\setminus\tilde I}$ for the equality below,
$$
\Var \Bigl[\bv^\top\tilde\bv \mid (\bz_i)_{i\in I\cap \tilde I}\Bigr] \le \bar\EE \sum_{j\in[p+1]} \sum_{i\in (I\setminus\tilde I) \cup (\tilde I\setminus I)} \Bigl(\frac{\partial }{\partial z_{ij}} \bv^\top \tilde\bv \Bigr)^2 = \bar\EE \sum_{j\in[p+1]} \Big[\sum_{i\in I\setminus\tilde I}\Bigl(
       \tilde\bv^\top
       \frac{\partial\bv}{\partial z_{ij}}
       \Bigr)^2 + \sum_{i\in \tilde I\setminus I}\Bigl(
       \bv^\top
       \frac{\partial\tilde \bv}{\partial z_{ij}}
       \Bigr)^2\Big],
$$ 
where $\bar\EE$ is conditional expectation given $(\bz_i)_{i\in I\cap \tilde I}$. 
By the symmetry of $(\bh, \tilde\bh)$, it suffices to bound the first term.
Letting $\bP^\perp = \bI_{p+1} - \bv\bv^\top$, we obtain the following from the identity \eqref{eq:identity_frac}:
\begin{align*}
\sum_{j\in[p+1]}\sum_{i\in I\setminus\tilde I}\Bigl(
       \tilde\bv^\top
       \frac{\partial\bv}{\partial z_{ij}}
       \Bigr)^2 &= \frac{1}{\|\bh\|^2} \sum_{j\in[p+1]}\sum_{i\in I} (\tilde\bv^\top \bP^\perp \bB \be_j (\bL\br)_i - \tilde\bv^\top \bP^\perp \bB \bZ^\top \bL \be_i h_j)^2\\
       &\lesssim
        \|\bB\|_{\oper}^2 \|\bL \bZ\|_{\oper}^2 + \|\bL\bZ \bB^\top\|_{\oper}^2 \lesssim (k\mu)^{-2} \|\bL\bZ\|_{\oper}^2.
   \end{align*}
   Thus, the moment bound $\EE[\|\bL\bZ\|_{\oper}^2]\lesssim (k+p)$ by \Cref{lem:gaussian_oper_bound} leads to 
   $$
   \EE\Bigl[\Var \Bigl[\bv^\top\tilde\bv \mid (\bz_i)_{i\in I\cap \tilde I}\Bigr]\Bigr] \lesssim  (k\mu)^{-2} \EE\Bigl[\|\bL\bZ\|_{\oper}^2 + \|\tilde{\bL}\bZ\|_{\oper}^2\Bigr] \lesssim (k\mu)^{-2} (k+p) \lesssim n^{-1} \tau^{-2} c^{-2}\gamma,
   $$
   thanks to $\tau=\min(1, \mu)$, $c=k/n\in(0,1)$, and $\gamma=\max(1, p/n)$. This completes the proof.
\end{proof}

\subsubsection{Proof of \Cref{lem:denom_lower_bound} (restated here for convenience)}
\label{proof:df_lower_bound}

\bigskip

\LemDenomLowerBound*

\begin{proof}
Below, we prove this assertion for $\est=\sub$ and $\est=\full$, separately.

\noindent\textbf{\underline{Proof for $\est=\sub$.}} 
Recall that $d_{m,\ell}^\sub = |I_m\cap I_\ell|(1-\df_m/k)(1-\df_\ell/k)$. \Cref{lem:df_upperbound} with $I=I_m$ and $I_\ell$ implies that there exists an absolute constant $C\in(0,1)$ such that 
\begin{align}
\text{for all } m, \ell, \quad \PP((1-\df_m/k)(1-\df_\ell/k) \le  C^2 \tau^{2} c^2 \gamma^{-2}) \le 2 e^{-nc/2} \label{eq:df_k_df_ell}. 
\end{align}
\Cref{lem:concentration-intersection} (introduced later) and Markov's inequality lead to the following:
$$
\text{for all } m\neq \ell, \quad  \PP(|I_m\cap I_\ell|n/k^2-1| > 1/2) \le 4 \EE[(I_m\cap I_\ell|n/k^2-1)^2] \le 4 n^2 k^{-4} k^2 n^{-1} = 4 n^{-1} c^{-2},
$$
which implies
$$\PP(|I_m\cap I_\ell| \le k^2/(2n) = 2^{-1} nc^2) \le 4n^{-1} c^{-2}.$$ 
Note in passing that the above inequality also holds for $m=\ell$ since $|I_m\cap I_\ell| = k \ge k^2/2n$ with probability $1$.  Therefore, we have, for all $m, \ell$, 
\begin{align*}
    \PP(d_{m, \ell}^{\sub} \le 2^{-1} C^2 n c^4\tau^2 \gamma^{-2}) &\le \PP(|I_m\cap I_\ell| \le 2^{-1}nc^2) + \PP((1-\df_m/k)(1-\df_\ell/k) \le  C^2 \tau^{2} c^2 \gamma^{-2})\\
    &\le 4n^{-1} c^{-2} + 2e^{-nc/2} \lesssim n^{-1}c^{-2}.
\end{align*}

\noindent\textbf{\underline{Proof for $\est=\full$.}} 
Recall that $d_{m,\ell}^\full = n - \df_m-\df_\ell + k^{-2} |I_m\cap I_\ell| \df_m \df_\ell$. 
We consider $m=\ell$ and $m\neq \ell$ separately. 

When $m=\ell$, \eqref{eq:df_k_df_ell} with $m=\ell$ implies that the following holds with probability at least $1-2e^{-nc/2}$:
$$
n^{-1} d_{m,m}^\full = c (1-\df_m/k)^2 + 1-c \ge c C^2 \tau^2 c^2 \gamma^{-2} + 1 - c \ge C^2 \tau^2 \gamma^{-2} (c^3 + 1- c) \ge C' \tau^2\gamma^{-2},
$$
where $C' = C^2 \min_{c\in[0,1]}(c^3+1-c) = C^2(1-2\sqrt{3}/9)$ is a positive absolute constant.

When $m\neq \ell$, we decompose
$$
\frac{d_{m, \ell}^\full}{n} = \Big(1-\frac{\df_m}{n}\Big)\Big(1-\frac{\df_\ell}{n}\Big) +  \frac{\df_m}{n}\frac{\df_\ell}{n}  \Big(\frac{|I_m\cap I_\ell| \cdot n}{k^2}-1\Big) =: A_{m, \ell} + B_{m,\ell}.
$$
\Cref{lem:df_upperbound} implies that there exists an absolute constant $C\in(0,1)$ such that
$$
\PP(A_{m,\ell} \le C^2 \tau^2 \gamma^{-2}) \le 2e^{-nc/2}.
$$
For $B_{m,\ell}$, $0<\df_m <k = nc$ and \Cref{lem:concentration-intersection} imply
\[
\EE[|B_{m,\ell}|^2] \le (\frac{k^2}{n^2} \frac{n}{k^2})^2 \EE[||I_m\cap I_\ell|-\frac{k^2}{n}|^2] 
\le \frac{1}{n^2} \frac{k^2}{n} = \frac{c^2}{n}.
\]
Thus, Markov's inequality leads to
\begin{align*}
\PP(d_{m, \ell}^\full > n 2 C^2 \tau^2\gamma^{-2}) 
&\le \PP(A_{m, \ell} > C^2 \tau^2\gamma^{-2}) + \PP(B_{m, \ell} > C^2 \tau^2\gamma^{-2}) \\ 
&\le 2e^{-nc/2} + (C^2\tau^2\gamma^{-2})^{-2} \EE[|B_{m,\ell}|^2] \lesssim (nc)^{-1} + \tau^{-4}\gamma^{4} c^2 n^{-1}  \\
&\lesssim n^{-1} c^{-1} \tau^{-4} \gamma^4.
\end{align*}
This concludes the proof.
\end{proof}

\begin{lemma}\label{lem:df_upperbound}
    There exists an absolute constant $C\in(0,1)$ such that
    $$
    \PP(1- \df/k \ge C \tau c \gamma^{-1}) \ge 1- e^{-nc/2}, \quad \PP(1-\df/n \ge C\tau\gamma^{-1}) \ge 1 - e^{-nc/2}
    $$
\end{lemma}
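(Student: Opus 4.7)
The plan is to combine the deterministic lower bound on $\trace[\bL\bV] = k - \df$ given in Equation~\eqref{eq:property_tr} of \Cref{lem:derivative} with a standard Gaussian operator-norm concentration inequality for $\bL\bG$, where $\bG = \bX\bSigma^{-1/2} \in \RR^{n \times p}$ has i.i.d.\ $\cN(0,1)$ entries under \Cref{assu:Gaussian-feature}.

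First, I would invoke \eqref{eq:property_tr} to write
\[
1 - \df/k \;=\; \tr[\bL\bV]/k \;\ge\; \bigl(1 + \|\bL\bG\|_{\oper}^2/(k\mu)\bigr)^{-1}.
\]
Then I would bound $\|\bL\bG\|_{\oper}$: since $\bL$ is the coordinate projection onto $I$ of cardinality $k$, $\bL\bG$ is (after dropping zero rows) distributed as a $k \times p$ matrix with i.i.d.\ $\cN(0,1)$ entries, so by the Gaussian concentration inequality for the operator norm (see \Cref{lem:gaussian_oper_bound}) applied with $t=\sqrt{k}$,
\[
\PP\bigl(\|\bL\bG\|_{\oper} \ge 2\sqrt{k} + \sqrt{p}\bigr) \;\le\; e^{-k/2} \;=\; e^{-nc/2}.
\]
On the complementary event, $\|\bL\bG\|_{\oper}^2 \le 2(4k + p) \le 8(k+p)$, hence
\[
\frac{\|\bL\bG\|_{\oper}^2}{k\mu} \;\le\; \frac{8(k+p)}{k\mu} \;=\; \frac{8}{\mu}\Bigl(1 + \frac{p}{k}\Bigr) \;\le\; \frac{8}{\tau}\Bigl(1 + \frac{\gamma}{c}\Bigr) \;\le\; \frac{16\gamma}{\tau c},
\]
where I used $\mu \ge \tau$, $p/k = (p/n)/(k/n) \le \gamma/c$, and $\gamma/c \ge 1$. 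Consequently, on this event,
\[
1 - \df/k \;\ge\; \bigl(1 + 16\gamma/(\tau c)\bigr)^{-1} \;\ge\; \frac{\tau c}{17\gamma},
\]
since $\tau c \le 1 \le \gamma$ gives $\tau c + 16\gamma \le 17\gamma$. This establishes the first inequality with $C = 1/17$ (or any smaller absolute constant).

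For the second inequality, I would use the identity $1 - \df/n = (1-c) + c(1-\df/k)$ and split on the value of $c=k/n$. If $c \le 1/2$, then $1 - \df/n \ge 1-c \ge 1/2 \ge (1/2)\tau/\gamma$ deterministically (using $\tau \le 1$ and $\gamma \ge 1$). If $c \ge 1/2$, then on the same high-probability event,
\[
1 - \df/n \;\ge\; c(1-\df/k) \;\ge\; \tfrac{1}{2}\cdot \frac{\tau c}{17 \gamma} \;\ge\; \frac{\tau}{68\gamma}.
\]
Taking $C = \min(1/2, 1/68)$ in both cases gives the second inequality on the same event of probability at least $1 - e^{-nc/2}$. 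There is no essential obstacle here; the only thing to be mindful of is tracking the absolute constants uniformly in the two cases of the $c$-split, and ensuring that the Gaussian operator-norm tail bound used matches the form available in \Cref{lem:gaussian_oper_bound}.
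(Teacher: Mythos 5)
Your proof is correct and takes essentially the same approach as the paper: invoke the lower bound on $\tr[\bL\bV]/k$ from \eqref{eq:property_tr}, control $\|\bL\bG\|_{\oper}$ via \Cref{lem:gaussian_oper_bound} (with a slightly different choice of $t$, which is immaterial), and then transfer the bound on $1-\df/k$ to $1-\df/n$. The only cosmetic difference is that you handle the second inequality via the identity $1-\df/n=(1-c)+c(1-\df/k)$ and a case split on $c\lessgtr 1/2$, whereas the paper uses the max of $1-c$ and $1-\df/k$ (noting $1-\df/n \ge 1-\df/k$ since $n\ge k$) together with $(1-c)\vee c\ge 1/2$; both yield an absolute constant and are equally valid.
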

\begin{proof}
Equation~\eqref{eq:property_tr} implies
$$
(1-\df/k)^{-1} \le 1+\mu^{-1} \|\bL\bG\|_{\oper}^2/k \le \tau^{-1} (1+\|\bL\bG\|_{\oper}^2/k),
$$
thanks to $\tau=\min(1,\mu)$, 
where $\bG\in\R^{n\times p}$ has i.i.d.\ Gaussian entries. Since 
$\|\bL\bG\|_{\oper} \stackrel{\mathrm{d}}{=} \|\bG'\|_{\oper}$ with $\bG'\in \R^{k\times p}$ having i.i.d.\ $\cN(0,1)$ entries, \Cref{lem:gaussian_oper_bound} with $t=\sqrt{k} + \sqrt{p}$ implies
$$
\PP(\|\bL\bZ\|_{\oper}^2 > \{2(\sqrt{k} + \sqrt{p})\}^2) \le e^{-(\sqrt{k} + \sqrt{p})^2/2} \le e^{-(k+p)/2} \le e^{-k/2}. 
$$
Since $\{2(\sqrt{k} + \sqrt{p})\}^2 \le 8(k+p)$. the following holds with probability at least $1-e^{-k/2}$:
\[
    (1-\df/k)^{-1} \le \tau^{-1} (1 + 8(1+p/k)) \le 9 \tau^{-1} (1+p/k) \le 9 \tau^{-1} (1+c^{-1}p/n) \le 18 \tau^{-1} c^{-1}\gamma,
\]
thanks to $\gamma=\max(1, p/n)$.
This completes the proof for  $1-\df/k$. 
For $1-\df/n$, $0 < \df_m < k = nc$ and the above display lead to
    $$
    \PP(1-\df_m/n \ge (1-c) \vee \{(18)^{-1} \tau c \gamma^{-1}\}) \ge 1-e^{-nc/2},
    $$
    where 
    $(1-c) \vee \{(18)^{-1} \tau c \gamma^{-1}\} \ge (18)^{-1} \tau \gamma^{-1} ((1-c) \vee c) \ge (18)^{-1} \tau \gamma^{-1} 2^{-1}$ thanks
    to $c,\tau\in(0,1]$ and $\gamma>1$.
    Therefore
    taking $C=(36)^{-1}$ concludes the proof.
\end{proof}

\subsubsection{Proof of \Cref{lem:concentration_df} (restated here for convenience)}
\label{proof:concentrate_denom_full}

\bigskip

\LemConcentrationDf*

\begin{proof}
Now we suppose that exists a deterministic scalar $d_n$ that does not depend on $m$ such that 
\begin{equation}\label{eq:df_concentrate_constant}
\PP(|\df_m/n - d_n| > \epsilon) \lesssim e^{-nc/2} + 
\frac{\gamma^{9/2}}{\epsilon\sqrt{n}\tau^3 c^4}.    
\end{equation}
Then, multiple applications of the triangle inequality lead to
\begin{align*}
\PP(|\df_m - \tdf_M| > n \epsilon) 
&\le \PP\bigg(\Big|\frac{\df_m}{n} - d_n\Big|> \frac\epsilon2\bigg) + \PP\bigg(\Big|\frac{M^{-1}\sum_m\df_m}{n} - d_n\Big|>\frac\epsilon2\bigg)\\
&\le \PP\bigg(\Big|\frac{\df_m}{n} - d_n\Big|> \frac\epsilon2\bigg) + \sum_{m=1}^M \PP\bigg(\Big|\frac{\df_m}{n} - d_n\Big|>\frac\epsilon2\bigg) 
\lesssim M \Big(e^{-nc/2} + 
\frac{\gamma^{9/2}}{\epsilon\sqrt{n}\tau^3 c^4}\Big),
\end{align*}
which completes the proof. 
Below we prove \eqref{eq:df_concentrate_constant}. 
Let $\bL_m=\bL_{I_m}$ for brevity.
Define $(W_m, d_n)$ by
\[
W_m := \frac{k\|\bL_m \br_m\|^2}{n^2 \|\bh_m\|^2} = \frac{c\|\bL_m \br_m\|^2}{n\|\bh_m\|^2}, \quad d_n = \frac{k}{n} - \sqrt{\EE[W_m]}.
\]
Note that $d_n$ does not depend on $m$
by symmetry. Since $\df_m = k - \tr[\bL_m\bV_m]$, our goal is to bound
$$
{\df_m}/{n} - d_n = {\tr[\bL_m\bV_m]}/{n} - \sqrt{\EE[W_m]}.
$$
\Cref{eq:moment_ineq_sub} with $I = \tilde{I} = I_m$ implies
$$
\EE\bigg[\Big|W_m -\frac{\tr[\bL_m\bV_m]^2}{n^2}\Big|\bigg] = \frac{k}{n^2} \EE\bigg[\Big|\frac{\|\bL_m\br_m\|^2}{\|\bh_m\|^2} - k \tr[\bL_m\bV_m]^2\Big|\bigg]  
\lesssim \frac{k}{n^2}\cdot \sqrt{n} \tau^{-2} c^{-3} \gamma^{7/2} = \frac{\gamma^{7/2}}{\sqrt{n}\tau^2 c^2},
$$
while the Gaussian Poincar\'e inequality leads to (see \Cref{lem:contraction_ratio} below) 
$$
\Var[W_m] = \Var\bigg[\frac{c}{n} \frac{\|\bL_m\br_m\|^2}{\|\bh_m\|^2}\bigg] = \frac{c^2}{n^2} \Var\bigg[\frac{\|\bL_m\br_m\|^2}{\|\bh_m\|^2}\bigg] \lesssim \frac{c^2}{n^2} \cdot \frac{n\gamma^3}{c^2 \tau^2} = \frac{\gamma^3}{n\tau^2}.
$$
By the above displays, we obtain
\begin{align*}
\EE \bigg[\Big|\frac{\tr[\bL_m\bV_m]^2}{n^2} - \EE[W_m]\Big|\bigg] 
&\le \EE \bigg[\Big|\frac{\tr[\bL_m\bV_m]^2}{n^2} - W_m\Big|\bigg] + \EE \Big[\big|W_m - \EE[W_m]\big|\Big] \\
&\le  \EE\bigg[\Big|\frac{\tr[\bL_m\bV_m]^2}{n^2} - W_m\Big|\bigg] + \sqrt{\Var[W_m]}
\\
&\lesssim \frac{\gamma^{7/2}}{\sqrt{n}\tau^2 c^2} + \sqrt{\frac{\gamma^3}{n\tau^2}} \lesssim \frac{\gamma^{7/2}}{\sqrt{n}\tau^2 c^2}.
\end{align*}
Now, \Cref{lem:df_upperbound} implies that there exists an absolute constant $C>0$ such that
$$
\PP(\Omega^c) \le  e^{-(k+p)/2} \le e^{-nc/2} \text{ with } \Omega = \{\tr[\bL_m\bV_m]/n \ge C \gamma^{-1} c\tau\}
$$
Notice that under $\Omega$, 
\begin{align*}
\Bigl|\frac{\tr[\bL_m\bV_m]^2}{n^2} - \EE[W_m]\Bigr| &= \Bigl|\frac{\tr[\bL_m\bV_m]}{n} - \sqrt{\EE[W_m]}\Bigr| \cdot \Bigl|\frac{\tr[\bL_m\bV_m]}{n} + \sqrt{\EE[W_m]}\Bigr|\\
&= \Bigl|\frac{\df_m}{n} - d_n\Bigr| \cdot \Big|\frac{\tr[\bL_m\bV_m]}{n} + \sqrt{\EE[W_m]}\Big| \ge  \Bigl|\frac{\df_m}{n} - d_n\Bigr| C \gamma^{-1} c^2 \tau.
\end{align*}
Combining the above displays together, we obtain
\begin{align*}
\PP\bigg(\Big|\frac{\df_m}{n} - d_n\Big| > {\epsilon}\bigg) &\le \PP(\Omega^c) + \PP\bigg(\Big|\frac{\tr[\bL_m\bV_m]^2}{n^2} - \EE[W_m]\Big| >  \epsilon C \gamma^{-1} c^2\tau\bigg) \\ 
&\le e^{-nc/2} + \frac{\gamma}{\epsilon C c^2\tau} \EE\bigg[\Big|\frac{\tr[\bL_m\bV_m]^2}{n^2} - \EE[W_m]\Big|\bigg]\\
&\lesssim e^{-nc/2} + \frac{\gamma}{\epsilon c^2\tau} \cdot \frac{\gamma^{7/2}}{\sqrt{n}\tau^2 c^2} = e^{-nc/2} + 
\frac{\gamma^{9/2}}{\epsilon\sqrt{n}\tau^3 c^4}.
\end{align*}
This finishes the proof.
\end{proof}

\begin{lemma}\label{lem:contraction_ratio}
We have
$\Var(\|\bL\br\|^2/\|\bh\|) \lesssim n c^{-2} \tau^{-2} \gamma^3$. 
\end{lemma}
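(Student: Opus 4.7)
}
The plan is to apply the Gaussian Poincar\'e inequality to $f(\bZ) = \|\bL\br\|^2/\|\bh\|^2$ viewed as a (locally Lipschitz) function of the i.i.d.\ standard Gaussian matrix $\bZ\in\R^{n\times(p+1)}$, and then to control the expected squared gradient $\EE\sum_{i,j}(\partial f/\partial z_{ij})^2$ using the closed form derivatives of $\bh$ and $\br$ from \Cref{lem:derivative} together with the operator-norm bounds $\|\bB\|_{\oper}\le (k\mu)^{-1}\le(k\tau)^{-1}$ and $\EE[\|\bL\bZ\|_{\oper}^{2q}]\lesssim(k+p)^{q}$ from \Cref{lem:gaussian_oper_bound}.

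First, using $\br = -\bZ\bh$ and the product rule, I will expand
\[
\frac{\partial f}{\partial z_{ij}}
= \frac{2}{\|\bh\|^2}(\bL\br)^\top \bL\frac{\partial \br}{\partial z_{ij}}
-\frac{2\|\bL\br\|^2}{\|\bh\|^4}\,\bh^\top\frac{\partial \bh}{\partial z_{ij}},
\quad
\frac{\partial \br}{\partial z_{ij}}=-\be_i h_j - \bZ\frac{\partial\bh}{\partial z_{ij}},
\]
and substitute the formula $\partial\bh/\partial z_{ij}=\bB\be_j(\bL\br)_i-\bB\bZ^\top\bL\be_i h_j$ from \eqref{eq:derivartive_formula}. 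This yields $f$-gradient pieces of three structural types: (i) a direct term $(\bL\br)_i h_j/\|\bh\|^2$; (ii) terms of the form $(\bL\br)^\top\bL\bZ\bB\be_j(\bL\br)_i/\|\bh\|^2$ and $\bh^\top \bB\be_j(\bL\br)_i \|\bL\br\|^2/\|\bh\|^4$; and (iii) terms of the form $(\bL\br)^\top\bL\bZ\bB\bZ^\top\bL\be_i h_j/\|\bh\|^2$ and $\bh^\top\bB\bZ^\top\bL\be_i h_j\|\bL\br\|^2/\|\bh\|^4$. For each type I will sum the squares over $(i,j)$ and convert to operator/Frobenius norms using identities such as $\sum_{ij}(\bL\br)_i^2 h_j^2=\|\bL\br\|^2\|\bh\|^2$ and $\sum_{i,j}(\bu^\top \bA \be_j v_i)^2 \le \|\bA\|_{\oper}^2\|\bu\|^2\|\bv\|^2$, together with $\|\bL\br\|/\|\bh\|\le\|\bL\bZ\|_{\oper}$.

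Then the dominant contribution to $\sum_{ij}(\partial f/\partial z_{ij})^2$ comes from the type (iii) terms, which are bounded by $C\,\|\bL\bZ\|_{\oper}^6\|\bB\|_{\oper}^2$, while the lower-order types contribute multiples of $\|\bL\bZ\|_{\oper}^2$ and $\|\bL\bZ\|_{\oper}^4\|\bB\|_{\oper}^2$. Taking expectations and using $\EE[\|\bL\bZ\|_{\oper}^{2q}]\lesssim(k+p)^{q}$ together with $(k+p)\lesssim n\gamma$, $\|\bB\|_{\oper}^2\le(k\mu)^{-2}\le(nc\tau)^{-2}$, the leading bound reads
\[
\EE\sum_{ij}\Bigl(\frac{\partial f}{\partial z_{ij}}\Bigr)^2
\lesssim (k+p) + \frac{(k+p)^3}{(k\mu)^2}
\lesssim n\gamma + \frac{n^3\gamma^3}{n^2 c^2\tau^2}
\lesssim \frac{n\gamma^3}{c^2\tau^2},
\]
which by the Gaussian Poincar\'e inequality is the desired estimate.

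The main obstacle is the careful bookkeeping of the cross terms produced by the product rule: several summands are structurally similar but pick up different powers of $\|\bL\bZ\|_{\oper}$ and $\|\bB\|_{\oper}$, and a na\"\i ve union bound would produce an extra factor of $\gamma$ or $c^{-1}$. I will reduce this by regrouping the derivative as $\partial f/\partial z_{ij}=A_{ij}+B_{ij}+C_{ij}$ with $A_{ij}$ the direct contribution, $B_{ij}$ collecting the $(\bL\br)_i$-linear pieces, and $C_{ij}$ collecting the $h_j$-linear pieces arising from $\bB\bZ^\top\bL\be_i$, and then bound $\sum_{ij}A_{ij}^2,\sum_{ij}B_{ij}^2,\sum_{ij}C_{ij}^2$ separately, so that only the type-(iii) block carries the $\|\bL\bZ\|_{\oper}^6$ factor and yields the stated $n\gamma^3/(c^2\tau^2)$ rate.
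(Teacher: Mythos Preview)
Your proposal is correct and follows essentially the same route as the paper: apply the Gaussian Poincar\'e inequality to $\|\bL\br\|^2/\|\bh\|^2$, substitute the derivative formula \eqref{eq:derivartive_formula}, and control the pieces via $\|\bB\|_{\oper}\le(k\mu)^{-1}$ and $\EE\|\bL\bZ\|_{\oper}^{2q}\lesssim(k+p)^q$, with the dominant contribution $(k\mu)^{-2}\|\bL\bZ\|_{\oper}^6$ yielding the stated $n\gamma^3/(c^2\tau^2)$. The paper packages the computation slightly more compactly by writing $f=\|\bu\|^2$ with $\bu=-\bL\bZ(\bh/\|\bh\|)$ and invoking the projection identity \eqref{eq:identity_frac}, which collapses your three-type split into just two remainder terms $\Rem^1_{ij}$ and $\Rem^2_{ij}$; note also that your type-(ii) terms actually carry the same $\|\bL\bZ\|_{\oper}^6\|\bB\|_{\oper}^2$ weight as type (iii), not $\|\bL\bZ\|_{\oper}^4\|\bB\|_{\oper}^2$, but this does not change the final bound.
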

\begin{proof}
    Let $\bu = \bL\br
    /\|\bh\| = -\bL\bZ\bh/\|\bh\| = -\bL\bZ \bv$ with $\bv = \bh/\|\bh\|$. The Gaussian Poincar\'e inequality yields
    \[
    \Var(\|\bu\|^2) 
    \le \EE \bigg[ \sum_{ij} \bigg(\frac{\partial \|\bu\|^2}{\partial z_{ij}}\bigg)^2 \bigg]
    = 4 \sum_{ij} \EE \bigg(\bu^\top \frac{\partial \bu}{\partial z_{ij}}\bigg)^2,
    \]
    where
    $$
    \bu^\top \frac{\partial \bu}{\partial z_{ij}} = \bv^\top \bZ^\top \bL( \be_i v_j +\bZ (\frac{\partial}{\partial z_{ij}} \frac{\bh}{\|\bh\|})) =  \bv^\top \bZ^\top \bL \be_i v_j +\bv^\top \bZ^\top \bL \bZ \bP^\perp \frac{1}{\|\bh\|} \frac{\partial \bh}{\partial z_{ij}} = \Rem^1_{ij} + \Rem^2_{i,j}.
    $$
    Note that $\sum_{ij} (\Rem^1_{ij})^2 \le \|\bL\bZ\|_{\oper}^2$. For $\Rem^2_{ij}$, the derivative formula \eqref{eq:derivartive_formula} leads to 
    \begin{align*}
    \sum_{ij} (\Rem^2_{ij})^2 
    &= \frac{1}{\|\bh\|^2} \sum_{ij} (\bv^\top \bZ^\top \bL \bZ \bP^\perp (\bB \be_j (\bL\br)_i - \bB\bZ^\top \bL\be_i h_j))^2 \\
    &\lesssim \frac{1}{\|\bh\|^2} (\|
    \bB^\top \bP^\perp \bZ^\top \bL \bZ \bv\|^2 \|\bL\br\|^2 + \|
    \bL \bZ \bB^\top \bP^\perp \bZ^\top \bL \bZ \bv\|^2\|\bh\|^2)\\
    &\lesssim \|\bB\|_{\oper}^2 \|\bL\bZ\|_{\oper}^6 \lesssim  (k\mu)^{-2} \|\bL\bZ\|_{\oper}^6.
    \end{align*}
    Thus, using \Cref{lem:gaussian_oper_bound}, we obtain
    $\Var(\|\bu\|^2) \lesssim (k+p) + k^{-2}\mu^{-2} (k^3 + p^3) \lesssim n c^{-2} \tau^{-2} \gamma^3$, thanks to $\tau=\min(1, \mu), c=k/n \in(0,1]$ and $\gamma=\max(1,p/n)$. 
\end{proof}

\subsection{Miscellaneous useful facts}
\label{app:elementary-inequality}

\bigskip

\begin{lemma}\label{lem:gaussian_oper_bound}
If $\bG\in \R^{k\times q}$ has i.i.d.\ $\cN0,1)$ entries, the tail 
$\PP(\|\bG\|_{\oper} > \sqrt{k} + \sqrt{q} + t) \le \Phi(-t) \le e^{-t^2/2}$
holds for all $t\in \R$, where $\Phi(\cdot)$ is the CDF of the standard normal. Therefore, we have $\EE[\|\bG\|_{\oper}^m] \le C(m) (\sqrt{k} + \sqrt{q})^r$ for all $m\ge 1$, where $C(m)>0$ is a constant depending only on $m$. 
\end{lemma}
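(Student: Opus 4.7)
The plan is to combine two classical ingredients: Gaussian concentration of measure for $1$-Lipschitz functions and the Gordon--Slepian expected-operator-norm bound.

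First I would observe that the map $\bG \mapsto \|\bG\|_{\oper}$ is $1$-Lipschitz in the Frobenius norm on $\R^{k \times q}$, since $\big|\|\bG\|_{\oper} - \|\bG'\|_{\oper}\big| \le \|\bG - \bG'\|_{\oper} \le \|\bG - \bG'\|_F$. Viewing $\bG$ as a standard Gaussian vector in $\R^{kq}$, the Cirel'son--Ibragimov--Sudakov inequality gives, for every $t \in \R$,
$$\PP\big(\|\bG\|_{\oper} > \EE[\|\bG\|_{\oper}] + t\big) \le \Phi(-t),$$
and $\Phi(-t) \le e^{-t^2/2}$ is the standard Mills ratio bound valid for $t \ge 0$ (and trivial for $t < 0$).

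Second, I would invoke Gordon's inequality (a consequence of Slepian's comparison lemma applied to the bilinear Gaussian processes $\bu^\top \bG \bv$ and $\bu^\top \bg + \bv^\top \bh$ on the product of unit spheres) which yields
$$\EE[\|\bG\|_{\oper}] \le \sqrt{k} + \sqrt{q}.$$
Substituting this into the concentration bound and using monotonicity of $\Phi(-\cdot)$ produces the asserted tail inequality.

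For the moment estimate, I would apply the layer-cake formula and the change of variables $u = s - (\sqrt{k} + \sqrt{q})$ to write
$$\EE[\|\bG\|_{\oper}^m] \le (\sqrt{k}+\sqrt{q})^m + m \int_0^\infty (\sqrt{k}+\sqrt{q}+u)^{m-1}\, e^{-u^2/2}\, du.$$
A binomial expansion of $(\sqrt{k}+\sqrt{q}+u)^{m-1}$ together with the Gaussian moment identity $\int_0^\infty u^j e^{-u^2/2}\,du = 2^{(j-1)/2}\Gamma\!\big(\tfrac{j+1}{2}\big)$ gives a bound of the form $C(m)(\sqrt{k}+\sqrt{q})^m$ (after absorbing lower-order terms via $\sqrt{k}+\sqrt{q} \ge 1$, which is harmless since $k,q \ge 1$, or otherwise via an additive constant). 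Both steps are entirely classical, so no real obstacle arises; the only point requiring a sentence of care is verifying that the constant $C(m)$ produced by the integration depends solely on $m$ and not on the dimensions $k,q$, which is immediate from the change-of-variables step.
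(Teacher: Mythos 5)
Your proof is correct and is in substance the same as the paper's: the paper's one-line proof cites Theorem~2.13 of Davidson and Szarek for the tail bound, and that theorem is proved precisely by combining Gaussian concentration for the $1$-Lipschitz map $\bG\mapsto\|\bG\|_{\oper}$ with the Gordon--Slepian bound $\EE\|\bG\|_{\oper}\le\sqrt{k}+\sqrt{q}$, which is what you reconstruct. Your derivation of the moment bound by layer-cake integration of the tail is likewise exactly what the paper means by ``obtained by integrating the tail bound.''
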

\begin{proof}
    See Theorem 2.13 of \citet{davidson2001local} for the tail bound. The moment bound is obtained by integrating the tail bound. 
\end{proof}

\begin{lemma}[Simple random sampling properties; see, e.g., page 13 of \cite{chaudhuri2014modern}]\label{lemma:sample_without_rep}
Fix an array $(x_i)_{i=1}^M$ of length $M\ge 1$ and let $\mu_M$ be the mean $M^{-1}\sum_{i\in [M]} x_i$ and $\sigma_M^2$ be the variance $M^{-1} \sum_{i\in M} x_i^2 - \mu_M^2$. 
Suppose $J$ is uniformly distributed on  $\{J \subset [M]: |J|=m\}$ for a fixed integer $m\le M$. Then, the mean and variance of the sample mean $\hat{\mu}_J = \sum_{i\in J} x_i$ are given by
$$
\EE [\hat{\mu}_J] = \mu_M, 
\quad 
\text{and}
\quad
\Var[\hat{\mu}_J] =  \frac{\sigma_M^2}{m} \left(1-\frac{m-1}{M-1}\right) \le \frac{\sum_{i\in M} x_i^2}{m M}. 
$$
\end{lemma}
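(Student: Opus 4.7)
The plan is to treat $\hat\mu_J$ as a linear combination of indicators and exploit the symmetry of uniform sampling without replacement. Writing $\hat\mu_J = m^{-1}\sum_{i=1}^M x_i \mathbf{1}\{i \in J\}$, symmetry immediately gives $\PP(i\in J) = m/M$ for every $i$, from which $\EE[\hat\mu_J] = m^{-1}\cdot m\cdot \mu_M = \mu_M$. (I read the statement as $\hat\mu_J = m^{-1}\sum_{i\in J} x_i$; otherwise the claimed mean $\mu_M$ would be inconsistent.)

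For the variance, I would expand $\hat\mu_J^2 = m^{-2}\sum_{i,j} x_i x_j \mathbf{1}\{i\in J\}\mathbf{1}\{j\in J\}$ and use symmetry to compute the pair probabilities: $\PP(i\in J) = m/M$ and, for $i\ne j$, $\PP(i,j\in J) = m(m-1)/(M(M-1))$. Splitting the double sum into diagonal and off-diagonal parts and using the identity $\sum_{i\ne j} x_ix_j = M^2\mu_M^2 - M(\sigma_M^2+\mu_M^2)$, the algebra collapses to
\[
\Var[\hat\mu_J] = \frac{\sigma_M^2}{m}\Bigl(1 - \frac{m-1}{M-1}\Bigr),
\]
which is the classical finite-population correction factor.

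For the upper bound, I would note two elementary inequalities: first $\sigma_M^2 \le M^{-1}\sum_{i=1}^M x_i^2$ (since subtracting $\mu_M^2$ only decreases the average of squares), and second $(M-m)/(M-1) \le 1$ whenever $m\ge 1$. Combining,
\[
\Var[\hat\mu_J] = \frac{\sigma_M^2(M-m)}{m(M-1)} \le \frac{1}{mM}\sum_{i=1}^M x_i^2,
\]
which is the stated bound. There is no real obstacle here; the only subtle point is the bookkeeping of the indicator-pair probabilities, and the rest is algebra. This is a textbook derivation of the simple-random-sampling variance formula.
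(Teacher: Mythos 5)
The paper gives no proof for this lemma (it simply cites \citet{chaudhuri2014modern}), so there is no internal argument to compare against; your derivation is the standard indicator-expansion proof of the finite-population variance formula and it is correct, including the final bound via $\sigma_M^2 \le M^{-1}\sum_i x_i^2$ and $(M-m)/(M-1)\le 1$. You also correctly identified a typo in the statement: $\hat\mu_J$ must be the sample mean $m^{-1}\sum_{i\in J}x_i$ (consistent with how the lemma is invoked in the proof of \Cref{th:sub_full}), not the sum $\sum_{i\in J}x_i$ as written.
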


\begin{lemma}\label{lem:concentration-intersection}
If $I$ and $\tilde I$ are independent and uniformly distributed over $[n]$ with cardinality $k\le n$, 
we have
$
\EE[(|I \cap \tilde I| - n^{-1}k^2)^2] \le n^{-1} k^2
$. 
\end{lemma}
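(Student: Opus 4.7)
\textbf{Proof plan for \Cref{lem:concentration-intersection}.} The plan is to identify $|I \cap \tilde I|$ as a hypergeometric random variable and then bound its variance by an elementary calculation. The argument proceeds in three short steps.

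First, conditioning on $I$, I would note that $\tilde I$ is an independent uniformly random subset of $[n]$ of cardinality $k$, so $|I \cap \tilde I|$ counts the number of ``marked'' elements (those in $I$) appearing in a simple random sample of size $k$ drawn without replacement from $[n]$. Hence, conditionally on $I$, the variable $|I \cap \tilde I|$ follows the Hypergeometric distribution with parameters $(n,k,k)$, and since this conditional law does not depend on $I$, it is also the unconditional law.

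Second, I would either invoke the standard hypergeometric mean and variance formulas, or derive them directly via the indicator expansion $|I \cap \tilde I| = \sum_{i=1}^n \ind_{\{i \in I\}} \ind_{\{i \in \tilde I\}}$, using $\PP(i\in I) = k/n$ and $\PP(\{i,j\}\subset I) = k(k-1)/[n(n-1)]$ for $i \neq j$ (and similarly for $\tilde I$). Either route yields $\EE[|I\cap\tilde I|] = k^2/n$ and
\[
\Var(|I\cap\tilde I|) \;=\; \frac{k^2(n-k)^2}{n^2(n-1)}.
\]

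Third, it remains to verify that this variance is bounded by $k^2/n$, which reduces to checking $(n-k)^2 \le n(n-1)$. Expanding gives the equivalent inequality $k(2n-k) \ge n$, which holds trivially since $k \ge 1$ and $2n-k \ge n$ whenever $k \le n$. There is no real obstacle here — the only subtle point is making sure the final algebraic inequality does hold uniformly in $k \in [1,n]$, which the above manipulation confirms.
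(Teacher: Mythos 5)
Your proof is correct and takes essentially the same approach as the paper: both identify $|I\cap\tilde I|$ as a hypergeometric random variable with sample size $k$, $k$ successes, and population size $n$, then apply the standard variance formula and bound it by $k^2/n$. The only difference is cosmetic — the paper cites the bound $\Var(X)\le nK/N$ as a known fact, whereas you verify the underlying inequality $(n-k)^2 \le n(n-1)$ by hand (correctly, via $k(2n-k)\ge n$ for $1\le k\le n$).
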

\begin{proof}
A random variable $X$ is said to follow a hypergeometric distribution, denoted as $X \sim \text{Hypergeometric} (n,K,N)$, if its probability mass function can be expressed as:
    \[
    \PP(X=k) = \frac{
    \binom{K}{k}\binom{N-K}{n-k}
    }{\binom{N}{n}
    }, \text{ where } \max\{0, n+K-N\} \leq k \leq \min\{n,K\}.
    \]
    The mean and variance of $X$ are respectively given by (e.g., \citet{greene2017exponential}):
    \[
    \EE[X] = \frac{nK}{N}, \quad \text{and} \quad \Var(X) = \frac{nK(N-K)(N-n)}{N^2(N-1)} \le \frac{nK}{N}.
    \]
    Since $|I \cap \tilde I|\sim \text{Hypergeometric} (k,k,n)$, we have
    $\EE[(|I \cap \tilde I| - n^{-1}k^2)^2] =
        \Var(|I\cap \tilde I|) \le k^2/n$.
\end{proof}

\subsection{Relaxing \Cref{assu:penalty} in the underparameterized regime}
\label{sec:relaxing_strong_convexity}

    In the underparameterized regime when $p/n < 1$, the strongly convex assumption on the penalty function in \Cref{assu:penalty} is not needed; we only need the penalty to be convex. 
    We make this precise in this section.
    
    \begin{theorem}
    \label{thm:relaxing-strong-convexity}
        Let \Cref{assu:sampling,assu:Gaussian-feature,assu:Gaussian-response} be fulfilled and assume $p/k\le \gamma < 1$ for a constant $\gamma$ independent of $k,n,p$. For any convex function $g_m$, define $$\check\bb_m = 
         \argmin_{\bb \in \RR^{p}}
        \frac{1}{2|I_m|}
        \|\bL_{I_m}(\by - \bX\bb)\|_2^2
        + g_m(\bb) + \mu_n\|\bSigma^{1/2}(\bb-\bbeta_0)\|^2$$ for some deterministic $\mu_n\ge 0$;
    which is the same as $\hbeta_m$ in \eqref{eq:def-hbeta}
    with an additional quadratic penalty term
    $\mu_n\|\bSigma^{1/2}(\bb-\bbeta_0)\|^2$.
    Let also $\cdf_m$ be the degrees of freedom of $\check\bb_m$.
    Then as long as $\mu_n\to 0$ as $n,k,p\to+\infty$ we have
    $$
    \frac{\sigma^2 + \|\bSigma^{1/2}(\check\bb_m-\bbeta_0)\|^2}{\sigma^2 + \|\bSigma^{1/2}(\hbeta_m-\hbbeta_0)\|^2}\pto 1,~
    \frac{1-\cdf_m/k}{1-\df_m/k}\pto 1,~
    \frac{\|\by-\bX\check\bb_m\|}{\|\by-\bX\hbeta_m\|}\pto 1,~
    \frac{\|\bL_{I_m}(\by-\bX\check\bb_m)\|}{\|\bL_{I_m}(\by-\bX\hbeta_m)\|}\pto 1
    $$
    as well as
    $$\frac{\|\bSigma^{1/2}(\hbeta_m-\check\bb_m)\|}{\min\{\|\bSigma^{1/2}(\hbeta_m-\bbeta_0)\|, \|\bSigma^{1/2}(\check\bb_m - \bbeta_0)\|\}}\pto 0.$$
    If $g_m$ is convex and the same for all $m\in[M]$, taking $\tau=\mu_n\to0$ sufficiently slowly so that
    the right-hand sides of \eqref{eq:cgcv_relative_error} converge to 0 (for instance, $\mu_n=1/\log n$ works), then
    $$
    \frac{\tR_M^{\cgcv,\sub}}{R_M}
    \pto 1,
    \qquad
    \frac{\tR_M^{\cgcv,\full}}{R_M}
    \pto 1,
    $$ 
    i.e., the proposed estimates are consistent without requiring strong convexity
    assumption on $g_m$.
    \end{theorem}
    \begin{proof}
        By the optimality condition of $\check\bb_m$, its objective value
        is smaller than that of $\hbeta_m$, so that 
        leaving aside $\mu_n\|\bSigma^{1/2}(\check\bb_m-\bbeta_0)\|^2$
        which is non-negative,
        \begin{equation*}
    \frac{
    \|\bL_{I_m}(\by - \bX\check\bb_n)\|_2^2
    }{2|I_m|}
    + g_m(\check\bb_m)
    \le
    \frac{
    \|\bL_{I_m}(\by - \bX\hbeta_m)\|_2^2
    }{2|I_m|}
    + g_m(\hbeta_m) + \mu_n\|\bSigma^{1/2}(\hbeta_m-\bbeta_0)\|^2.
    \end{equation*}
       Similarly by optimality of $\hbeta_m$, noting that $\hat{\bbeta}_m$ still minimizes the convex function 
       $$\bbeta \mapsto \frac{
    \|\bL_{I_m}(\by - \bX\bbeta)\|_2^2
    }{2|I_m|} - \frac{\|\bL_{I_m}\bX(\bbeta-\hbbeta_m)\|^2}{2|I_m|}
    + g_m(\bbeta),$$ thanks to the KKT (Karush-Kuhn-Tucker) condition for the original minimization problem, we have
    \begin{equation*}
    \frac{
    \|\bL_{I_m}\bX(\check\bb_m - \hbeta_m)\|_2^2
    }{2|I_m|}
    +
    \frac{
    \|\bL_{I_m}(\by - \bX\hbeta_m)\|_2^2
    }{2|I_m|}
    + g_m(\hbeta_m)
    \le
    \frac{
    \|\bL_{I_m}(\by - \bX\check\bb_n)\|_2^2
    }{2|I_m|}
    + g_m(\check\bb_m) .
        \end{equation*}
    Summing the above two displays, most terms cancel, and we find
    $$
    \frac{
    \|\bL_{I_m}\bX(\check\bb_m - \hbeta_m)\|_2^2
    }{2|I_m|}
    \le
    \mu_n\|\bSigma^{1/2}(\hbeta_m-\bbeta_0)\|^2.
    $$
    Since $p/k\le \gamma < 1$, 
    the left-hand
    side is bounded from below by $\frac12 C(\gamma)\|\bSigma^{1/2}(\check\bb_m - \hbeta_ m)\|^2$ for $C(\gamma)= ((1-\sqrt \gamma)/2)^2$
    with exponentially large probability. In this event,
    by the triangle inequality,
    \begin{equation}
    \begin{aligned}
    &|
    (|\|\bSigma^{1/2}(\check\bb_m - \bbeta_0)\|^2+\sigma^2)^{1/2}
    -
    (|\|\bSigma^{1/2}(\bbeta_0 - \hbeta_m)\|^2+\sigma^2)^{1/2}
    |
    \\
    &\le\big|
    \|\bSigma^{1/2}(\check\bb_m - \bbeta_0)\|
    -
    |\|\bSigma^{1/2}(\bbeta_0 - \hbeta_m)\| \big|
    \\&\le
    \|\bSigma^{1/2}(\check\bb_m - \hbeta_m)\|
  \\&\le \sqrt{2\mu_n/C(\gamma)} \|\bSigma^{1/2}(\hbeta_m-\bbeta_0)\|.
    \end{aligned}
    \label{eq:above_extra_gamma<1}
    \end{equation}
    Dividing by $(\|\bSigma^{1/2}(\hbeta_m-\bbeta_0)\|^2+\sigma^2)^{1/2}$,
    we obtain the desired convergence
    $$\frac{\sigma^2 + \|\bSigma^{1/2}(\check\bb_m-\bbeta_0)\|^2}{\sigma^2 + \|\bSigma^{1/2}(\hbeta_m-\hbbeta_0)\|^2}\pto 1,$$ thanks to $\mu_n\to 0$.
    For the convergence of the ratio of norms of residuals,
    we may bound from below the denominators by
    $$\|\by-\bX\hbeta_m\|
    \ge
    \|\bL_{I_m}(\by-\bX\hbeta_m)\|
    \ge
    \sqrt{|I_m| C(\gamma)}(\|\bSigma^{1/2}(\hbeta_m-\bbeta_0)\|^2+\sigma^2)^{1/2}$$ with probability approaching one, since the eigenvalues of
    $|I_m|^{-1/2}\bL_{I_m}[\bX\mid \bepsilon/\sigma]$ are bounded  away from 0.
    In the numerators,
    $\|\by-\bX\hbeta_m\| - \|\by-\bX\check\bb_m\|$ is bounded
    from above by $C'(\gamma)\|\bSigma^{1/2}(\check\bb_m-\hbeta_m)\|$
    and the ratio $\|\bSigma^{1/2}(\check\bb_m-\hbeta_m)\|/\|\bSigma^{1/2}(\hbeta_m-\bbeta_0)\|$ converges to 0 by
    \eqref{eq:above_extra_gamma<1}.
    This proves that the ratios of norms of residuals converge to 1 in probability.
    The ratio $\frac{1-\df_m/k}{1-\cdf_m/k}$ converges to 1
    in probability because GCV is consistent \cite[Section 3]{bellec2020out}
    for both $\check\bb_m$ and $\hbeta_m$, which gives
    $$(1-\df_m/k) \cdot \frac{\|\bL_{I_m}(\by-\bX\hbeta_m)\|}{
    \sqrt{
    |I_m|(\sigma^2+\|\bSigma^{1/2}(\hbeta_m-\bbeta_0)\|^2)}
    } \pto 1,$$
    and similarly for $\check\bb_m$.
    Finally, \eqref{eq:above_extra_gamma<1} provides
    $$\|\bSigma^{1/2}(\hbeta_m-\check\bb_m)\|/\min\{\|\bSigma^{1/2}(\hbeta_m-\bbeta_0)\|, \|\bSigma^{1/2}(\check\bb_m - \bbeta_0)\|\}\pto 0.$$

    By \eqref{eq:correlation_bound}, we further have, for $\check\bb_1,\dots\check\bb_M$ thanks to the strongly convex penalty,
    that $$\PP\Bigl(\sum_m(\sigma^2+\|\bSigma^{1/2}(\check\bb_m-\bbeta_0)\|^2)
    \le 2(\sigma^2+\|\sum_m\bSigma^{1/2}(\check\bb_m-\bbeta_0)\|^2)
    \Bigr) \to 1.$$
    Combining these inequalities, we get that all quantities involved in the definition
    of \eqref{eq:cgcv-proposal-sub}-\eqref{eq:cgcv-proposal} and their targets
    are close to the corresponding quantities  with $(\hbbeta_m,\df_m)$
    replaced by ($\check\bb_m,\cdf_m)$. 
    Thus the consistency of the estimates for $(\check\bb_m,\cdf_m)$
    implies the consistency of the estimates for $(\hbeta_m,\df_m)$.
    \end{proof}

\section[Proofs in \Cref{sec:asm-consistency}]{Proofs in \Cref{sec:asm-consistency}}\label{app:asm-consistency}

\subsection{Preparatory definitions}
\label{sec:preparation-asm-consistency}

    \noindent\textbf{Fixed-point parameters.}
    In the study of ridge ensembles under proportional asymptotics, a key quantity 
    that appears is the solution of a fixed-point equation. 
    For any $\lambda > 0$ and $\theta > 0$, define $v_p(-\lambda; \theta)$ as the unique nonnegative solution to the fixed-point equation
    \begin{align}
        v_p(-\lambda;\theta)^{-1} &= \lambda + \theta\int r(1 + v_p(-\lambda;\theta)r)^{-1}\rd H_p(r) ,\label{eq:v_ridge}
    \end{align}
    where $H_{p}(r) = p^{-1}\sum_{i=1}^{p} \ind_{\{r_i \le r\}}$ is the empirical spectral distribution of $\bSigma$ and $r_i$'s are the eigenvalues of $\bSigma$.
    For $\lambda=0$, define $v_p(0; \theta):=\lim_{\lambda \to 0^{+}} v_p(-\lambda; \theta)$ for $\theta > 1$ and $\infty$ otherwise.

    \noindent\textbf{Best linear projection.}
    Since the ensemble ridge estimators are linear estimators, we evaluate their performance relative to the oracle parameter: 
    \[\bbeta_0 = \EE[\bx\bx^{\top}]^{-1} \EE[\bx y],\] which is the best (population) linear projection of $y$ onto $\bx$ and minimizes the linear regression error.
    Note that we can decompose any response $y$ into: 
    \begin{align*}
        y = \fLI(\bx) + \fNL(\bx),
    \end{align*}
    where $\fLI(\bx)=\bbeta_0^{\top}\bx$ is the oracle linear predictor, and $\fNL(\bx)=y-\fLI(\bx)$ is the nonlinear component that is not explained by $\fLI(\bx)$.
    The best linear projection has the useful property that $\fLI(\bx)$ is (linearly) uncorrelated with $\fNL(\bx)$, although they are generally dependent.
    It is worth mentioning that this does not imply that $y$ and $\bx$ follow a linear regression model. Indeed, our framework allows any nonlinear dependence structure between them and is model-free for the joint distribution of $(\bx,y)$.
    For $n$ i.i.d. samples from the same distribution as $(\bx,y)$, we define analogously the vector decomposition:
    \begin{align}
        \label{eq:li-nl-decomposition}
        \by = \bfLI +\bfNL,
    \end{align}
    where $\bfLI=\bX\bbeta_0$ and $\bfNL = [\fNL(\bx_i)]_{i\in[n]}$.

    \noindent\textbf{Covariance and resolvents.}
    For $j\in[M]$, let $\bL_j$ be a diagonal matrix with $i$-th diagonal entry being 1 if $i\in I_m$ and 0 otherwise. Let $\hSigma_j = \bX^{\top}\bL_{j}\bX/k$ and $\bM_j = (\hSigma_j + \lambda\bI_p)^{-1}$.

    \noindent\textbf{Risk and risk estimator.} 
    Recall that for $m,\ell\in[M]$, the risk component is defined as
    \begin{align}
        R_{m,\ell} &=  \|\fNL\|_{L_2}^2+ (\hbeta_m - \bbeta_0)^\top \bSigma (\hbeta_{\ell} - \bbeta_0),          
    \end{align}
    while its two estimators are defined as
    \begin{align}
        \hR_{m,\ell}^{\sub}    &= \frac{N_{m,\ell}^{\sub}}{D_{m,\ell}^{\sub}},
        \quad
        \text{and}
        \quad
        \hR_{m,\ell}^{\full}=\frac{N_{m,\ell}^{\full}}{D_{m,\ell}^{\full}},          
    \end{align}
    where 
    \begin{align*}
        N_{m,\ell}^{\sub} &= |I_m\cap I_{\ell}|^{-1}(\by - \bX\hbeta_m)^{\top}\bL_{m\cap \ell}(\by - \bX\hbeta_{\ell}\\
        D_{m,\ell}^{\sub} &= 1  - \frac{\tr(\bS_m)}{|I_m|} - \frac{\tr(\bS_{\ell})}{|I_{\ell}|} + \frac{\tr(\bS_{m})\tr(\bS_{\ell})}{|I_m||I_{\ell}|}\\
        N_{m,\ell}^{\full} &= n^{-1} (\by - \bX \hbeta_m)^\top (\by - \bX \hbeta_{\ell})\\
         D_{m,\ell}^{\full} &=1 - \frac{\tr(\bS_m)}{n} - \frac{\tr(\bS_{\ell})}{n} + \frac{\tr(\bS_{m}) \tr(\bS_{\ell}) | I_m \cap I_\ell |}{n |I_m| |I_{\ell}|},
    \end{align*}
    and $\bL_{m\cap \ell}=\bL_{I_m\cap I_\ell}$. 
    
    \noindent\textbf{Asymptotic equivalence.}
    Let $\bA_p$ and $\bB_p$ be sequences of additively conformable random matrices with arbitrary dimensions (including vectors and scalars as special cases). 
    We define $\bA_p$ and $\bB_p$ to be \emph{asymptotically equivalent}, denoted as $\bA_p \asympequi \bB_p$, if $\lim_{p \to \infty} \left| \tr[\bC_p (\bA_p - \bB_p)] \right| = 0$ almost surely for any sequence of random matrices $\bC_p$ with bounded trace norm that are multiplicatively conformable to $\bA_p$ and $\bB_p$ and are independent of $\bA_p$ and $\bB_p$.
    Observe that when dealing with sequences of scalar random variables, this definition simplifies the standard notion of almost sure convergence for the involved sequences.

    \subsection{Proofs of \Cref{thm:risk-est,thm:unifrom-consistency-lambda} (restated here for convenience)}\label{app:subsec:asym-thm:risk-est}

    \bigskip

    \ThmRiskEst*

    \ThmUniformConsistencyLambda*
    
        \Cref{thm:risk-est} is a direct consequence of \Cref{thm:unifrom-consistency-lambda}. Thus, below, we focus on proving \Cref{thm:unifrom-consistency-lambda}.
        The proof of \Cref{thm:unifrom-consistency-lambda} builds on the following lemmas.

        \begin{enumerate}
            \item
            \emph{Risk.}
            \Cref{lem:risk} establishes the asymptotics $\sR_{m,\ell}$ of the prediction risk $R_{m,\ell}$.

            \item 
            \emph{\Ovlp-estimator.}
            \Cref{lem:num-est-1} and \Cref{lem:den-est-1} establishes the asymptotics $\sN_{m,\ell}^{\sub}$ and $\sD_{m,\ell}^{\sub}$ of the numerator $N_{m,\ell}^{\sub}$ and the denominator $D_{m,\ell}^{\sub}$, respectively, for the \ovlp-estimator.
            
            \item 
            \emph{Full-estimator.}
            \Cref{lem:num-est-2} and \Cref{lem:den-est-2} establishes the asymptotics $\sN_{m,\ell}^{\full}$ and $\sD_{m,\ell}^{\full}$ of the numerator $N_{m,\ell}^{\full}$ and the denominator $D_{m,\ell}^{\full}$, respectively, for the full-estimator.
            
        \end{enumerate}

        For $\lambda>0$ and $k\in\cK_n$ such that $p/k\rightarrow[\phi,\infty)$, the consistency of the two estimators follows by showing that the ratio of the numerator asymptotics and the denominator asymptotics in (1) and (2) matches with the risk asymptotics in \Cref{lem:risk}, respectively:
        \[
           R_{m,\ell}
            \asympequi \sR_{m,\ell} = \frac{\sN_{m,\ell}^{\sub}}{\sD_{m,\ell}^{\sub}} \asympequi \frac{N_{m,\ell}^{\sub}}{D_{m,\ell}^{\sub}} = \hR_{m,\ell}^{\sub},
        \]
        and 
        \[
            R_{m,\ell} \asympequi \sR_{m,\ell} = \frac{\sN_{m,\ell}^{\full}}{\sD_{m,\ell}^{\full}} \asympequi \frac{N_{m,\ell}^{\full}}{D_{m,\ell}^{\full}} = \hR_{m,\ell}^{\full}.
        \]
        Next, \Cref{lem:boundary} takes care of the boundary cases when $\psi=\infty$ and $\lambda=0$.

        Finally, from \Cref{lem:boundary}, we also have that the sequences of functions $\{R_{m,\ell}-\sR_{m,\ell}\}_{p\in\NN}$, $\{\hR_{m,\ell}^{\sub}-\sR_{m,\ell}\}_{p\in\NN}$, and $\{\hR_{m,\ell}^{\full}-\sR_{m,\ell}\}_{p\in\NN}$ are uniformly equicontinuous on $\lambda\in\Lambda=[0,\infty]$ when $\psi\neq 1$.
        This implies that the sequences of functions $\{R_{m,\ell}-\hR_{m,\ell}^{\sub}\}_{p\in\NN}$ and $\{R_{m,\ell}-\hR_{m,\ell}^{\full}\}_{p\in\NN}$ are also uniformly equicontinuous on $\lambda\in\Lambda=[0,\infty]$ almost surely.
        From Theorem 21.8 of \cite{davidson1994stochastic}, it further follows that the sequences converge to zero uniformly over $\Lambda$ almost surely.
    
    The rest of the current subsection is committed to presenting and proving the important lemmas \Cref{lem:risk,lem:num-est-1,lem:den-est-1,lem:num-est-2,lem:den-est-2,lem:boundary}.

    \begin{lemma}[Asymptotic equivalence for prediction risk]\label{lem:risk}
        Under \Crefrange{asm:model}{asm:prop-asym}, for $\psi\in[\phi,\infty)$ and $\lambda>0$, it holds that
        \[
            R_{m,\ell}
            \asympequi \sR_{m,\ell} := (\|\fNL\|_{L_2}^2 + \tc_p(-\lambda;\psi)) (1+\tv_p(-\lambda;\varphi_{m\ell},\psi)),
        \]
        where $\varphi_{m\ell}=\psi\ind_{\{m=\ell\}}+\phi\ind_{\{m\neq\ell\}}$ and $\tv_p(-\lambda;\phi,\psi),\tc_p(-\lambda;\psi)$ are defined in \Cref{lem:risk-bias}.
    \end{lemma}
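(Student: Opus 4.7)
The plan is to write the risk component in the standard bias-plus-variance form and reduce it to deterministic equivalents that are already catalogued in the helper lemma that defines $\tv_p$ and $\tc_p$. Starting from the explicit formula $\hbeta_{m,\lambda} = \bM_m \bX^\top \bL_m \by/k$, and substituting the linear-plus-nonlinear decomposition $\by = \bX\bbeta_0 + \bfNL$ from \eqref{eq:li-nl-decomposition}, one obtains the clean identity
\[
\hbeta_{m,\lambda} - \bbeta_0 \;=\; \tfrac{1}{k}\bM_m \bX^\top \bL_m \bfNL \;-\; \lambda \bM_m \bbeta_0,
\]
and an analogous expression for $\hbeta_{\ell,\lambda} - \bbeta_0$. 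Plugging these into the risk component $R_{m,\ell} = \|\fNL\|_{L_2}^2 + (\hbeta_m - \bbeta_0)^\top \bSigma (\hbeta_\ell - \bbeta_0)$ expands the latter term into four pieces: a pure bias piece $\lambda^2 \bbeta_0^\top \bM_m \bSigma \bM_\ell \bbeta_0$, two cross bias--variance pieces of the form $\lambda \bbeta_0^\top \bM_m \bSigma \bM_\ell \bX^\top \bL_\ell \bfNL/k$, and a variance piece $\bfNL^\top \bL_m \bX \bM_m \bSigma \bM_\ell \bX^\top \bL_\ell \bfNL/k^2$.

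The first step will be to dispose of the cross terms. Since $\EE[\bfNL \mid \bX] = 0$ by the definition of the best linear projection, the cross pieces have mean zero conditional on $\bX$ and $\{I_m,I_\ell\}$; a standard second-moment calculation combined with the bounded $4+\delta$ moments of $\bfNL$ in \Cref{asm:model} and operator-norm bounds for $\bM_m$ (which are bounded whenever $\lambda>0$ and follow from the $4+\delta$ moments and bounded spectrum in \Cref{asm:feat}) shows these terms are $\op(1)$ and hence asymptotically equivalent to zero. The remaining work is to compute deterministic equivalents for the bias piece $\lambda^2 \bbeta_0^\top \bM_m \bSigma \bM_\ell \bbeta_0$ and the variance piece $k^{-2} \tr[\bM_m \bSigma \bM_\ell \bX^\top \bL_m \bfNL \bfNL^\top \bL_\ell \bX]$.

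The next step is to invoke the two-resolvent asymptotic equivalents for $\bM_m \bSigma \bM_\ell$ and for $k^{-2}\bX^\top \bL_m \cdot \cdot \bL_\ell \bX$ conditional on $\bfNL$. Here the structure splits cleanly by the value of $\varphi_{m\ell}$: when $m=\ell$, $\bL_m = \bL_\ell$ and the two resolvents collapse to the single-subsample ridge resolvent with aspect ratio $\psi$, giving the classical fixed-point calculus through $v_p(-\lambda;\psi)$ and the definitions of $\tv_p(-\lambda;\psi,\psi)$ and $\tc_p(-\lambda;\psi)$; when $m\neq\ell$, the two sets $I_m,I_\ell$ are independent uniform subsamples whose overlap is of order $k^2/n$ by \Cref{lem:concentration-intersection}, and the sampled observations outside $I_m\cap I_\ell$ act as freely independent perturbations in the sense captured by the effective aspect ratio $\phi=p/n$ governing $\tv_p(-\lambda;\phi,\psi)$. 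In both cases the bias and variance pieces combine into $\tc_p(-\lambda;\psi)\cdot(1+\tv_p(-\lambda;\varphi_{m\ell},\psi))$, and adding the $\|\fNL\|_{L_2}^2$ floor produces the claimed factorization $(\|\fNL\|_{L_2}^2+\tc_p(-\lambda;\psi))(1+\tv_p(-\lambda;\varphi_{m\ell},\psi))$.

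The main obstacle is the $m\neq\ell$ case, which genuinely requires a two-resolvent deterministic equivalent of the form $\bX^\top \bL_m \bX \bM_m \bSigma \bM_\ell \bX^\top \bL_\ell \bX / k^2$. Rather than redevelop this from scratch, I would decompose $\bL_m=\bL_{m\cap\ell}+\bL_{m\setminus\ell}$ and $\bL_\ell=\bL_{m\cap\ell}+\bL_{\ell\setminus m}$, so that the cross-resolvent separates into a shared-sample part supported on $I_m\cap I_\ell$ (whose size concentrates at $k^2/n$) and an independent part supported on the disjoint indices. On the disjoint part the two resolvents are conditionally independent given $\bX_{I_m\cap I_\ell}$, and a standard leave-one-out/Sherman--Morrison rank-one update argument reduces the problem to the known single-resolvent equivalents encapsulated in the definition of $\tv_p,\tc_p$. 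The risk-component bias and variance pieces collected in the auxiliary bias/variance lemmas referenced by \Cref{lem:risk-bias} then yield the asserted form, and asymptotic equivalence upgrades to the almost-sure limit under \Cref{asm:prop-asym}.
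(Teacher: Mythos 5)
Your plan follows the same decomposition as the paper: write $\hbeta_m-\bbeta_0 = -\lambda\bM_m\bbeta_0 + \bM_m\bX^\top\bL_m\bfNL/k$, expand the inner product into a bias term, a variance term, and cross terms, kill the cross terms by (conditional) mean-zero plus moment bounds, and reduce the bias and variance pieces to the two-resolvent deterministic equivalents catalogued in \Cref{lem:risk-bias} and \Cref{lem:risk-var}. The reduction of the variance quadratic form to the overlap $I_m\cap I_\ell$ that you describe via the split $\bL_m=\bL_{m\cap\ell}+\bL_{m\setminus\ell}$ is exactly the conditional-independence step the paper outsources to Lemma~D.2 of \citet{patil2023generalized}, so the architecture is essentially the same; you propose to re-derive the cross-resolvent equivalent from leave-one-out, whereas the paper simply cites \citet{du2023subsample,patil2023generalized}.

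One arithmetic slip in your final paragraph: you write that the bias and variance pieces "combine into $\tc_p(-\lambda;\psi)(1+\tv_p)$" and that "adding the $\|\fNL\|_{L_2}^2$ floor" gives the claimed product. But $\tc_p(1+\tv_p)+\|\fNL\|_{L_2}^2\neq(\|\fNL\|_{L_2}^2+\tc_p)(1+\tv_p)$. In fact only the bias piece gives $\tc_p(1+\tv_p)$; the variance quadratic form $k^{-2}\bbf_0^\top\bX\bM_m\bSigma\bM_\ell\bX^\top\bbf_0$ contributes an \emph{additional} $\|\fNL\|_{L_2}^2\,\tv_p$, and only then does the floor $\|\fNL\|_{L_2}^2$ upgrade to $\|\fNL\|_{L_2}^2(1+\tv_p)$ so that the sum factors. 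Your plan correctly identifies the variance piece as a genuine quadratic form in $\bfNL$, so this is a bookkeeping error rather than a structural gap, but the statement that "the bias and variance pieces combine into $\tc_p(1+\tv_p)$" is false as written and should instead record the variance contribution $\|\fNL\|_{L_2}^2\tv_p(-\lambda;\varphi_{m\ell},\psi)$ separately before combining.
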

    \begin{proof}
        From the definition of the ridge estimator \eqref{eq:ingredient-estimator} and the decomposition of the response \eqref{eq:li-nl-decomposition}, we have
        \begin{align*}
            \hbeta_m - \bbeta_0 &=
            \bM_m\frac{\bX^{\top}\bL_m}{k} (\bX\bbeta_0 + \bfNL) - \bbeta_0
            \\
            &= - \lambda \bM_m\bbeta_0 + \bM_m\frac{\bX^{\top}\bL_m}{k}
            \bfNL.
        \end{align*}
        Then it follows that
        \begin{align*}
            &\|\fNL\|_{L_2}^2
            + 
            (\hbeta_{m} - \bbeta_0)^\top \bSigma (\hbeta_{\ell} - \bbeta_0) \\
            &= \lambda^2 \bbeta_0^{\top}\bM_m\bSigma\bM_{\ell}\bbeta_0\\
            &\qquad+ (\|\fNL\|_{L_2}^2
            +  \bfNL^{\top} \frac{\bL_m\bX}{k} \bM_m \bSigma \bM_{\ell}\frac{\bX^{\top}\bL_{\ell}}{k}
            \bfNL) \\
            &\qquad + (\lambda \bbeta_0^{\top}\bM_{\ell} \bSigma\bM_{\ell}\frac{\bX^{\top}\bL_m}{k}
            \bfNL + \lambda \bbeta_0^{\top}\bM_{\ell} \bSigma\bM_m\frac{\bX^{\top}\bL_m}{k}
            \bfNL)\\
            &= T_B + T_V + T_C.
        \end{align*}
        \noindent\textbf{Bias term.} 
        From \Cref{lem:risk-bias}, we have
        \begin{align*}
            T_B \asympequi  \tc_p(-\lambda;\psi) (1+\tv_p(-\lambda;\varphi_{m\ell},\psi)),
        \end{align*}
        where $\varphi_{m\ell}=\psi\ind_{\{m=\ell\}}+\phi\ind_{\{m\neq\ell\}}$ and $\tv_p(-\lambda;\phi,\psi),\tc_p(-\lambda;\psi)$ are defined in \Cref{lem:risk-bias}.

        \noindent\textbf{Variance term.}
        By conditional independence and Lemma D.2 of \citet{patil2023generalized}, the variance term converges to the the quadratic term of $\bbf_0=\bL_{m\cap\ell}\bfNL$:
        \begin{align*}
            T_V &\asympequi \|\fNL\|_{L_2}^2 +  \frac{1}{k^2}\bbf_0^{\top} \bX\bM_m \bSigma \bM_{\ell}\bX^{\top} \bbf_0 \asympequi \|\fNL\|_{L_2}^2 ( 1 + \tv_p(-\lambda;\varphi_{m\ell},\psi) ),
        \end{align*}
        where the last equivalence is from \Cref{lem:risk-var}.

        \noindent\textbf{Cross term.} 
        From \citet[Lemma A.3]{patil2023generalized}, the cross term vanishes, i.e., $T_C \asto 0$.

        This completes the proof.
    \end{proof}
    
    \begin{lemma}[Asymptotic equivalence for numerator of the \ovlp-estimator]\label{lem:num-est-1}
        Under \Cref{asm:model,asm:feat,asm:prop-asym}, for $\psi\in[\phi,\infty)$ and $\lambda>0$, and index sets $I_m,I_{\ell}\overset{\SRS}{\sim}\cI_k$, it holds that        
        \begin{align*}
                N_{m,\ell}^{\sub}
                &\asympequi\sN_{m,\ell}^{\sub}:=
                \lambda^2v_p(-\lambda;\psi)^2(\|\fNL\|_{L_2}^2 + \tc_p(-\lambda;\psi)) (1+\tv_p(-\lambda;\varphi_{m\ell},\psi)),
        \end{align*}
        where $\varphi_{m\ell}=\psi\ind_{\{m=\ell\}}+\phi\ind_{\{m\neq\ell\}}$ and $v_p(-\lambda;\psi)$, $\tv_p(-\lambda;\phi,\psi)$ and $\tc_p(-\lambda;\psi)$ are defined in \Cref{lem:risk-bias}.
    \end{lemma}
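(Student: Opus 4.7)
The plan is to first extract an explicit $\lambda^2$ prefactor using a push-through identity for the ridge estimator. The first-order optimality condition combined with Woodbury's identity gives $\bL_m(\by - \bX\hbeta_m) = \lambda\bK_m^+\bL_m\by$, where $\bK_m := \bL_m\bX\bX^\top\bL_m/k + \lambda\bL_m$, hence
\[
N_{m,\ell}^{\sub} \;=\; \frac{\lambda^2}{|I_m\cap I_\ell|}\,\by^\top\bL_m\bK_m^+\bL_{m\cap\ell}\bK_\ell^+\bL_\ell\by.
\]
The dual push-through $\bX^\top\bL_m\bK_m^+ = \bM_m\bX^\top\bL_m$ will let us pivot between the inner $n$-dimensional resolvent $\bK_m^+$ and the outer $p$-dimensional resolvent $\bM_m$ already used in \Cref{lem:risk}.

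Next, write $\by = \bX\bbeta_0 + \bfNL$ and decompose the quadratic form into bias--bias ($T_{\mathrm{BB}}$), cross bias--variance ($T_{\mathrm{BV}}$), and variance--variance ($T_{\mathrm{VV}}$) pieces. The BV piece vanishes almost surely: by the linear-projection definition of $\bbeta_0$, $\EE[\bx\,\fNL(\bx)] = \mathbf{0}$, so standard concentration of bilinear forms in $\bfNL$ given $\bX$ (as in the cross-term analysis of \Cref{lem:risk}, built on \citet[Lemma A.3]{patil2023generalized}) yields $T_{\mathrm{BV}}/|I_m\cap I_\ell| \asto 0$. After the push-through, the BB piece becomes
\[
T_{\mathrm{BB}}/|I_m\cap I_\ell| \;=\; \frac{\lambda^2}{|I_m\cap I_\ell|}\,\bbeta_0^\top\bM_m\bX^\top\bL_{m\cap\ell}\bX\bM_\ell\bbeta_0,
\]
and the key step is a cross-resolvent deterministic equivalent showing this is asymptotically $\lambda^2 v_p(-\lambda;\psi)^2\,\bbeta_0^\top\bM_m\bSigma\bM_\ell\bbeta_0$, which by \Cref{lem:risk-bias} equals $\lambda^2 v_p^2\,\tc_p(-\lambda;\psi)(1+\tv_p(-\lambda;\varphi_{m\ell},\psi))$. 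An analogous cross-resolvent argument, together with the law-of-large-numbers statement $\bfNL^\top\bL_{m\cap\ell}\bfNL/|I_m\cap I_\ell|\asto\|\fNL\|_{L_2}^2$ and \Cref{lem:risk-var}, gives the VV contribution $\lambda^2 v_p^2\,\|\fNL\|_{L_2}^2(1+\tv_p)$. Summing the three pieces yields $\sN_{m,\ell}^{\sub}$.

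The main obstacle is the cross-resolvent deterministic equivalent for $\bM_m\bX^\top\bL_{m\cap\ell}\bX\bM_\ell$ when $m\neq\ell$. Because $\bM_m$, $\bL_{m\cap\ell}\bX$, and $\bM_\ell$ all depend on the shared rows indexed by $I_m\cap I_\ell$, the naive substitution $\bX^\top\bL_{m\cap\ell}\bX/|I_m\cap I_\ell| \leadsto \bSigma$ is incorrect and omits exactly the $v_p^2$ factor we need. The remedy is a row-wise leave-out argument over $i\in I_m\cap I_\ell$: two Sherman--Morrison rank-one updates simultaneously remove $\bx_i$ from $\bM_m$ and $\bM_\ell$, yielding two leverage denominators $1+k^{-1}\bx_i^\top\bM_m^{(-i)}\bx_i$ and $1+k^{-1}\bx_i^\top\bM_\ell^{(-i)}\bx_i$ that, by the fixed-point equation \eqref{eq:v_ridge}, each concentrate to a deterministic value responsible for one factor of $v_p(-\lambda;\psi)$. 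For $m=\ell$ the argument degenerates to the classical single-sample GCV identity $N_{m,m}^{\sub} = \|\bL_m(\by - \bX\hbeta_m)\|_2^2/k \asto (1-\tr(\bS_m)/k)^2\,R_{m,m}$ with $1-\tr(\bS_m)/k\asto\lambda v_p$, in which the $\lambda^2 v_p^2$ prefactor appears transparently.
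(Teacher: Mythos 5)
Your decomposition and the paper's are essentially identical: split $\by=\bX\bbeta_0+\bfNL$ to produce a bias piece, a cross piece, and a variance piece, kill the cross piece via \citet[Lemma A.3]{patil2023generalized}, and reduce the remaining two to the deterministic equivalents supplied by \Cref{lem:risk-bias,lem:risk-var}. Your lead-in --- applying the push-through identity first so that $\lambda^2$ appears as a global prefactor and then decomposing $\by$ --- is an algebraically equivalent reordering of the paper's, which decomposes $\hbeta_m - \bbeta_0 = -\lambda\bM_m\bbeta_0 + \bM_m\bX^\top\bL_m\bfNL/k$ directly so the $\lambda$ lands in the bias term; both routes produce the same three quadratic forms. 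Where you flag the cross-resolvent equivalent for $\bM_m\hSigma_{m\cap\ell}\bM_\ell$ as the main obstacle and sketch a Sherman--Morrison leave-one-out argument, the paper simply cites \citet[Lemmas D.6 and F.8]{du2023subsample} inside the proof of \Cref{lem:risk-bias}(2); your sketch points at the right mechanism but as written is a plan rather than a proof, and would still require the uniform concentration of the two leverage denominators over $i\in I_m\cap I_\ell$.

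One bookkeeping slip: your intermediate line $\lambda^2\bbeta_0^\top\bM_m\hSigma_{m\cap\ell}\bM_\ell\bbeta_0 \asympequi \lambda^2 v_p(-\lambda;\psi)^2\,\bbeta_0^\top\bM_m\bSigma\bM_\ell\bbeta_0$ is off by a factor of $\lambda^2$. Comparing parts (1) and (2) of \Cref{lem:risk-bias}, the correct relation is $\lambda^2\bM_m\hSigma_{m\cap\ell}\bM_\ell \asympequi \lambda^2 v_p(-\lambda;\psi)^2\cdot\lambda^2\bM_m\bSigma\bM_\ell$, and \Cref{lem:risk-bias}(1) evaluates $\lambda^2\bbeta_0^\top\bM_m\bSigma\bM_\ell\bbeta_0$, not the unnormalized quadratic form. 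The dropped $\lambda^2$ silently reappears in your next step, so the final value $\lambda^2v_p(-\lambda;\psi)^2\tc_p(-\lambda;\psi)(1+\tv_p(-\lambda;\varphi_{m\ell},\psi))$ is nonetheless correct; the error is notational, not substantive.
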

    \begin{proof}
        From the definition of the ridge estimator \eqref{eq:ingredient-estimator} and the decomposition of the response \eqref{eq:li-nl-decomposition}, we have
        \begin{align*}
            \bL_{m\cap \ell}(\by - \bX\hbeta_m) &=
            \bL_{m\cap \ell}\left(\bI_p-\bX\bM_m\frac{\bX^{\top}\bL_m}{k}\right)
            \by\\
            &= \lambda\bL_{m\cap \ell}\bX \bM_m\bbeta_0 + \bL_{m\cap \ell}\left(\bI_p-\bX\bM_m\frac{\bX^{\top}\bL_m}{k}\right)
            \bfNL.
        \end{align*}
        Then it follows that
        \begin{align*}
            &\frac{1}{|I_m\cap I_{\ell}|} (\by - \bX\hbeta_m)^{\top}\bL_{m\cap \ell}(\by - \bX\hbeta_{\ell}) \\
            &= \lambda^2\bbeta_0^{\top}\bM_m\hSigma_{m\cap\ell}\bM_{\ell}\bbeta_0\\
            &\qquad + \frac{1}{|I_m\cap I_{\ell}|}(\bL_m\bfNL)^{\top}\left(\bI_p-\frac{\bX\bM_m\bX^{\top}}{k}\right)\bL_{m\cap\ell}\left(\bI_p-\frac{\bX\bM_{\ell}\bX^{\top}}{k}\right)(\bL_{\ell}\bfNL) \\
            &\qquad + \frac{\lambda}{|I_m\cap I_{\ell}|} \left(
            \bbeta_0^{\top} \bM_{\ell}\bX^{\top}\bL_{m\cap\ell}\left(\bI_p-\bX\bM_m\frac{\bX^{\top}\bL_m}{k}\right)
            \bfNL + \bbeta_0^{\top} \bM_{\ell}\bX^{\top}\bL_{m\cap\ell}\left(\bI_p-\bX\bM_{\ell}\frac{\bX^{\top}\bL_{\ell}}{k}\right)
            \bfNL
            \right)\\
            &= T_B + T_V + T_C.
        \end{align*}
        Next, we analyze the three terms separately.

        \noindent\textbf{Bias term.}
        From \Cref{lem:risk-bias}, \begin{align*}
            T_B \asympequi \lambda^2 v_p(-\lambda;\psi)^2 \tc_p(-\lambda;\psi) (1+\tv_p(-\lambda;\varphi_{m\ell},\psi)),
        \end{align*}
        where $\varphi_{m\ell}=\psi\ind_{\{m=\ell\}}+\phi\ind_{\{m\neq\ell\}}$ and $v_p(-\lambda;\psi)$, $\tv_p(-\lambda;\phi,\psi)$ and $\tc_p(-\lambda;\psi)$ are defined in \Cref{lem:risk-bias}.
        
        \noindent\textbf{Variance term.}
        By conditional independence and Lemma D.2 of \citep{patil2023generalized}, the variance term converges to the the quadratic term of $\bbf_0=\bL_{m\cap\ell}\bfNL$:
        \begin{align*}
            T_V &\asympequi \frac{1}{|I_m\cap I_{\ell}|}(\bL_{m\cap\ell}\bfNL)^{\top}\left(\bI_p-\frac{\bX\bM_m\bX^{\top}}{k}\right)\bL_{m\cap\ell}\left(\bI_p-\frac{\bX\bM_{\ell}\bX^{\top}}{k}\right)(\bL_{m\cap\ell}\bfNL) \\
            &= \frac{1}{|I_m\cap I_{\ell}|}\|\bbf_0\|_2^2 -  \frac{1}{k}\sum_{j\in\{m,\ell\}} \bbf_0^{\top} \frac{\bX\bM_j\bX^{\top}}{|I_m\cap I_{\ell}|} \bbf_0 + \frac{|I_m\cap I_{\ell}|}{k^2} \bbf_0^{\top} \frac{\bX\bM_m\hSigma_{m\cap\ell} \bM_{\ell}\bX^{\top}}{|I_m\cap I_{\ell}|} \bbf_0\\
            & \asympequi \lambda^2 v_p(-\lambda;\psi)^2 (1+\tv_p(-\lambda;\varphi_{m\ell},\psi)),
        \end{align*}
        where the last equivalence is from \Cref{lem:risk-var}.
        
        \noindent\textbf{Cross term.}
        From \citet[Lemma A.3]{patil2023generalized}, the cross term vanishes, i.e., $T_C \asto 0$.        
    \end{proof}

    \begin{lemma}[Asymptotic equivalence for denominator of the \ovlp-estimator]\label{lem:den-est-1}
        Under \Cref{asm:feat,asm:prop-asym}, for $\psi\in[\phi,\infty)$ and $\lambda>0$, and index sets $I_m,I_{\ell}\overset{\SRS}{\sim}\cI_k$, it holds that
        \[
            D_{m,\ell}^{\sub}
                \asympequi\sD_{m,\ell}^{\sub}:= \lambda^2v_p(-\lambda;\psi)^2,
        \]
        where $v_p(-\lambda;\psi)$ is the unique nonnegative solution to fixed-point equation \eqref{eq:v_ridge}.  
    \end{lemma}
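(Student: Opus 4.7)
The proof is short, and the strategy will be: factor the denominator, identify each factor with the normalized trace of a companion $k\times k$ resolvent via the push-through identity, and then apply a Silverstein-type deterministic equivalent. Under \Cref{assu:sampling}, $|I_m|=|I_\ell|=k$, so the denominator factors cleanly as
\[
D_{m,\ell}^{\sub} = \Bigl(1-\tfrac{\tr(\bS_m)}{k}\Bigr)\Bigl(1-\tfrac{\tr(\bS_\ell)}{k}\Bigr).
\]
Since each factor is bounded in $[0,1]$ (the spectrum of $\bS_m$ lies in $[0,1)$) and $I_m, I_\ell$ are exchangeable, it will suffice to prove the one-matrix statement $1-\tr(\bS_m)/k \asympequi \lambda v_p(-\lambda;\psi)$ and then take the product.

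To expose a companion resolvent, I will set $\bA=\bX_{I_m}/\sqrt{k}$ so that $\bar\hSigma_m := \bA\bA^\top = \bX_{I_m}\bX_{I_m}^\top/k$ and use the push-through identity $\bA(\bA^\top\bA+\lambda\bI_p)^{-1}\bA^\top = \bA\bA^\top(\bA\bA^\top+\lambda\bI_k)^{-1}$ to rewrite
\[
\bS_m = \bI_k-\lambda\bigl(\bar\hSigma_m+\lambda\bI_k\bigr)^{-1},
\qquad
1-\tfrac{\tr(\bS_m)}{k} = \tfrac{\lambda}{k}\tr\bigl[(\bar\hSigma_m+\lambda\bI_k)^{-1}\bigr].
\]
This collapses the target to the normalized trace of the companion resolvent, an object controlled directly by Marchenko--Pastur/Silverstein theory. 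Under \Cref{asm:feat,asm:prop-asym}, the $k$ rows of $\bX_{I_m}$ are i.i.d.\ with covariance $\bSigma$ and aspect ratio $p/k\to\psi$, so the deterministic equivalent for this trace is
\[
\tfrac{1}{k}\tr\bigl[(\bar\hSigma_m+\lambda\bI_k)^{-1}\bigr] \asympequi v_p(-\lambda;\psi),
\]
because the Silverstein fixed-point identity for this trace at $z=-\lambda$, $\theta=\psi$, and spectral measure $H_p$ is exactly \eqref{eq:v_ridge}. Combining with the previous display and squaring will yield $D_{m,\ell}^{\sub} \asympequi \lambda^2 v_p(-\lambda;\psi)^2$.

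The main obstacle will be the Silverstein equivalent in the last step. The subtlety is that $v_p$ is defined through the pre-limit empirical spectral distribution $H_p$ of $\bSigma$ rather than a fixed limiting measure, so the equivalent must be invoked in its non-asymptotic deterministic form under the $(4+\delta)$-moment assumption of \Cref{asm:feat}. This form is by now standard (Bai--Silverstein, Rubio--Mestre style) and is the same form already used elsewhere in \Cref{app:subsec:asym-technical-lemmas} (e.g., inside \Cref{lem:risk-bias,lem:risk-var}) and in \cite{patil2023generalized,du2023subsample}, so I plan to cite it rather than reprove it; the remainder of the argument is elementary algebra.
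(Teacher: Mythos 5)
Your proof is correct and follows essentially the same route as the paper: both establish $1-\tr(\bS_j)/k \asympequi \lambda v_p(-\lambda;\psi)$ for each $j$ and multiply. The paper outsources that trace asymptotic to a cited lemma (Lemma~C.1 of \citet{du2023subsample}), whereas you unpack it via the push-through identity $\bS_m = \bI_k - \lambda(\bar\hSigma_m+\lambda\bI_k)^{-1}$ and the Silverstein companion fixed point, which is a slightly more self-contained presentation of the same fact.
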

    \begin{proof}
        Since $I_m,I_{\ell}\overset{\SRS}{\sim}\cI_k$, we have that $|I_m|=|I_{\ell}|=k$.                
        From \citet[Lemma C.1]{du2023subsample}, we have that
        \[\frac{\tr(\bS_j)}{|I_j|} = \frac{1}{k^2}\tr(\bX \bM_j\bX^{\top}\bL_j) = \frac{1}{k}\tr( \bM_j\bSigma_j) \asympequi 1 - \lambda v_p(-\lambda;\psi).\]
        By continuous mapping theorem,
        \begin{align*}
            1  - \frac{\tr(\bS_m)}{|I_m|} - \frac{\tr(\bS_{\ell})}{|I_{\ell}|} + \frac{\tr(\bS_{m})\tr(\bS_{\ell})}{|I_m||I_{\ell}|} &\asympequi 1 - 2(1 - \lambda v_p(-\lambda;\psi)) + (1 - \lambda v_p(-\lambda;\psi)^2 = \lambda^2v_p(-\lambda;\psi)^2,
        \end{align*}
        which completes the proof.
    \end{proof}

    \begin{lemma}
        [Asymptotic equivalence for the numerator of the full-estimator]
        \label{lem:num-est-2}
        Under \Crefrange{asm:model}{asm:prop-asym}, for $\psi\in[\phi,\infty)$ and $\lambda>0$, it holds that
        \[
            N_{m,\ell}^{\full}
            \asympequi\sN_{m,\ell}^{\full}:=
            d_p(-\lambda;\varphi_{m\ell},\psi) (\|\fNL\|_{L_2}^2 + \tc_p(-\lambda;\psi)) (1+\tv_p(-\lambda;\varphi_{m\ell},\psi)),
        \]
        where 
        \[d_p(-\lambda;\varphi_{m\ell},\psi) = \begin{dcases}
                \frac{\psi-\phi}{\psi} + \frac{\phi}{\psi}\lambda^2 v_p(-\lambda;\psi)^2, &\text{ if }\varphi_{m\ell}=\psi,\\
                \left(\frac{\psi-\phi}{\psi} + \frac{\phi}{\psi} \lambda v_p(-\lambda;\psi) \right)^2,    &\text{ if }\varphi_{m\ell} = \phi,
            \end{dcases}\]
        $\varphi_{m\ell}=\psi\ind_{\{m=\ell\}}+\phi\ind_{\{m\neq\ell\}}$, and $\tv_p(-\lambda;\phi,\psi),\tc_p(-\lambda;\psi)$ are defined in \Cref{lem:risk-bias}.
    \end{lemma}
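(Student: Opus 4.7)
The plan is to mirror the bias-variance-cross decomposition used in the proof of \Cref{lem:num-est-1}, but adapted to account for the fact that the residuals are now evaluated on the full sample rather than on $I_m\cap I_\ell$. Using $\hSigma_j\bM_j = \bI - \lambda\bM_j$, the same manipulation as in \Cref{lem:num-est-1} yields the representation $\by - \bX\hbeta_j = \lambda\bX\bM_j\bbeta_0 + (\bI-\bS_j)\bfNL$, where $\bS_j = \bX\bM_j\bX^\top\bL_j/k$. Expanding $N_{m,\ell}^{\full}=n^{-1}(\by-\bX\hbeta_m)^\top(\by-\bX\hbeta_\ell)$ produces a bias term, a variance term, and two cross terms; the cross terms vanish by \cite[Lemma~A.3]{patil2023generalized} exactly as before.

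For the surviving bias and variance terms, the new ingredient is the partition $\bI_n = \bL_{m\cap\ell} + \bL_{m\cap\ell^c} + \bL_{m^c\cap\ell} + \bL_{m^c\cap\ell^c}$, which splits each term into four pieces indexed by $S\in\{m\cap\ell,\, m\cap\ell^c,\, m^c\cap\ell,\, m^c\cap\ell^c\}$. The fully in-sample piece $S=m\cap\ell$ equals $(|I_m\cap I_\ell|/n)\,N_{m,\ell}^{\sub}$, so \Cref{lem:num-est-1} combined with the concentration $|I_m\cap I_\ell|/n \asto (\phi/\psi)^{2}$ (for $m\ne\ell$) identifies its limit as $(\phi/\psi)^{2}\lambda^{2}v_p(-\lambda;\psi)^{2}\sR_{m,\ell}$. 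For the fully out-of-sample piece $S=m^c\cap\ell^c$, the indices are disjoint from both training sets $I_m$ and $I_\ell$, so a conditional SLLN argument (with a sampling-without-replacement variance bound as in \Cref{lemma:sample_without_rep}) reduces the average to the population risk, yielding the limit $((\psi-\phi)/\psi)^{2}\sR_{m,\ell}$.

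The central new difficulty lies in the two mixed pieces. For $S=m\cap\ell^c$, the bias contribution equals $\lambda^{2}\bbeta_0^\top\bM_m(\bX^\top\bL_{m\cap\ell^c}\bX/n)\bM_\ell\bbeta_0$. Writing $\bL_{m\cap\ell^c} = \bL_m - \bL_{m\cap\ell}$ and applying $\hSigma_m\bM_m = \bI - \lambda\bM_m$ exposes a factor $\lambda\bM_m$ on the $m$-side; invoking the deterministic equivalents from \Cref{lem:risk-bias} together with the freeness-type results of \cite[Lemma~D.2]{patil2023generalized} for the independently subsampled resolvents $\bM_m$ and $\bM_\ell$ then collapses this bias contribution to $(\phi/\psi)((\psi-\phi)/\psi)\,\lambda v_p(-\lambda;\psi)\,\tc_p(-\lambda;\psi)(1+\tv_p(-\lambda;\phi,\psi))$ in the limit. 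The variance contribution is handled analogously via \Cref{lem:risk-var} and produces the same scalar factor multiplying $\|\fNL\|_{L_2}^{2}(1+\tv_p(-\lambda;\phi,\psi))$; summing gives the mixed-piece limit $(\phi/\psi)((\psi-\phi)/\psi)\,\lambda v_p(-\lambda;\psi)\,\sR_{m,\ell}$. The piece $S=m^c\cap\ell$ is symmetric.

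Summing the four limits and using the identity
\[
\Bigl(\tfrac{\phi}{\psi}\lambda v_p\Bigr)^{2} + 2\,\tfrac{\phi}{\psi}\tfrac{\psi-\phi}{\psi}\,\lambda v_p + \Bigl(\tfrac{\psi-\phi}{\psi}\Bigr)^{2} = \Bigl(\tfrac{\psi-\phi}{\psi} + \tfrac{\phi}{\psi}\lambda v_p\Bigr)^{2} = d_p(-\lambda;\phi,\psi)
\]
yields $\sN_{m,\ell}^{\full} = d_p(-\lambda;\phi,\psi)\sR_{m,\ell}$ for $m\ne\ell$. For $m=\ell$ the two mixed pieces vanish identically (since $\bL_{m\cap\ell^c}=0$), so only the first and fourth pieces contribute, and their sum $\bigl((\psi-\phi)/\psi + (\phi/\psi)\lambda^{2}v_p^{2}\bigr)\sR_{m,m}$ matches $d_p(-\lambda;\psi,\psi)\sR_{m,m}$. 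The main obstacle is rigorously tracking the mixed pieces: the factor $\lambda v_p$ arises from cancellation against the ridge resolvent on one side only, while the other out-of-sample side must be shown to contribute no additional shrinkage. This requires careful application of the deterministic equivalents for products $\bM_m\bA\bM_\ell$ with $m\ne\ell$, precisely the regime where one must exploit asymptotic freeness between $\hSigma_m$ and $\hSigma_\ell$ despite the partial overlap of $I_m$ and $I_\ell$.
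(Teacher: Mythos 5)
Your decomposition route is genuinely different from the paper's. The paper handles the bias term of $N_{m,\ell}^{\full}$ by delegating to \Cref{lem:risk-bias}~Part~(3), which itself decomposes $\hSigma$ into only two pieces via $\bL_{m\cup\ell}$ and $\bL_{(m\cup\ell)^c}$, and then invokes an already-derived equivalent from \citet{du2023subsample} for the union piece. The variance term is handled analogously via \Cref{lem:risk-var}~Part~(3). Your four-piece partition $\bI_n=\bL_{m\cap\ell}+\bL_{m\cap\ell^c}+\bL_{m^c\cap\ell}+\bL_{m^c\cap\ell^c}$ has the appeal that the in-sample piece reduces exactly to $(|I_m\cap I_\ell|/n)\,N_{m,\ell}^{\sub}$, letting you recycle \Cref{lem:num-est-1}, and the algebraic identity $(\tfrac{\phi}{\psi}\lambda v_p)^2+2\tfrac{\phi}{\psi}\tfrac{\psi-\phi}{\psi}\lambda v_p+(\tfrac{\psi-\phi}{\psi})^2=d_p(-\lambda;\phi,\psi)$ does close up if each piece has the limit you assign it.

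However, there is a genuine gap: the mixed-piece limits are asserted rather than derived. Writing $\bL_{m\cap\ell^c}=\bL_m-\bL_{m\cap\ell}$ and using $\bM_m\hSigma_m=\bI-\lambda\bM_m$ gives, for the bias contribution,
\[
\lambda^2\bM_m\frac{\bX^\top\bL_{m\cap\ell^c}\bX}{n}\bM_\ell
=\frac{k}{n}\lambda^2\bM_\ell-\frac{k}{n}\lambda^3\bM_m\bM_\ell-\frac{|I_m\cap I_\ell|}{n}\lambda^2\bM_m\hSigma_{m\cap\ell}\bM_\ell,
\]
so the ``exposed'' factor is not a single clean $\lambda v_p$: you get a term $\lambda^2\bM_\ell$, a term $\lambda^3\bM_m\bM_\ell$ involving the product of two \emph{dependent} resolvents with no intervening covariance (for which neither \Cref{lem:risk-bias} nor \citet[Lemma~D.2]{patil2023generalized} directly supplies a deterministic equivalent), and a term that actually cancels against the $m\cap\ell$ piece. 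So the mixed piece on its own does not have the limit $\tfrac{\phi}{\psi}\tfrac{\psi-\phi}{\psi}\lambda v_p\,\tc_p(1+\tv_p)$ by a one-line manipulation; at minimum you would need an equivalent for $\lambda\bM_m\bM_\ell$ over a partial-overlap regime, which is new work. A secondary issue: for the out-of-sample piece $m^c\cap\ell^c$, a conditional SLLN only gives the $\|\fNL\|_{L_2}^2$ term; the factor $(1+\tv_p)$ in $\sR_{m,\ell}$ comes from the quadratic form $\bfNL^\top\bL_m\bX\bM_m\bX^\top\bL_{m^c\cap\ell^c}\bX\bM_\ell\bX^\top\bL_\ell\bfNL/k^2$, which does not vanish and needs the deterministic-equivalent machinery (as the paper does through \Cref{lem:risk-var}~Part~(3)), not a law of large numbers. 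Both of these are exactly where the paper avoids your obstacle by working at the level of $\hSigma$ and $\hSigma_{m\cup\ell}$ rather than the finer intersection lattice.
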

    \begin{proof}
        Analogous to the proof of \Cref{lem:num-est-1}, the numerator splits into
        \begin{align*}
            &\frac{1}{n}
            {(\by - \bX \hbeta_m)^\top (\by - \bX \hbeta_{\ell})} \\
            &= \frac{1}{n}\lambda^2\bbeta_0^{\top}\bM_m\hSigma\bM_{\ell}\bbeta_0\\
            &\qquad + \frac{1}{n}\bfNL^{\top}\left(\bI_p-\frac{\bL_m\bX\bM_m\bX^{\top}}{k}\right)\left(\bI_p-\frac{\bX\bM_{\ell}\bX^{\top}\bL_{\ell}}{k}\right)\bfNL \\
            &\qquad \qquad + \frac{\lambda}{n} \left(
            \bbeta_0^{\top} \bM_{\ell}\bX^{\top}\left(\bI_p-\bX\bM_m\frac{\bX^{\top}\bL_m}{k}\right)
            \bfNL + \bbeta_0^{\top} \bM_{\ell}\bX^{\top}\left(\bI_p-\bX\bM_{\ell}\frac{\bX^{\top}\bL_{\ell}}{k}\right)
            \bfNL
            \right)\\
            &= T_B + T_V + T_C.
        \end{align*}
        Next, we analyze the three terms separately.

    The bias and the cross term are analyzed as in the proof of \Cref{lem:num-est-1}, by involving \Cref{lem:risk-bias} and \citet[Lemma A.3]{patil2023generalized}, respectively:
    \begin{align*}
        T_B & \asympequi d_p(-\lambda;\varphi_{m\ell},\psi) \tc_p(-\lambda;\psi) (1+\tv_p(-\lambda;\varphi_{m\ell},\psi)) ,\\
        T_C & \asympequi 0.
    \end{align*}
    For the variance term, by conditional independence and Lemma D.2 of \citep{patil2023generalized}, the variance term converges to the quadratic terms of $\bbf_1=\bL_{m\cup\ell}\bfNL$ and $\bbf_0=\bL_{m\cap\ell}\bfNL$, respectively:
    \begin{align*}
        &\frac{1}{n}\bfNL^{\top} \left(\bI_p-\frac{\bL_m\bX\bM_m\bX^{\top}}{k}\right)\left(\bI_p-\frac{\bX\bM_{\ell}\bX^{\top}\bL_{\ell}}{k}\right) \bfNL \\
        &=\frac{|I_m\cup I_{\ell}|}{n}\bfNL^{\top} \left(\bI_p-\frac{\bL_m\bX\bM_m\bX^{\top}}{k}\right)\bL_{m\cup\ell}\left(\bI_p-\frac{\bX\bM_{\ell}\bX^{\top}\bL_{\ell}}{k}\right) \bfNL +\\
        &\qquad + \frac{|(I_m\cup I_{\ell})^c|}{n}\bfNL^{\top} \left(\bI_p-\frac{\bL_m\bX\bM_m\bX^{\top}}{k}\right)\bL_{(m\cup\ell)^c}\left(\bI_p-\frac{\bX\bM_{\ell}\bX^{\top}\bL_{\ell}}{k}\right) \bfNL\\
        &=\frac{|I_m\cup I_{\ell}|}{n}\bbf_1^{\top} \left(\bI_p-\frac{\bX\bM_m\bX^{\top}}{k}\right)\bL_{m\cup\ell}\left(\bI_p-\frac{\bX\bM_{\ell}\bX^{\top}}{k}\right) \bbf_1 \\
        &\qquad + \frac{|(I_m\cup I_{\ell})^c|}{n} \|\bbf_0\|_2^2 +  \frac{|(I_m\cup I_{\ell})^c|^2}{nk^2} \bbf_0^{\top} \bX\bM_m\hSigma_{(m\cup\ell)^c } \bM_{\ell}\bX^{\top}\bbf_0.
    \end{align*}
    From \Cref{lem:risk-var}, it follows that
    \begin{align*}
        T_V &\asympequi d_p(-\lambda;\varphi_{m\ell},\psi) \|\fNL\|_{L_2}^2 (1+\tv_p(-\lambda;\varphi_{m\ell},\psi)).
    \end{align*}
    Combining the above results completes the proof.
    \end{proof}

    \begin{lemma}[Asymptotic equivalence for denominator of the full-estimator]\label{lem:den-est-2}
        Under \Cref{asm:feat,asm:prop-asym}, for $\psi\in[\phi,\infty)$ and $\lambda>0$, and index sets $I_m,I_{\ell}\overset{\SRS}{\sim}\cI_k$, it holds that
        \[
            D_{m,\ell}^{\full}
                \asympequi\sD_{m,\ell}^{\full}:=
            \begin{dcases}
                \frac{\psi-\phi}{\psi} + \frac{\phi}{\psi}\lambda^2 v_p(-\lambda;\psi)^2, &m=\ell,\\
                \left(\frac{\psi-\phi}{\psi} + \frac{\phi}{\psi} \lambda v_p(-\lambda;\psi) \right)^2,    &m\neq\ell,
            \end{dcases}            
        \]
        where $v_p(-\lambda;\psi)$ is the unique nonnegative solution to fixed-point equation \eqref{eq:v_ridge}.
    \end{lemma}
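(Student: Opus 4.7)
The plan is to reduce the denominator to a product of three pieces, each of which has a known deterministic limit, and then combine them via the continuous mapping theorem. Observe that
\[
D_{m,\ell}^{\full} \;=\; 1 - \frac{\tr(\bS_m)}{n} - \frac{\tr(\bS_{\ell})}{n} + \frac{\tr(\bS_m)}{|I_m|}\cdot\frac{\tr(\bS_{\ell})}{|I_{\ell}|}\cdot\frac{|I_m\cap I_{\ell}|}{n}.
\]
First, exactly as in the proof of \Cref{lem:den-est-1}, we invoke Lemma C.1 of \citet{du2023subsample} to obtain $\tr(\bS_j)/|I_j| \asympequi 1 - \lambda v_p(-\lambda;\psi)$ for $j\in\{m,\ell\}$. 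Since $|I_j|=k$ and $k/n\to\phi/\psi$ under \Cref{asm:prop-asym}, this also gives
\[
\frac{\tr(\bS_j)}{n} \;=\; \frac{k}{n}\cdot\frac{\tr(\bS_j)}{k} \;\asympequi\; \frac{\phi}{\psi}\bigl(1-\lambda v_p(-\lambda;\psi)\bigr).
\]

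Second, we handle the intersection factor $|I_m\cap I_{\ell}|/n$ in two cases. If $m=\ell$, then trivially $|I_m\cap I_{\ell}|/n = k/n \to \phi/\psi$. If $m\neq\ell$, \Cref{assu:sampling} makes $I_m,I_{\ell}$ independent draws from $\cI_k$, so $|I_m\cap I_\ell|$ is hypergeometric with mean $k^2/n$ and variance bounded by $k^2/n$ (\Cref{lem:concentration-intersection}). Dividing by $n$, Chebyshev yields $|I_m\cap I_{\ell}|/n \asto k^2/n^2 \to (\phi/\psi)^2$.

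Third, combining the three asymptotics by the continuous mapping theorem gives, for $m=\ell$,
\[
D_{m,m}^{\full} \;\asympequi\; 1 - 2\,\tfrac{\phi}{\psi}\bigl(1-\lambda v\bigr) + \tfrac{\phi}{\psi}\bigl(1-\lambda v\bigr)^2 \;=\; 1 - \tfrac{\phi}{\psi}\bigl(1-\lambda v\bigr)\bigl(1+\lambda v\bigr) \;=\; \tfrac{\psi-\phi}{\psi} + \tfrac{\phi}{\psi}\lambda^2 v^2,
\]
where we write $v=v_p(-\lambda;\psi)$ for brevity, and for $m\neq\ell$,
\[
D_{m,\ell}^{\full} \;\asympequi\; 1 - 2\,\tfrac{\phi}{\psi}\bigl(1-\lambda v\bigr) + \tfrac{\phi^2}{\psi^2}\bigl(1-\lambda v\bigr)^2 \;=\; \Bigl(1 - \tfrac{\phi}{\psi}\bigl(1-\lambda v\bigr)\Bigr)^2 \;=\; \Bigl(\tfrac{\psi-\phi}{\psi} + \tfrac{\phi}{\psi}\lambda v\Bigr)^2,
\]
matching the claimed $\sD_{m,\ell}^{\full}$.

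The only non-routine step is the off-diagonal case, since $|I_m\cap I_{\ell}|/n$ is a random quantity rather than a deterministic ratio; here the key input is the hypergeometric variance bound of \Cref{lem:concentration-intersection}, which is already established. Everything else is a direct reuse of the trace asymptotics of \Cref{lem:den-est-1} combined with the ratio $k/n\to\phi/\psi$, so no additional random matrix input beyond Lemma C.1 of \citet{du2023subsample} is needed.
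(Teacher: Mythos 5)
Your proposal is correct in substance and follows essentially the same route as the paper's proof: decompose $D_{m,\ell}^{\full}$ into trace ratios and the intersection ratio, plug in the deterministic equivalents, and simplify. The trace asymptotics and $k/n\to\phi/\psi$ are used identically (the paper cites Lemma G.2 of \citet{du2023subsample} for the latter and for $|I_m\cap I_\ell|/k$, but this is the same fact), and the algebra matches. The one place where you differ — using \Cref{lem:concentration-intersection} plus Chebyshev for the intersection count — has a technical slip: $\asympequi$ for scalars is almost sure convergence, and Chebyshev with $\Var(|I_m\cap I_\ell|/n) = O(1/n)$ only gives convergence in probability, since $\sum_n 1/n$ diverges and Borel--Cantelli does not apply. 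You would need either an exponential (Hoeffding-type) tail bound for sampling without replacement or the citation the paper uses. This is an easily repaired local gap and does not affect the validity of your overall argument.
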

    \begin{proof}
        From the proof of \Cref{lem:den-est-1}, we have $\tr(\bS_j)/|I_j| \asto 1 - \lambda v_p(-\lambda;\psi),$ for $j\in\{m,\ell\}$.
        From \citet[Lemma G.2]{du2023subsample}, we also have $k/n\asto \phi/\psi$.
        
        For $m=\ell$, we have
        \begin{align*}
            1 - \frac{\tr(\bS_m)}{n} - \frac{\tr(\bS_{\ell})}{n} + \frac{\tr(\bS_{m}) \tr(\bS_{\ell})}{n k}
            &\asympequi 1 - \frac{2\phi}{\psi}(1-\lambda v_p(-\lambda;\psi)) + \frac{\phi}{\psi}(1-\lambda v_p(-\lambda;\psi))^2\\
            &= \frac{\psi-\phi}{\psi} + \frac{\phi}{\psi}(\lambda v_p(-\lambda;\psi))^2
        \end{align*}
        For $m\neq \ell$, from \citet[Lemma G.2]{du2023subsample}, we also have $|I_m\cap I_{\ell}|/k\asto \phi/\psi$.
        By continuous mapping theorem, it follows that
        \begin{align*}
            1 - \frac{\tr(\bS_m)}{n} - \frac{\tr(\bS_{\ell})}{n} + \frac{\tr(\bS_{m}) \tr(\bS_{\ell}) | I_m \cap I_\ell |}{n |I_m| |I_{\ell}|} &\asympequi 1 - \frac{2\phi}{\psi}(1-\lambda v_p(-\lambda;\psi)) +  \frac{\phi^2}{\psi^2}(1 - \lambda v_p(-\lambda;\psi))^2\\
            &= \left(\frac{\psi-\phi}{\psi} + \frac{\phi}{\psi} \lambda v_p(-\lambda;\psi) \right)^2,
        \end{align*}
        which finishes the proof.
    \end{proof}

    \begin{lemma}[Boundary cases: diverging subsample aspect ratio and ridgeless]\label{lem:boundary}
        Under \Crefrange{asm:model}{asm:prop-asym}, the conclusions in \Cref{thm:risk-est} hold for $\psi = \infty$ or $\lambda=0$, if $k=\omega(\sqrt{n})$ for the first estimator and no restriction on $k$ is needed for the second estimator.

        Furthermore, for $\sR_{m,\ell},\sN_{m,\ell}^{\sub},\sD_{m,\ell}^{\sub},\sN_{m,\ell}^{\full},\sD_{m,\ell}^{\full}$ defined in \Cref{lem:risk,lem:den-est-1,lem:num-est-2,lem:den-est-2}, the sequences of functions $\{R_{m,\ell}-\sR_{m,\ell}\}_{p\in\NN}$, $\{N_{m,\ell}^{\sub}/D_{m,\ell}^{\sub}-\sN_{m,\ell}^{\sub}/\sD_{m,\ell}^{\sub}\}_{p\in\NN}$, and $\{N_{m,\ell}^{\full}/D_{m,\ell}^{\full}-\sN_{m,\ell}^{\full}/\sD_{m,\ell}^{\full}\}_{p\in\NN}$ are uniformly bounded, equicontinuous and approaching zero on $\lambda\in[0,\infty]$ almost surely.
    \end{lemma}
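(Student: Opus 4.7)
The plan is twofold: first, extend the pointwise asymptotic equivalences in Lemmas \ref{lem:risk}--\ref{lem:den-est-2} to the boundary parameter values $\lambda = 0$ and $\psi = \infty$; second, upgrade this pointwise convergence on the compactified domain $\lambda \in [0,\infty]$ to uniform convergence via an equicontinuity argument.

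For the first part, when $\psi \in [\phi,\infty)$ and $\lambda \to 0^+$ with $\psi \ne 1$, the fixed-point solution $v_p(-\lambda;\psi)$ defined in \eqref{eq:v_ridge} converges to a finite nonnegative limit $v_p(0;\psi)$ (finite precisely when $\psi > 1$, zero when $\psi < 1$), and the derived scalars $\tv_p, \tc_p$ inherit continuous limits away from $\psi=1$. On the almost-sure event that the smallest nonzero eigenvalue of $\hSigma_m$ is bounded below uniformly in $p$ (available from Bai-Yin style results when $\psi \ne 1$), $\bM_m(\lambda) = (\hSigma_m + \lambda \bI_p)^{-1}$ extends continuously at $\lambda=0$ to the Moore-Penrose inverse, so $R_{m,\ell,\lambda}$ and both estimators $\hR_{m,\ell,\lambda}^{\sub}, \hR_{m,\ell,\lambda}^{\full}$ are continuous at $\lambda=0$ and the equivalences pass to the limit. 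For $\psi = \infty$, one would use that $v_p(-\lambda;\psi) \to 0$, collapsing the ridge predictor toward the null predictor; here the condition $k = \omega(\sqrt{n})$ is invoked for the \ovlp-estimator to ensure $|I_m \cap I_\ell| \gtrsim k^2/n \to \infty$ via Chebyshev together with \Cref{lem:concentration-intersection}, so that the normalization $1/|I_m \cap I_\ell|$ in $N_{m,\ell}^{\sub}$ does not blow up. The full-estimator avoids this restriction because its denominator $\sD_{m,\ell}^{\full}$ stays bounded below by $(\psi-\phi)/\psi > 0$.

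For the second part, the strategy is to show that the sequences of functions $\{R_{m,\ell}-\sR_{m,\ell}\}$, $\{\hR_{m,\ell}^{\sub} - \sR_{m,\ell}\}$, and $\{\hR_{m,\ell}^{\full} - \sR_{m,\ell}\}$ are uniformly Lipschitz in $\lambda$ on $[0,\infty]$ (after the compactifying reparametrization $\lambda \mapsto \lambda/(1+\lambda)$) on a high-probability event. The derivatives with respect to $\lambda$ involve $\partial_\lambda \bM_m = -\bM_m^2$, and using the resolvent identity together with the uniform operator-norm control on $\bX/\sqrt{n}$ (from \Cref{asm:feat}) and the lower eigenvalue bound mentioned above, one obtains Lipschitz constants independent of $p$. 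Combined with the pointwise convergence on a countable dense subset of $[0,\infty]$, this yields equicontinuity of the error sequences, and hence uniform convergence to zero by an Arzelà--Ascoli argument (or Theorem 21.8 of \cite{davidson1994stochastic}, as invoked in the main proof sketch), establishing the uniform boundedness, equicontinuity, and pointwise vanishing claims simultaneously.

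The main obstacle is controlling the Lipschitz constants of $\lambda \mapsto \hR_{m,\ell,\lambda}^{\est}$ and $\lambda \mapsto R_{m,\ell,\lambda}$ near $\lambda=0$ in the overparameterized regime $\psi > 1$, since the norm of $\partial_\lambda \bM_m = -\bM_m^2$ is governed by the smallest nonzero eigenvalue of $\hSigma_m$; keeping this bounded uniformly in $p$ with high probability is exactly where the condition $\psi \ne 1$ becomes essential, as that eigenvalue degenerates to zero when $\psi = 1$. Similarly, near $\psi = \infty$, one must track the rate at which $v_p(-\lambda;\psi) \to 0$ uniformly in $\lambda$, which requires examining the fixed-point equation \eqref{eq:v_ridge} carefully to extract a rate compatible with equicontinuity; this is where the distinction between the two estimators, and the need for $k = \omega(\sqrt{n})$ in the \ovlp-case, reappears.
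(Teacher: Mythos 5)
Your plan follows the same three-step structure as the paper's proof: (i) handle the diverging subsample aspect ratio $\psi=\infty$, (ii) pass to the ridgeless boundary $\lambda=0$ via equicontinuity plus Moore--Osgood, and (iii) bound $\lambda$-derivatives to get equicontinuity of the ratio over $\Lambda=[0,\infty]$. The condition $k=\omega(\sqrt{n})$ for the \ovlp-estimator and the role of $\psi\neq1$ are both correctly identified.

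There is, however, an imprecise step that would trip you up if carried out literally. You write that ``the norm of $\partial_\lambda \bM_m=-\bM_m^2$ is governed by the smallest nonzero eigenvalue of $\hSigma_m$.'' In the overparameterized regime $\psi>1$ this is false: $\hSigma_m$ has a nontrivial kernel, so $\|\bM_m(\lambda)\|_\oper=1/\lambda$ and $\|\bM_m(\lambda)^2\|_\oper=1/\lambda^2\to\infty$ as $\lambda\to0^+$. Trying to get a Lipschitz constant by multiplying $\|\bM_m^2\|_\oper$ against operator-norm bounds on $\bX/\sqrt{n}$ therefore fails near $\lambda=0$. The resolvent identity alone does not save you, since it produces exactly the factor $\|\bM_m(\lambda)\|_\oper\|\bM_m(\lambda')\|_\oper$. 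The saving grace, which the paper exploits, is that the quantities whose $\lambda$-derivatives actually enter $R_{m,\ell,\lambda}$, $N_{m,\ell}^{\est}$, $D_{m,\ell}^{\est}$ are of the form $\bL_j\bX\bM_j^2\bX^\top\bL_j/|I_j|$ (equivalently, $\partial_\lambda\bS_j$); the $\bX^\top\bL_j$ factors annihilate $\ker(\hSigma_j)$, so only the nonzero eigenvalues contribute and the operator norm is $\max_i \sigma_i/(\sigma_i+\lambda)^2 \le 1/\sigma_{\min}^+$, which is controlled precisely by the nonzero-eigenvalue gap when $\psi\neq1$. You need to compute the derivative at that level, not at the level of $\bM_m^2$ alone.

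One other small point: for $\psi=\infty$ you invoke only $v_p(-\lambda;\psi)\to0$, but that controls the deterministic equivalent, not the estimator itself. The paper's argument pins down the estimator directly via $\|\hbeta_m\|_2\le\|\bL_m\by/\sqrt{k}\|_2\cdot\max_i s_i/(s_i^2+\lambda)\le C\sqrt{\EE[y_1^2]}/s_{\min}\to0$ as the smallest nonzero singular value of $\bL_m\bX/\sqrt{k}$ diverges when $p/k\to\infty$. That direct bound (rather than the fixed-point limit) is what lets you identify both the risk and the estimators with the null risk $\|\fNL\|_{L_2}^2+\bbeta_0^\top\bSigma\bbeta_0$ and then check that the limiting formulas agree.
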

    \begin{proof}
        We split the proof into two parts.
    
        \noindent\textbf{Part (1) Diverging subsample aspect ratio for $\lambda>0$.}
        Recall that
        \begin{align*}
            &\frac{1}{|I_m\cap I_{\ell}|} (\by - \bX\hbeta_m)^{\top}\bL_{m\cap \ell}(\by - \bX\hbeta_{\ell}) \\
            &= (\bbeta_0 - \hbeta_m)^{\top}\hSigma_{m\cap \ell}(\bbeta_0 - \hbeta_m)  + \frac{1}{|I_m\cap I_{\ell}|} \|\bL_{m\cap\ell}\bfNL\|_2^2 + \frac{1}{|I_m\cap I_{\ell}|}\sum_{j\in\{m,\ell\}} \bfNL^{\top}\bL_{m\cap\ell}\bX(\bbeta_0 - \hbeta_m).
        \end{align*}
        From law of large numbers, $ \|\bL_{m\cap\ell}\bfNL\|_2^2 / |I_m\cap I_{\ell}| \asto \|\fNL\|_{L_2}^2$ as $|I_m\cap I_{\ell}|$ tends to infinity, which is guaranteed when $k=\omega(\sqrt{n})$.
        From \citet[Lemma A.3]{patil2023generalized}, $\bfNL^{\top}\bL_{m\cap\ell}\bX\bbeta_0/|I_m\cap I_{\ell}| \asto 0$.
        
        For the other term, note that,
        \begin{align*}
            \|\hbeta_m\|_2 &\leq \|(\bX^{\top}\bL_m\bX/k+\lambda\bI_p)^{-1}(\bX^{\top}\bL_m\by/k)\|_2  \\
            &\leq \|(\bX^{\top}\bL_m\bX/k+\lambda\bI_p)^{-1}\bX^{\top}\bL_m/\sqrt{k}\|\cdot\|\bL_m\by/\sqrt{k}\|_2\\
            &\leq C\sqrt{\EE[y_1^2]}\cdot  \|(\bX^{\top}\bL_m\bX/k+\lambda\bI_p)^{-1}\bX^{\top}\bL_m/\sqrt{k}\|_{\oper},
        \end{align*}
        where the last inequality holds eventually almost surely since Assumption \ref{asm:model} implies that the entries of $\by$ have bounded $4$-th moment, and thus from the strong law of large numbers, $\| \bL_m \by / \sqrt{k} \|_2$ is eventually almost surely
        bounded above by $C\sqrt{\EE[y_1^2]}$ for some constant $C$.
        On the other hand, the operator norm of the matrix $(\bX^\top \bL_m\bX / k+ \lambda\bI_p)^{-1} \bX\bL_m / \sqrt{k}$ is upper bounded $\max_i s_i/(s_i^2+\lambda)\leq 1/s_{\min}$ where $s_i$'s are the singular values of $\bX$ and $s_{\min}$ is the smallest nonzero singular value.
        As $k, p \to \infty$ such that $p / k \to \infty$, $s_{\min} \to \infty$ almost surely (e.g., from results of \citet{bloemendal2016principal}) and therefore, $\|\hbeta_m\|_2 \to 0$ almost surely.
        Because $\|\hSigma\|_{\oper}$ is upper bounded almost surely, we further have $\bfNL^{\top}\bL_{m\cap\ell}\bX\hbeta_m/|I_m\cap I_{\ell}| \asto 0$.
        Consequently we have $\bfNL^{\top}\bL_{m\cap\ell}\bX(\bbeta_0-\hbeta_m)/|I_m\cap I_{\ell}| \asto 0$ and
        \begin{align*}
            \frac{1}{|I_m\cap I_{\ell}|} (\by - \bX\hbeta_m)^{\top}\bL_{m\cap \ell}(\by - \bX\hbeta_{\ell}) & \asto \bbeta_0^{\top}\bSigma \bbeta_0 + \|\fNL\|_{L_2}^2 .
        \end{align*}
        Since $\bS_{j} = \bX_{I_j} \hbeta_j$ for $j\in\{m,\ell\}$, we have that $\tr(\bS_{j} )/k\asto 0$.
        So the denominator converges to 1, almost surely.

        On the other hand, the asymptotic formula for $\lambda>0$ satisfies that
        \begin{align*}
            \lim_{\psi\rightarrow\infty} \frac{\lambda^2v_p(-\lambda;\psi)^2(\|\fNL\|_{L_2}^2 + \tc_p(-\lambda;\psi)) (1+\tv_p(-\lambda;\varphi_{m\ell},\psi))}{\lambda^2v_p(-\lambda;\psi)^2}= \|\fNL\|_{L_2}^2 + \bbeta_0^{\top}\bSigma \bbeta_0,
        \end{align*}
        where we use the property that $\lim_{\psi\rightarrow\infty}v_p(-\lambda;\psi)=0$.
        Thus, the asymptotic formula is also well-defined and right continuous at $\psi=\infty$.
        
        \noindent\textbf{Part (2) Ridgeless predictor when $\lambda=0$.}
        Below, we analyze the numerator and the denominator separately for the first estimator.
        To indicate the dependency on $p$ and $\lambda$, we denote the denominator and its asymptotic equivalent by
        \begin{align*}
            P_{p,\lambda}:= D_{m,\ell}^{\sub}, \qquad Q_{p,\lambda}:= \sD_{m,\ell}^{\sub},
        \end{align*}
        where we view $k$ and $n$ as sequences $\{ k_p \}$ and $\{ n_p \}$ that are indexed by $p$.
        For $j\in\{m,\ell\}$, since $\bS_j\succeq \zero_{n\times n}$ and $\|\bS_j\|_{\oper} = \|\bL_j\bX(\bX^{\top}\bL_j\bX/k+\lambda\bI_p)^{+}\bX^{\top}\bL_j/|I_j|\|_{\oper}\leq 1$ is upper bounded for $\lambda\geq 0$, with equality holds only if $\lambda=0$.         
        When $\lambda=0$ and $\psi< 1$, we have $\tr(\bS_j)/|I_j|=\tr(\bL_j\bX(\bX^{\top}\bL_j\bX/k)^{-1}\bX^{\top}\bL_j/|I_j|)/|I_j| = p/|I_j|\leq\psi$ almost surely.
        When $\lambda=0$ and $\psi>1$, we have 
        that $\tr(\bS_j)/|I_j|\geq r_{\min}r_{\max}^{-1} p/|I_j|\asto r_{\min}r_{\max}^{-1}\psi>\psi$.
        Thus, we have that $0<P_{p,\lambda}<(1-\psi)^2$ for $\lambda>0$ and $\lambda=0$, $\psi\neq 1$.

        Next we inspect the boundedness of the derivative of $P_{p,\lambda}$:
        \begin{align*}
            \frac{\partial}{\partial \lambda}P_{p,\lambda} &= - \frac{\tr(\frac{\partial}{\partial \lambda}\bS_m)}{|I_m|} - \frac{\tr(\frac{\partial}{\partial \lambda}\bS_{\ell})}{|I_{\ell}|} + \frac{\tr(\frac{\partial}{\partial \lambda}\bS_{m})\tr(\bS_{\ell}) + \tr(\bS_{m})\tr(\frac{\partial}{\partial \lambda}\bS_{\ell})}{|I_m||I_{\ell}|}.
        \end{align*}
        For $j\in\{m,\ell\}$, note that 
        \[\frac{\partial}{\partial \lambda}\bS_j  = \bL_j\bX \left(\frac{\bX^{\top}\bL_j\bX}{k}+\lambda\bI\right)^{-2} \frac{\bX^{\top}\bL_j}{|I_j|} .\]
        We also have $\|\partial \bS_j/\partial \lambda\|_{\oper}$ is upper bounded almost surely on $\Lambda$, and thus so does $|\partial P_{p,\lambda}/\partial \lambda|$.
        We know that $P_{p,\lambda} - Q_{p,\lambda} \asto 0$ for $\lambda>0$.
        Define $Q_{p,0}:=\lim_{\lambda\rightarrow0^+}Q_{p,\lambda} = (1-\psi)^2$, which is well-defined according to \citet[Proposition E.2]{du2023subsample}.
        The equicontinuity property, together with the Moore-Osgood theorem, we have that almost surely,
        \begin{align*}
            \lim_{p\rightarrow\infty}|P_{p,0}-Q_{p,0}| &= \lim_{\lambda\rightarrow0^+}\lim_{p\rightarrow\infty}|P_{p,\lambda}-Q_{p,\lambda}| = \lim_{\lambda\rightarrow0^+} 0 =0,
        \end{align*}
        which proves that the denominator formula $Q_{p,0}$ is valid for $\lambda=0$.

        For the numerator, similarly, we define
        \begin{align*}
            P_{p,\lambda}'&:=N_{m,\ell}^{\sub} = \frac{1}{|I_m\cap I_{\ell}|} \by^{\top}L_{m\cap\ell}\by - \sum_{j\in\{m,\ell\}}\by^{\top}\bS_jL_{m\cap\ell}\by + \by^{\top}\bS_m\bS_{\ell}\by,
        \end{align*}
        and $Q_{p,\lambda}'=\sN_{m,\ell}^{\sub}$.
        By the similar argument above, the sequence of function $P_{p,\lambda}'-Q_{p,\lambda}'$ is equicontinuous on $\Lambda$ and thus $Q_{p,0}'=\lim_{\lambda\rightarrow0^+}Q_{p,\lambda}'$ is well-defined.
        Finally, since the proof for the second estimator and the risk are similar, we omit it here for simplicity.

        \noindent\textbf{Part (3) Equicontinuity (in $\lambda$) of the ratio.}
        Again, we focus on proof for the first estimator. Define $P_{p,\lambda}'' := N_{m,\ell}^{\sub}/D_{m,\ell}^{\sub}$ and $Q_{p,\lambda}'' := \sN_{m,\ell}^{\sub}/\sD_{m,\ell}^{\sub}$.
        From the proof of Part (1), we know that 
        $D_{m,\ell}^{\sub}$ is positive and upper bounded almost surely, and $N_{m,\ell}^{\sub}$ is nonnegative and upper bounded almost surely.
        Thus, we have that $P_{p,\lambda}''$ is upper bounded almost surely over $\Lambda$.

        On the other hand, from the monotonicity and boundedness of fixed-point quantities \citep[Lemma F.12]{du2023subsample}, it follows that when $\lambda=0$ and $\psi\neq 1$, $\sD_{m,\ell}^{\sub}=(1 - \psi)^2$;
        when $\lambda>0$, $\sD_{m,\ell}^{\sub}$ is a continuous function, with left limit at $\lambda=0$ bounded away from zero and infinity and right limit $\lim_{\lambda\rightarrow\infty}\lambda^2v_p(-\lambda;\psi)=1$, and is therefore also bounded away from zero and infinity over $\Lambda$.
        This implies that $Q_{p,\lambda}''$ is upper bounded over $\Lambda$.
        
        Next, we examine the derivative of $P_{p,\lambda}''$ and $Q_{p,\lambda}''$:
        \begin{align*}
            \frac{\partial }{\partial \lambda} P_{p,\lambda}'' &= \frac{\frac{\partial N_{m,\ell}^{\sub}}{\partial \lambda} D_{m,\ell}^{\sub} - N_{m,\ell}^{\sub}\frac{\partial D_{m,\ell}^{\sub}}{\partial \lambda}  }{(D_{m,\ell}^{\sub })^2},
            \quad
            \text{and}
            \quad
            \frac{\partial }{\partial \lambda} Q_{p,\lambda}'' = \frac{\frac{\partial \sN_{m,\ell}^{\sub}}{\partial \lambda} \sD_{m,\ell}^{\sub} - \sN_{m,\ell}^{\sub}\frac{\partial \sD_{m,\ell}^{\sub}}{\partial \lambda}  }{(\sD_{m,\ell}^{\sub })^2}.
        \end{align*}
        By a similar argument as above, we can also show that $|\partial P_{p,\lambda}''/\partial \lambda|$ and $|\partial Q_{p,\lambda}''/\partial \lambda|$ are upper bounded over $\Lambda$.
        Applying the Moore-Osgood theorem, the conclusion follows.
    \end{proof}
    
\subsection{Proof of \Cref{prop:correct-gcv} (restated here for convenience)}\label{app:subsec:asym-prop:correct-gcv}

\bigskip

\PropCorrectGCV*

\subsubsection*{Pointwise consistency}
    From \Cref{lem:risk} and \Cref{lem:num-est-2}, we have that
    \begin{align}
        R_M &=  \|\fNL\|_{L_2}^2 + \frac{1}{M^2}\sum_{m=1}^M \|\hbeta(\cD_{I_m}) - \bbeta_0\|_{\bSigma}^2\notag\\
        &\qquad + 
        \frac{1}{M^2} \sum_{m,\ell=1}^M (\hbeta(\cD_{I_m}) - \bbeta_0)^\top \bSigma (\hbeta_{\ell} - \bbeta_0)) \notag\\
        &\asympequi \left( \frac{1}{M}  (1+\tv_p(-\lambda;\psi,\psi)) \right.\notag\\
        &\qquad\left.+ \frac{M-1}{M} (1+\tv_p(-\lambda;\phi,\psi))\right) (\|\fNL\|_{L_2}^2 + \tc_p(-\lambda;\psi)), \label{eq:prop:correct-gcv-eq-1}\\
         \frac{1}{n}\|\by - \bX \hbeta_m)\|_2^2 &\asympequi d_p(-\lambda;\psi,\psi)  (1+\tv_p(-\lambda;\psi,\psi)) (\|\fNL\|_{L_2}^2 + \tc_p(-\lambda;\psi)), \label{eq:prop:correct-gcv-eq-2}
    \end{align}
    and
    \begin{align}
        &\frac{1}{n}\|\by - \bX \tbeta_M\|_2^2 \notag\\
        &= \frac{1}{M^2}\sum_{m=1}^M \frac{1}{n}
        \|\by - \bX \hbeta_m)\|_2^2 + \frac{1}{M^2}\sum_{m,\ell=1}^M \frac{1}{n}
            {(\by - \bX \hbeta_m)^\top (\by - \bX \hbeta_{\ell})} \notag\\
        &\asympequi \left( \frac{1}{M}d_p(-\lambda;\psi,\psi)  (1+\tv_p(-\lambda;\psi,\psi)) \right. \notag\\
        &\qquad \left.+ \frac{M-1}{M}d_p(-\lambda;\phi,\psi)  (1+\tv_p(-\lambda;\phi,\psi))\right) (\|\fNL\|_{L_2}^2 + \tc_p(-\lambda;\psi)). \label{eq:prop:correct-gcv-eq-3}
    \end{align}
    
    On the other hand, from \citet[Lemma 3.4.]{du2023subsample}, the average degrees of freedom $\tdf$ and the denominator $(1-\tdf/n)^2$ of the naive GCV estimator satisfy that
    \begin{align}
        \frac{1}{n}\tdf \asympequi \frac{\phi}{\psi}(1-\lambda v_p(-\lambda;\psi)),\label{eq:prop:correct-gcv-eq-4}
    \end{align}
    and
    \begin{align}
        (1-\tdf/n)^2 \asympequi \left(\frac{\psi-\phi}{\psi} + \frac{\phi}{\psi}\lambda v_p(-\lambda;\psi)\right)^2 = d_p(-\lambda;\phi,\psi).\label{eq:prop:correct-gcv-eq-5}
    \end{align}
    Thus, from \eqref{eq:prop:correct-gcv-eq-1}, \eqref{eq:prop:correct-gcv-eq-3} and \eqref{eq:prop:correct-gcv-eq-5}, the difference between the prediction risk and the naive GCV estimator admits the following asymptotic representations:
    \begin{align}
        &R_M - \frac{\|\by - \bX \tbeta_M\|_2^2/n}{(1-\tdf/n)^2 }  \notag\\
        &\asympequi \frac{1}{M}\left( 1 - \frac{d_p(-\lambda;\psi,\psi)}{d_p(-\lambda;\phi,\psi)} \right)(1+\tv_p(-\lambda;\psi,\psi))(\|\fNL\|_{L_2}^2 + \tc_p(-\lambda;\psi)).\label{eq:prop:correct-gcv-eq-6}
    \end{align}
    On the other hand, from \eqref{eq:prop:correct-gcv-eq-2}, \eqref{eq:prop:correct-gcv-eq-4} and \eqref{eq:prop:correct-gcv-eq-5}, we also have that for all $m\in[M]$,
    \begin{align}
         & \frac{1}{(1-\tdf/n)^2} \cdot \frac{1}{M}\cdot \frac{(\psi-\phi) (\tdf/n)^2}{\phi (1-\tdf/n)^2 + (\psi-\phi) (\tdf/n)^2} \cdot \frac{\|\by-\bX\hbeta_m\|_2^2}{n} \notag\\
         &\asympequi \frac{1}{M}\frac{d_p(-\lambda;\psi,\psi)}{d_p(-\lambda;\phi,\psi)}\frac{(\psi - \phi)\frac{\phi^2}{\psi^2}(1-\lambda v_p(-\lambda;\psi))^2}{\phi d_p(-\lambda;\phi,\psi) + (\psi - \phi)\frac{\phi^2}{\psi^2}(1-\lambda v_p(-\lambda;\psi))^2 }  (1+\tv_p(-\lambda;\psi,\psi)) (\|\fNL\|_{L_2}^2 + \tc_p(-\lambda;\psi)) \notag\\
         &= \frac{1}{M}  \frac{\frac{\phi(\psi - \phi)}{\psi^2}\left(1-\lambda v_p(-\lambda;\psi)\right)^2}{d_p(-\lambda;\phi,\psi)} (1+\tv_p(-\lambda;\psi,\psi)) (\|\fNL\|_{L_2}^2 + \tc_p(-\lambda;\psi)) \notag\\
         &= - \frac{1}{M} \left(1 - \frac{d_p(-\lambda;\psi,\psi)}{d_p(-\lambda;\phi,\psi)}\right) (1+\tv_p(-\lambda;\psi,\psi)) (\|\fNL\|_{L_2}^2 + \tc_p(-\lambda;\psi)),\label{eq:prop:correct-gcv-eq-7}
    \end{align}
    by noting that $d_p(-\lambda;\psi,\psi) - d_p(-\lambda;\phi,\psi) = \phi(\psi - \phi)\psi^{-2}(1-\lambda v_p(-\lambda;\psi))^2$.
    Matching \eqref{eq:prop:correct-gcv-eq-6} and \eqref{eq:prop:correct-gcv-eq-7} finishes the proof when $\hR_{m,m}$ is the full-estimator.
    The proof when $\hR_{m,m}$ is the \ovlp-estimator follows similarly.

\subsubsection*{Uniform consistency}

    From the proof of \Cref{lem:den-est-2}, we have that
    \begin{align*}
        (1-\tdf/n)^2\asympequi \left(1-\frac{\phi}{\psi}(1-\lambda v_p(-\lambda;\psi)\right)^2=:\sD_{m,\ell}^{\full},\qquad \text{for all }\ m\neq\ell,
    \end{align*}
    and
    \begin{align*}
        \tR^{\gcv}_M&\asympequi \frac{1}{M^2}\sum_{m,\ell\in[M]}\frac{\sN_{m,\ell}^{\full}}{\sD_{1,2}^{\full}} \asympequi  \frac{1}{M^2}\sum_{m,\ell\in[M]} R_{m,\ell} - \frac{1}{M^2}\sum_{m\in[M]} \frac{\sD_{m,m}^{\full}}{\sD_{1,2}^{\full}} R_{m}\\
        \hR_{m,m}^\est &\asympequi R_{m,m}.
    \end{align*}
    Then we have    
    \begin{align*}
        \hR_M^{\cgcv,\est}&=\tR^{\gcv}_M- \frac{1}{M} \frac{(\tdf/n)^2}{(1-\tdf/n)^2} \frac{\psi-\phi}{\phi}\frac{1}{M}\sum_{m=1}^M \hR_{m,m}^\est\\
        &\asympequi\tR^{\gcv}_M- \frac{1}{M^2}\sum_{m=1}^{M} \frac{\sD_{1,1}^{\full} - \sD_{1,2}^{\full}}{\sD_{1,2}^{\full}} R_{m,m}\\
        &\asympequi \frac{1}{M^2}\sum_{m,\ell\in[M]}R_{m,\ell}\\
        &= R_M,
    \end{align*}
    which establishes the point-wise consistency.

    Similar to the proof of \Cref{thm:unifrom-consistency-lambda}, the uniform equicontinuity of $\tR_M^{\gcv}$ and $R_{m,\ell}$ to their asymptotic limits follows from \Cref{lem:boundary}.
    And the uniformity for $|\hR_M^{\cgcv,\est} - R_M|$ follows similarly.

\subsection{Technical lemmas and their proofs}\label{app:subsec:asym-technical-lemmas}

\bigskip

    \begin{lemma}[Bias term of risk]\label{lem:risk-bias}
        Suppose the same assumptions in \Cref{thm:risk-est} hold and let $\varphi_{m\ell}=\psi\ind_{\{m=\ell\}}+\phi\ind_{\{m\neq\ell\}}$, then it holds that:
        \begin{enumerate}[(1),leftmargin=7mm]
            
            \item $\lambda^2\bbeta_0^{\top} \bM_{m} \bSigma\bM_{\ell} \bbeta_0 \asympequi \tc_p(-\lambda;\psi) (1+\tv_p(-\lambda;\varphi_{m\ell},\psi))$.
            
            \item $\lambda^2\bbeta_0^{\top} \bM_{m} \hSigma_{m\cap\ell}\bM_{\ell} \bbeta_0 \asympequi \lambda^2v_p(-\lambda;\psi)^2\tc_p(-\lambda;\psi) (1+\tv_p(-\lambda;\varphi_{m\ell},\psi))$

            \item $\lambda^2\bbeta_0^{\top} \bM_{m} \hSigma \bM_{\ell} \bbeta_0 \asympequi 
            \begin{dcases}
                \left(\frac{\psi-\phi}{\psi} + \frac{\phi}{\psi } \lambda^2v_p(-\lambda;\psi)^2  \right) \tc_p(-\lambda;\psi) (1+\tv_p(-\lambda;\varphi_{m\ell},\psi)), & m=\ell,\\
                \left(\frac{\psi-\phi}{\psi} + \frac{\phi}{\psi}\lambda v_p(-\lambda;\psi)\right)^2 \tc_p(-\lambda;\psi) (1+\tv_p(-\lambda;\varphi_{m\ell},\psi)), & m\neq\ell.
            \end{dcases} $
        \end{enumerate}
        Here the nonnegative constants $v_p(-\lambda;\psi)$, $\tv_p(-\lambda;\phi,\psi)$ and $\tc_p(-\lambda;\psi)$ are defined through the following equations:
        \begin{align*}
            \frac{1}{v_p(-\lambda;\psi)} &= \lambda+\psi \int\frac{r}{1+v_p(-\lambda;\psi)r }\rd H_p(r),\\
            \tv_p(-\lambda;\phi,\psi) &= \ddfrac{\phi \int\frac{r^2}{(1+v_p(-\lambda;\psi)r)^2}\rd H_p(r) }{v_p(-\lambda;\psi)^{-2}-\phi \int\frac{r^2}{(1+v_p(-\lambda;\psi)r)^2}\rd H_p(r)},\\
            \tc_p(-\lambda;\psi) &=
            \bbeta_0^{\top}(v_p(-\lambda;\psi)\bSigma+\bI_p)^{-1}\bSigma (v_p(-\lambda;\psi)\bSigma+\bI_p)^{-1}\bbeta_0 .
        \end{align*}    
    \end{lemma}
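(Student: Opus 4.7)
The plan is to reduce each of the three equivalences to a first-order deterministic equivalent for the subsampled ridge resolvent $\bM_m = (\hSigma_m + \lambda \bI_p)^{-1}$, supplemented by a ``squared-resolvent'' correction that produces the variance-inflation factor $1 + \tv_p$. The starting point is the standard anisotropic local law
$$
\lambda \bM_m \asympequi \lambda v_p(-\lambda;\psi) \, (v_p(-\lambda;\psi) \bSigma + \bI_p)^{-1},
$$
valid for each $m \in [M]$ under \Cref{asm:feat,asm:prop-asym}, as used in the bagging-ridge literature (\citet{patil2022bagging,du2023subsample,patil2023generalized}). Sandwiching $\bSigma$ between two such equivalents immediately produces the $\tc_p(-\lambda;\psi)$ factor appearing in all three parts.

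For the second-order correction, I would split into the cases $m = \ell$ and $m \neq \ell$. When $m = \ell$, I would use the resolvent identity $\bM_m \hSigma_m \bM_m = \bM_m - \lambda \bM_m^2 = \bM_m + \lambda \, \partial_\lambda \bM_m$ and implicit differentiation of the fixed-point equation $v_p^{-1} = \lambda + \psi \int r/(1 + v_p r) \, dH_p(r)$; a short calculation shows that this yields exactly the inflation factor $1 + \tv_p(-\lambda;\psi,\psi)$, with the coefficient $\psi$ in the fixed point propagating through the derivative. When $m \neq \ell$, the two resolvents share rows of $\bX$ indexed by $I_m \cap I_\ell$, whose cardinality concentrates at $k^2/n$ by \Cref{lem:concentration-intersection}. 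I would decompose $\hSigma_m = \hSigma_{m\cap\ell} + \hSigma_{m\setminus\ell}$ (analogously for $\hSigma_\ell$) and apply a Sherman--Morrison leave-block-out perturbation to migrate the shared block, tracking the $O(|I_m \cap I_\ell|/k)$ correction. The effective aspect ratio governing the product coupling is then $p/n \to \phi$ rather than $p/k \to \psi$, which is precisely what produces $\tv_p(-\lambda;\phi,\psi)$ in place of $\tv_p(-\lambda;\psi,\psi)$.

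Parts (2) and (3) then follow by expanding the middle matrix. For part (2), applying the resolvent equivalent twice to $\hSigma_{m\cap\ell}$ inserted between $\bM_m$ and $\bM_\ell$ produces the prefactor $\lambda^2 v_p(-\lambda;\psi)^2$ through the identity $1 - \lambda v_p = \psi v_p \int r/(1 + v_p r) \, dH_p$ together with the concentration $|I_m \cap I_\ell|/n \to (\phi/\psi)^2 \cdot \psi / \phi$. For part (3), I would split $\hSigma = n^{-1}\sum_{i \in I_m \cup I_\ell} \bx_i \bx_i^\top + n^{-1}\sum_{i \notin I_m \cup I_\ell} \bx_i \bx_i^\top$; the out-of-subsample sum is independent of $\bM_m, \bM_\ell$ and by the law of large numbers is equivalent to $((\psi - \phi)/\psi)\bSigma$, giving the $(\psi - \phi)/\psi$ summand in the prefactor, while the in-subsample sum is handled using part (2) applied to the larger index set $I_m \cup I_\ell$.

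The main obstacle will be the $m \neq \ell$ case, since the standard calculus of deterministic equivalents does not multiply across statistically dependent matrices, and here $\bM_m, \bM_\ell$ are coupled through the shared rows indexed by $I_m \cap I_\ell$. I expect to rely on the explicit leave-block-out expansion combined with the variance bound $\Var(|I_m \cap I_\ell|) \lesssim k^2/n$ (\Cref{lem:concentration-intersection}) to show that the coupling contribution is exactly captured by the substitution $\psi \mapsto \phi$ in the second-order coefficient. This bookkeeping step, which distinguishes $\tv_p(-\lambda;\phi,\psi)$ from $\tv_p(-\lambda;\psi,\psi)$, will require importing the product-resolvent machinery developed in \citet{patil2023generalized,du2023subsample} rather than being derivable directly from the single-resolvent equivalent.
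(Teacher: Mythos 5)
Your overall plan — work from the first-order deterministic equivalent $\lambda\bM_m \asympequi \lambda v_p(-\lambda;\psi)\,(v_p\bSigma+\bI_p)^{-1}$, upgrade to a second-order equivalent to get the $(1+\tv_p)$ correction, and split into $m=\ell$ / $m\neq\ell$ — is in the right spirit, and the $\lambda$-derivative identity $1+\tv_p(-\lambda;\psi,\psi) = -v_p'(-\lambda;\psi)/v_p(-\lambda;\psi)^2$ that you invoke for $m=\ell$ is correct and is indeed the standard way to get the diagonal ($m=\ell$) inflation factor. The paper does not re-derive any of this; it directly cites Lemma S.2.4 of Patil et al.\ (2022) for part (1), Lemmas D.6(1) and F.8(3) of Du et al.\ (2023) for part (2), and then for part (3) performs precisely the decomposition $\hSigma = \tfrac{|I_m\cup I_\ell|}{n}\hSigma_{m\cup\ell} + \tfrac{|(I_m\cup I_\ell)^c|}{n}\hSigma_{(m\cup\ell)^c}$ that you describe. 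So the final reductions are the same; the difference is that you propose to re-derive the second-order resolvent equivalents that the paper outsources.

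There is, however, a concrete gap in your handling of part (3) in the $m\neq\ell$ case. You assert that the out-of-union block contributes $\tfrac{\psi-\phi}{\psi}\bSigma$. That is only correct when $m=\ell$, where $|(I_m\cup I_\ell)^c|/n \to (\psi-\phi)/\psi$. When $m\neq\ell$ the union concentrates at $|I_m\cup I_\ell|/n \to \phi(2\psi-\phi)/\psi^2$, so the complement gives $|(I_m\cup I_\ell)^c|/n \to (\psi-\phi)^2/\psi^2$, matching the constant term in the expansion $\bigl(\tfrac{\psi-\phi}{\psi} + \tfrac{\phi}{\psi}\lambda v_p\bigr)^2$; your factor $\tfrac{\psi-\phi}{\psi}$ would produce the wrong prefactor. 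Relatedly, the in-union block $\bM_m\hSigma_{m\cup\ell}\bM_\ell$ for $m\neq\ell$ is not ``part (2) applied to the larger index set'': in part (2) the middle covariance is built from $I_m\cap I_\ell$, which is contained in both index sets that define the two resolvents, whereas $I_m\cup I_\ell$ strictly contains both, so the dependence structure is different. The paper handles this term by a separately derived equivalent (Equation (54) of Du et al.\ (2023)), whose prefactor $\lambda^2 v_p^2 (1+\tv_p(\phi,\psi))\bigl(\tfrac{2(\psi-\phi)}{2\psi-\phi}\tfrac{1}{\lambda v_p} + \tfrac{\phi}{2\psi-\phi}\bigr)$ is visibly not of the part-(2) form. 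Your sketch would also need, for $m\neq\ell$, a derivation of the coupled-resolvent equivalent with the $\phi$-coefficient; the Sherman--Morrison leave-block-out idea is plausible but, as you yourself note, ultimately reduces to importing the product-resolvent machinery of the cited papers — so the proposal is better read as a heuristic for why those lemmas have the form they do rather than as an independent proof.
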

    \begin{proof}[Proof of \Cref{lem:risk-bias}]
        Note that $\bbeta_0$ is independent of $\bM_{m} \bSigma\bM_{\ell}$, $\bM_{m} \hSigma_{m\cap\ell}\bM_{\ell}$ and $\bM_{m} \hSigma\bM_{\ell}$. %
        We analyze the deterministic equivalents of the latter for the three cases.

    \noindent\textbf{Part (1)}
    From \citet[Lemma S.2.4]{patil2022bagging}, we have that
    \begin{align*}
        \lambda^2\bM_m\bSigma\bM_{\ell} \asympequi \left(v_p(-\lambda;\psi) \bSigma + \bI_p\right)^{-1} (1+\tv_p(-\lambda;\varphi_{m\ell},\psi))\bSigma\left(v_p(-\lambda;\psi) \bSigma + \bI_p\right)^{-1}. 
    \end{align*}
    Since $\|\bbeta_0\|_2$ is almost surely bounded from \citet[Lemma D.5.]{patil2023generalized}, by the trace property of deterministic equivalents in \citet[Lemma E.3 (4)]{patil2023generalized}, we have
    \begin{align*}
        \bbeta_0^{\top} \bM_{m} \bSigma\bM_{\ell}  \bbeta_0
        &\xlongequal{\as}   \bbeta_0^{\top} \left(v_p(-\lambda;\psi) \bSigma + \bI_p\right)^{-1} (1+\tv_p(-\lambda;\varphi_{m\ell},\psi))\bSigma\left(v_p(-\lambda;\psi) \bSigma + \bI_p\right)^{-1} \bbeta_0 \\
        & = \tc_p(-\lambda;\psi) (1+\tv_p(-\lambda;\varphi_{m\ell},\psi)).
    \end{align*}

    \noindent\textbf{Part (2)} From \citet[Lemma D.6 (1) and Lemma F.8 (3)]{du2023subsample}, we have that
    \begin{align}
        \lambda^2\bM_m\hSigma_{m\cap\ell}\bM_{\ell} \asympequi \lambda^2v_p(-\lambda;\psi)^2\left(v_p(-\lambda;\psi) \bSigma + \bI_p\right)^{-1} (1+\tv_p(-\lambda;\varphi_{m\ell},\psi))\bSigma\left(v_p(-\lambda;\psi) \bSigma + \bI_p\right)^{-1}. \label{eq:bias-2}
    \end{align}
    Then, by the trace property of deterministic equivalents in \citet[Lemma E.3 (4)]{patil2023generalized}, we have
    \begin{align*}
        \bbeta_0^{\top} \bM_{m} \hSigma_{m\cap\ell}\bM_{\ell}  \bbeta_0
        &\xlongequal{\as}   \lambda^2v_p(-\lambda;\psi)^2\bbeta_0^{\top} \left(v_p(-\lambda;\psi) \bSigma + \bI_p\right)^{-1} (1+\tv_p(-\lambda;\varphi_{m\ell},\psi))\bSigma\left(v_p(-\lambda;\psi) \bSigma + \bI_p\right)^{-1} \bbeta_0 \\
        &= \lambda^2v_p(-\lambda;\psi)^2\tc_p(-\lambda;\psi) (1+\tv_p(-\lambda;\varphi_{m\ell},\psi)).
    \end{align*}

    \noindent\textbf{Part (3)} Note that 
        $\hSigma = \frac{|I_m\cup I_\ell|}{n}\hSigma_{m\cup \ell} + \frac{|(I_m\cup I_\ell)^c|}{n}\hSigma_{(m\cup \ell)^c}$,
    we have
    \begin{align}
        \bM_{m} \hSigma \bM_{\ell}  = \frac{|I_m\cup I_\ell|}{n} \bM_{m} \hSigma_{m\cup \ell} \bM_{\ell}  + \frac{|(I_{m}\cup I_\ell)^c|}{n} \bM_{m} \hSigma_{(m\cup \ell)^c} \bM_{\ell} .  \label{eq:lem:risk-bias-1}
    \end{align}
    For the first term in \eqref{eq:lem:risk-bias-1}, from \citet[Equation (54)]{du2023subsample}, when $m\neq \ell$, it holds that
    \begin{align*}
        \lambda^2\bM_m\hSigma_{m\cup \ell}\bM_{\ell} &\asympequi  \lambda^2v_p(-\lambda; \psi)^2(1+\tv_p(-\lambda;\phi,\psi))\left(\frac{2(\psi-\phi)}{2\psi-\phi} \frac{1}{\lambda v_p(-\lambda;\psi)} + \frac{\phi}{2\psi-\phi}\right)( v_p(-\lambda; \psi) \bSigma + \bI_p)^{-2}\bSigma,
    \end{align*}
    From Part (2), when $m=\ell$, it holds that
    \begin{align*}
        \lambda^2\bM_m\hSigma_{m\cup \ell}\bM_{\ell} &\asympequi \lambda^2v_p(-\lambda;\psi)^2\left(v_p(-\lambda;\psi) \bSigma + \bI_p\right)^{-1} (1+\tv_p(-\lambda;\varphi_{m\ell},\psi))\bSigma\left(v_p(-\lambda;\psi) \bSigma + \bI_p\right)^{-1},
    \end{align*}

    For the second term in \eqref{eq:lem:risk-bias-1}, from \citet[Lemma F.8 (1)]{du2023subsample}, we have
    \begin{align*}
        \lambda^2\bM_m\hSigma_{(m\cup \ell)^c}\bM_{\ell} &\asympequi \lambda^2\bM_1\bSigma\bM_2
        \asympequi \left(v_p(-\lambda;\psi) \bSigma + \bI_p\right)^{-1} (1+\tv_p(-\lambda;\varphi_{m\ell},\psi))\bSigma\left(v_p(-\lambda;\psi) \bSigma + \bI_p\right)^{-1},
    \end{align*}
    where the last equivalence is from Part (1).

    When $m=\ell$, the coefficients in \eqref{eq:lem:risk-bias-1} concentrate $|I_m\cup I_\ell|/n\asto \phi/\psi$ and $|(I_m\cup I_\ell)^c|/n\asto (\psi-\phi)/\psi$ from \citet[Lemma G.6]{du2023subsample}.
    Then \eqref{eq:lem:risk-bias-1} implies that
    \begin{align*}
        \lambda^2\bM_m\hSigma\bM_{\ell} &\asympequi \left(\frac{\psi-\phi}{\psi} + \frac{\phi}{\psi } \lambda^2v_p(-\lambda;\psi)^2  \right) \left(v_p(-\lambda;\psi) \bSigma + \bI_p\right)^{-1} (1+\tv_p(-\lambda;\varphi_{m\ell},\psi))\bSigma\left(v_p(-\lambda;\psi) \bSigma + \bI_p\right)^{-1}.
    \end{align*}

    When $m\neq\ell$, the coefficients in \eqref{eq:lem:risk-bias-1} concentrate $|I_m\cup I_\ell|/n\asto \phi(2\psi-\phi)/\psi^2$ and $|(I_m\cup I_\ell)^c|/n\asto (\psi-\phi)^2/\psi^2$ from \citet[Lemma G.6]{du2023subsample}.
    Then \eqref{eq:lem:risk-bias-1} implies that
    \begin{align*}
        \lambda^2\bM_m\hSigma\bM_{\ell} &\asympequi \left(\frac{\psi-\phi}{\psi} + \frac{\phi}{\psi}\lambda v_p(-\lambda;\psi)\right)^2 \left(v_p(-\lambda;\psi) \bSigma + \bI_p\right)^{-1} (1+\tv_p(-\lambda;\varphi_{m\ell},\psi))\bSigma\left(v_p(-\lambda;\psi) \bSigma + \bI_p\right)^{-1}.
    \end{align*}
    Finally, applying the trace property of deterministic equivalents in \citet[Lemma E.3 (4)]{patil2023generalized} as in the previous parts completes the proof.    
    \end{proof}
    
\begin{lemma}[Variance term of risk]\label{lem:risk-var}
    Suppose the same assumptions in \Cref{thm:risk-est} hold and let $\varphi_{m\ell}=\psi\ind_{\{m=\ell\}}+\phi\ind_{\{m\neq\ell\}}$, then it holds that:
    \begin{enumerate}[(1),leftmargin=7mm]
        \item $k^{-2}\bbf_0^{\top} \bX\bM_m \bSigma \bM_{\ell}\bX^{\top} \bbf_0 \asympequi  \|\fNL\|_{L_2}^2 \tv_p(-\lambda;\varphi_{m\ell},\psi)$.

        \item $\frac{1}{|I_m\cap I_{\ell}|}\bbf_0^{\top} \left(\bI_p-\frac{\bX\bM_m\bX^{\top}}{k}\right)\bL_{m\cap\ell}\left(\bI_p-\frac{\bX\bM_{\ell}\bX^{\top}}{k}\right) \bbf_0 \asympequi \|\fNL\|_{L_2}^2 \lambda^2v_p(-\lambda;\psi))^2(1+\tv_p(-\lambda;\varphi_{m\ell},\psi))$.
        
        \item
        \begin{minipage}[t]{\linewidth}
        $\frac{|I_m\cup I_{\ell}|}{n}\bbf_1^{\top} \left(\bI_p-\frac{\bX\bM_m\bX^{\top}}{k}\right)\bL_{m\cup\ell}\left(\bI_p-\frac{\bX\bM_{\ell}\bX^{\top}}{k}\right) \bbf_1$\\ 
        \quad + $\frac{|(I_m\cup I_{\ell})^c|}{n} \|\bbf_0\|_2^2 +  \frac{|(I_m\cup I_{\ell})^c|^2}{nk^2} \bbf_0^{\top} \bX\bM_m\hSigma_{(m\cup\ell)^c } \bM_{\ell}\bX^{\top}\bbf_0$\\
         $~~~~\asympequi 
        \begin{dcases}
            \|\fNL\|_{L_2}^2 \left(\frac{\psi-\phi}{\psi} + \frac{\phi}{\psi } \lambda^2v_p(-\lambda;\psi)^2  \right) (1+\tv_p(-\lambda;\varphi_{m\ell},\psi)), & m=\ell,\\
            \|\fNL\|_{L_2}^2 \left(\frac{\psi-\phi}{\psi} + \frac{\phi}{\psi}\lambda v_p(-\lambda;\psi)\right)^2(1+\tv_p(-\lambda;\varphi_{m\ell},\psi)), & m\neq\ell.
            \end{dcases}$
        \end{minipage}
    \end{enumerate}
    Here $\bbf_0=\bL_{m\cap\ell}\bfNL$, $\bbf_1=\bL_{m\cup\ell}\bfNL$ and the nonnegative constants $v_p(-\lambda;\psi)$ and $\tv_p(-\lambda;\phi,\psi)$ are defined in \Cref{lem:risk-bias}.
\end{lemma}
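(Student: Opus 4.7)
The plan is to prove each of the three equivalences by combining (i) algebraic expansion of the quadratic forms, (ii) the matrix deterministic equivalents established in the proof of \Cref{lem:risk-bias} together with results imported from \citet{du2023subsample,patil2022bagging,patil2023generalized}, and (iii) the trace property of asymptotic equivalents applied to quadratic forms in $\bfNL$. The key mechanism is that by the definition of $\bbeta_0$ as the best linear projection, $\bfNL$ is linearly uncorrelated with $\bx$ and (by \Cref{asm:model,asm:feat}) has bounded $4+\eta$ moments; hence, for any matrix $\bA$ with bounded trace norm that is appropriately independent of $\bfNL$, the quadratic form $\bfNL^\top \bA \bfNL$ concentrates at $\|\fNL\|_{L_2}^2\cdot\tr(\bA)$. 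This reduces each statement to a trace identity in resolvents of $\hSigma_j$, which is then evaluated via the fixed-point equation \eqref{eq:v_ridge}.

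For Part (1), cyclic invariance gives $k^{-2}\bfNL^\top\bL_{m\cap\ell}\bX\bM_m\bSigma\bM_\ell\bX^\top\bL_{m\cap\ell}\bfNL \asympequi \|\fNL\|_{L_2}^2\cdot(|I_m\cap I_\ell|/k^2)\,\tr(\hSigma_{m\cap\ell}\bM_m\bSigma\bM_\ell)$; substituting the equivalent of $\bM_m\bSigma\bM_\ell$ from \Cref{lem:risk-bias}(1), the concentration of $|I_m\cap I_\ell|/k$ from \citet[Lemma G.6]{du2023subsample}, and the fixed-point relation yields the claimed factor $\tv_p(-\lambda;\varphi_{m\ell},\psi)$. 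For Part (2), expand the middle operator using $\bL_{m\cap\ell}\bbf_0=\bbf_0$ into $\|\bbf_0\|_2^2$ (handled by the strong law of large numbers for i.i.d.\ entries of $\bfNL$), two terms of the form $\bbf_0^\top\bX\bM_j\bX^\top\bbf_0/k$, and a double term $\bbf_0^\top\bX\bM_m\hSigma_{m\cap\ell}\bM_\ell\bX^\top\bbf_0\cdot|I_m\cap I_\ell|/k^2$. Substituting the equivalent \eqref{eq:bias-2} and the standard equivalent for $\bM_j$, the simplifications mirror those in the proof of \Cref{lem:den-est-1} and collapse, via \eqref{eq:v_ridge}, to $\lambda^2 v_p(-\lambda;\psi)^2(1+\tv_p(-\lambda;\varphi_{m\ell},\psi))$. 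Part (3) proceeds identically on the partition of $[n]$ into $I_m\cap I_\ell$, $(I_m\cup I_\ell)\setminus(I_m\cap I_\ell)$, and $(I_m\cup I_\ell)^c$; the sample-size ratios on the last two pieces concentrate at different limits for $m=\ell$ vs.\ $m\neq\ell$ (\citet[Lemma G.6]{du2023subsample}), and the deterministic equivalents for $\bM_m\hSigma_{m\cup\ell}\bM_\ell$ and $\bM_m\hSigma_{(m\cup\ell)^c}\bM_\ell$ recorded inside the proof of \Cref{lem:risk-bias}(3) give the stated two cases.

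The main obstacle is justifying the trace-property step for $\bfNL$: under the model-free \Cref{asm:model,asm:feat}, $\bfNL$ is only linearly uncorrelated with $\bX$, not independent, so the naive ``plug-in'' argument is invalid. The remedy, following \citet[Lemmas A.3, D.2]{patil2023generalized}, is a conditioning combined with a $4+\eta$-moment quadratic-form concentration bound. The bookkeeping is particularly delicate because $\bbf_0=\bL_{m\cap\ell}\bfNL$ is a random restriction of $\bfNL$ to an index set that is itself random but independent of $\bX$, while the resolvents $\bM_m,\bM_\ell$ act on possibly overlapping rows; ensuring that the trace norms of the sandwiched matrices stay uniformly bounded in $p$ -- so that the trace property applies and the equivalents from \Cref{lem:risk-bias} transfer cleanly -- is where most of the technical effort is concentrated.
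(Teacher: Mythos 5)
Your proposal follows essentially the same route as the paper's proof. The paper likewise expands each quadratic form, invokes a concentration result for quadratic forms in $\bfNL$ (packaged as \Cref{lem:quad-uncorr}, which reduces $\bbf_0^\top \bX \bA \bX^\top \bbf_0 / |I_m\cap I_\ell|$ to $\|\fNL\|_{L_2}^2\,\tr(\bA\hSigma_{m\cap\ell})$ for symmetric $\bA$ with bounded operator norm), substitutes the deterministic equivalents from \Cref{lem:risk-bias} and \citet{du2023subsample,patil2022bagging,patil2023generalized}, and simplifies through the fixed-point relation \eqref{eq:v_ridge}; the split into the three index regions for Part (3) and the concentration of $|I_m\cap I_\ell|/k$, $|I_m\cup I_\ell|/n$ is also the same. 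The one point you identify as the main obstacle — that $\bfNL$ is merely uncorrelated with, not independent of, $\bX$ — is handled in the paper by proving \Cref{lem:quad-uncorr} directly via a matrix identity plus a diagonal concentration step and an anisotropic local law, rather than by direct conditioning, but this is a packaging difference of the same mechanism you describe, not a different argument.
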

\begin{proof}
    Since $\|\fNL\|_{4+\delta}<\infty$ from \citet[Lemma D.5]{patil2023generalized}, we have that $\|\bbf_0\|_2^2/|I_m\cap I_{\ell}| \asto \|\fNL\|_{L_2}^2$ by strong law of large number.
    Then, from \Cref{lem:quad-uncorr}, we have that the quadratic term 
    \begin{align}
        \frac{1}{|I_m\cap I_{\ell}|}\bbf_0^{\top}\bX\bA\bX^{\top}\bbf_0  &\asympequi \frac{1}{|I_m\cap I_{\ell}|}\|\fNL\|_{L_2}^2 \tr(\bL_{m\cap\ell}\bX\bA\bX^{\top}\bL_{m\cap\ell}) = \|\fNL\|_{L_2}^2 \tr(\bA\hSigma_{m\cap\ell}) \label{eq:lem:risk-var-eq-1}
    \end{align}
    for any symmetric matrix $\bA$ with bounded operator norm.
    We next apply this result for different values of $\bA$.

    \noindent\textbf{Part (1)} Let $\bA=\bM_m \bSigma \bM_{\ell} $. Since from \eqref{eq:bias-2} and the product rule \citep[Lemma S.7.4 (3)]{patil2022bagging}, we have that 
    \[\bM_{\ell} \hSigma_{m\cap \ell} \bM_m \bSigma \asympequi v_p(-\lambda;\psi)^2\left(v_p(-\lambda;\psi) \bSigma + \bI_p\right)^{-1} (1+\tv_p(-\lambda;\varphi_{m\ell},\psi))\bSigma\left(v_p(-\lambda;\psi) \bSigma + \bI_p\right)^{-1}\bSigma .\]
    Then by the trace property of deterministic equivalents in \citet[Lemma E.3 (4)]{patil2023generalized}, we have
    \begin{align*}
        \frac{p}{k\ind_{\{m=\ell\}}+n\ind_{\{m\neq\ell\}}}\cdot\frac{1}{p}\tr(\bA\hSigma_{m\cap\ell}) & \asympequi \varphi_{m\ell}(1+\tv_p(-\lambda;\varphi_{m\ell},\psi)) \int \left(\frac{v_p(-\lambda;\psi) r}{1 + v_p(-\lambda;\psi) r}\right)^2 \rd H_p(r) \\
        &= \tv_p(-\lambda;\varphi_{m\ell},\psi)).
    \end{align*}
    Finally, note that $|I_m\cap I_{\ell}|(k\ind_{\{m=\ell\}}+n\ind_{\{m\neq\ell\}}) \asympequi k^2$.
    This implies that 
    \[
        k^{-2}\bbf_0^{\top} \bX\bM_m \bSigma \bM_{\ell}\bX^{\top} \bbf_0 \asympequi  \|\fNL\|_{L_2}^2 \tv_p(-\lambda;\varphi_{m\ell},\psi).
    \]

    \noindent\textbf{Part (2)}
    Note that 
    \begin{align*}
        &\frac{1}{|I_m\cap I_{\ell}|}\bbf_0^{\top} \left(\bI_p-\frac{\bX\bM_m\bX^{\top}}{k}\right)\bL_{m\cap\ell}\left(\bI_p-\frac{\bX\bM_{\ell}\bX^{\top}}{k}\right) \bbf_0 \\
        &= \frac{1}{|I_m\cap I_{\ell}|}\bbf_0^{\top} \bbf_0 - \frac{1}{|I_m\cap I_{\ell}|}\sum_{j\in\{m,\ell\}}\bbf_0^{\top} \frac{\bX\bM_j\bX^{\top}}{k}\bbf_0 + \frac{1}{k^2}\bbf_0^{\top} \bX\bM_m\hSigma_{m\cap\ell}\bM_{\ell}\bX^{\top} \bbf_0.
    \end{align*}
    We next analyze the three terms separately for $m\neq \ell$.
    By the law of large numbers, the first term converges as $\bbf_0^{\top} \bbf_0/|I_m\cap I_{\ell}| \asto \|\fNL\|_{L_2}^2$.
    For the second term, let $\bA = \bM_m$ and $\bM_{\ell}$. From \citet[Corollary F.5 and Lemma F.8 (4)]{patil2023generalized}, it follows that for $j\in\{m,\ell\}$,
    \begin{align*}
        \bM_j\hSigma_{m\cap\ell} \asympequi \bI_p - (v_p(-\lambda;\psi) \bSigma + \bI_p)^{-1},
    \end{align*}
    and thus, we have
    \begin{align*}
        \frac{1}{p}\tr(\bM_j \hSigma_{m\cap\ell} ) &\asympequi \int \frac{v_p(-\lambda;\psi)r}{1+v_p(-\lambda;\psi)r}\rd H_p(r) = \psi^{-1}(1 -\lambda v_p(-\lambda;\psi)).
    \end{align*}
    It then follows that
    \begin{align*}
        \frac{1}{|I_m\cap I_{\ell}|}\sum_{j\in\{m,\ell\}}\bbf_0^{\top} \frac{\bX\bM_j\bX^{\top}}{k}\bbf_0 \asympequi 2\|\fNL\|_{L_2}^2(1 -\lambda v_p(-\lambda;\psi)).
    \end{align*}
    For the third term, let $\bA = \bM_m \hSigma_{m\cap \ell} \bM_{\ell}$.
    When $m\neq \ell$, from \citet[Lemma D.7 (1) and Lemma F.8 (5)]{patil2023generalized}, we have that
    \begin{align*}
        \bM_m \hSigma_{m\cap \ell} \bM_{\ell}\hSigma_{m\cap \ell} &\asympequi \frac{\psi}{\phi}\left(v_p(-\lambda;\psi)-\frac{\psi-\phi}{\psi}\lambda \tv_v(-\lambda;\phi,\psi) \right)(v_p(-\lambda;\psi)\bSigma+\bI_p)^{-1}\bSigma \notag\\
        &\qquad - \lambda \tv_v(-\lambda;\phi,\psi)(v_p(-\lambda;\psi)\bSigma+\bI_p)^{-2}\bSigma,
    \end{align*}
    where $\tv_v(-\lambda;\psi) = v_p(-\lambda;\psi)^2(1+\tv_p(-\lambda;\phi,\psi))$.
    Then from \eqref{eq:lem:risk-var-eq-1}, we have
    \begin{align*}
        &\frac{1}{|I_m\cap I_{\ell}|}\bbf_0^{\top} \left(\bI_p-\frac{\bX\bM_m\bX^{\top}}{k}\right)\bL_{m\cap\ell}\left(\bI_p-\frac{\bX\bM_{\ell}\bX^{\top}}{k}\right) \bbf_0 \\
        &\asympequi \|\fNL\|_{L_2}^2 \left(1 - 2(1-\lambda v_p(-\lambda;\psi)) + \psi\left(v_p(-\lambda;\psi)-\frac{\psi-\phi}{\psi}\lambda \tv_v(-\lambda;\phi,\psi) \right) \int \frac{r}{1+v_p(-\lambda;\psi) r}\rd H_p(r) \right.\\
        &\qquad \left. - \phi \lambda\tv_v(-\lambda;\phi,\psi) \int \frac{r}{(1+v_p(-\lambda;\psi) r)^2}\rd H_p(r) \right)\\
        &= \|\fNL\|_{L_2}^2 \lambda \tv_v(-\lambda;\phi,\psi) \left(\frac{1}{v_p(-\lambda;\psi)} + \phi  \int \left(\frac{r}{1+v_p(-\lambda;\psi) r}\right)^2\rd H_p(r) - (\psi-\phi)  \int \frac{r}{1+v_p(-\lambda;\psi) r}\rd H_p(r)\right.\\
        &\qquad\left.- \phi  \int \frac{r}{(1+v_p(-\lambda;\psi) r)^2}\rd H_p(r)\right)\\
        &=\|\fNL\|_{L_2}^2 \lambda \tv_v(-\lambda;\phi,\psi) \left(\frac{1}{v_p(-\lambda;\psi)} - \psi\int \frac{r}{1+v_p(-\lambda;\psi) r}\rd H_p(r) \right)\\
        &= \|\fNL\|_{L_2}^2 \lambda^2 \tv_v(-\lambda;\phi,\psi)\\
        &= \|\fNL\|_{L_2}^2 \lambda^2v_p(-\lambda;\psi)^2(1+\tv_p(-\lambda;\phi,\psi)),
    \end{align*}
    when $m\neq \ell$.
    
    When $m=\ell$, from \eqref{eq:lem:risk-var-eq-1} and \citet[Lemma D.7 (1)]{du2023subsample}, we have
    \begin{align*}
        &\frac{1}{|I_m\cap I_{\ell}|}\bbf_0^{\top} \left(\bI_p-\frac{\bX\bM_m\bX^{\top}}{k}\right)\bL_{m\cap\ell}\left(\bI_p-\frac{\bX\bM_{\ell}\bX^{\top}}{k}\right) \bbf_0 \\
        &= \frac{1}{k} \bbf_0^{\top} \left(\bI_p-\frac{\bX\bM_m\bX^{\top}}{k}\right)\bL_{m}\left(\bI_p-\frac{\bX\bM_{m}\bX^{\top}}{k}\right) \bbf_0\\
        &\asympequi \|\fNL\|_{L_2}^2 (1 - \frac{2}{k}\tr(\bM\hSigma_m) + \frac{1}{k}\tr(\bM_m\hSigma_m\bM_m\hSigma_m))\\
        &\asympequi \|\fNL\|_{L_2}^2 \lambda^2v_p(-\lambda;\psi)^2(1+\tv_p(-\lambda;\psi,\psi)).
    \end{align*}
    Combining the above results finish the proof for Part (2).

    \noindent\textbf{Part (3)} We analyze the three terms separately for $m\neq\ell$.    
    From \citet[Lemma D.7 (3)]{du2023subsample}, we have
    \begin{align*}
        &\frac{1}{|I_m\cup I_{\ell}|}\tr\left( \left(\bI_p-\frac{\bX\bM_m\bX^{\top}}{k}\right)\bL_{m\cup\ell}\left(\bI_p-\frac{\bX\bM_{\ell}\bX^{\top}}{k}\right) \right) \\
        &\asympequi \lambda^2v_p(-\lambda;\psi)^2\left(\frac{2(\psi-\phi)}{2\psi-\phi} \frac{1}{\lambda v_p(-\lambda;\psi)}+ \frac{\phi}{2\psi-\phi}\right)(1+\tv_p(-\lambda;\psi,\psi)) .
    \end{align*}
    From \Cref{lem:quad-uncorr}, it then follows that
    \begin{align*}
        &\frac{|I_m\cup I_{\ell}|}{n}\bbf_1^{\top} \left(\bI_p-\frac{\bX\bM_m\bX^{\top}}{k}\right)\bL_{m\cup\ell}\left(\bI_p-\frac{\bX\bM_{\ell}\bX^{\top}}{k}\right) \bbf_1 \\
        &\asympequi \|\fNL\|_{L^2}^2 \frac{\phi(2\psi-\phi)}{\psi^2}\lambda^2v_p(-\lambda;\psi)^2\left(\frac{2(\psi-\phi)}{2\psi-\phi} \frac{1}{\lambda v_p(-\lambda;\psi)}+ \frac{\phi}{2\psi-\phi}\right)(1+\tv_p(-\lambda;\psi,\psi)) .
    \end{align*}
    From strong law of large numbers, the second term converges $\frac{|(I_m\cup I_{\ell})^c|}{n} \|\bbf_0\|_2^2 \asto \|\fNL\|_{L^2}^2$. 
    For the third term, let $\bA = \bM_m\hSigma_{(m\cup\ell)^c } \bM_{\ell}$, then we have 
    \begin{align*}
        \frac{1}{p}\tr(\bM_m\hSigma_{(m\cup\ell)^c } \bM_{\ell}\hSigma_{m\cap \ell}  \asympequi \frac{1}{p}\tr(\bM_m\bSigma \bM_{\ell}\hSigma_{m\cap \ell} \asympequi \psi^{-1}\tv_p(-\lambda;\phi,\psi),
    \end{align*}
    where the first equality is from the conditional independence property and the second is from \citet[Lemma F.8 (3)]{du2023subsample}.
    Again, from \Cref{lem:quad-uncorr}, it follows that
    \[\frac{|(I_m\cup I_{\ell})^c|^2}{nk^2} \bbf_0^{\top} \bX\bM_m\hSigma_{(m\cup\ell)^c } \bM_{\ell}\bX^{\top}\bbf_0 \asympequi \|\fNL\|_{L_2}^2\left(\frac{\psi-\phi}{\psi}\right)^2\tv_p(-\lambda;\phi,\psi).\]
    Combining the above results finishes the proof of Part (3) for $m\neq\ell$.

    When $m=\ell$, the formula simplifies to
    \[
        \frac{1}{n}\bbf_0^{\top} \left(\bI_p-\frac{\bX\bM_m\bX^{\top}}{k}\right)\left(\bI_p-\frac{\bX\bM_{\ell}\bX^{\top}}{k}\right) \bbf_0.
    \]
    From \eqref{eq:lem:risk-var-eq-1}, Part (2) and \citet[Lemma S.2.5 (1)]{patil2022bagging}, we have
    \begin{align*}
        &\frac{1}{n}\bbf_0^{\top} \left(\bI_p-\frac{\bX\bM_m\bX^{\top}}{k}\right)\left(\bI_p-\frac{\bX\bM_{\ell}\bX^{\top}}{k}\right) \bbf_0 \\
        &= \frac{1}{n} \bbf_0^{\top} \left(\bI_p-\frac{\bX\bM_m\bX^{\top}}{k}\right)\left(\bI_p-\frac{\bX\bM_{m}\bX^{\top}}{k}\right) \bbf_0\\
        &\asympequi \|\fNL\|_{L_2}^2 \frac{k}{n}\left(1 - \frac{2}{k}\tr(\bM\hSigma_{m}) + \frac{1}{k}\tr(\bM_m\hSigma_{m}\bM_{m}\hSigma_{m})\right) \\
        &\qquad + \|\fNL\|_{L_2}^2 \frac{n-k}{n}\left( 1 + \frac{1}{n}\tr(\bM_m\hSigma_{m}\bM_{m}\hSigma_{m^c})\right)\\
        &\asympequi \|\fNL\|_{L_2}^2 \frac{\phi}{\psi } \lambda^2v_p(-\lambda;\psi)^2 (1+\tv_p(-\lambda;\varphi_{m\ell},\psi)) + \frac{\psi-\phi}{\psi} \left( 1 + \frac{1}{k}\tr(\bM_m\hSigma_{m}\bM_{m}\bSigma) \right)\\
        &\asympequi \|\fNL\|_{L_2}^2 \frac{\phi}{\psi } \lambda^2v_p(-\lambda;\psi)^2 (1+\tv_p(-\lambda;\varphi_{m\ell},\psi)) + \frac{\psi-\phi}{\psi} \left( 1 +  \tv_p(-\lambda;\psi,\psi)\right) \\
        &\asympequi \|\fNL\|_{L_2}^2 \left(\frac{\psi-\phi}{\psi} + \frac{\phi}{\psi } \lambda^2v_p(-\lambda;\psi)^2  \right) (1+\tv_p(-\lambda;\varphi_{m\ell},\psi)).
    \end{align*}
   Combining the above results finish the proof for Part (3).
\end{proof}

\begin{lemma}[Quadratic concentration with uncorrelated components]\label{lem:quad-uncorr}
    Let $\bX\in\RR^{n\times p}$ be the design matrix whose rows $\bx_i$'s are independent samples drawn according to \Cref{asm:feat}.
    Let $\bbf \in \RR^{p}$ be a random vector with i.i.d.\ entries $f_i$'s, where $f_i$ has bounded $L_2$ norm and is uncorrelated with $\bx_i$.
    Let $\bA\in\RR^{n\times n}$ be a symmetric matrix such that $\limsup\|\bA\|_{\oper}\leq M_0$ almost surely as $p\rightarrow\infty$ for some constant $M_0<\infty$.
    Then as $n,p \to \infty$ such that $p/n\rightarrow\phi\in(0,\infty)$, it holds that    
    \begin{align}
        \frac{1}{n}\bbf^{\top}\bX \bA \bX^{\top} \bbf \asympequi \frac{1}{n}\|\fNL\|_{L_2}^2\tr(\bX \bA \bX^{\top}).
    \end{align}
\end{lemma}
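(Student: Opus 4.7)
Since $\asympequi$ reduces to almost-sure convergence for scalar sequences, the goal is to show
\begin{equation*}
\frac{1}{n}\bigl(\bbf^\top\bX\bA\bX^\top\bbf - \|\fNL\|_{L_2}^2\,\tr(\bX\bA\bX^\top)\bigr) \xrightarrow{\mathrm{a.s.}} 0.
\end{equation*}
Setting $\bB := \bX\bA\bX^\top \in \RR^{n\times n}$, I would split the difference into a diagonal and an off-diagonal piece,
\begin{equation*}
\bbf^\top\bB\bbf - \|\fNL\|_{L_2}^2\,\tr(\bB) \;=\; \underbrace{\sum_{i=1}^n (f_i^2 - \|\fNL\|_{L_2}^2)\,B_{ii}}_{T_{\mathrm{diag}}} \;+\; \underbrace{\sum_{i\neq j} f_i f_j\,B_{ij}}_{T_{\mathrm{off}}},
\end{equation*}
and treat $n^{-1}T_{\mathrm{diag}}$ and $n^{-1}T_{\mathrm{off}}$ separately. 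Under $\limsup\|\bA\|_{\oper}\leq M_0$ almost surely and the $(4+\delta)$-moment bound on rows of $\bX$ from \Cref{asm:feat}, standard random matrix estimates give $\|\bB\|_{\oper}\leq M_0\|\bX\|_{\oper}^2 = O_{\mathrm{a.s.}}(n)$, so all entries $B_{ij}$ are uniformly controlled.

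For the off-diagonal term I would condition on $\bX$ and decompose $f_i = \bar f_i + \tilde f_i$ with $\bar f_i := \EE[f_i\mid\bx_i]$ and $\tilde f_i$ mean-zero conditionally on $\bx_i$, independent across $i$. The uncorrelation hypothesis $\EE[f_i\bx_i] = 0$ makes $\bar f_i$ orthogonal to $\bx_i$ in $L_2$, and since $B_{ij} = \bx_i^\top\bA\bx_j$, the contribution from $\bar f_i\bar f_j$ vanishes in leading order. A direct second-moment computation, using conditional independence of the $\tilde f_i$'s, yields $\EE[T_{\mathrm{off}}^2\mid\bX]\leq C\,\EE[f_1^4]\,\bigl(\|\bB\|_F^2 - \sum_i B_{ii}^2\bigr) = O(n^3)$ almost surely, so $n^{-1}T_{\mathrm{off}} \to 0$ in probability; Borel--Cantelli along a polynomially spaced subsequence $n_k = k^{2}$, combined with a standard subsequence bridging argument exploiting the monotone growth of $\bX^\top\bX$, upgrades this to almost-sure convergence.

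For the diagonal term I would employ a leave-one-out decoupling. Fix $i$, let $\bA^{(i)}$ be the matrix obtained by replacing $\bx_i$ with an independent copy $\bx_i'$ throughout the construction of $\bA$, and set $B_{ii}^{(i)} := \bx_i^\top\bA^{(i)}\bx_i$. A rank-one perturbation argument, combined with the uniform bound on $\|\bA\|_{\oper}$, yields $\EE\lvert B_{ii}-B_{ii}^{(i)}\rvert^2 = o(n)$, so the residual $n^{-1}\sum_i(f_i^2 - \|\fNL\|_{L_2}^2)(B_{ii}-B_{ii}^{(i)})$ is negligible by Cauchy--Schwarz against $\EE[f_i^4]^{1/2}$. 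Conditional on $(\bx_j)_{j\neq i}$ and $\bx_i'$, $B_{ii}^{(i)}$ is independent of $f_i$, so $n^{-1}\sum_i(f_i^2 - \|\fNL\|_{L_2}^2)B_{ii}^{(i)}$ is a sum of almost-martingale increments with conditional variance $O(1/n)$, and Chebyshev plus Borel--Cantelli conclude.

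The main obstacle is the leave-one-out construction for \emph{general} $\bA$: the lemma imposes no structural assumption on how $\bA$ depends on $\bX$, so one cannot literally form $\bA^{(i)}$ without more information. I expect the clean resolution is to reduce, via the trace characterization of $\asympequi$ and a density argument, to the case where $\bA$ is built from sample covariances and their resolvents --- which already covers every application in \Cref{lem:risk-var} --- and there the rank-one perturbation bound $\lvert B_{ii}-B_{ii}^{(i)}\rvert\leq C M_0^2\|\bx_i\|^2/n$ follows from classical resolvent identities. The uncorrelation hypothesis $\EE[\bx_i f_i] = 0$ is used precisely to prevent $\bar f_i \bar f_j B_{ij}$ from leaving a systematic bias in $T_{\mathrm{off}}$: without it, $\EE[f_i\mid\bx_i]$ could align with $\bx_i$ and produce a nonvanishing limit.
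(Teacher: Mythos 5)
Your decomposition of $\bbf^\top\bB\bbf - \|\fNL\|_{L_2}^2\tr(\bB)$ into diagonal and off-diagonal pieces is the right starting point, and the paper effectively does the same once it has converted the quadratic form to a resolvent expression. But there are two concrete defects, one arithmetic and one structural, and neither is cosmetic.

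\textbf{Arithmetic.} With $\bB=\bX\bA\bX^\top$ and $\|\bA\|_{\oper}=\cO(1)$ you correctly note $\|\bB\|_{\oper}=\Op(n)$, hence $\|\bB\|_F^2=\Op(n^3)$. A conditional second moment of order $n^3$ for $T_{\mathrm{off}}$ gives $T_{\mathrm{off}}=\Op(n^{3/2})$, so $n^{-1}T_{\mathrm{off}}=\Op(n^{1/2})$, which \emph{diverges}. The conclusion "$n^{-1}T_{\mathrm{off}}\to 0$" does not follow from the bound you state, and no Borel--Cantelli subsequencing can repair a bound that is a factor of $n$ too weak. (The object the paper actually controls is $\bbf^\top\bG\bbf/n$ with $\bG=\bX\bA\bX^\top/n$, for which $\|\bG\|_{\oper}=\Op(1)$ and $\|\bG\|_F^2=\Op(n)$; after that rescaling a second-moment argument does close.)

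\textbf{The conditional-mean split does not neutralize the cross term.} Writing $f_i=\bar f_i+\tilde f_i$ with $\bar f_i=\EE[f_i\mid\bx_i]$, the uncorrelation hypothesis $\EE[f_i\bx_i]=0$ gives only $\EE[\bar f_i\bx_i]=0$. Setting $\tilde\bx_i:=\bar f_i\bx_i$, the contribution $\sum_{i\neq j}\bar f_i\bar f_j B_{ij}=\sum_{i\neq j}\tilde\bx_i^\top\bA\tilde\bx_j$ is again an off-diagonal quadratic form in i.i.d.\ mean-zero vectors whose kernel $\bA$ still depends on the $\bx_i$'s. You have reproduced the original difficulty, not resolved it; conditioning on $\bX$ freezes $\bar f_i$, so this term is a systematic, $\bX$-measurable bias with no residual randomness for a variance bound to exploit. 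The claim that it "vanishes in leading order" needs its own argument of the same strength as the one you are trying to prove.

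\textbf{The way around general $\bA$ is algebraic, not a density reduction.} You rightly flag that leave-one-out is unavailable for arbitrary $\bA$, but the paper does not reduce to structured $\bA$ by approximation. It \emph{sandwiches} $\bA$ between resolvents of an auxiliary matrix that is independent of $\bA$: set $\bM=(\hSigma+\lambda\bI_p)^{-1}$ with $\hSigma=\bX^\top\bX/n$, use the push-through identity $\bM\bX^\top=\bSigma^{1/2}\bZ^\top(\bZ\bSigma\bZ^\top/n+\lambda\bI_n)^{-1}$, and obtain $\bX\bA\bX^\top/n=\bB_1^{-1}\bB_2\bB_1^{-1}$ with $\bB_1=\bZ\bSigma\bZ^\top/n+\lambda\bI_n$, so $\bB_1^{-1}$ is a genuine sample-covariance resolvent, while $\bB_2=\bZ\bSigma^{1/2}\bM^{-1}\bA\bM^{-1}\bSigma^{1/2}\bZ^\top/n$ absorbs $\bA$ but retains $\|\bB_2\|_{\oper}=\Op(1)$. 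The off-diagonal vanishing is then established for resolvents (a trace lemma, not a raw second-moment computation), lifted to $\bB_1^{-1}\bB_2\bB_1^{-1}$ by a resolvent interpolation identity, and the diagonal concentration follows from the anisotropic local law for $\bB_1^{-1}$ applied in \citet{knowles2017anisotropic}. That machinery — the resolvent sandwich, the off-diagonal trace lemma, and the local law — is precisely what your proposal lacks, and without it neither the cross-term nor the diagonal term can be controlled for $\bA$ depending on $\bX$.
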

\begin{proof}
    Let $\hSigma=\bX^{\top}\bX/n$ and $\bM=(\hSigma+\lambda\bI_p)^{-1}$ be the resolvent.
    Note that 
    \begin{align}
        \frac{1}{n}\bbf^{\top}\bX \bA \bX^{\top} \bbf = \frac{1}{n}\bbf^{\top}\bX \bM (\bM^{-1}\bA\bM^{-1}) \bM \bX^{\top} \bbf \label{eq:lem:quad-uncorr-eq-1}
    \end{align}
    Since $\bX=\bZ\bSigma^{1/2}$, we have $\bM\bX^{\top} = \bSigma^{-1/2}(\bZ^{\top}\bZ/n+\lambda\bSigma^{-1})^{-1}\bZ^{\top} = \bSigma^{1/2}\bZ^{\top}(\bZ\bSigma\bZ^{\top}/n+\lambda\bI_p)^{-1}$.
    Let $\bB_1= (\bZ\bSigma\bZ^{\top}/n+\lambda\bI_p)^{-1}$ and $\bB_2=\bZ\bSigma^{1/2}\bM^{-1}\bA\bM^{-1} \bSigma^{1/2}\bZ^{\top}/n$, then \eqref{eq:lem:quad-uncorr-eq-1} becomes
    \begin{align}
        \frac{1}{n}\bbf^{\top}\bX \bA \bX^{\top} \bbf =  \bbf^{\top}\bB_1 \bB_2 \bB_1 \bbf .
    \end{align}
    Next, we adapt the idea of \citet[Lemma A.16]{bartlett2021deep}, to show the diagonal concentration and trace concentration successively.
    
    \noindent\textbf{Diagonal concentration.}
    From a matrix identity in \citet[Lemma D.4]{patil2023generalized}, we have that, for any $t>0$,
    \begin{align*}
        \bB_1^{-1}\bB_2\bB_1^{-1} &= \frac{1}{t}(\bB_1^{-1} - (\bB_1 + t \bB_2)^{-1})  + t \bB_1^{-1} \bB_2 (\bB_1 + t \bB_2)^{-1} \bB_2 \bB_1^{-1} .
    \end{align*}    
    Let $\bU\in\RR^{n\times n}$ with $U_{ij}=[\bbf]_i[\bbf]_j\ind\{i\neq j\}$.
    We then have
    \begin{align}
        &\left|\sum_{1\leq i\neq j\leq n}[\bB_1^{-1}\bB_2\bB_1^{-1}]_{ij}[\bbf]_i[\bbf]_j\right| \notag\\
        &= |\langle \bB_1^{-1}\bB_2\bB_1^{-1} ,\bU\rangle| \notag\\
        &\leq \frac{1}{t}|\langle \bB_1^{-1},\bU\rangle| - \frac{1}{t}|\langle(\bB_1 + t \bB_2)^{-1},\bU\rangle| + t \|\bB_1^{-1}\|_{\oper}^2 \|\bB_2\|_{\oper}^2 \|(\bB_1 + t \bB_2)^{-1}\|_{\oper}\|\bU\|_{\tr}. \label{eq:lem:gen-risk-nonlinear-eq-4}
    \end{align}
    For the first two terms, \citet[Lemma D.3]{patil2023generalized} implies that
    \begin{align*}
        \frac{1}{n}|\langle \bB_1^{-1} ,\bU\rangle| \asto 0,
        \quad 
        \text{and}
        \quad
        \frac{1}{n}|\langle (\bB_1 + t\bB_2)^{-1},\bU\rangle|\asto 0,
    \end{align*}
    where the second convergence is due to $|\langle (\bB_1 + t\bB_2)^{-1},\bU\rangle|\lesssim t|\langle \bB_1 ^{-1},\bU\rangle|$ because of Woodbury matrix identity and the bounded spectrums of $\bB_1$ and $\bB_2$.
    For the last term, note that $\|\bB_1\|_{\oper}\leq \lambda^{-1}$ and $\|\bB_2\|_{\oper}\leq \|\bA\|_{\oper}\|\hSigma\|_{\oper}$, where $\|\bA\|_{\oper}$ is almost surely bounded as assumed, and $\|\bZ\bSigma^{1/2}\|_{\oper}^2/n=\|\hSigma\|_{\oper}\leq r_{\max}(1+\sqrt{\phi})^2$ almost surely as $n,p\rightarrow\infty$ and $p/n\rightarrow\phi\in(0,\infty)$ (see, e.g., \citet{bai2010spectral}).
     Also, $\|\bU\|_{\tr}/n\leq 2\|\bbf\|_2^2/n \asto 2\|\fNL\|_{L_2}^2<\infty$ from the strong law of large numbers.
    Thus, the last term is almost surely bounded.
    By choosing $t=\sqrt{|\langle \bB_1^{-1},\bU\rangle|/\|\bU\|_{\tr}}$, it then follows that $ n^{-1}|\langle \bB_1^{-1}\bB_2\bB_1^{-1} ,\bU\rangle| \asto 0$.
    Therefore,
    \begin{align*}
        \left|\frac{1}{n}\bbf^{\top}\bB_1^{-1}\bB_2\bB_1^{-1}\bbf - \frac{1}{n}\sum_{i=1}^n [\bB_1^{-1}\bB_2\bB_1^{-1}]_{ii}[\bbf]_i^2\right| \asto 0.
    \end{align*}

    \noindent\textbf{Trace concentration.} From the results in \citet{knowles2017anisotropic}, it holds that
    \begin{align*}
        \max_{1\leq i\leq n}\left|[\bB_1^{-1}\bB_2\bB_1^{-1}]_{ii} - \frac{1}{n}\tr[\bB_1^{-1}\bB_2\bB_1^{-1}]\right| \asto 0.
    \end{align*}
    Further, since $n^{-1}\|\bbf\|_2^2\asto \|\fNL\|_{L^2}^2$, we have
    \begin{align*}
        \frac{1}{n}|\bbf^{\top} \bB_1^{-1}\bB_2\bB_1^{-1}\bbf - \tr[\bB_1^{-1}\bB_2\bB_1^{-1}]\|\fNL\|_{L^2}^2|\asto 0,
    \end{align*}
    which finishes the proof.
\end{proof}

\newpage

\section{Additional numerical illustrations in \Cref{sec:finite-sample-analysis}}
\label{app:experiments-gaussian}

\subsection{Comparison of intermediate \ovlp- and full-estimators for elastic net and lasso}
\label{app:experiments-gaussian-sub-vs-full}

\noindent\underline{Elastic net:}

\begin{figure}[!ht]
    \centering
    \includegraphics[width=0.75\textwidth]{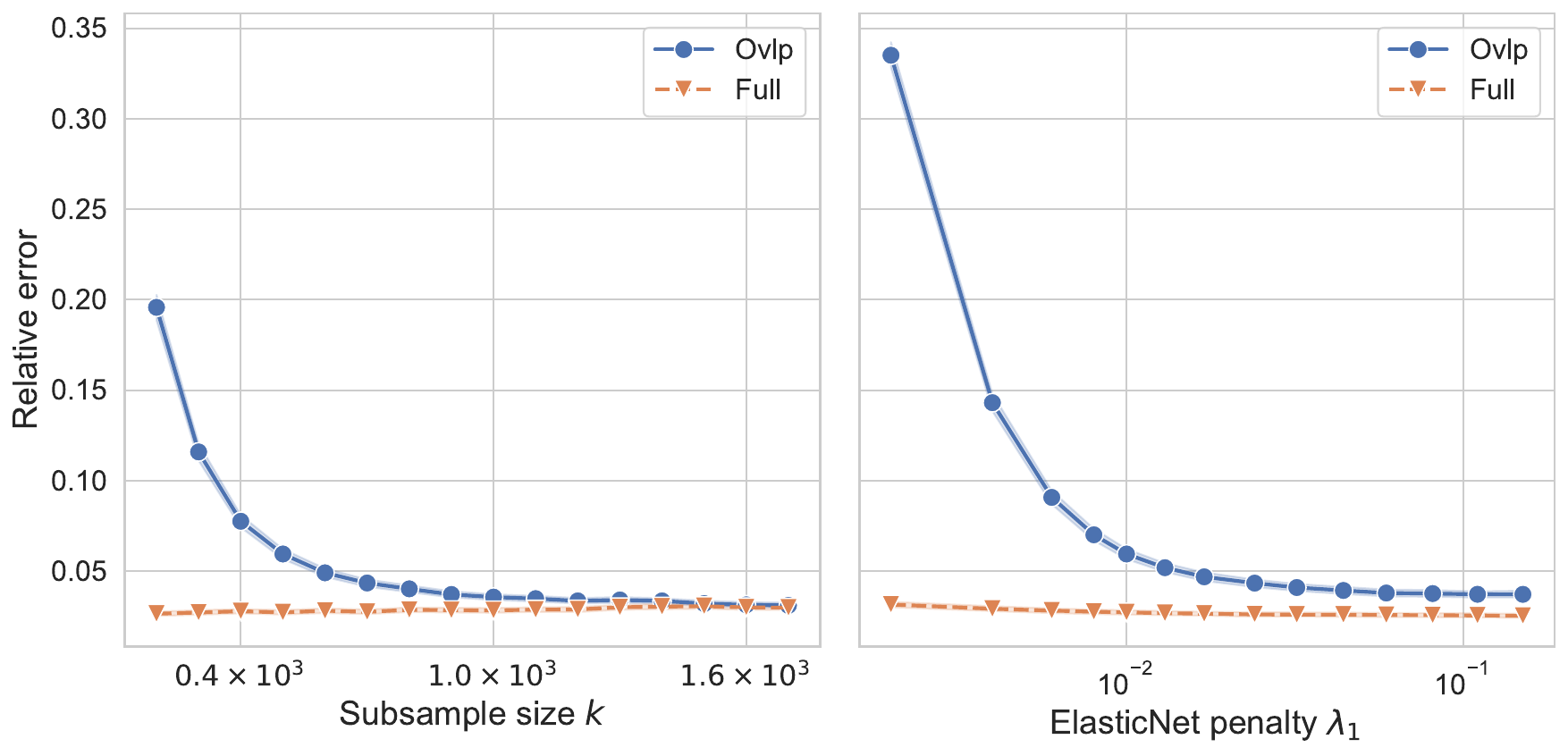}
    \caption{
    \textbf{Full-estimator performs better than \ovlp-estimator across different subsample sizes.}
    The relative errors of the ``\ovlp'' and ``full'' estimators for ridge ensemble with ensemble size $M=10$.
    The left panel shows the results with ridge penalty $\lambda=1$ and varying subsample size $k$.
    The right panel shows the results with subsample size $k=300$ and varying ridge penalty $\lambda$.
    The data generating process is the same as in \Cref{fig:Fig1_ridge_lasso}.
    }
    \label{fig:Fig2_sub_full_ElasticNet.pdf}
\end{figure}

\noindent\underline{Lasso:}

\begin{figure}[!ht]
    \centering
    \includegraphics[width=0.75\textwidth]{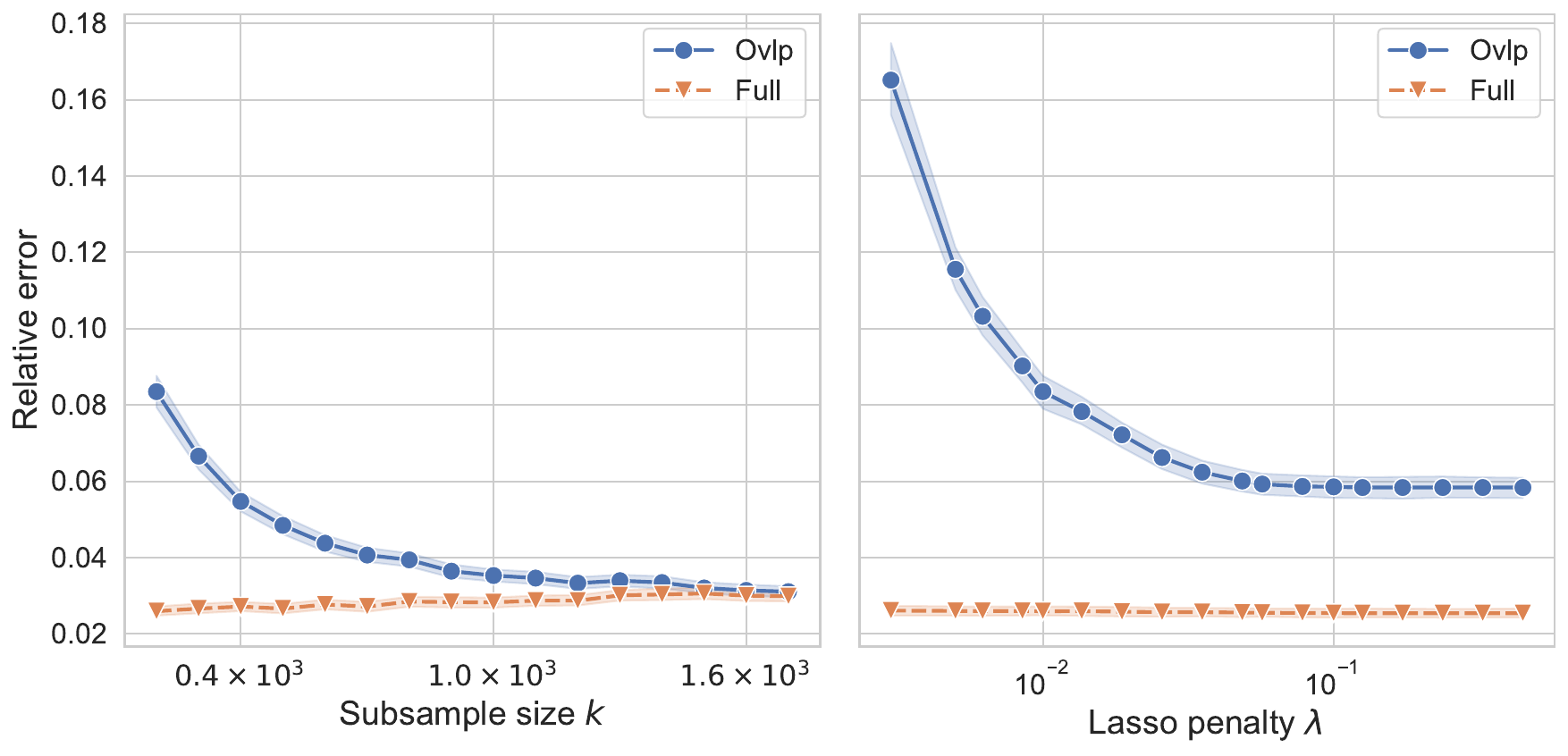}
    \caption{
    \textbf{Full-estimator performs better than \ovlp-estimator across different subsample sizes.}
    The relative errors of the ``\ovlp'' and ``full'' estimators for ridge ensemble with ensemble size $M=10$.
    The left panel shows the results with ridge penalty $\lambda=1$ and varying subsample size $k$.
    The right panel shows the results with subsample size $k=300$ and varying ridge penalty $\lambda$.
    The data generating process is the same as in \Cref{fig:Fig1_ridge_lasso}.
    }
    \label{fig:Fig2_sub_full_k.pdf}
\end{figure}

\newpage

\subsection{Comparison of \ovlp- and full-CGCV for ridge, elastic net, and lasso}
\label{app:experiments-gaussian-sub-vs-full-CGCV}

\noindent\underline{Ridge:}

\begin{figure}[!ht]
    \centering
    \includegraphics[width=0.75\textwidth]{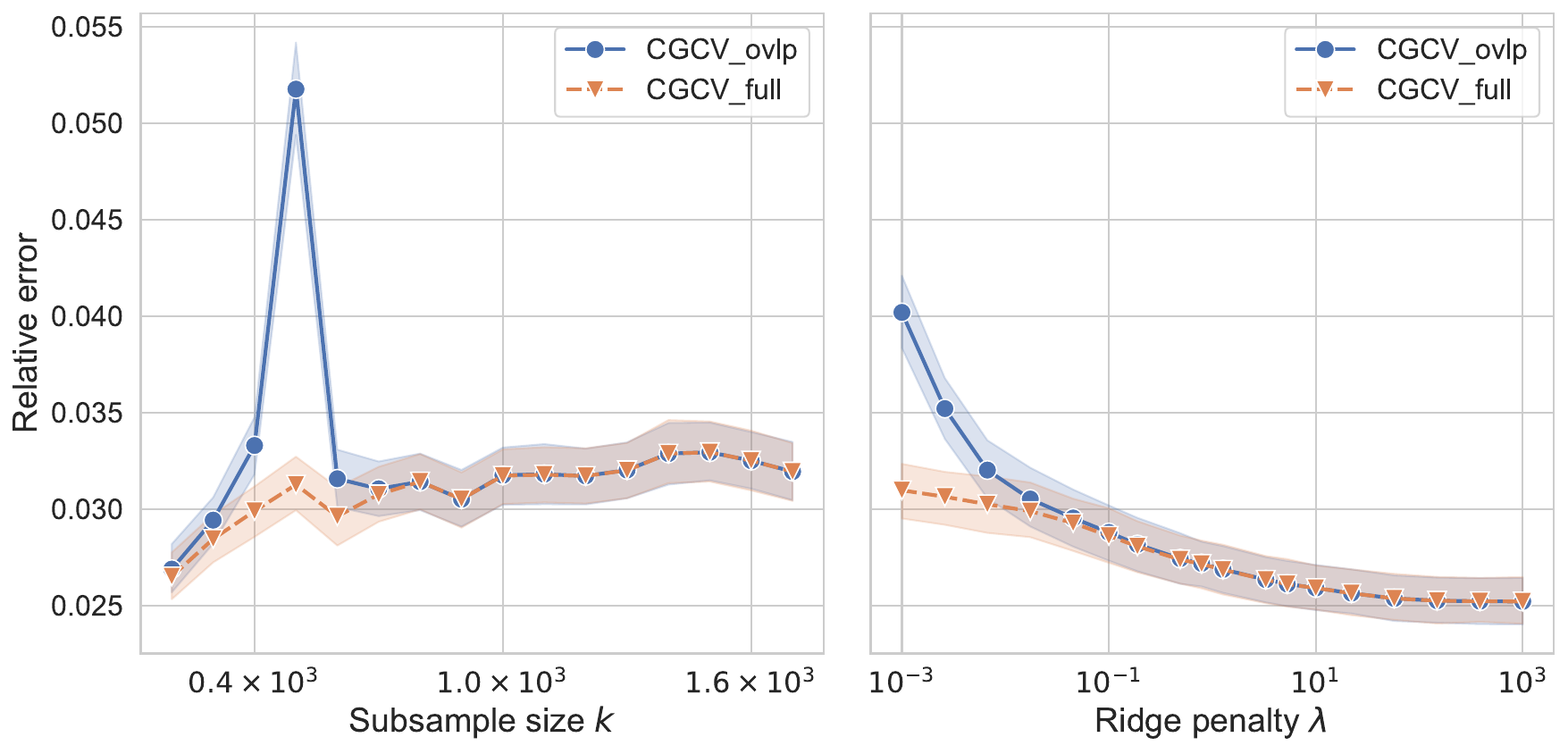}
    \caption{
    \textbf{
    The CGCV{\_}full estimator ($\tR_M^{\cgcv,\full}$) performs better than CGCV\_ovlp 
 estimator ($\tR_M^{\cgcv,\sub}$). }
    Plots of relative errors of the $\tR_M^{\cgcv,\full}$ and $\tR_M^{\cgcv,\sub}$ for ridge ensemble with ensemble size $M=10$.
    The left panel shows the results with ridge penalty $\lambda=0.001$ and varying subsample size $k$.
    The right panel shows the results with subsample size $k=500$ and varying ridge penalty $\lambda$.
    The data generating process is the same as in \Cref{fig:Fig1_ridge_lasso}.
    }
    \label{fig:Fig2_sub_full_ridge-cgcv.pdf}
\end{figure}

\noindent\underline{Elastic net:}

\begin{figure}[!ht]
    \centering
    \includegraphics[width=0.75\textwidth]{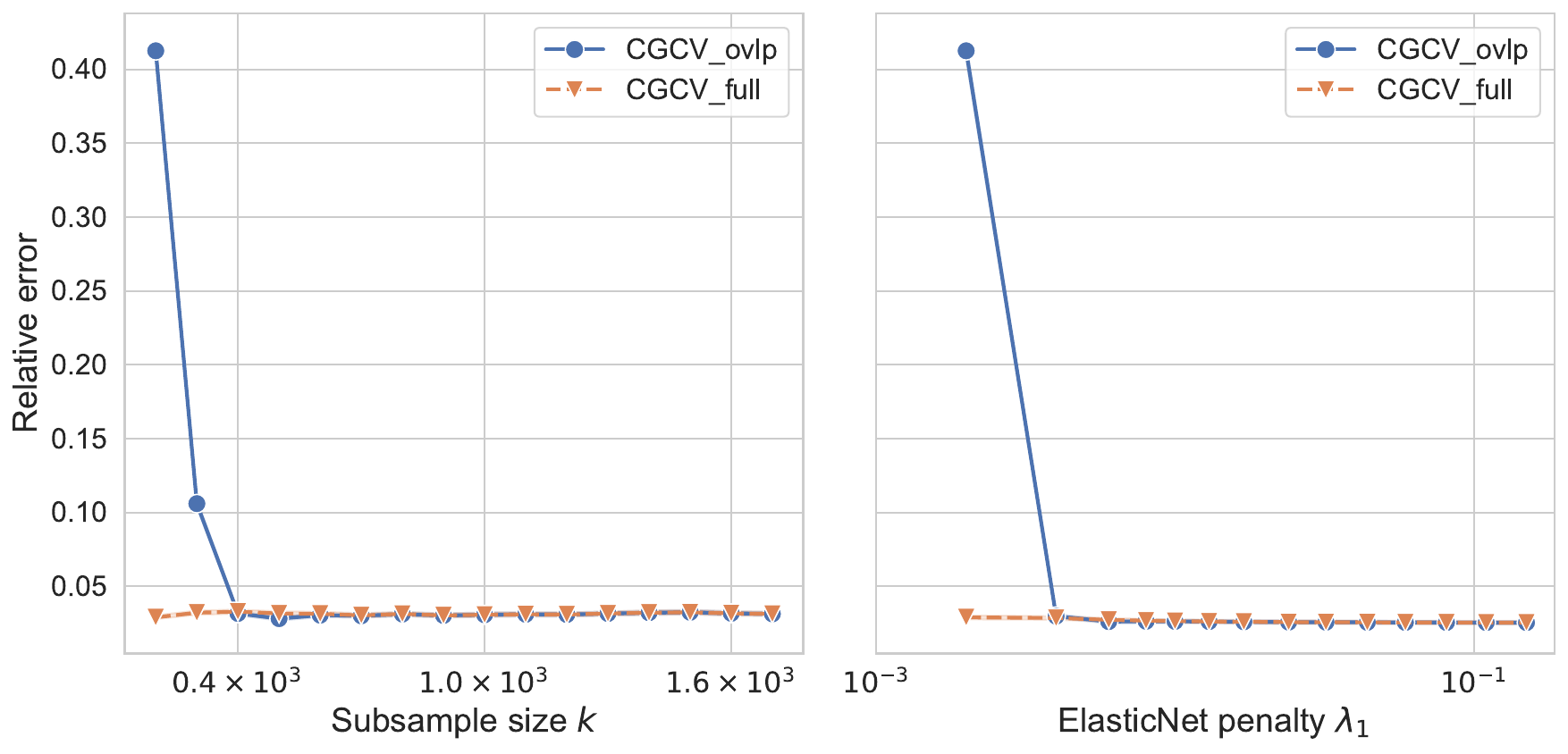}
    \caption{
    \textbf{The CGCV{\_}full estimator ($\tR_M^{\cgcv,\full}$) performs better than CGCV\_ovlp estimator ($\tR_M^{\cgcv,\sub}$). }
    Plots of relative errors of the $\tR_M^{\cgcv,\full}$ and $\tR_M^{\cgcv,\sub}$ for elastic net ensemble with ensemble size $M=10$ and $\lambda_2=0.01$.
    The left panel shows the results with elastic net penalty $\lambda_1=0.002$ and varying subsample size $k$.
    The right panel shows the results with subsample size $k=200$ and varying elastic net penalty $\lambda_1$.
    The data generating process is the same as in \Cref{fig:Fig1_ridge_lasso}.
    }
    \label{fig:Fig2_sub_full_ElasticNet-cgcv.pdf}
\end{figure}

\clearpage
\noindent\underline{Lasso:}

\begin{figure}[!ht]
    \centering
    \includegraphics[width=0.75\textwidth]{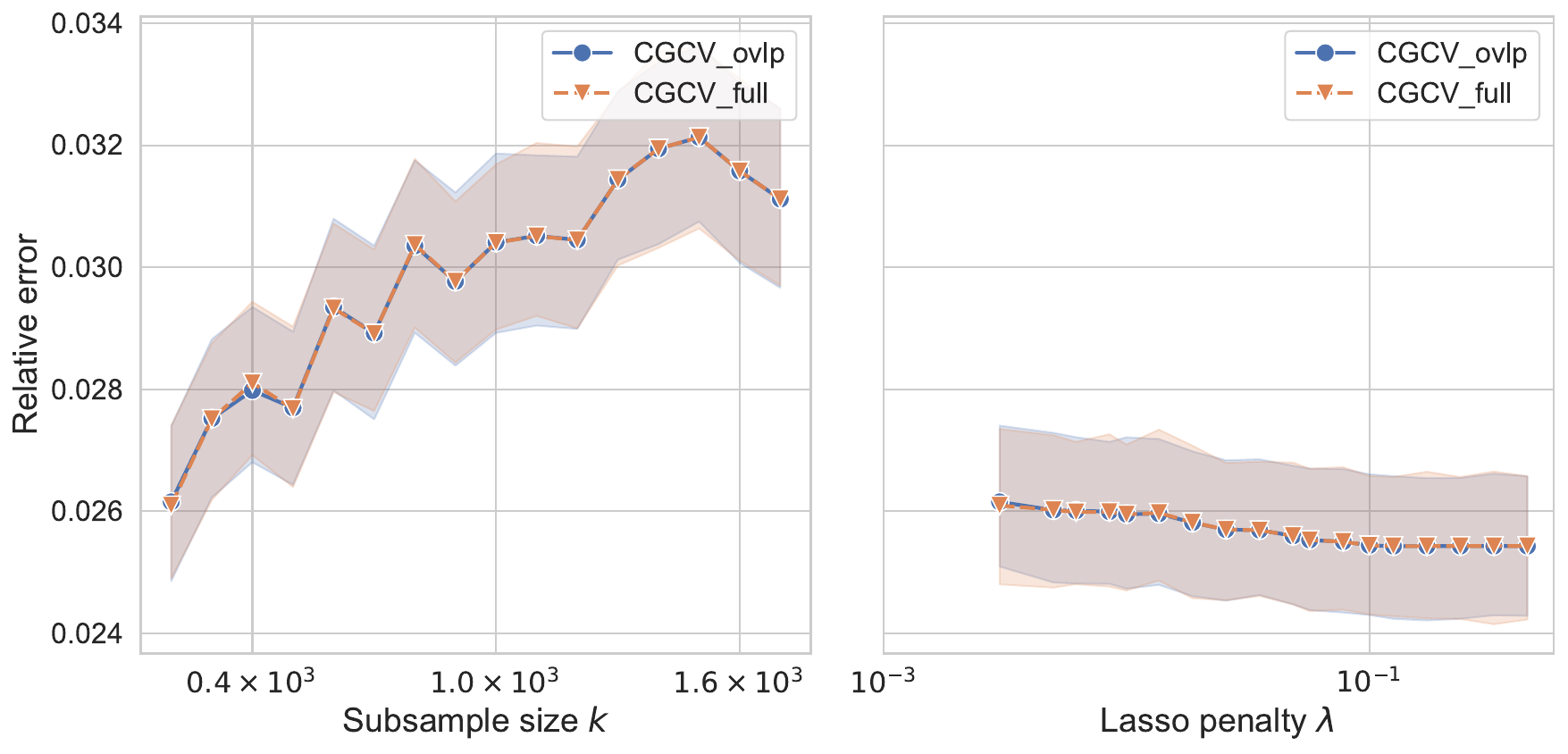}
    \caption{
    \textbf{The CGCV{\_}full estimator ($\tR_M^{\cgcv,\full}$) performs similar to CGCV\_ovlp estimator ($\tR_M^{\cgcv,\sub}$). }
    Plots of relative errors of the $\tR_M^{\cgcv,\full}$ and $\tR_M^{\cgcv,\sub}$ for lasso ensemble with ensemble size $M=10$.
    The left panel shows the results with lasso penalty $\lambda=0.003$ and varying subsample size $k$.
    The right panel shows the results with subsample size $k=200$ and varying lasso penalty $\lambda$.
    The data generating process is the same as in \Cref{fig:Fig1_ridge_lasso}.
    }
    \label{fig:Fig2_sub_full_lasso.pdf}
\end{figure}

\subsection[Comparison of CGCV and GCV in subsample size for different ensemble size]{Comparison of CGCV and GCV in $k$ for ridge, elastic net, and lasso}
\label{app:experiments-gaussian-cgcv-vs-gcv-in-k}

\noindent\underline{Ridge:}

\begin{figure}[!ht]
    \centering
    \includegraphics[width=0.95\textwidth]{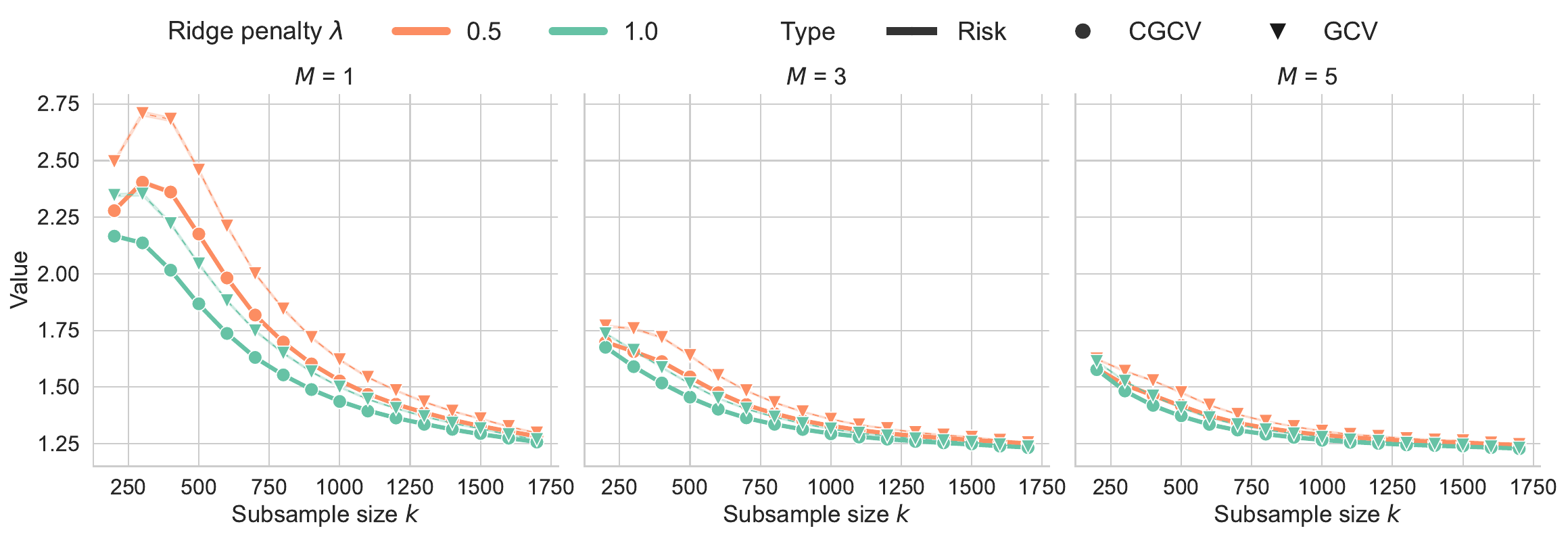}
    \caption{
    \textbf{CGCV is consistent for finite ensembles across different subsample sizes.}
    Plot of risks versus the subsample size $k$ for the ridge ensemble with different $\lambda$ and $M$.
    The data generating process is the same as in \Cref{fig:Fig1_ridge_lasso}.
    }
    \label{fig:surface-plot}
\end{figure}

\clearpage
\noindent\underline{Elastic Net:}

\begin{figure}[!ht]
    \centering
    \includegraphics[width=0.95\textwidth]{figures/ElasticNet_k.pdf}
    \caption{
    \textbf{CGCV is consistent for finite ensembles across different subsample sizes.}
    Plot of risks versus the subsample size $k$ for elastic net ensemble with different $\lambda$ and $M$.
    }
    \label{fig:ElasticNet_k.pdf}
\end{figure}

\noindent\underline{Lasso:}

\begin{figure}[!ht]
    \centering
    \includegraphics[width=0.95\textwidth]{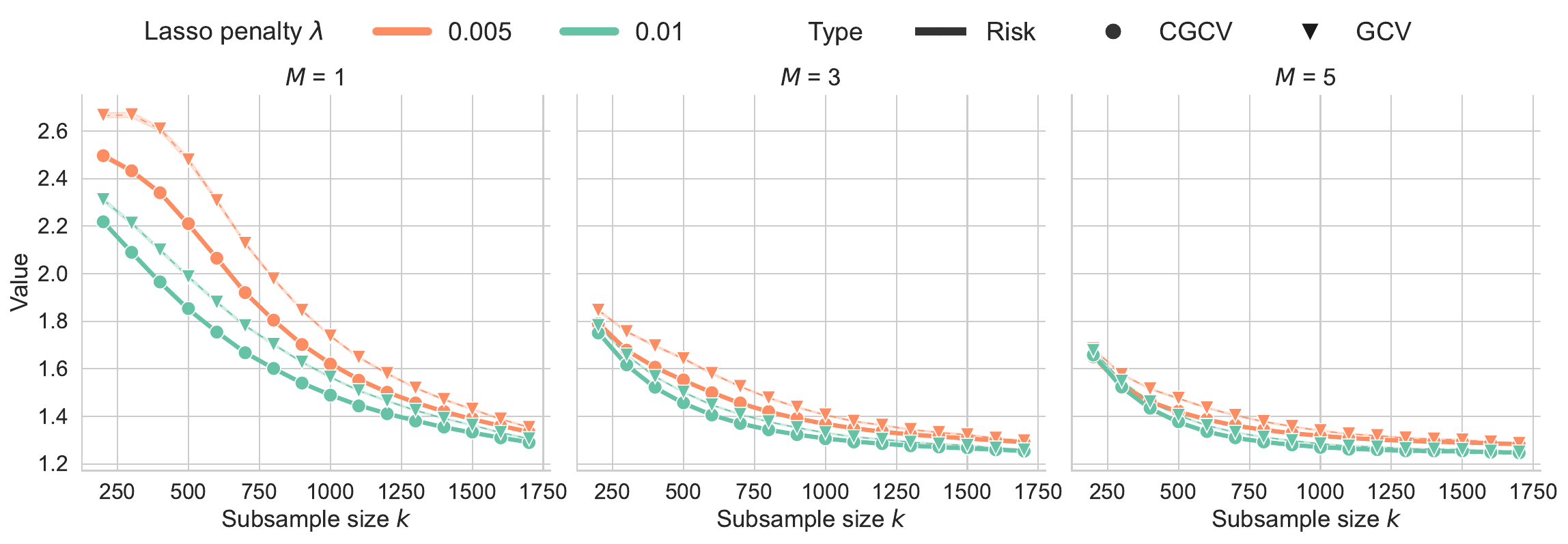}
    \caption{
    \textbf{CGCV is consistent for finite ensembles across different subsample sizes.}
    Plot of risks versus the subsample size $k$ for elastic net ensemble with different $\lambda$ and $M$.
    }
    \label{fig:lasso_k.pdf}
\end{figure}

\clearpage
\subsection[Comparison of GCV and GCV in ensemble size for ridge, elastic net, and lasso]{Comparison of GCV and GCV in $M$ for ridge, elastic net, and lasso}
\label{app:experiments-gaussian-cgcv-vs-gcv-in-M}

\noindent\underline{Ridge:}

\begin{figure}[!ht]
    \centering
    \includegraphics[width=0.95\textwidth]{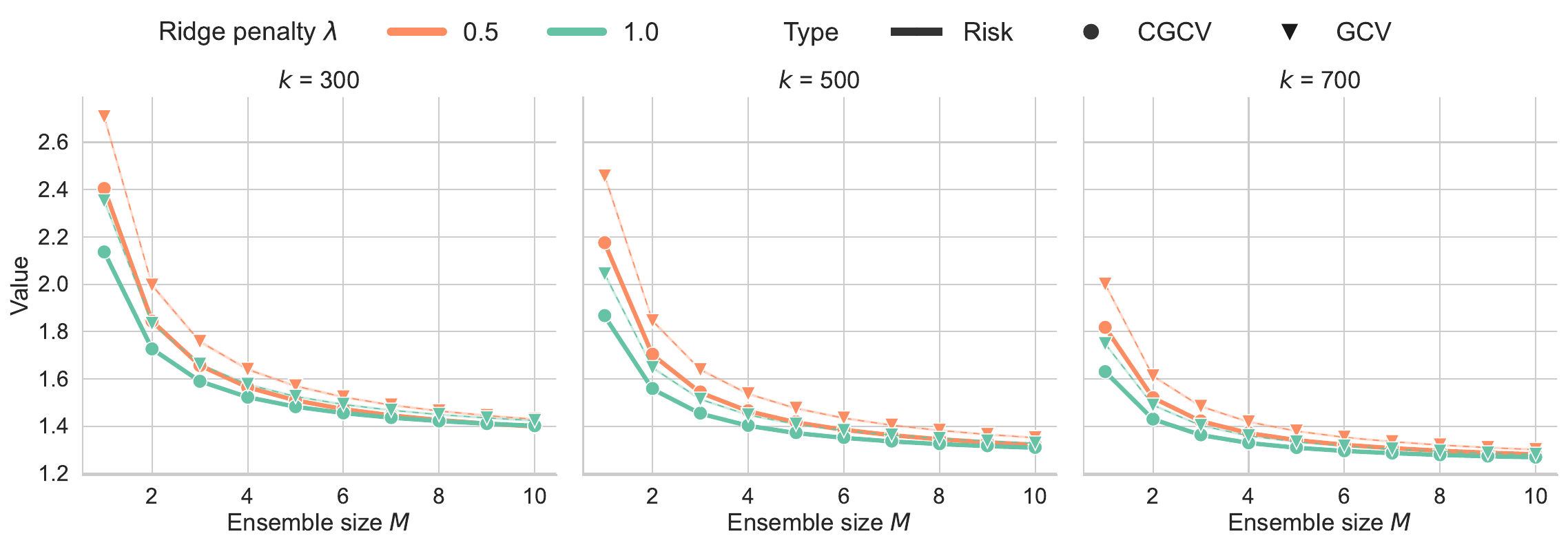}
    \caption{
    \textbf{GCV gets closer to the risk as ensemble size increases across different subsample sizes.}
    Plot of risks versus the subsample size $k$ for the lasso ensemble with different $\lambda$ and $M$.
    The data generating process is the same as in \Cref{fig:Fig1_ridge_lasso}.
    }
    \label{fig:lasso-surface-plot}
\end{figure}

\noindent\underline{Elastic Net:}

\begin{figure}[!ht]
    \centering
    \includegraphics[width=0.95\textwidth]{figures/ElasticNet_M.pdf}
    
    \caption{
    \textbf{GCV gets closer to the risk as ensemble size increases across different subsample sizes.}
    Plot of risks versus the ensemble size $M$ for ridge ensemble with different $\lambda$ and $k$.
    }
    \label{fig:ElasticNet_M.pdf}
\end{figure}

\clearpage
\noindent\underline{Lasso:}

\begin{figure}[!ht]
    \centering
    \includegraphics[width=0.95\textwidth]{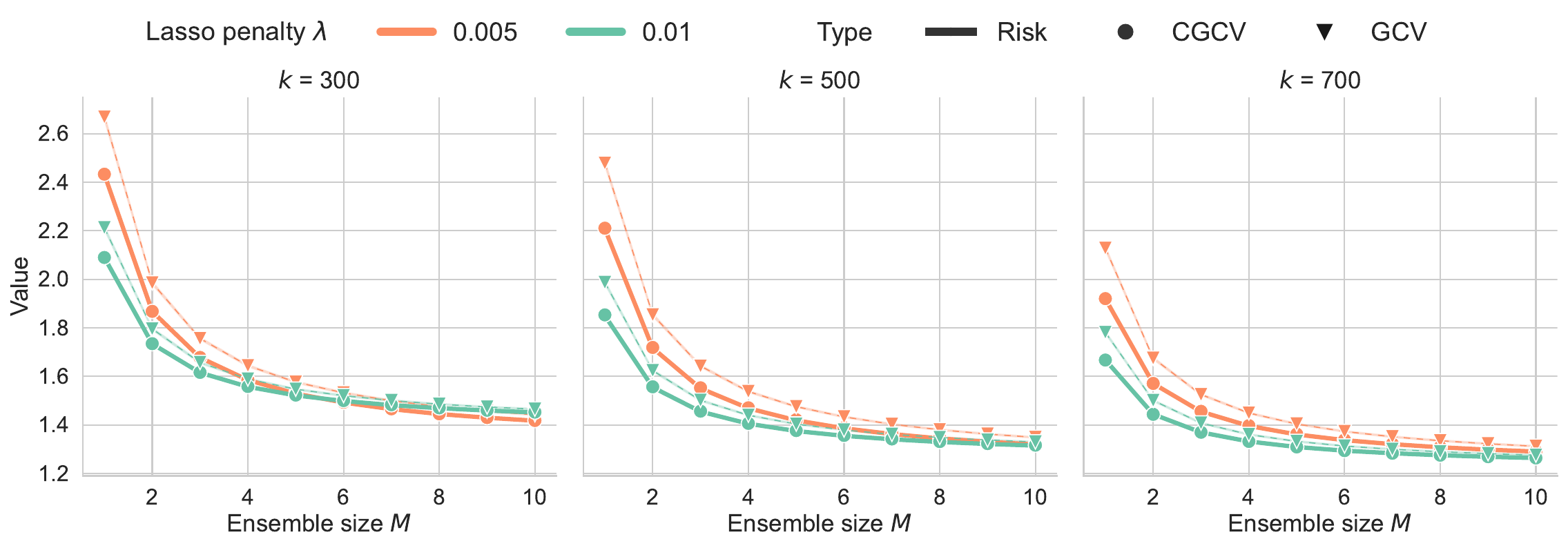}
    
    \caption{
    \textbf{GCV gets closer to the risk as ensemble size increases across different subsample sizes.}
    Plot of risks versus the ensemble size $M$ for ridge ensemble with different $\lambda$ and $k$.
    }
    \label{fig:lasso_M.pdf}
\end{figure}

\subsection[Comparison of CGCV and GCV in regularization level]{Comparison of CGCV and GCV in $\lambda$ for ridge, elastic net and lasso}
\label{app:experiments-gaussian-cgcv-vs-gcv-in-lambda}

\noindent\underline{Ridge:}

\begin{figure}[!ht]
    \centering
    \includegraphics[width=0.95\textwidth]{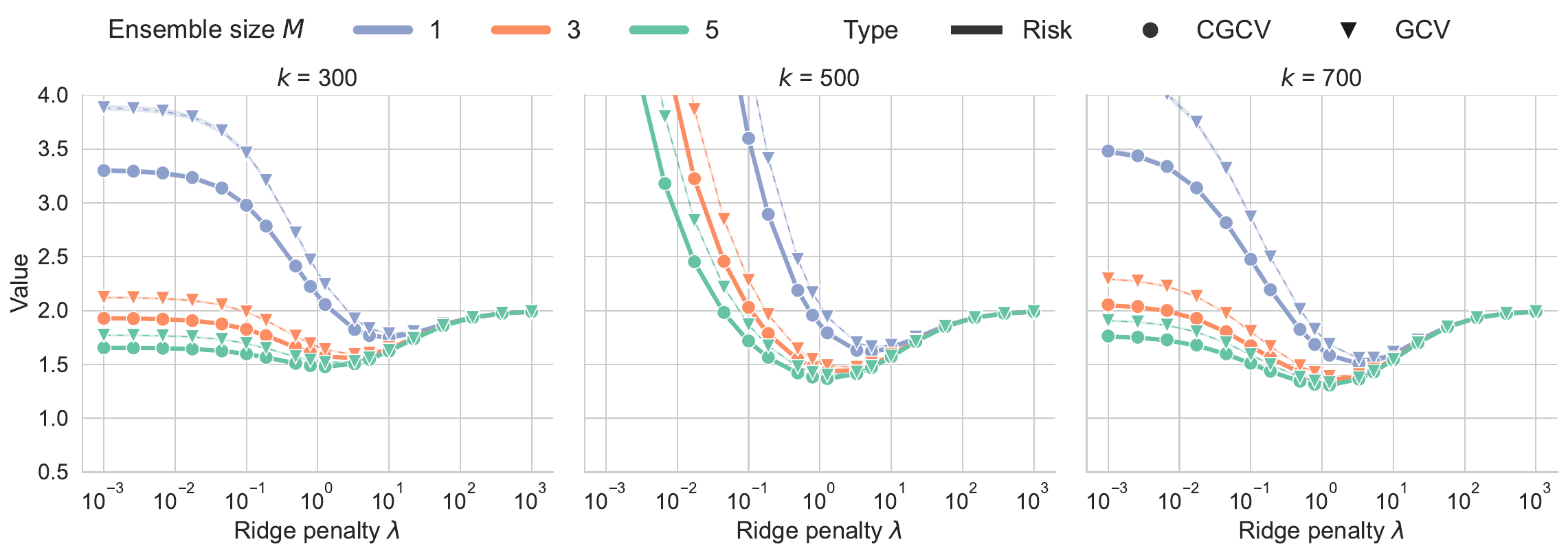}
    \caption{
    \textbf{CGCV is consistent for finite ensembles across different regularization levels.}
    Plot of risks versus the ensemble size $M$ for ridge ensemble with different $\lambda$ and $k$.
    }
    \label{fig:lasso-lam}
\end{figure}

\clearpage
\noindent\underline{Elastic Net:}

\begin{figure}[!ht]
    \centering
    \includegraphics[width=0.95\textwidth]{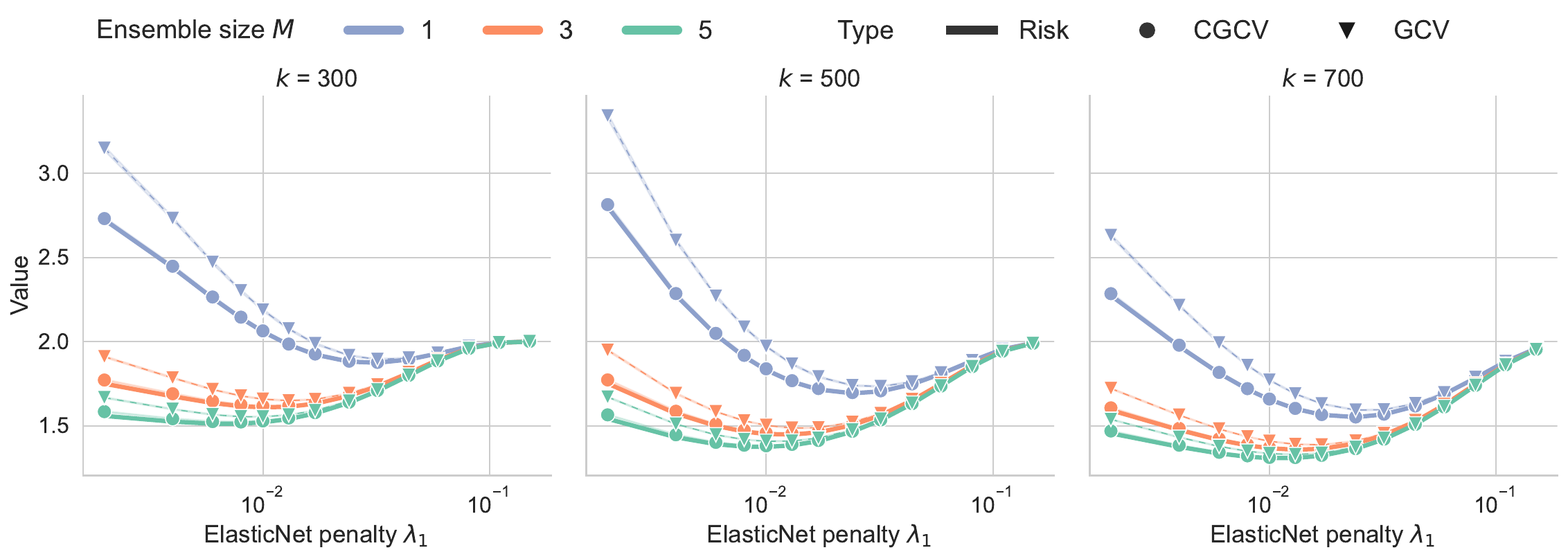}
    
    \caption{
    \textbf{GCV gets closer to the risk as ensemble size increases across different subsample sizes.}
    Plot of risks versus the ensemble size $M$ for elastic net ensemble with different $\lambda$ and $k$.
    }
    \label{fig:lam-ridge}
\end{figure}

\noindent\underline{Lasso:}

\begin{figure}[!ht]
    \centering
    \includegraphics[width=0.95\textwidth]{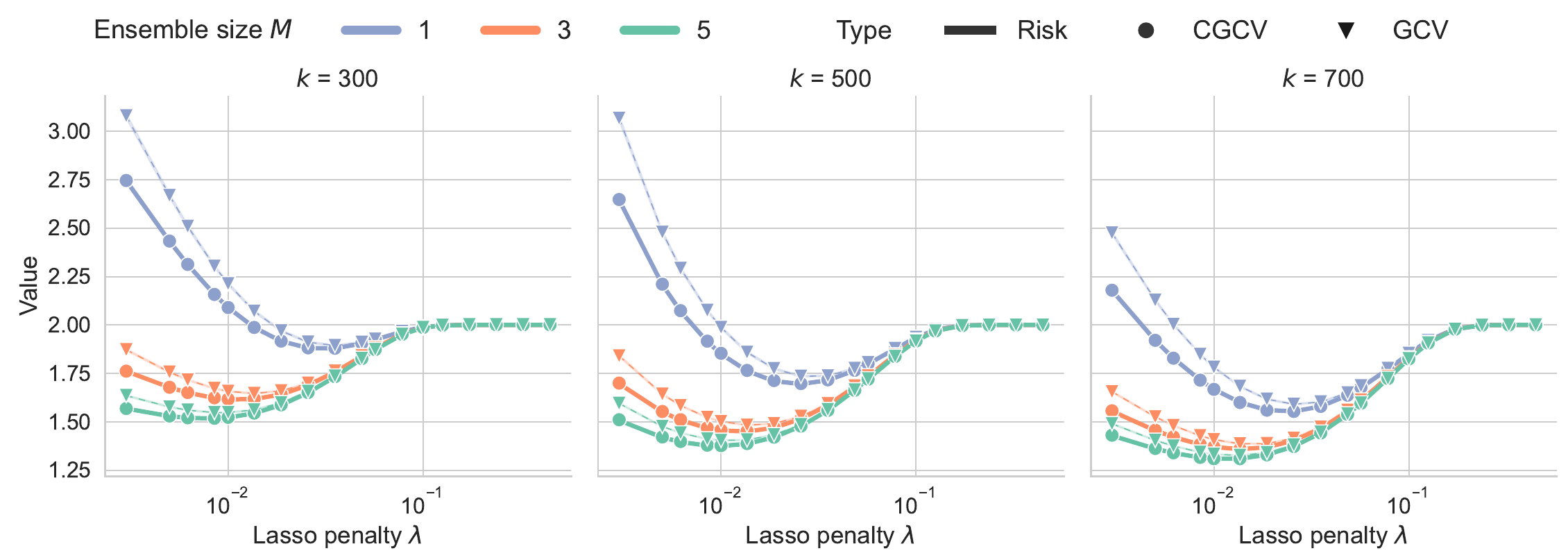}
    
    \caption{
    \textbf{GCV gets closer to the risk as ensemble size increases across different subsample sizes.}
    Plot of risks versus the ensemble size $M$ for lasso ensemble with different $\lambda$ and $k$.
    }
    \label{fig:lam-lasso}
\end{figure}

\subsection[Numerical simulations with large ensemble size M]{Numerical simulations with large ensemble size $M$}
\label{sec:simulations-large-M}

Recall that \Cref{sec:numerical-illustrations-gaussian} presents simulation results when $M$ is at most $10$. 
In this section, we provide simulation results in the same setting as described in \Cref{sec:numerical-illustrations-gaussian}, but with $M$ as large as $100$. The goal is to illustrate that our proposed CGCV estimator is efficient not only for small $M$ but also for large $M$. Since the bias of the GCV is decreasing with $M$, we expect the GCV works well for large $M$. 
It is evident from \Cref{fig:fun_M_large_M} that the CGCV accurately estimates the actual risk for all values of $M$, while the bias of GCV diminishes only when $M$ is large. 

\begin{figure}[!ht]
    \centering
    \includegraphics[width=0.95\textwidth]{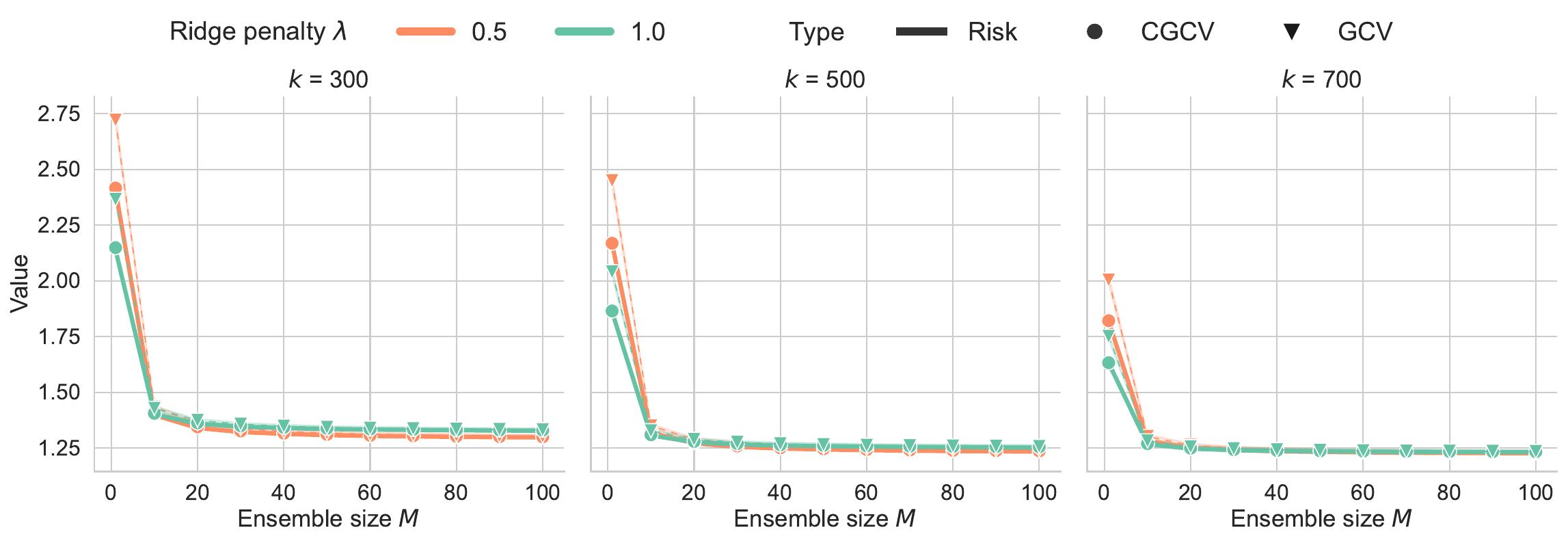}\\
    \includegraphics[width=0.95\textwidth]{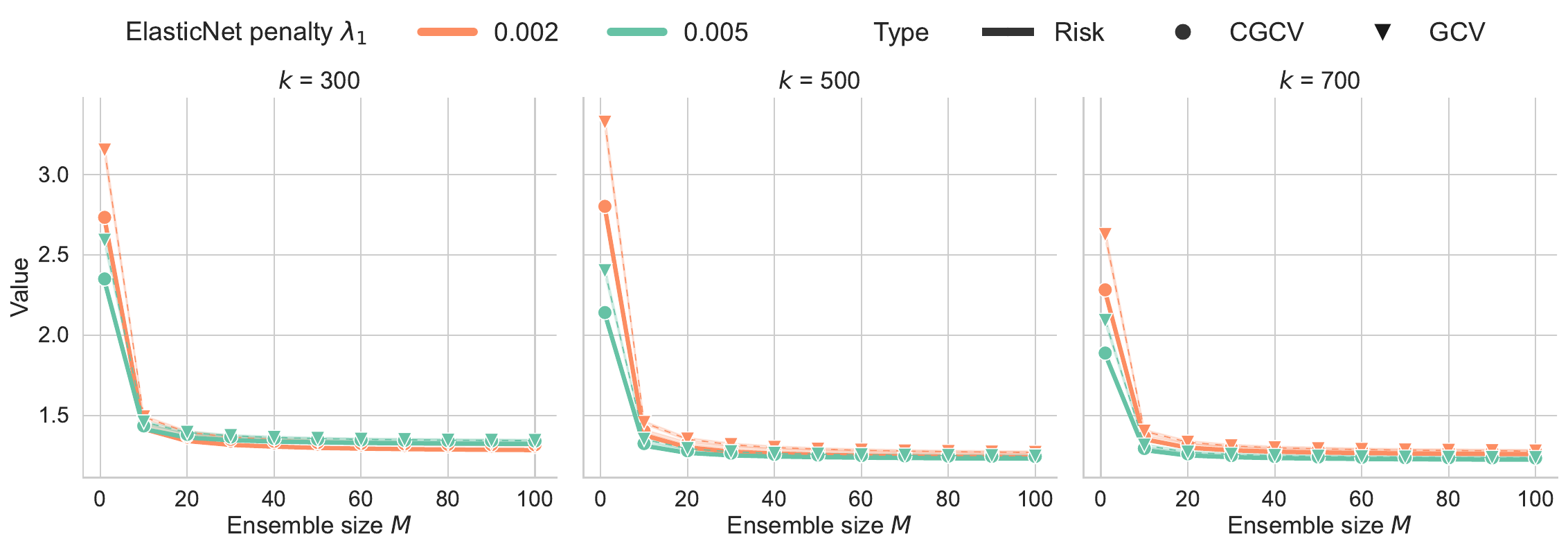}\\
    \includegraphics[width=0.95\textwidth]{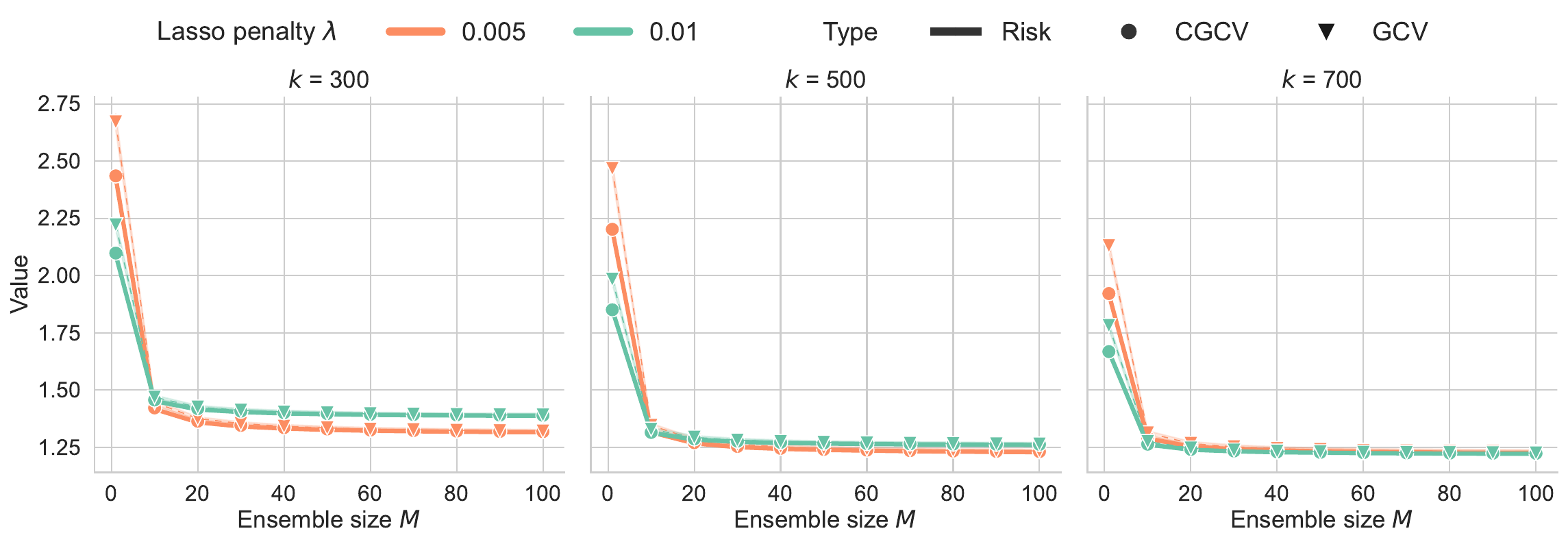}\\
    \caption{
    \textbf{CGCV efficiently estimates the actual risk for all values of $M$.
    GCV gets closer to the actual risk as $M$ increases.}
    Top row: Ridge; Middle row: Elastic Net; Bottom row: Lasso. 
    The data generating process is the same as in \Cref{fig:Fig1_ridge_lasso}.
    }
    \label{fig:fun_M_large_M}
\end{figure}

\clearpage
\subsection{Illustration of inconsistency for GCV variant}
\label{sec:naive-gcv-inconsistency}

\begin{figure}[!ht]
    \centering
    \includegraphics[width=0.95\textwidth]{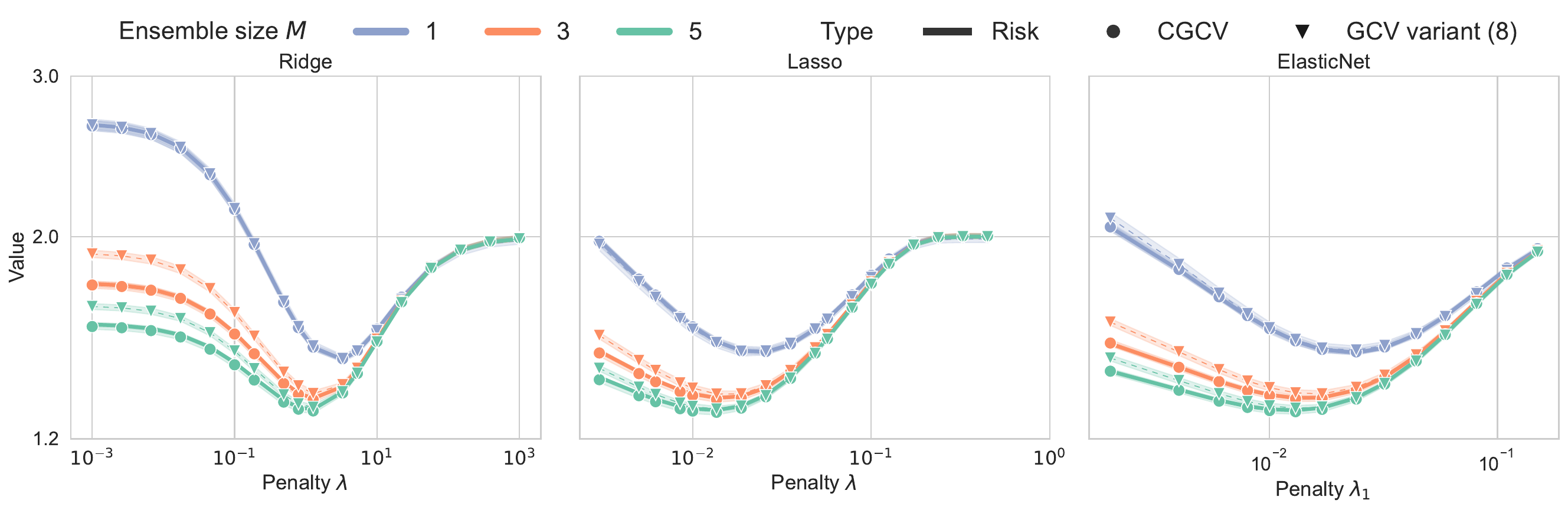}
    
    \caption{
    \textbf{The GCV variant \eqref{eq:gcv-naive} is not consistent for finite $M>1$.}
    Plot of risks versus the regularization parameter for ridge, lasso, and elastic net ensembles,
    under different $M$ and fixed $k=800$. The data generating process is the same as in \Cref{fig:Fig1_ridge_lasso}.
    }
    \label{fig:gcv_naive_lam}
\end{figure}

\clearpage
\section{Additional numerical illustrations for \Cref{sec:finite-sample-analysis} with non-Gaussian data}
\label{sec:numerical-non-gaussian}

In this section, we conduct the same numerical experiments as in \Cref{sec:finite-sample-analysis} except that here the design matrix is not Gaussian distributed. 
We consider two non-Gaussian distributions:
the Rademacher distribution and the uniform distribution in $[-\sqrt{3}, \sqrt{3}]$. 
The design matrix $\bX$ is then generated by a scale transformation to ensure the covariance matrix is the same as $\bSigma$ in 
\Cref{sec:numerical-illustrations-gaussian}. 
In the following subsections, we present counterpart plots to \Cref{fig:Fig1_ridge_lasso,fig:k,fig:ridge-k,fig:M} in the same setting, except here $\bX$ is non-Gaussian distributed.

\subsection{Results for Rademacher distribution}
\label{app:experiments-rademacher}

\begin{figure}[H]
    \centering
    \includegraphics[width=0.8\textwidth]{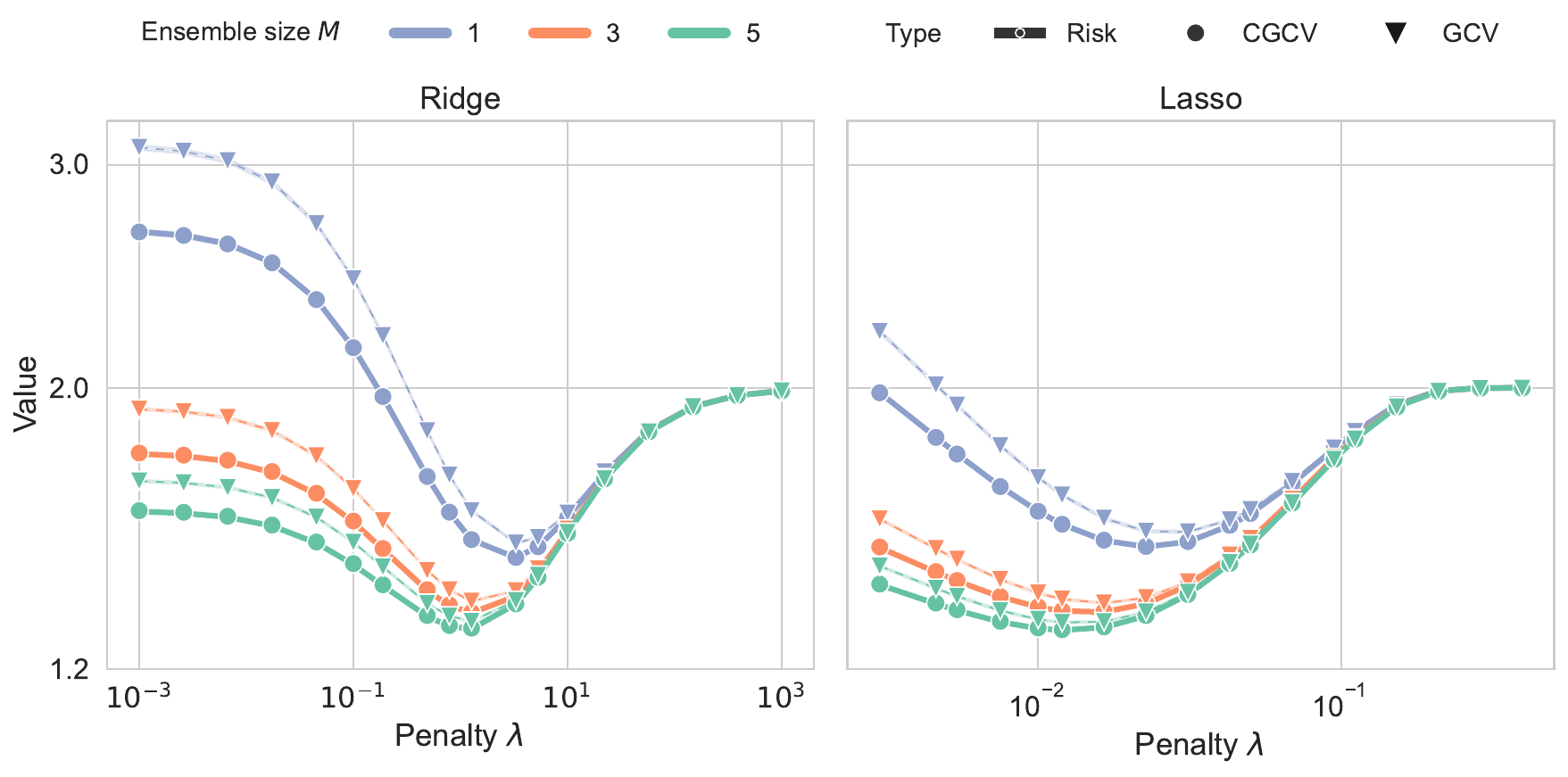}
    \caption{
    \textbf{CGCV is consistent for finite ensembles of penalized estimators while GCV is not.}
    The simulation setting is the same as \Cref{fig:Fig1_ridge_lasso}, except that here $\bX$ is Rademacher distributed. 
    }
    \label{fig:Fig1_ridge_lasso_rademacher}
\end{figure}

\begin{figure}[H]
    \centering
    \includegraphics[width=0.8\textwidth]{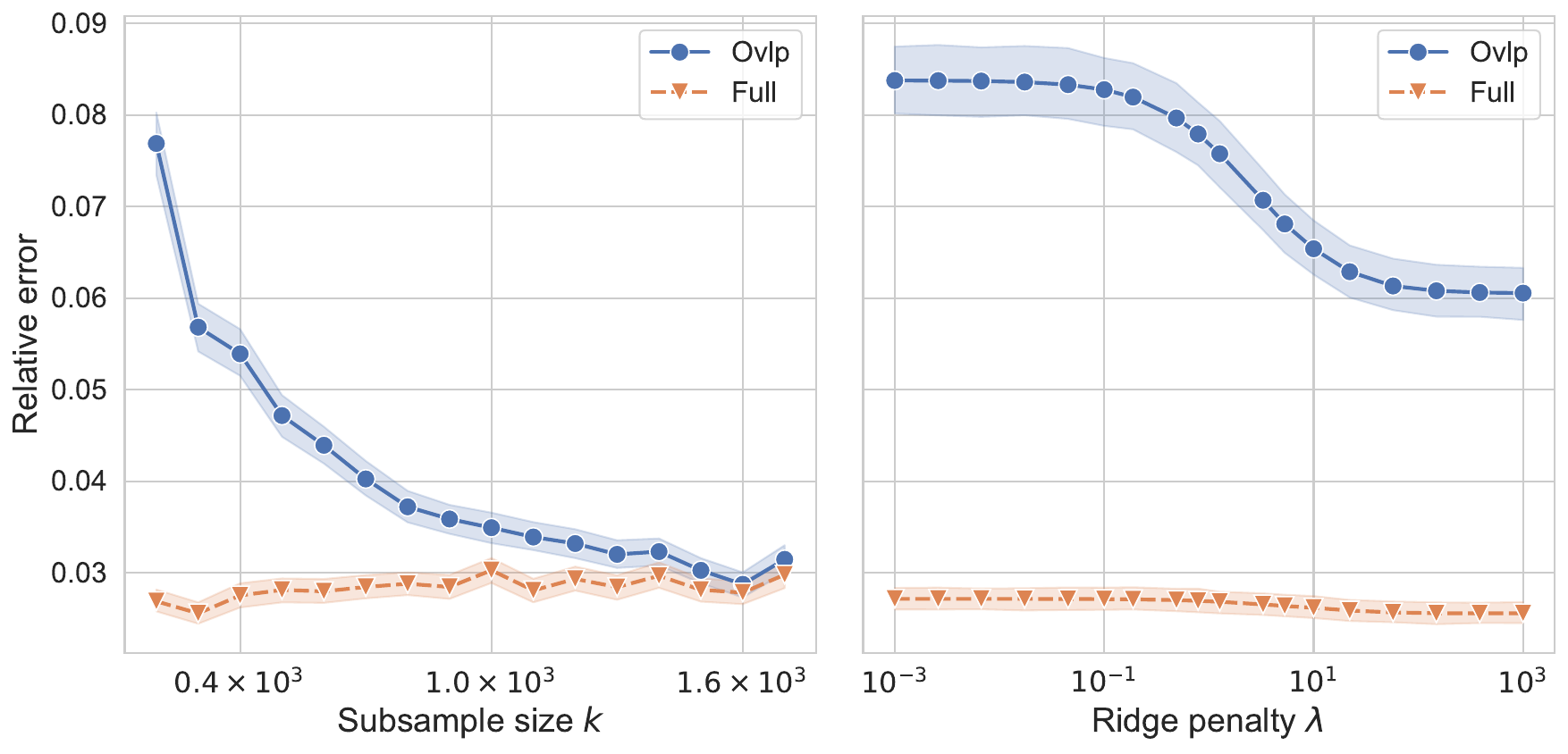}
    \caption{
    \textbf{Full-estimator $\tR_{M}^{\full}$  performs better than \ovlp-estimator $\tR_{M}^{\sub}$ across different regularization and for small subsample size $k$.}
    The simulation setting is the same as \Cref{fig:k}, except that here $\bX$ is Rademacher distributed.
    }\label{fig:k_rademacher}
\end{figure}

\begin{figure}[H]
    \centering
    \includegraphics[width=0.95\textwidth]%
    {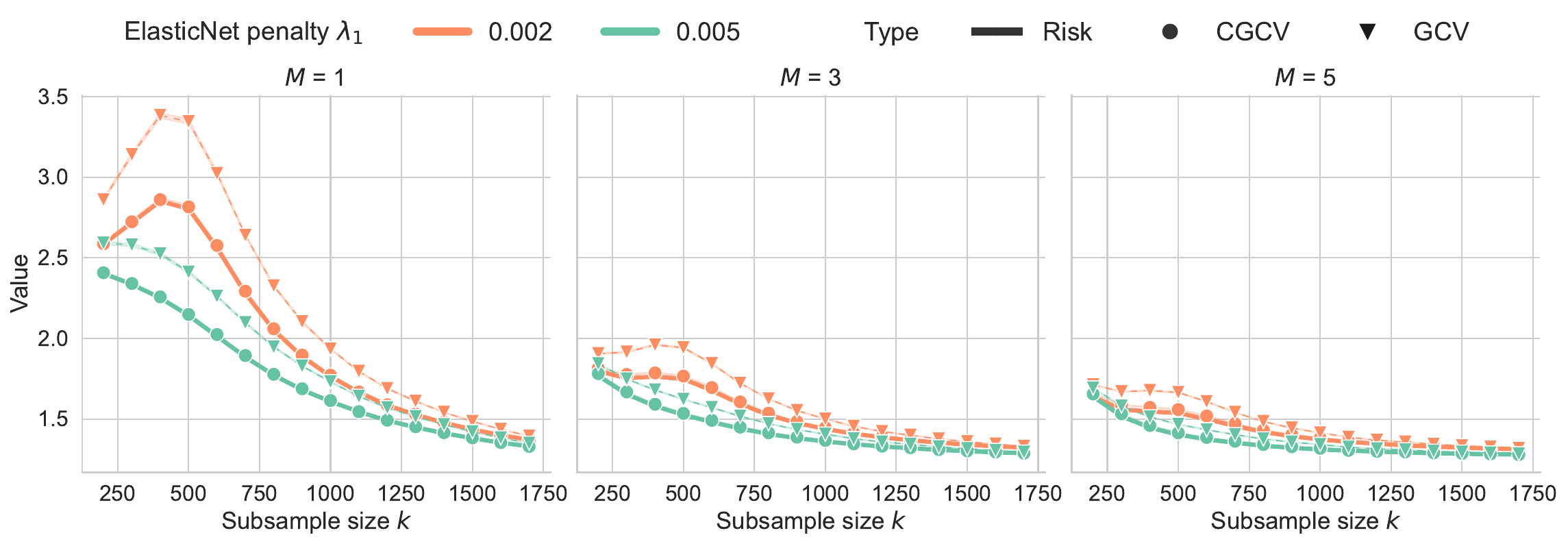}
    \caption{
    \textbf{CGCV is consistent for finite ensembles across different subsample sizes.}
    The plot of risks versus the subsample size $k$ for Elastic Net ensemble with different $\lambda$ and $M$ and fixed $\lambda_2 = 0.01$. 
    The simulation setting is the same as \Cref{fig:ridge-k}, except that here $\bX$ is Rademacher distributed.
    }\label{fig:ridge-k_rademacher}
\end{figure}

\begin{figure}[H]
    \centering
    \includegraphics[width=0.95\textwidth]%
    {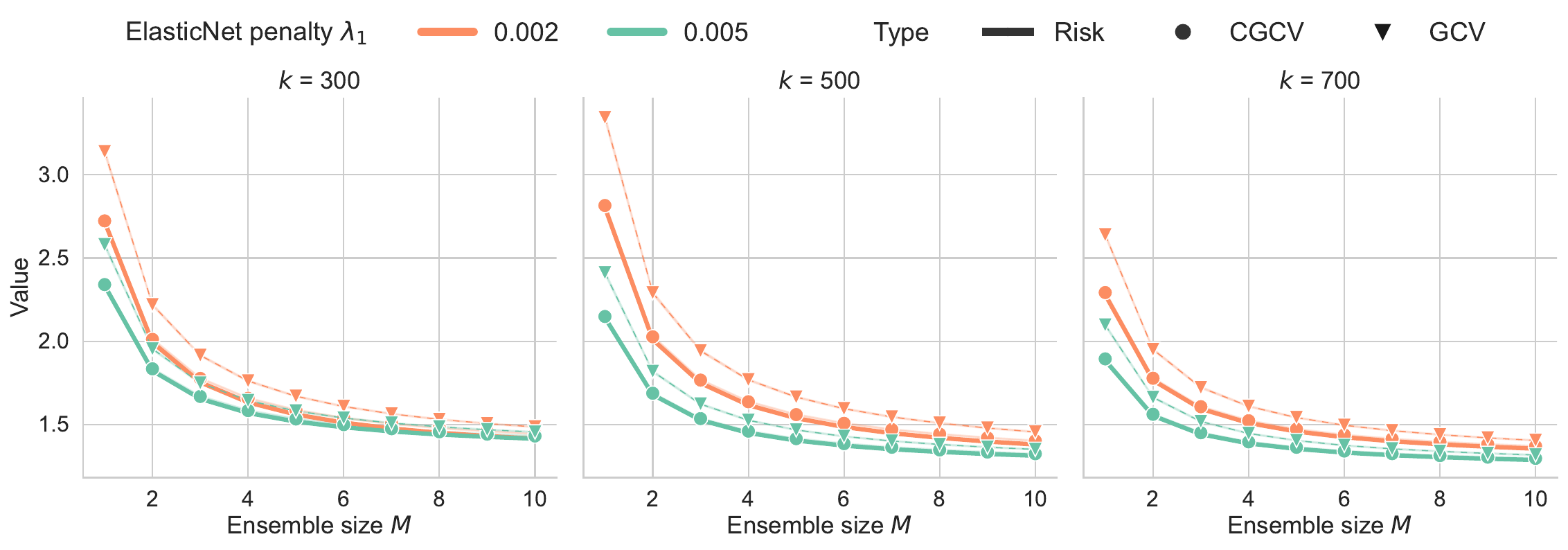}
    \caption{
    \textbf{GCV gets closer to the risk as ensemble size increases across different subsample sizes.}
    The plot of risks versus the Elastic Net ensemble size $M$ for ridge ensemble with different $\lambda$ and $k$ and fixed $\lambda_2 = 0.01$.
    The simulation setting is the same as \Cref{fig:M}, except that here $\bX$ is Rademacher distributed.
    }
    \label{fig:M_rademacher}
\end{figure}

\subsection{Results for uniform distribution}
\label{app:experiments-uniform}

\begin{figure}[H]
    \centering
    \includegraphics[width=0.8\textwidth]{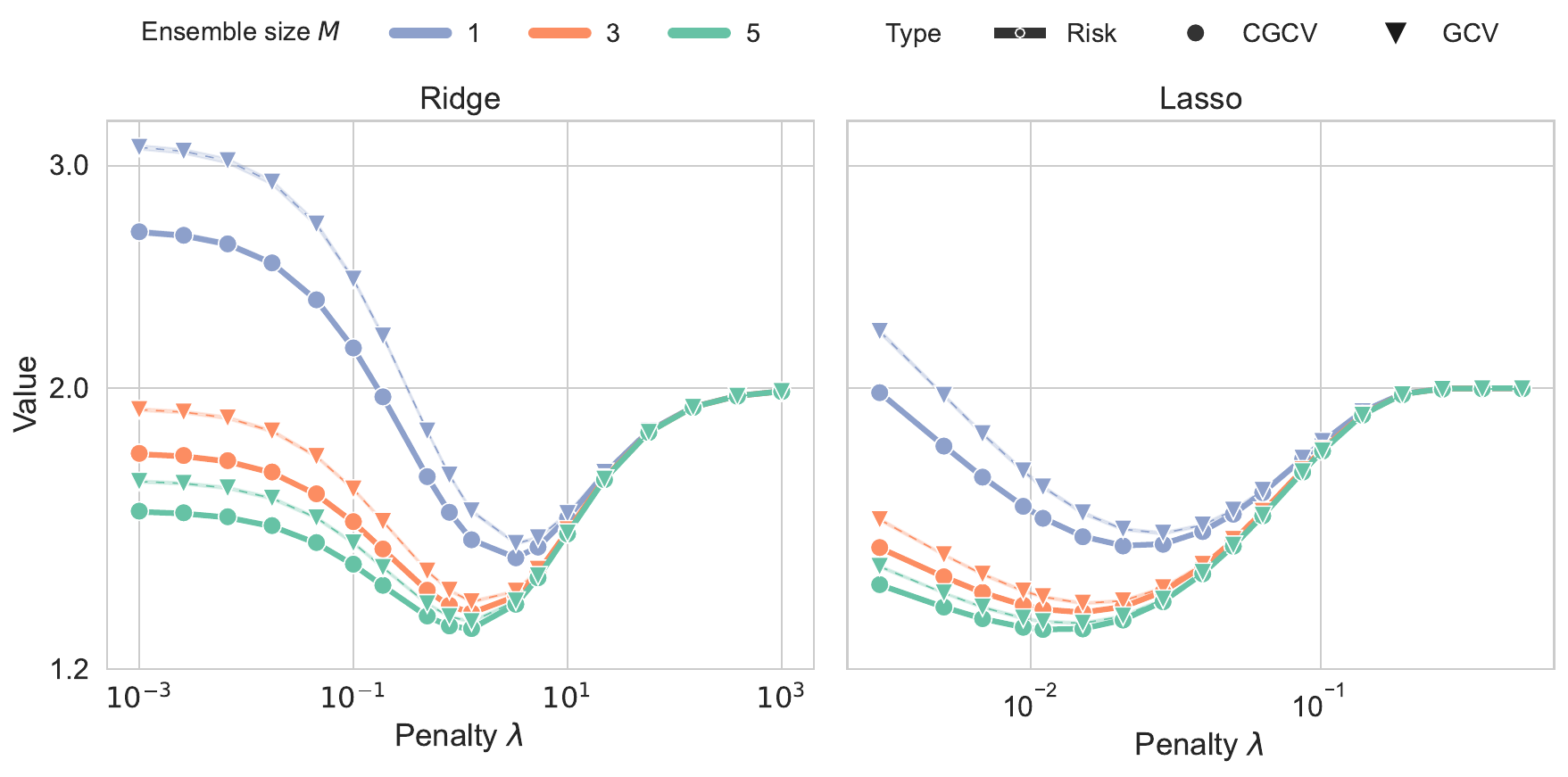}
    \caption{
    \textbf{CGCV is consistent for finite ensembles of penalized estimators while GCV is not.}
    The simulation setting is the same as \Cref{fig:Fig1_ridge_lasso}, except that here $\bX$ is uniformly distributed. 
    }
    \label{fig:Fig1_ridge_lasso_uniform}
\end{figure}

\begin{figure}[H]
    \centering
    \includegraphics[width=0.8\textwidth]{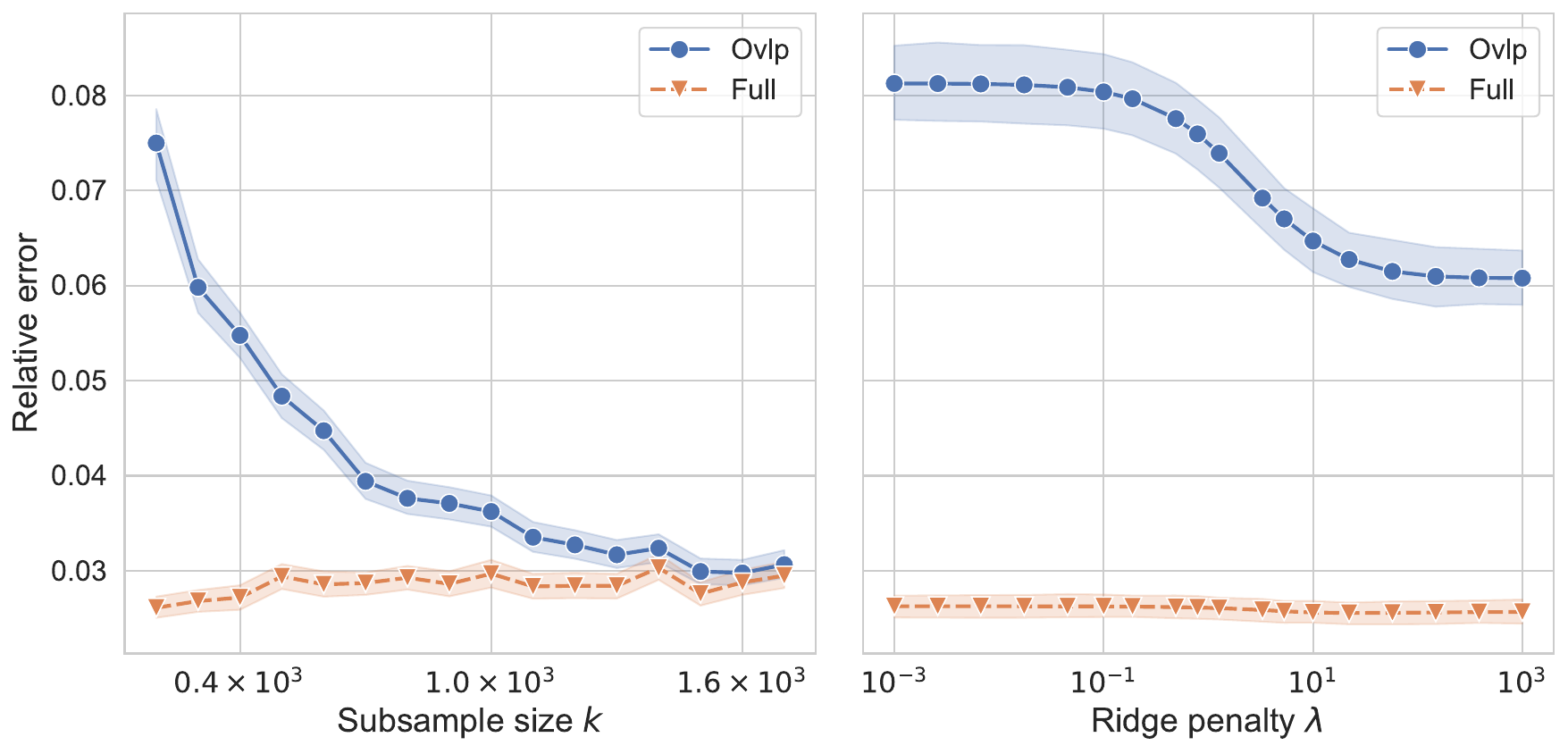}
    \caption{
    \textbf{Full-estimator $\tR_{M}^{\full}$  performs better than \ovlp-estimator $\tR_{M}^{\sub}$ across different regularization and for small subsample size $k$.}
    The simulation setting is the same as \Cref{fig:k}, except that here $\bX$ is uniformly distributed.
    }\label{fig:k_uniform}
\end{figure}

\begin{figure}[H]
    \centering
    \includegraphics[width=0.95\textwidth]%
    {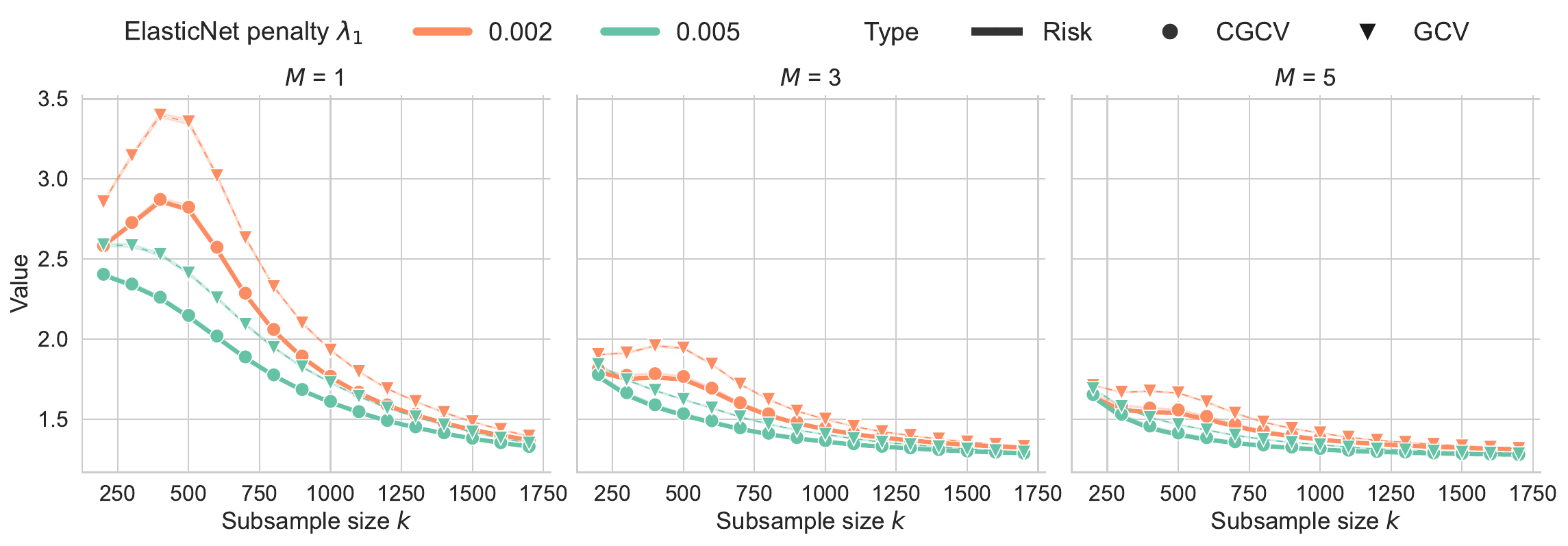}
    \caption{
    \textbf{CGCV is consistent for finite ensembles across different subsample sizes.}
    The plot of risks versus the subsample size $k$ for Elastic Net ensemble with different $\lambda$ and $M$ and fixed $\lambda_2 = 0.01$. 
    The simulation setting is the same as \Cref{fig:ridge-k}, except that here $\bX$ is uniformly distributed.
    }\label{fig:ridge-k_uniform}
\end{figure}

\begin{figure}[H]
    \centering
    \includegraphics[width=0.95\textwidth]%
    {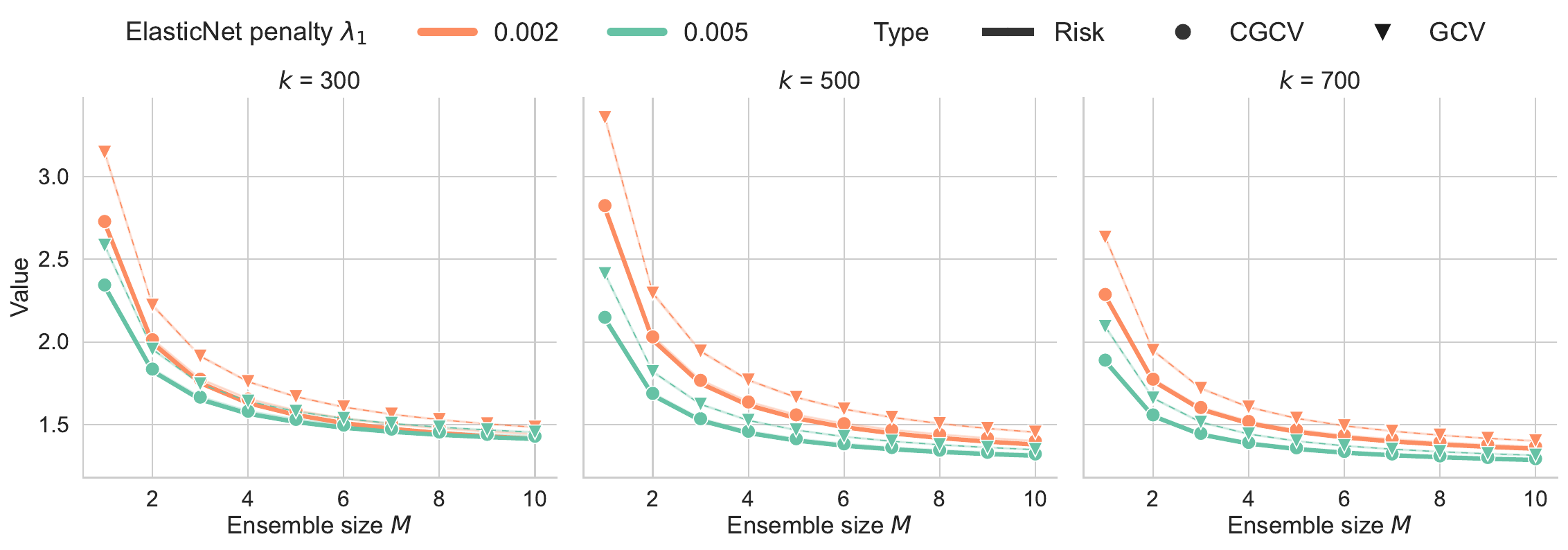}
    \caption{
    \textbf{GCV gets closer to the risk as ensemble size increases across different subsample sizes.}
    The plot of risks versus the Elastic Net ensemble size $M$ for ridge ensemble with different $\lambda$ and $k$ and fixed $\lambda_2 = 0.01$.
    The simulation setting is the same as \Cref{fig:M}, except that here $\bX$ is uniformly distributed.
    }
    \label{fig:M_uniform}
\end{figure}

\clearpage

\section{Additional numerical illustrations for \Cref{sec:asm-consistency} with non-Gaussian data}
\label{app:experiments-nongaussian}

\subsection[Comparison of CGCV and GCV in lambda for elastic net and lasso]{Comparison of CGCV and GCV in $\lambda$ for elastic net and lasso}
\label{app:experiments-nongaussian-cgcv-vs-gcv-in-lambda}

\noindent\underline{Elastic net:}

\begin{figure}[!ht]
    \centering
    \includegraphics[width=0.9\textwidth]{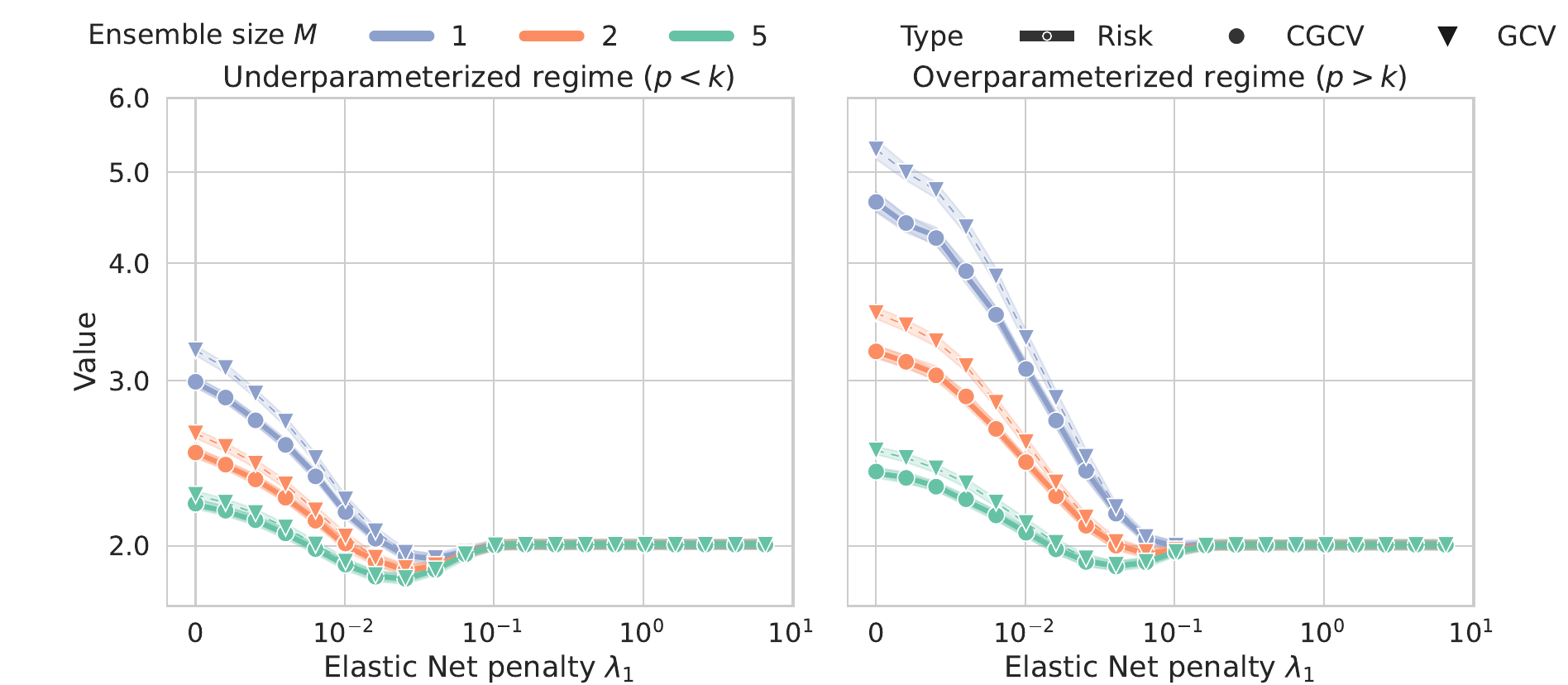}
    \caption{
    \textbf{CGCV is consistent for finite ensembles across different regularization levels.}
    The GCV estimates for the Elastic Net ensemble with varying lasso penalty $\lambda_1$ and fixed $\lambda_2=0.01$ in a problem setup of \Cref{subsec:asym-numerical} over 50 repetitions of the datasets.
    Here the feature dimension is $p=1200$.
    The left and the right panels show the cases when the subsample sizes are $k=2400$ and $k=800$, respectively.}\label{fig:elasticnet-lambda}
\end{figure}

\noindent\underline{Lasso:}

\begin{figure}[!ht]
    \centering
    \includegraphics[width=0.9\textwidth]{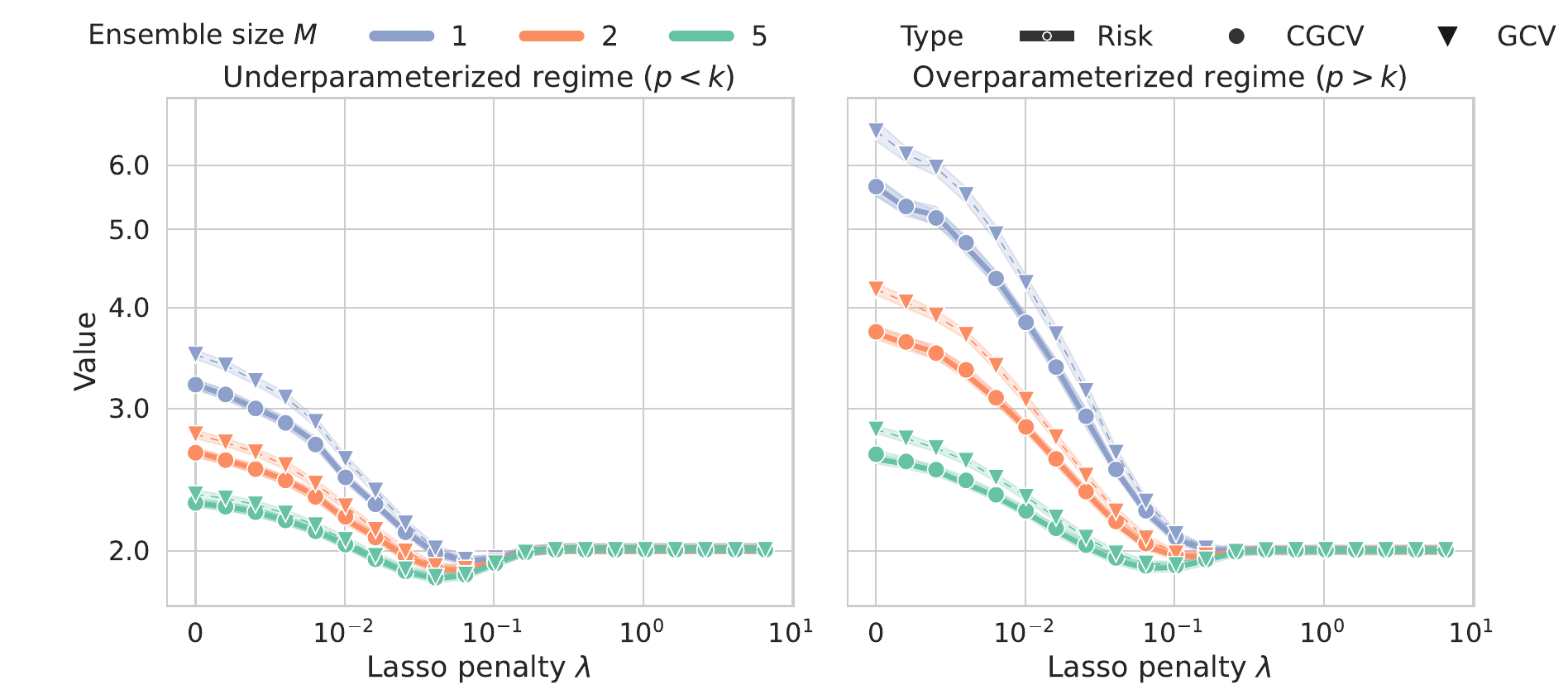}
    \caption{
    \textbf{CGCV is consistent for finite ensembles across different regularization levels.}
    The GCV estimates for the lasso ensemble with varying lasso penalty $\lambda$ in a problem setup of \Cref{subsec:asym-numerical} over 50 repetitions of the datasets.
    Here the feature dimension is $p=1200$.
    The left and the right panels show the cases when the subsample sizes are $k=2400$ and $k=800$, respectively.
    }
    \label{fig:lasso-lambda}
\end{figure}

\end{document}